\newcommand{\bZ}{{\mathbb Z}}
\newcommand{\bR}{{\mathbb R}}
\newcommand{\bC}{{\mathbb C}}
\newcommand{\bF}{{\mathbb F}}
\newcommand{\bT}{{\mathbb T}}
\newcommand{\bG}{{\mathbb G}}
\newcommand{\II}{{I_{\infty}^2}}
\newtheorem{thm}{Theorem}[section]
\newtheorem{lemma}[thm]{Lemma}
\newtheorem{cor}[thm]{Corollary}
\newtheorem{conj}[thm]{Conjecture}
\numberwithin{equation}{section}
\begin{document}

\title[gamma filtration  of flag varieties ]{The gamma filtrations for  the spin groups }
 
\author{Nobuaki Yagita}

\address{ faculty of Education, 
Ibaraki University,
Mito, Ibaraki, Japan}
 
\email{nobuaki.yagita.math@vc.ibaraki.ac.jp, }

\keywords{ gamma filtration, Chow ring, spin group, complete flag variety}
\subjclass[2010]{ 57T15, 20G15, 14C15}

\maketitle

\begin{abstract}
Let $G$ be a compact Lie group and $T$ its maximal torus.
In this paper, we try to  compute $gr_{\gamma}^*(G/T)$
the graded ring associated  with the gamma filtration of
the complex $K$-theory $K^0(G/T)$ for $G=Spin(n)$.
In particular,  we give a counterexample for a conjecture by Karpenko  
when $G=Spin(17)$. The arguments for $E_7$
 in $\S 11$ of the old version were not correct, and they are deleted in this version.

\end{abstract}

\section{Introduction}

Let us fix a prime number $p$, and assume all theories
($H^*(X), CH^*(X), K^*(X)$) are $p$-localized theories.
 Let $G$ and  $T$ be a simply  connected compact Lie group
and its maximal torus. 
Given a field $k$ with $ch(k)=0$,
let $G_k$ and $T_k$ be the split  reductive group and 
a split maximal torus over the field $k$,
corresponding to $G$ and  $T$.  Let $B_k$ be the Borel
subgroup containing $T_k$.
Let $\bG$ be a $G_k$-torsor.  Then $\bF=\bG/B_k$ is a
twisted form of the flag variety $G_k/B_k$. 

In this paper, we study the  graded ring $gr_{\gamma}(\bF)$ associated to the $\gamma$-filtration of the algebraic $K$-theory $K_{alg}^0(\bF)$, mainly for
$G=Spin(odd)$ and $p=2$. 
Since $K_{alg}^0(\bF)$ is isomorphic to the topological 
$K$-theory $ K_{top}^0(G/T)$ [Pa], we know $gr_{\gamma}(\bF)\cong
gr_{\gamma}^*(G/T)$ for the algebraic and topological $\gamma$-filtrations.

Moreover we take 
$\bG$  a versal $G_k$-torsor i.e.,
$\bG$ is isomorphic to $G_{k(S)}$-torsor over
$Spec(k(S))$ given by the generic fiber of $GL_N\to S$ where $S=GL_N/G_k$ for an embedding 
$G\subset GL_N$ ( for the properties of a versal $G_k$-torsor, see $\S 3$ below or
see \cite{Ga-Me-Se}, \cite{To2}, \cite{Me-Ne-Za}, \cite{Ka1}).  
When $\bG/B_k=\bF$ is
 a versal  flag variety, it is known  (\cite{Me-Ne-Za}, \cite{Ka1}) that $CH^*(\bF)$ is generated by Chern classes.  Hence we know  $gr_{\gamma}^*(\bF)\cong gr^*_{geo}(\bF)$; the graded ring associated the geometric filtration (defined from degree of $CH^*(X)$)
 (see Chapter 15 in [To3], \cite{YaG}). 
\begin{lemma}
Let $G$ be a compact simply connected Lie group.
Let $\bF=\bG/B_k$ be the versal flag variety. Then 
\[ gr_{\gamma}^*(G/T)\cong
gr_{geo}^*(\bF)\cong CH^*(\bF)/I(1)
\quad for\ some \ ideal\ I(1)\subset CH^*(\bF).\]
\end{lemma}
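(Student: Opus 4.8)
The plan is to establish the two isomorphisms separately: the first by combining Panin's comparison with the general mechanism that identifies the $\gamma$-filtration with the geometric (codimension) filtration once the Chow ring is generated by Chern classes, and the second by recognizing $gr^*_{geo}(\bF)$ as the standard quotient of $CH^*(\bF)$.

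First I would set up the two comparison maps on $K^0(\bF)$. On the one hand, one always has $F^n_\gamma \subseteq F^n_{geo}$, so the inclusions induce a graded ring homomorphism $\rho\colon gr^*_\gamma(\bF)\to gr^*_{geo}(\bF)$. On the other hand there is the classical surjective graded ring homomorphism $cl\colon CH^*(\bF)\twoheadrightarrow gr^*_{geo}(\bF)$, $[Z]\mapsto [\mathcal O_Z]\bmod F^{\codim Z+1}_{geo}$; it is onto because $F^n_{geo}$ is generated by classes of coherent sheaves with support of codimension $\ge n$, and it is multiplicative because, for $Z,Z'$ meeting properly, $[\mathcal O_Z]\cdot[\mathcal O_{Z'}]\equiv[\mathcal O_Z\otimes\mathcal O_{Z'}]$ modulo higher filtration (the higher $\operatorname{Tor}$'s have strictly smaller support). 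The key compatibility, which I would quote from Chapter 15 in [To3] (see also \cite{YaG}) and which amounts to a Grothendieck--Riemann--Roch statement modulo one step of the filtration, is that for a vector bundle $E$ of rank $r$ one has $cl(c_i(E))=\rho(\bar c_i(E))$, where $\bar c_i(E)\in gr^i_\gamma(\bF)$ is the class of $\gamma^i([E]-r)$.

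Next I would prove that $\rho$ is an isomorphism using the hypothesis (\cite{Me-Ne-Za}, \cite{Ka1}) that, for a versal $\bF$, $CH^*(\bF)$ is generated by Chern classes of vector bundles. Surjectivity of $\rho$: any $\alpha\in CH^n(\bF)$ is a polynomial in Chern classes, so $cl(\alpha)$ is that same polynomial evaluated on the elements $cl(c_i(E))=\rho(\bar c_i(E))$, hence lies in $\Img(\rho)$; since $cl$ is onto, so is $\rho$. Injectivity, i.e. $F^n_\gamma(\bF)=F^n_{geo}(\bF)$ for all $n$, follows by downward induction on $n$: the filtration is finite, so both sides vanish for $n\gg 0$; assuming $F^{n+1}_\gamma=F^{n+1}_{geo}$, take $x\in F^n_{geo}$, write its class in $gr^n_{geo}$ as $\rho(\bar y)$ with $y\in F^n_\gamma$ (possible by surjectivity of $\rho$), so that $x-y\in F^{n+1}_{geo}=F^{n+1}_\gamma\subseteq F^n_\gamma$, whence $x\in F^n_\gamma$. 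Thus $gr^*_\gamma(\bF)\cong gr^*_{geo}(\bF)$, and combining with $gr^*_\gamma(G/T)\cong gr^*_\gamma(\bF)$ coming from the $\lambda$-ring isomorphism $K^0_{alg}(\bF)\cong K^0_{top}(G/T)$ of [Pa] yields the first isomorphism of the Lemma. For the last isomorphism it then suffices to put $I(1):=\Ker(cl)$, which is a homogeneous ideal of $CH^*(\bF)$ since $cl$ is a surjective graded ring homomorphism, giving $gr^*_{geo}(\bF)\cong CH^*(\bF)/I(1)$.

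The main obstacle is not the formal assembly above but the two ingredients being invoked: (i) the compatibility $cl(c_i(E))=\rho(\bar c_i(E))$ between the Chow-theoretic and $\gamma$-theoretic Chern classes, which is the genuine technical heart and is where one must be careful about what "Chern class modulo higher filtration" means on each side; and (ii) the fact that the comparison $K^0_{alg}(\bF)\cong K^0_{top}(G/T)$ of [Pa] is compatible with the full $\lambda$-ring structure, so that it carries $F^*_\gamma$ to $F^*_\gamma$ rather than merely being an additive or ring isomorphism.
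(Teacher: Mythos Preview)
Your proposal is correct and follows essentially the same route as the paper: Panin's isomorphism gives $gr_\gamma^*(G/T)\cong gr_\gamma^*(\bF)$, Chern-class generation of $CH^*(\bF)$ for versal $\bF$ forces $gr_\gamma^*(\bF)\cong gr_{geo}^*(\bF)$, and the surjection $CH^*(\bF)\twoheadrightarrow gr_{geo}^*(\bF)$ defines $I(1)$. The only cosmetic difference is that the paper packages the last two steps via the motivic Atiyah--Hirzebruch spectral sequence (its Lemmas~4.2 and~4.4), describing $I(1)$ as $\cup_r\Img(d_r)$ rather than as $\Ker(cl)$, and quoting the $\gamma$/geometric comparison as a black box rather than writing out the downward-induction argument you give.
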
 

  Karpenko conjectured that 
the above $I(1)=0$
(\cite{Ka1}, \cite{Ka2}).
In this paper, we try to compute $gr_{\gamma}^*(G/T)$
for $G=Spin(2\ell+1)$. In particular, we see $I(1)\not =0$
for $G=Spin(17)$.

For these computations, we use the generalized Rost
motive $R(\bG)$ defined by Petrov-Semenov-Zainoulline
\cite{Pe-Se-Za} such that the ($p$-localized) motive $M(\bF)$ of the flag variety $\bF$ is written
\[ M(\bF)\cong R(\bG)\otimes (\oplus_s \bT^{\otimes s}),
\quad for\ the \ Tate \ motive \ \bT.\]

For $G=Spin(2\ell+1)$, $G''=Spin(2\ell+2)$, we know
$R(\bG)\cong R(\bG'')$ from  a theorem
by Vishik-Zainoulline \cite{Vi-Za}.  So in this paper,
we only study $Spin(odd)$ but many  arguments for
$Spin(even)$ are similar.

For $\ell\le 2$,  $H^*(G)$
has no torsion. For $\ell=3,4$, $gr_{\gamma}^*(\bF)$
are  given in \cite{YaC} and $I(1)=0$.
For $\ell=5$, $gr_{\gamma}^*(\bF)$ is given in \cite{YaG},
\cite{Ka2}. Moreover Karpenko shows $I(1)=0$ and this
Karpenko result is the first example that
$R(\bG)$ is not the original Rost motive, but $CH^*(R(\bG))$ is given.
For $\ell=6$, $gr^*_{\gamma}(\bF)$ is given $\S 9$ in this paper, while we can not see $I(1)=0$ or not. 
For $\ell\ge 7$,  very partial results are only given here.
However, we see $I(1)\not =0$ for $\ell=8$.

  Let us write by $\Lambda(a,...,b)$ the exterior algebra
over $\bZ/2$ generated by $a,...,b$.
We note the following fact 
\begin{lemma} Let $G=Spin(2\ell+1)$ and $\bG$ be versal.  Let $2^t\le \ell<2^{t+1}$. Then 
there are elements $c_2,...,c_{ \ell}$ and
$e_{2^{t+1}}$ with $deg(c_i)=i$, $deg(e_j)=j$ such that there is an additive  surjection
\[ \Lambda(c_2,...,c_{\ell})\otimes 
\bZ/2[e_{2^{t+1}}]\twoheadrightarrow 
CH^*(R(\bG))/2.\]
\end{lemma}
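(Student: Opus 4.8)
\emph{Proof proposal.} The plan is to reduce the statement to an upper bound on the Poincar\'e series of $CH^*(R(\bG))/2$ together with a choice of generators. Since $\bG$ is versal, $CH^*(\bF)$ is generated by Chern classes of the vector bundles associated through $\bG$ to representations of $Spin(2\ell+1)$ (\cite{Me-Ne-Za}, \cite{Ka1}); as every representation is built from the vector representation $V$ and the spin representation $\Delta$ by sums, tensor products and exterior powers, which affect Chern classes only polynomially, $CH^*(\bF)/2$ is generated by the $c_i(V)$ and the $c_j(\Delta)$. Because $R(\bG)=R(\bG)\otimes\bT^{\otimes 0}$ is a direct summand of $M(\bF)$, the associated projector gives a surjection of graded $\bZ/2$-vector spaces $CH^*(\bF)/2\twoheadrightarrow CH^*(R(\bG))/2$, so $CH^*(R(\bG))/2$ is spanned by the images of monomials in these Chern classes. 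It then suffices to (i) select, among those images, a system of $\ell-1$ elements $c_2,\dots,c_\ell$ in degrees $2,\dots,\ell$ and one class $e_{2^{t+1}}$ in degree $2^{t+1}$, and (ii) show that $\dim_{\bZ/2}CH^n(R(\bG))/2$ does not exceed the coefficient of $t^n$ in $\prod_{i=2}^\ell(1+t^i)\cdot(1-t^{2^{t+1}})^{-1}$ for every $n$.

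For (i) I would use Quillen's theorem $H^*(BSpin(2\ell+1);\bZ/2)\cong\bZ/2[w_i:i\in I_\ell]\otimes\bZ/2[e]$, where $I_\ell\subset\{2,\dots,2\ell+1\}$ is the complement of the Steenrod orbit $\{2,3,5,9,\dots\}$ of $w_2$, and the spin class $e$ has topological degree $2^{t+2}$ (note that $2^t\le\ell<2^{t+1}$ forces $2^{t+1}<2\ell+1\le 2^{t+2}$), hence $CH$-degree $2^{t+1}$. Modulo $2$ the Chern classes of $V$ reduce to the squares $w_i^2$, while those of $\Delta$ add, modulo the subring generated by the $w_i$, exactly one new class, in $CH$-degree $2^{t+1}$ --- this is $e_{2^{t+1}}$. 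The passage to $R(\bG)$ is what cuts the generating set down: over $\bar k$ the motive $\bar R(\bG)$ splits as a sum of Tate motives (\cite{Pe-Se-Za}, \cite{Vi-Za}), so a product of two classes of $CH^*(\bar R(\bG))$ vanishes whenever its degree carries no Tate summand; this very restrictive multiplicative behaviour, together with the known presentation of $CH^*(BSpin(2\ell+1))_{(2)}$, should yield that in $CH^*(R(\bG))/2$ every Chern monomial is a $\bZ/2$-combination of square-free monomials in a suitable system $c_2,\dots,c_\ell$ of degree-$i$ classes (built from $V$) times powers of $e_{2^{t+1}}$ --- i.e. the higher Chern classes of $V$, the squares $c_i^2$, and all but one $\Delta$-contribution become redundant. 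This yields the surjection in the statement.

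Step (ii) is the crux. By \cite{Pe-Se-Za}, $P(\bar R(\bG);t)=\prod_k\bigl(1+t^{d_k}+\dots+t^{(2^{j_k}-1)d_k}\bigr)$ with $(d_k;j_k)$ the $J$-invariant of the versal $Spin(2\ell+1)$-torsor, which has to be read off from $H^*(Spin(2\ell+1);\bZ/2)$. Since $CH^*(R(\bG))\to CH^*(\bar R(\bG))$ is degree-preserving with finite cokernel, $\dim_{\bZ/2}CH^n(R(\bG))/2$ equals the coefficient of $t^n$ in $P(\bar R(\bG))$ plus the number of cyclic torsion summands of $CH^n(R(\bG))$; so one must identify this $J$-invariant, bound the $2$-torsion of $CH^*(R(\bG))$ (using the structure of $CH^*(BSpin(2\ell+1))$ and of the generalized Rost motive), and then verify the coefficientwise inequality against $\prod_{i=2}^\ell(1+t^i)\cdot(1-t^{2^{t+1}})^{-1}$, which need only be checked in degrees $\le\dim\bF=\ell^2$ and is elementary once the left-hand side is known. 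The principal obstacle is this torsion control: when $CH^*(R(\bG))$ happens to be torsion-free, step (ii) collapses to the purely combinatorial inequality $P(\bar R(\bG);t)\le\prod_{i=2}^\ell(1+t^i)\cdot(1-t^{2^{t+1}})^{-1}$, but in general one needs the full mod-$2$ Chow group of the generalized Rost motive, torsion included, and only then is exhibiting the explicit $c_i$ and checking the inequality routine.
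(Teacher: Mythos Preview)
Your route has a genuine gap, and it is exactly the one you flag at the end: step~(ii) asks for an upper bound on $\dim_{\bZ/2}CH^n(R(\bG))/2$, and to get it you would have to control the $2$-torsion of $CH^*(R(\bG))$. But that torsion is the central unknown of the whole paper --- for $Spin(17)$ it is subtle enough to produce the counterexample to Karpenko's conjecture --- so there is no a~priori handle on it, and the dimension count cannot be closed. Step~(i) runs into the same obstruction in disguise: arguing that products vanish because $\bar R(\bG)$ is a sum of Tate motives only gives vanishing in $CH^*(\bar R(\bG))/2$, while the kernel of the restriction $CH^*(R(\bG))/2\to CH^*(\bar R(\bG))/2$ can be large (it is where the torsion lives), so nothing is concluded about $CH^*(R(\bG))/2$ itself. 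Note also that even granting (i) and (ii) as you state them, they do not formally imply the surjection: picking classes of the right degrees together with a dimension bound does not by itself force the square-free monomials to span.

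The paper's argument is direct and needs no size estimate. By Theorem~3.1 and the discussion around Lemmas~3.5--3.6, the polynomial ring $A(b)=\bZ/2[b_1,\dots,b_\ell]$ on the transgressions already surjects onto $CH^*(R(\bG'))/2$. For $G'=Spin(2\ell+1)$ these transgressions are exactly $c_2,\dots,c_\ell$ together with $c_1^{2^{t+1}}=e_{2^{t+1}}$ (this is the presentation of $grH^*(G'/T')/2$ recorded in \S 6). So the only remaining input is the relation $c_i^2=0$ in $CH^*(\bF')/2$ for $2\le i\le\ell$, and that comes from the inclusion $SO(2\ell+1)\hookrightarrow Sp(2\ell+1)$: the sentence just before Theorem~6.2 (references \cite{Pe}, \cite{YaC}) gives $c_i^2=0$ on the $SO$-flag variety, and since $G'/T'\cong G/T$ the Spin flag variety $\bF'$ is an $SO$-flag variety. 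The surjection from $A(b)$ then factors through $\Lambda(c_2,\dots,c_\ell)\otimes\bZ/2[e_{2^{t+1}}]$ automatically --- no Poincar\'e-series comparison, no torsion bound, and no appeal to Quillen's description of $H^*(BSpin;\bZ/2)$ is required.
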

Here $c_i$ is represented by some  $i$-th Chern class.
For a ring $A$, let us write by $A\{a,...,b\}$ the $A$-free module generated by $a,...,b$.  For example, we 
can write 
\begin{thm} Let $G=Spin(13)$
and $\bG$ be versal.  Then $gr_{\gamma}^*(R(\bG))$ is additively generated by  products of $1,c_2,c_3,...,c_6$ and $e_8$ as follows
\[ gr_{\gamma}^*(R(\bG))\cong 
    A\otimes (\bZ_{(2)}\{1,c_6\}\oplus \bZ/2\{c_4\})\oplus
\bZ/2\{c_6c_4\}\]
where $A= \bZ_{(2)}\{1,c_3,c_5,e_8\}\oplus \bZ/2\{c_2\}.$
\end{thm}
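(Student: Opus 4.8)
Here $\ell=6$, so $2^t\le\ell<2^{t+1}$ holds with $t=2$ and $2^{t+1}=8$; thus the distinguished class of Lemma~1.2 is $e_8=e_{2^{t+1}}$, and by that lemma every element of $CH^*(R(\bG))/2$ is a polynomial in $c_2,\dots,c_6$ and $e_8$. The plan is three-staged: (i) use the restriction map $\mathrm{res}$ to the split form $\overline{R(\bG)}$ to pin down the $\bZ_{(2)}$-free part of $gr_{\gamma}^*(R(\bG))$; (ii) determine $CH^*(R(\bG))$ as a ring, i.e. find the kernel of the surjection of Lemma~1.2 and the additive $2$-torsion; (iii) pass from $CH^*(R(\bG))$ to $gr_{\gamma}^*(R(\bG))$ by dividing out the ideal $I(1)'\subset CH^*(R(\bG))$ corresponding to the ideal $I(1)$ of Lemma~1.1.

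For stage (i): by \cite{Vi-Za} one may replace $Spin(13)$ by $Spin(14)$, and $\overline{R(\bG)}$ is a sum of Tate motives. Since $R(\bG)$ becomes a sum of Tate motives rationally, $\Ker(\mathrm{res})$ is torsion, so $\mathrm{res}$ embeds the free part of $gr_{\gamma}^*(R(\bG))$ into $CH^*(\overline{R(\bG)})$; the shape of the claimed answer forces this image to have Poincar\'e polynomial $(1+t^3)(1+t^5)(1+t^6)$, i.e. to be $\bZ_{(2)}\{1,c_3,c_5,e_8\}\otimes\bZ_{(2)}\{1,c_6\}$. Here the $c_i$ are suitable $i$-th Chern classes (of the natural representation $Spin(13)\to SO(13)\subset GL_{13}$ and of the half-spin representation $\Delta_6$ of dimension $2^6$) pushed into $R(\bG)$ via $M(\bF)\cong R(\bG)\otimes(\oplus_s\bT^{\otimes s})$, and $e_8$ is the degree-$8$ class whose restriction generates, up to a $2$-power, the $t^8$-component of $CH^*(\overline{R(\bG)})$.

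For stages (ii) and (iii): I would read the multiplicative relations off the mod-$2$ cohomology $H^*(Spin(13)/T;\bZ/2)$ together with the image of $CH^*(BSpin(13))$ — the squaring relations $c_i^2=0$ (making the Chern part an exterior algebra), the relation putting $e_8^2$ in the ideal $(c_2,\dots,c_6)$ (so that $e_8$ survives only linearly), and the vanishing of the mixed products absent from the statement. The $\gamma$-filtration is generated by the Chern classes $c_i^{\gamma}(\rho)$, $\rho\in\widetilde{R(T)}$, and $I(1)'$ is governed by the relations in $R(G)\hookrightarrow R(T)$ (Weyl invariants); using the standard identities for the operations $\gamma^j$ I would show that $2c_2$ and $2c_4$ fall into higher $\gamma$-filtration, hence $c_2,c_4$ and their admissible products are $2$-torsion, while $c_3,c_5,c_6,e_8$ stay $\bZ_{(2)}$-free, and that modulo $I(1)'$ the only monomials in $c_2,\dots,c_6,e_8$ that remain are exactly the $16$ listed. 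This last point is a finite verification, bounded by the top nonzero degree of $CH^*(Spin(13)/T)$.

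The main obstacle is to prove that the additive surjection of Lemma~1.2 becomes an \emph{isomorphism} onto the asserted module once $I(1)'$ is imposed: one needs an upper bound — enough relations $c_ic_j=0$ and $e_8^2\in(c_2,\dots,c_6)$ holding already in $gr_{\gamma}^*$ — together with a matching lower bound, i.e. certifying the $16$ classes as genuinely nonzero (via explicit cycles or characteristic classes on the versal flag variety, or the nontriviality of the corresponding $\gamma$-classes). Reconciling the two in degrees $6$--$12$, where $c_2c_4$, $c_2c_6$, $c_4c_6$, $e_8c_4$ and the various $c_6$- and $e_8$-multiples interact, is where essentially all the effort lies; an alternative to the cohomological bookkeeping is to carry out the whole computation inside $K^0(Spin(13)/T)$ using the $\gamma$-operations and the Weyl-group action directly, as in \cite{YaG}, \cite{Ka2}.
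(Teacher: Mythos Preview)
Your three-stage plan has a structural problem: you propose to compute $CH^*(R(\bG))$ first and then quotient by $I(1)'$, but determining $CH^*(R(\bG))$ for $Spin(13)$ is strictly harder than the theorem being proved and is in fact left open in the paper (see Lemma~9.9, which states only a conditional result). The paper proceeds in the opposite direction: it computes $gr_{\gamma}^*(R(\bG))$ directly, bypassing $CH^*(R(\bG))$ entirely.

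The computational engine you are missing is the connective $K$-theory $k^*(\bar R(\bG))\cong \bZ_{(2)}[v_1]\otimes P(y)$ with $grP(y)\cong\Lambda(y_6,y_{10},y_{12})$. Corollary~2.1 (via Nishimoto's $Q_n$-operations) gives explicit formulas such as
\[
c_2=v_1y_6,\quad c_4=v_1y_{10},\quad c_3=2y_6,\quad c_5=2y_{10}+v_1y_{12},\quad c_6=2y_{12},\quad e_8=2y_6y_{10}+v_1y_6y_{12}
\]
modulo $I_{\infty}^3$, where $I_{\infty}=(2,v_1)$. The key reduction is Lemma~9.1: any element of $I_{\infty}^4k^*(\bar R(\bG))$ is already zero in $gr_{\gamma}(R(\bG))/2$. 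This makes the vanishing of products a finite check modulo $I_{\infty}^4$; for instance $c_2c_3-v_1c_6=2v_1y_6^2-2v_1y_{12}\equiv 0\pmod{I_{\infty}^4}$ gives $c_2c_3=0$ in $gr_{\gamma}/2$. Nonvanishing is read off from $v_1$-divisibility in $Im(res_k)\subset k^*(\bar R(\bG))$: e.g.\ $c_2c_4\mapsto v_1^2y_6y_{10}$ is nonzero because $v_1y_6y_{10}\notin Im(res_k)$. The $\bZ_{(2)}$-free versus $\bZ/2$ dichotomy then comes from inspecting which monomials hit $2K^*(\bar R(\bG))$ under the Panin isomorphism $K^*(R(\bG))\cong K^*(\bar R(\bG))$ (Lemmas~9.3--9.4). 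Your proposal to ``read relations off $H^*(Spin(13)/T;\bZ/2)$'' or to use $\gamma$-operations in $R(T)$ does not supply this $v_1$-divisibility mechanism, and without it there is no way to establish either the vanishing relations (upper bound) or the survival of the sixteen listed classes (lower bound) that you correctly identify as the crux.
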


When $G=Spin(17)$, elements in 
the Chow ring  
$CH^*(R(\bG))/2$ are additively generated by  products of
$1,c_2,...,c_7$. 
 Moreover the torsion index $t(G)=2^4$.
Then we can  see for
$x=c_2c_3c_6c_7,$
\[ x=0 \in gr_{\gamma}^*(R(\bG))/2,\ \  
  but \ \  x\not =0 \in gr(2)^*(R(\bG))/2.\]  Here $gr(n)^*(X)$ is the graded
associated ring for the integral Morava $\tilde K(n)$-theory, so that
\[gr(n)^*(X)\cong CH^*(X)/I(n)\quad  for\  some\ ideal\ I(n),\]
and $gr(1)^*(X)\cong gr_{geo}^*(X)$.
Hence this $x$ is nonzero  in $CH^*(R(\bG))/2$, i.e., $0\not =x\in I(1)$.

{\bf Remark.}
The arguments (for $(G,p)=(E_7,2)$) in $\S 11$ and 
Theorem 1.4 of the old version were not correct, and they are deleted in this version.

The plan of this paper is the following.
In $\S 2$, we recall and prepare the topological arguments.
 In $\S 3$, we recall the decomposition
of the motive of a versal flag variety.
In $\S 4$, we recall the filtrations of the $K$-theory.
In $\S 5$, we study  the connective $K$-theory   $k^*(\bar R(\bG))$.
In $\S 6$, we study the general properties when
$G=Spin(2\ell+1)$.
In $\S 7,\ \S 8$,  we study cases $\ell=3$ (and $4$),
$\ell=5$  respectively. 
In $\S 9$, we study the case $\ell=6$.
In $\S 10$ we study the 
cases $\ell\ge 7$ and see the counterexample when $\ell=8$. In $\S 11$, we note the cases $\ell$ large.

 \section{Lie groups $G$ and the  flag manifolds $G/T$}  

Let $G$ be a connected 
 compact Lie group.
By Borel (\cite{Bo}, \cite{Mi-Tod}), its $mod(p)$ cohomology is written as 
\[ (*)\quad gr H^*(G;\bZ/p)\cong P(y)/p\otimes \Lambda(x_1,...,x_{\ell}),
\quad \ell=rank(G)\]
\[ \quad  with  \ \  P(y)=\bZ_{(p)}[y_1,...,y_k]/(y_1^{p^{r_1}},...,
y_k^{p^{r_k}})
\]
where the degree $|y_i|$ of $y_i$ is even and 
$|x_j|$ is odd.  In this paper, we only consider 
this $grH^*(G;\bZ/p)$ in $(*)$, and write it simply $H^*(G;\bZ/p)$.

 Let $T$ be the  maximal torus of $G$ and $BT$ be the classifying space of $T$.
We consider the fibering (\cite{Tod}, \cite{Mi-Ni})
$ G\stackrel{\pi}{\to}G/T\stackrel{i}{\to}BT$
and the induced spectral sequence 
\[ E_2^{*,*'}=H^*(BT;H^{*'}(G;\bZ/p)) \Longrightarrow H^*(G/T;\bZ/p).\] 
The cohomology of the classifying space of the torus is  given by
$H^*(BT)\cong S(t)=\bZ[t_1,...,t_{\ell}]$ with $|t_i|=2$,
where $t_i=pr_i^*(c_1)$ is the $1$-st  Chern class
induced from
$ T=S^1\times ...\times S^1\stackrel{pr_i}{\to}S^1\subset U(1)$
for the $i$-th projection $pr_i$.
Note that $\ell=rank(G)$ is also the number of the odd degree generators $x_i$ in
$H^*(G;\bZ/p)$.  

It is well
known that $y_i$ are permanent cycles (i.e., exist in $E_{\infty}^{0,*'}$) and 
that there is a regular sequence (\cite{Tod}, \cite{Mi-Ni})
$(\bar b_1,...,\bar b_{\ell})$ in $H^*(BT)/(p)$ such that $d_{|x_i|+1}(x_i)=\bar b_i$ (this $\bar b_i$ is called the transgressive element).  Thus we get
\[ E_{\infty}^{*,*'}\cong grH^*(G/T;\bZ/p)\cong P(y)/p\otimes 
S(t)/(\bar b_1,...,\bar b_{\ell}).\]
 Moreover we know that $G/T$ is a manifold 
such that $H^*(G/T)$ is torsion free, and 
\[grH^*(G/T)\cong P(y)\otimes S(t)/(b_1,...,b_{\ell})
\quad where \ b_i=\bar b_i\ mod(p).\]
For ease of he notation, let us write simply
$S(t)/(b)=S(t)/(b_1,...,b_{\ell})$.

Let $BP^*(-)$ be the Brown-Peterson theory
with the coefficients ring $BP^*\cong \bZ_{(p)}[v_1,v_2,...],$
$|v_i|=-2(p^i-1)$ (\cite{Ha}, \cite{Ra}).
Since $H^*(G/T)$ is torsion free,  the 
Atiyah-Hirzebruch 
spectral sequence (AHss) collapses.  Hence  we also know 
$gr BP^*(G/T)\cong BP^*\otimes grH^*(G/T)$.

Recall the  Morava $K$-theory
$K(n)^*(X)$ with the coefficient ring $K(n)^*=\bZ/p[v_n,v_n^{-1}]
$.
Similarly we can define connected or integral Morava
K-theories with 
\[ k(n)^*=\bZ/p[v_n],\ \ \tilde k(n)^*=\bZ_{(p)}[v_n],
\ \ \tilde K(n)^*=\bZ_{(p)}[v_n,v_n^{-1}]. \]
It is known that
$\tilde K(1)^*(X)\cong  BP^*(X)\otimes _{BP^*}\tilde K(1)^*$ as the Conner-Floyd theorem. The relation to the usual $K$-theory $K^*(X)$
is given using the Bott periodicity $B$ 
\[ K^*(X)\cong \tilde K(1)^*(X)\otimes_{\tilde K(1)^*}\bZ_{(p)}[B,B^{-1}],\quad with\
B^{p-1}=v_1.\]

We recall the Milnor operation
$ Q_i: H^*(X;\bZ/p)\to H^{*+2p^i-1}(X;\bZ/p)$,
which 
is defined by $Q_0=\beta$ and $Q_{i+1}=P^{p^i}Q_i-Q_iP^{p^i}$
for the Bockstein operation $\beta$ and the reduced power operation $P^j$.
This $Q_i$-operation relates to the 
Morava $k(i)^*$-theory.
Recall the fibering $G\stackrel{\pi}{\to}G/T\to BT$.
Let us write  $v_0=p$ and $k(0)^*(X)=H^*(X)$.
Let $I_{\infty}=(p,v_1,...)$ be the invariant prime ideal in $BP^*$. Then we can prove
\begin{cor} (\cite{YaC}) 
Let $d_{|x|+1}(x)=b\not =0\in H^*(G/T)/p$ for $x\in H^*(G;\bZ/p)$.  Then there is a lift $b\in BP^*(BT)\cong BP^*\otimes S(t)$ of $b\in S(t)/p$ such that
\[ b=\sum_{i=0} v_iy(i) \in \ BP^*(G/T)/\II
\quad (i.e., \ b=v_iy(i)\in\ k(i)^*(G/T))/(v_i^2))\]
where $y(i)\in H^*(G/T;\bZ/p)$ with $\pi^*y(i)=Q_ix$.
\end{cor}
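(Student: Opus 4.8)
The plan is to separate the argument into an ``algebraic'' part, which produces an expression of the asserted shape by a collapse-of-spectral-sequences argument, and a ``geometric'' part, which identifies the coefficients $y(i)$ with the Milnor operations applied to $x$.

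First I would choose an integral lift $b\in H^{*}(BT)\cong S(t)$ of $\bar b$ and lift it once more to $b\in BP^{*}(BT)\cong BP^{*}\otimes S(t)$ via the canonical surjection $BP^{*}(BT)\twoheadrightarrow H^{*}(BT)$, and then look at its image in $BP^{*}(G/T)$. Since $\bar b$ is transgressive for the fibration $G\to G/T\to BT$, it dies in $grH^{*}(G/T;\bZ/p)=P(y)/p\otimes S(t)/(\bar b_{1},\dots ,\bar b_{\ell})$, so the mod $p$ reduction of $b$ in $H^{*}(G/T;\bZ/p)$ is zero. Because $H^{*}(G/T)$ is torsion free and concentrated in even degrees, the Atiyah--Hirzebruch spectral sequence for $BP^{*}(G/T)$ collapses and $BP^{*}(G/T)\cong BP^{*}\otimes H^{*}(G/T)$; hence $\Ker(BP^{*}(G/T)\to H^{*}(G/T;\bZ/p))=I_{\infty}\cdot BP^{*}(G/T)$. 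This forces an expression $b=\sum_{i\ge 0}v_{i}y(i)$ in $BP^{*}(G/T)$ with $y(i)\in H^{*}(G/T;\bZ/p)$, the $y(i)$ being well defined only up to an ambiguity that lands in $\II$. The same reasoning applied to the integral Morava theories (which satisfy $k(i)^{*}(G/T)\cong H^{*}(G/T;\bZ/p)\otimes\bZ/p[v_{i}]$, again by collapse) yields $b=v_{i}y(i)$ in $k(i)^{*}(G/T)/(v_{i}^{2})$ for each $i$.

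Next I would identify $y(i)$ by restricting along $\pi$. Since $G$ is the homotopy fibre of $i\colon G/T\to BT$, the composite $i\pi$ is null-homotopic, so $\pi^{*}b=0$; hence $v_{i}\,\pi^{*}y(i)=0$ in $k(i)^{*}(G)/(v_{i}^{2})$, i.e.\ $v_{i}\,\pi^{*}y(i)$ is a boundary in the Atiyah--Hirzebruch spectral sequence for $k(i)^{*}(G)$. By the classical identification of the first nontrivial differential $d_{2p^{i}-1}$ of that spectral sequence with $\pm v_{i}\otimes Q_{i}$ (and the fact that, for degree reasons, only $d_{2p^{i}-1}$ can hit such a class), this gives $\pi^{*}y(i)=Q_{i}z$ for some $z\in H^{*}(G;\bZ/p)$; a degree count ($|y(i)|=|b|+2(p^{i}-1)$ versus $|Q_{i}z|=|z|+2p^{i}-1$) puts $z$ in degree $|x|=|b|-1$. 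The final step is to pin down $z=x$. For this I would use naturality of the transgression: the class in $k(i)^{*}(G)$ whose $k(i)$-theoretic Serre differential produces $\pi^{*}y(i)$ is a lift of the transgressive generator, and since $\bar b$ is the transgression of $x$ and of no other generator, comparison of the $k(i)$-theoretic with the mod $p$ Serre spectral sequence of $G\to G/T\to BT$ identifies this lift with $x$ modulo $\Ker\pi^{*}$, which is exactly the indeterminacy in $y(i)$. Since the $BP$-expansion $b=\sum_{i\ge 0}v_{i}y(i)$ reduces under $BP^{*}\to k(i)^{*}$ to the $k(i)$-expression, the very same $y(i)$ satisfy $\pi^{*}y(i)=Q_{i}x$ in $BP^{*}(G/T)/\II$, as claimed.

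I expect the main obstacle to be that last identification $z=x$: one must rule out contributions of the other transgressive generators $x_{j}$ and of decomposable classes in $H^{|x|}(G;\bZ/p)$, and check that the comparison map between the $k(i)$-theoretic and mod $p$ Serre spectral sequences is compatible enough to transport the single transgression $d_{|x|+1}(x)=\bar b$. The multiplicative structure together with the fact that $(\bar b_{1},\dots ,\bar b_{\ell})$ is a regular sequence of transgressions should make the degree and divisibility estimates do most of the work; the remaining delicate point is to arrange the choice of the $BP$-lift $b$ (and of the $y(i)$) so that no extra decomposable correction terms survive outside $\II$.
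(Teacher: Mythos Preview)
The paper does not actually prove this corollary; it is quoted from the author's earlier preprint \cite{YaC} and used as a black box.  So there is no in-paper proof to match against, and your proposal has to stand on its own.

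Your ``algebraic'' half is fine.  Since $H^*(G/T)$ is torsion free and even, the $BP$-AHss collapses and the kernel of $BP^*(G/T)\to H^*(G/T;\bZ/p)$ is exactly $I_\infty\cdot BP^*(G/T)$; the image of any lift of $\bar b$ therefore has the asserted form $\sum_i v_i y(i)$ modulo $I_\infty^2$, and the $k(i)$-reduction reads $b=v_iy(i)\bmod v_i^2$.  One small correction: after you fix a lift $b\in BP^*(BT)$ you do not have to ``also'' run the argument in $k(i)$; the $k(i)$-statement is literally the reduction of the $BP$-statement, and in fact $y(i)=v_i^{-1}i^*(b)$ is well defined in $k(i)^*(G/T)$ because the latter is $v_i$-torsion free.

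The ``geometric'' half is where there is a real gap, and it is exactly the one you flagged.  From $\pi^*i^*=0$ you get $v_i\,\pi^*y(i)=0$ in $k(i)^*(G)$, and via the long exact sequence coming from $\Sigma^{2p^i-2}k(i)\xrightarrow{v_i}k(i)\to H\bZ/p$ (whose boundary $\partial$ satisfies $\rho\circ\partial=Q_i$) this gives $\pi^*y(i)=\partial z$ for some $z\in H^{|x|}(G;\bZ/p)$, hence $\pi^*\bar y(i)=Q_iz$.  Your proposal to pin down $z=x$ by ``comparing the $k(i)$-theoretic and the mod $p$ Serre spectral sequences'' does not work as stated: the class $x$ generally does \emph{not} live in $k(i)^*(G)$ (it supports the AHss differential $d_{2p^i-1}x=v_iQ_ix$), so it does not appear on the $E_2$-page of the $k(i)$-Serre spectral sequence, and there is no $k(i)$-transgression of $x$ to compare with.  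Moreover, changing the lift $b$ by $I_\infty\cdot BP^*(BT)$ only changes $y(i)$ by classes of the form $i^*(c)$, so $\pi^*\bar y(i)$ is in fact \emph{independent} of the lift; the freedom in choosing $b$ cannot be used to force $z=x$.

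A way to close the gap that avoids the $k(i)$-Serre spectral sequence is to chase the two connecting maps simultaneously.  Let $C_\pi$ be the cofibre of $\pi:G\to G/T$, so that there is an extension $j:C_\pi\to BT$ of $i$ with $jq=i$ (where $q:G/T\to C_\pi$), and the mod~$p$ transgression reads $\delta_H(x)=j_H^*(\bar b)$ in $H^{|b|}(C_\pi;\bZ/p)$.  Because $\partial$ is a stable natural transformation it commutes with $\delta$; since $\partial\bar b=0$ (odd degree in $BT$), one gets $\delta_{k(i)}(\partial x)=0$ and hence $\partial x=\pi^*(w)$ for some $w\in k(i)^{|y(i)|}(G/T)$.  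The remaining identification $w=y(i)$ (equivalently $v_iw=i^*\tilde b$) then follows from a diagram chase comparing the two long exact sequences for $(G\to G/T\to C_\pi)$ and for $(\Sigma^{2p^i-2}k(i)\to k(i)\to H\bZ/p)$ over $BT$, $C_\pi$, and $G/T$; the point is that both $v_iw$ and $i^*\tilde b$ lie in $\Img(q^*)$ and have the same image $\delta_H(x)$ under $\rho$, which together with the vanishing of $k(i)^{\mathrm{odd}}(BT)$ and $k(i)^{\mathrm{odd}}(G/T)$ forces their difference into $v_i\cdot\Img(q^*)$, whence $\bar y(i)=\bar w$ and $\pi^*\bar y(i)=\rho(\partial x)=Q_ix$.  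This is the step that your sketch left open.
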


\section{versal flag varieties}

 Let $G_k$ be the split reductive algebraic  group corresponding to $G$, and $T_k$ be the split maximal
torus corresponding to $T$.  Let $B_k$ be the Borel subgroup
with $T_k\subset B_k$.   Note that $G_k/B_k$ is cellular, and
$CH^*(G_k/T_k)\cong CH^*(G_k/B_k)$.
Hence we have   
$ CH^*(G_k/B_k)\cong H^{2*}(G/T)$
and $CH^*(BB_k)\cong H^{2*}(BT).$

{\bf Remark.}  In this paper, for $x\in CH^*(X)$,
degree $|x|$ means two times of the usual degree for 
the Chow ring i.e., $|x|=2*$.

Let us write by $\Omega^*(X)$  the $BP$-version of the algebraic cobordism defined by Levine-Morel 
(\cite{Le-Mo},\cite{YaA},\cite{YaB})
  such that
\[ \Omega^*(X)=MGL^{2*,*}(X)_{(p)}\otimes_{MU_{(p)}^*}BP^*,
\quad \Omega^*(X)\otimes _{BP^*}\bZ_{(p)}\cong CH^*(X)\] where $MGL^{*,*'}(X)$ is the algebraic cobordism theory
defined by Voevodsky with $MGL^{2*,*}(pt.)\cong 
MU^*$ the complex cobordism ring.
There is a natural (realization) map $\Omega^*(X)\to
BP^*(X(\bC))$.  In particular, we have 
$\Omega^*(G_k/B_k)\cong BP^*(G/T).$
Recall  $I_n=(p,v_1,...,v_{n-1})$ and we  also note
\[ \Omega^*(G_k/B_k)/I_{\infty}\cong
CH^*(G_k/B_k)/p\cong H^*(G/T)/p.\] 

Let $\bG$ be a nontrivial $G_k$-torsor.
We can construct a twisted form of $\bG_k/B_k$ 
by $(\bG\times G_k/B_k)/G_k\cong \bG/B_k.$
We will study  the twisted flag variety $\bF=\bG/B_k$.

Let us consider an embedding of $G_k$ into the general linear group $GL_N$ for some large  $N$.  This makes $GL_N$ a $G_k$-torsor over the quotient variety $S=GL_N/G_k$.
Let $F$ be the function field $k(S)$ and  define
the $versal$ $G_k$-$torsor$ $E$ to be the $G_k$-torsor over $F$ given by the generic fiber of $GL_N\to S$. 
(For details, see \cite{Ga-Me-Se}, \cite{To2}, \cite{Me-Ne-Za}, \cite{Ka1}.) 

The corresponding flag variety $E/B_k$ is called the 
$versal$ flag
variety, which is considered as the most complicated twisted
flag variety (for given $G_k$). It is known that the Chow ring
$CH^*(E/B_k)$ is not dependent to the choice
of  generic $G_k$-torsors $E$ (Remark 2.3 in \cite{Ka1}).

Karpenko proves the following result
 for a versal flag variety. 
\begin{thm}
(Karpenko Lemma 2.1 in \cite{Ka1}, \cite{Me-Ne-Za})
Let  $\bG/B_k$ be a versal flag variety.  
Let $h^*(-)$ be an oriented cohomology theory
e.g., $h^*(X)=CH^*(C), K^*(X),\Omega^*(X)$.
Then the natural map
$h^*(BB_k)\to h^*(\bG/B_k)$ is surjective.
\end{thm}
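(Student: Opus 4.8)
The plan is to realise $\bG/B_k$ as the generic fibre of a smooth morphism whose total space is a finite-dimensional approximation of $BB_k$, and then to combine the localization exact sequence with continuity of $h^*$.

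First I would set up the approximation. Choose a linear representation $W$ of $GL_N$ and an open subset $U\subseteq W$ on which $GL_N$, hence also $B_k\subseteq G_k\subseteq GL_N$, acts freely, with $\codim_W(W\setminus U)$ as large as we wish. Then $U/GL_N$, $U/G_k$ and $U/B_k$ are finite-dimensional approximations of $BGL_N$, $BG_k$ and $BB_k$ in the sense of Totaro, so that $h^i(BB_k)\cong h^i(U/B_k)$ once $\codim_W(W\setminus U)$ exceeds the range relevant in degree $i$; this is classical for $CH^*$ and carries over to $K^*$ and $\Omega^*$. The projection $U\to U/G_k$ is a $G_k$-torsor whose generic fibre is a versal $G_k$-torsor, and since $h^*$ of the corresponding twisted flag variety does not depend on the choice of versal torsor (Remark 2.3 in \cite{Ka1}, and \cite{Me-Ne-Za}), we may assume that $\bG/B_k$ is the generic fibre of the associated $G_k/B_k$-bundle $U/B_k\to U/G_k$.

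Next, for every dense open $V\subseteq U/G_k$ with preimage $(U/B_k)_V\subseteq U/B_k$, the localization sequence for the oriented cohomology theory $h^*$ on the smooth variety $U/B_k$ gives a surjection $h^*(U/B_k)\twoheadrightarrow h^*((U/B_k)_V)$. Passing to the filtered colimit over all such $V$ and using continuity of $h^*$, namely $h^*(\bG/B_k)\cong\varinjlim_V h^*((U/B_k)_V)$, produces a surjection $h^*(U/B_k)\twoheadrightarrow h^*(\bG/B_k)$. Composing with the approximation isomorphism $h^*(BB_k)\cong h^*(U/B_k)$, taken in each fixed degree after choosing $U$ appropriately, gives the claim. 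One may also note $h^*(BB_k)\cong h^*(BT_k)$ by homotopy invariance, since $BB_k\to BT_k$ is an affine bundle, which exhibits the image as generated by first Chern classes of the tautological line bundles.

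The step needing the most care is continuity: identifying $h^*$ of the generic fibre of $U/B_k\to U/G_k$ with $\varinjlim_V h^*((U/B_k)_V)$. For $CH^*$ this follows from the localization sequence together with compatibility of Chow groups with filtered limits of schemes with affine transition maps; for $\Omega^*$ and $K^0$ one needs the analogous continuity statements (Levine--Morel \cite{Le-Mo}; and Quillen's localization and colimit properties for $K$-theory) together with the fact that these theories satisfy the localization property for open--closed decompositions of smooth varieties. A secondary point, routine for $CH^*$, $K^*$ and $\Omega^*$ but worth recording, is that the approximation $U/B_k$ carries the oriented structure compatibly with that on $BB_k$, so that the classes pulled back from $h^*(BB_k)$ are genuinely Chern classes.
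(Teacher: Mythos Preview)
The paper does not supply its own proof of this theorem; it is quoted from \cite{Ka1} and \cite{Me-Ne-Za} and used as a black box. So there is nothing in the paper to compare your argument against directly.

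Your argument is the standard one and is essentially what appears in the cited references: realise the versal flag variety as the generic fibre of a finite-dimensional approximation $U/B_k\to U/G_k$, use localization to get surjectivity onto every open restriction, and pass to the limit via continuity of $h^*$. This is correct. One small point worth tightening: you invoke independence of $h^*(\bG/B_k)$ from the choice of versal torsor by citing Remark~2.3 of \cite{Ka1}, but that remark is stated for Chow groups. The independence for $\Omega^*$ and $K^*$ follows by the same specialization argument (both theories admit specialization maps for smooth families), and you implicitly need it because your approximation $U$ varies with the degree. Alternatively, since $\bG/B_k$ has finite dimension, one may fix a single $U$ with $\codim_W(W\setminus U)$ large enough to cover all degrees at once, and then only the versality of the generic fibre of $U\to U/G_k$ is needed, which is standard. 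Either way the gap is cosmetic rather than substantive.
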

\begin{cor}
If $\bG$ is versal, then $h^*(\bF)=h^*(\bG/B_k)$
is multiplicatively generated by elements $t_i$ in $S(t)$. 
\end{cor}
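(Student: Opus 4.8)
The plan is to deduce this directly from Theorem 3.2 together with the standard computation of $h^*(BB_k)$, so that the only content beyond Karpenko's surjectivity statement is the fact that $h^*(BT_k)$ is generated by first Chern classes. First I would identify $h^*(BB_k)$ with $h^*(BT_k)$: the Borel subgroup splits as $B_k = T_k\ltimes U_k$ with $U_k$ its unipotent radical, and $U_k$ is a successive extension of additive groups $\mathbb{G}_a$. Passing to finite-dimensional smooth approximations of the classifying spaces à la Totaro--Morel--Voevodsky, the induced map $BB_k\to BT_k$ becomes an iterated affine-bundle, hence induces an isomorphism $h^*(BB_k)\xrightarrow{\sim} h^*(BT_k)$ by the homotopy invariance of any oriented cohomology theory.

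Next I would recall that $h^*(BT_k)$ is generated, as an $h^*(pt)$-algebra, by the classes $t_i$. Since $T_k=\mathbb{G}_m^{\ell}$, we have $BT_k=(B\mathbb{G}_m)^{\ell}$, and $B\mathbb{G}_m$ is approximated by projective spaces; iterated application of the projective bundle formula for $h^*$ shows that $h^*(BT_k)$ is generated over $h^*(pt)$ by the first Chern classes $t_i$ of the line bundles attached to the characters $pr_i\colon T_k\to\mathbb{G}_m$ (literally as the polynomial ring $S(t)$ when $h^*=CH^*$, and topologically in the power-series sense for $K^*$ and $\Omega^*$). On the fixed finite-dimensional variety $\bF$ only finitely many monomials in the images of the $t_i$ survive, so no completion issue remains.

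Finally I would invoke Theorem 3.2: for the versal flag variety $\bF=\bG/B_k$, the pullback $h^*(BB_k)\to h^*(\bF)$ is surjective. Composing with the isomorphism $h^*(BB_k)\cong h^*(BT_k)$ and the generation statement above, every element of $h^*(\bF)$ is an $h^*(pt)$-polynomial in the images of $t_1,\dots,t_{\ell}$, which is the assertion. The deduction is short, and the only step requiring any care is the reduction from $B_k$ to $T_k$: one must run the affine-bundle argument on a common tower of smooth approximations of $BB_k$ and $BT_k$, compatibly with the maps to $\bG/B_k$, rather than on the infinite-dimensional classifying spaces themselves.
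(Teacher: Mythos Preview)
Your proposal is correct and follows exactly the route the paper intends: the paper states the corollary immediately after Karpenko's surjectivity theorem and provides no separate proof, because earlier in \S2--\S3 it has already recorded the identifications $h^*(BB_k)\cong h^*(BT_k)$ and $H^*(BT)\cong S(t)=\bZ_{(p)}[t_1,\dots,t_\ell]$ as known facts. What you have done is simply spell out the two ingredients the paper takes for granted---the affine-bundle reduction from $B_k$ to $T_k$ and the projective-bundle computation of $h^*(BT_k)$---so there is no difference in approach, only in level of detail.
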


The versal case  of the main result in Petrov-Semenov-Zainoulline \cite{Pe-Se-Za} is
given  as
\begin{thm} (Theorem 5.13 in \cite{Pe-Se-Za}
and Theorem 4.3 in \cite{Se-Zh})
Let $\bG$ be a versal $G_k$-torsor, and   $\bF=\bG/B_k$. 
Then there is a $p$-localized  motive $R(\bG)$ such that
the ($p$-localized) motive $M(\bF)$ of the variety $\bF$ is decomposed
\[M(\bF)_{(p)}\cong  R(\bG)\otimes T\quad with\ T=(\oplus _u
 \bT^{\otimes u}).\]
Here $\bT^{\otimes u}$ are Tate motives with 
$CH^*(T)/p\cong  S(t)/(p,b)=S(t)/(p,b_1,...,b_{\ell}).$   Hence
\[  CH^*(\bF)/p\cong 
CH^*( R(\bG))\otimes S(t)/(p,b).\]
For  $\bar R(
\bG)=R(\bG)\otimes \bar k$, we have 
$ CH^*(\bar R(\bG))/p\cong P(y)/p$.
\end{thm}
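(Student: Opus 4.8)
This is the versal specialization of Theorem 5.13 of \cite{Pe-Se-Za} (see also Theorem 4.3 of \cite{Se-Zh}); here is how we would run the argument with the tools already in place. Over $\bar k$ the variety $\bF\bk\cong G_k/B_k$ is cellular, so $M(\bF\bk)$ is a finite direct sum of Tate motives, and from $CH^*(G_k/B_k)\cong H^{2*}(G/T)$ and the Borel description in \S 2,
\[ CH^*(\bF\bk)/p\cong H^*(G/T)/p\cong P(y)/p\otimes S(t)/(p,b) \]
as graded $\bZ/p$-vector spaces. The goal is to lift this tensor splitting to the level of motives over $k$: split the $S(t)/(p,b)$-factor off as a sum of Tate motives $T$, and concentrate the $P(y)/p$-factor in one motive $R(\bG)$.

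First we would invoke the general motivic decomposition of a generically split variety (see \cite{Pe-Se-Za} and the references there): $M(\bF)$ is a finite direct sum of Tate twists of indecomposable, themselves generically split, motives. Versality then enters through the $J$-invariant of \cite{Pe-Se-Za}: for a versal torsor it attains its maximal value, and the resulting classification forces every indecomposable summand of $M(\bF)$ to be one and the same motive, which we denote $R(\bG)$. Collecting the twists as $T=\oplus_u\bT^{\otimes u}$ gives $M(\bF)\cong R(\bG)\otimes T$, hence $M(\bF\bk)\cong\bar R(\bG)\otimes(\oplus_u\bT^{\otimes u})$ and, on Chow groups, $CH^*(\bF\bk)/p\cong CH^*(\bar R(\bG))/p\otimes CH^*(T)/p$.

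Next we would pin down the two factors by comparison with the displayed Borel formula together with Karpenko surjectivity (Theorem 3.1, Corollary 3.2). Since $T$ is already split over $k$, restriction identifies $CH^*(T)/p$ with a subgroup of the image of $CH^*(\bF)/p\to CH^*(\bF\bk)/p$; by Theorem 3.1 that image is the subring generated by the $t_i$, which under the isomorphism above is exactly the image $S(t)/(p,b)$ of the characteristic map $CH^*(BB_k)/p\to CH^*(G_k/B_k)/p$ of \S 2. Checking that no rational class escapes $CH^*(T)/p$ — the complementary factor $\bar R(\bG)$ being the base change of an indecomposable generically split motive — would give $CH^*(T)/p\cong S(t)/(p,b)$, and comparing the two expressions for $CH^*(\bF\bk)/p$ would then force $CH^*(\bar R(\bG))/p\cong P(y)/p$. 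Combining everything,
\[ CH^*(\bF)/p\cong CH^*(R(\bG))\otimes S(t)/(p,b). \]

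The hard part will be the second step: showing that \emph{all} indecomposable summands of $M(\bF)$ coincide, so that the whole non-split part of the motive is a single $R(\bG)$ and not a mixture of smaller twisted motives. This is not formal from Karpenko surjectivity — it rests on the computation of the $J$-invariant of a versal torsor together with Rost nilpotence, which is the technical heart of \cite{Pe-Se-Za}. A related subtlety is that the Chow-group identifications and the motivic decomposition are intertwined (knowing that the rational subring $S(t)/(p,b)$ is realized by genuine Tate \emph{summands}, and exhausts the rational cycles, uses the decomposition rather than a bare dimension count), so in practice the steps above are carried out together as in \cite{Pe-Se-Za}, \cite{Se-Zh}.
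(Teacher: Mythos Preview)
The paper does not supply a proof of this theorem at all: it is stated as an imported result, with the citations to \cite{Pe-Se-Za} and \cite{Se-Zh} serving in lieu of an argument. So there is no ``paper's own proof'' to compare against, and your sketch is already more than what the paper offers.

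That said, your outline is a fair summary of how the cited references proceed: the motivic decomposition of a generically split projective homogeneous variety into twists of a single indecomposable $R(\bG)$ via the $J$-invariant, together with the fact that versality forces the $J$-invariant to be maximal, is exactly the content of \cite{Pe-Se-Za}. One point to tighten: your identification of $CH^*(T)/p$ with $S(t)/(p,b)$ via ``the rational subring equals the image of the characteristic map'' is heuristic rather than a proof. In \cite{Pe-Se-Za} the matching of the two tensor factors with $P(y)/p$ and $S(t)/(p,b)$ is done by a Poincar\'e polynomial count coming directly from the definition of the $J$-invariant (the exponents $r_i$ in $P(y)$ are literally the entries of $J$), not by first isolating the Tate part as the rational cycles. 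You correctly flag in your last paragraph that the Chow identifications and the motivic decomposition are intertwined, so this is more a matter of presentation than a gap.
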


{\bf Remark.}  In this paper, 
 $A\cong B$ for rings $A,B$
means a ring isomorphism.
 However $CH^*(R(\bG))$ does not have a
 canonical ring structure.
 Hence an isomorphism $A\cong CH^*(R(\bG))$
means only a (graded) additive isomorphism. 

Hereafter in this section, we always assume that
$\bG$ (and hence $\bF$) is versal.

Hence we have surjections for a  variety $\bF$
\[ CH^*(BB_k)\twoheadrightarrow  CH^*(\bF)\stackrel{pr.}{\twoheadrightarrow} CH^*(R(\bG)).\]
We study in \cite{YaC}
what elements in $CH^*(BB_k)$ 
generate $CH^*(R(\bG))$.

For ease of notations, let us write
$A(b)=\bZ/p[b_1,...,b_{\ell}]$.
By giving the filtration on $S(t)$ by $b_i$, we 
can write 
$gr S(t)/p\cong A(b)\otimes S(t)/(b).$

In particular, we have maps
$ A(b)\stackrel{i_A}{\to} CH^*(\bF)/p\to CH^*(R(\bG))/p.$
We also see that
the above composition map is surjective
(see also Lemma 3.6  below).
\begin{lemma}
Suppose that there are $f_1(b),...,f_s(b)\in A(b)$ such that 
$CH^*(R(\bG))/p\cong A(b)/(f_1(b),...,f_s(b))$. Moreover if $f_i(b)=0$ for $1\le i\le s$ 
also in $CH^*(\bF)/p$, we have the isomorphism
\[CH^*(\bF)/p\cong S(t)/(p, f_1(b),...,f_s(b)).\]
\end{lemma}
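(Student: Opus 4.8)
The plan is to combine Theorem 3.3 (the motivic decomposition, which gives $CH^*(\bF)/p \cong CH^*(R(\bG))/p \otimes S(t)/(p,b)$) with the hypothesis $CH^*(R(\bG))/p \cong A(b)/(f_1(b),\dots,f_s(b))$, and then use the associated graded structure on $S(t)/p$ coming from the $b$-filtration, namely $\mathrm{gr}\, S(t)/p \cong A(b)\otimes S(t)/(b)$. First I would observe that, additively, the right-hand side $S(t)/(p,f_1(b),\dots,f_s(b))$ carries the $b$-filtration and its associated graded is $\bigl(A(b)/(f_1,\dots,f_s)\bigr)\otimes S(t)/(b)$; this is where the condition that $(b_1,\dots,b_\ell)$ is a regular sequence in $S(t)/p$ is used, so that quotienting by the $f_i(b)$ interacts cleanly with quotienting by the $b_i$. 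Comparing with Theorem 3.3, the two sides have the same associated graded module, hence are abstractly isomorphic as graded $\bZ/p$-modules; this already gives the additive statement.

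The more substantive point is why the surjection $S(t)/p \twoheadrightarrow CH^*(\bF)/p$ (from Corollary 3.2, composed with $CH^*(BB_k)/p \cong S(t)/p$) has kernel exactly the ideal $(f_1(b),\dots,f_s(b))$. Here is the key step: since $f_i(b)=0$ in $CH^*(\bF)/p$ by hypothesis, the ideal $(f_1(b),\dots,f_s(b))$ is contained in the kernel, so the surjection factors as
\[ S(t)/(p,f_1(b),\dots,f_s(b)) \twoheadrightarrow CH^*(\bF)/p. \]
To see this is injective, I would filter both sides by the $b$-filtration and pass to associated graded rings. On the source, $\mathrm{gr}$ gives $\bigl(A(b)/(f_1,\dots,f_s)\bigr)\otimes S(t)/(b)$ as noted; on the target, $\mathrm{gr}\, CH^*(\bF)/p \cong CH^*(R(\bG))/p \otimes S(t)/(b)$ by Theorem 3.3, and this is $\bigl(A(b)/(f_1,\dots,f_s)\bigr)\otimes S(t)/(b)$ by hypothesis. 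The induced map on associated graded rings is the identity-type map induced by $i_A\colon A(b)\to CH^*(R(\bG))/p$ tensored with $S(t)/(b)$, hence an isomorphism. A surjective degreewise-finite-dimensional map of graded $\bZ/p$-vector spaces inducing an iso on associated graded (with respect to a bounded-below, exhaustive filtration) is itself an isomorphism, so we conclude $CH^*(\bF)/p \cong S(t)/(p,f_1(b),\dots,f_s(b))$.

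The main obstacle I anticipate is making the filtration argument rigorous: one must check that the $b$-filtration is compatible on all three objects ($S(t)/p$, $CH^*(\bF)/p$, and $CH^*(R(\bG))/p$) in the sense that the filtration on $CH^*(\bF)/p$ pulled back from $S(t)/p$ agrees with the tensor-product filtration coming from the decomposition in Theorem 3.3, and that the quotient map $S(t)/p \to CH^*(R(\bG))/p$ is strictly filtered (sends the $b$-filtration to the given presentation filtration on $A(b)/(f_1,\dots,f_s)$). This is essentially the content of the discussion preceding the lemma — the surjectivity of $A(b)\to CH^*(R(\bG))/p$ and the identification $\mathrm{gr}\,S(t)/p \cong A(b)\otimes S(t)/(b)$ — so the lemma should follow once these compatibilities are spelled out; the regularity of $(b_1,\dots,b_\ell)$ in $S(t)/p$ (from the transgression theorem recalled in Section 2) is what guarantees no collapsing occurs in the associated graded and that dimension counts match degreewise.
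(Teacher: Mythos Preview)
The paper states this lemma without proof, so there is no argument to compare against directly. Your approach is correct and is the natural one: use Corollary~3.2 to get the surjection $S(t)/p\twoheadrightarrow CH^*(\bF)/p$, factor through $S(t)/(p,f_1,\dots,f_s)$ using the hypothesis that the $f_i(b)$ vanish in $CH^*(\bF)/p$, and then compare with the size of $CH^*(\bF)/p$ supplied by Theorem~3.3.

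One simplification: the filtration-compatibility check you flag as the ``main obstacle'' is unnecessary. You already observe in your first paragraph that, degreewise,
\[
\dim_{\bZ/p} S(t)/(p,f_1,\dots,f_s)=\dim_{\bZ/p}\bigl(A(b)/(f_1,\dots,f_s)\bigr)\cdot\dim_{\bZ/p} S(t)/(p,b)
\]
(using that $S(t)/p$ is free over $A(b)$, i.e., the regularity of $(b_1,\dots,b_\ell)$), and by Theorem~3.3 together with the hypothesis this equals $\dim_{\bZ/p} CH^*(\bF)/p$. A surjection of graded $\bZ/p$-vector spaces that are finite-dimensional in each degree and have the same graded dimension is an isomorphism; no filtration on $CH^*(\bF)/p$ is needed. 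The filtration argument you sketch would require verifying that the image-of-$b$ filtration on $CH^*(\bF)/p$ has associated graded exactly $CH^*(R(\bG))/p\otimes S(t)/(p,b)$, which is not immediate from Theorem~3.3 alone, so bypassing it via the dimension count is both cleaner and avoids the gap you yourself noticed.
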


\begin{lemma}
Let $pr:CH^*(\bF)/p\to CH^*(R(\bG))/p$, and 
 $0\not =x\in Ker(pr)$.  Then  
$ x=\sum b't'$ with $b'\in A(b),$ $0\not =t'\in S(t)^+/(p,b)$
 i.e., $ |t'|>0.$
\end{lemma}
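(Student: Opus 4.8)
The plan is to read off the shape of $Ker(pr)$ directly from the motivic decomposition of Theorem 3.4 and then lower the coefficients into $A(b)$ using the surjectivity of $i_A$; the only delicate point will be matching the abstract tensor factorization with honest products in $CH^*(\bF)/p$, which I would resolve by passing to the associated graded for the $(b_1,\dots,b_\ell)$-filtration.

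First I would unwind Theorem 3.4. Since $M(\bF)_{(p)}\cong R(\bG)\otimes T$ with $T=\bigoplus_u\bT^{\otimes u}$ (the summand $\bT^{\otimes 0}$ being the unit motive), the projector $pr$ is the projection of $CH^*(\bF)/p$ onto the $u=0$ summand. Under the isomorphism $CH^*(\bF)/p\cong CH^*(R(\bG))/p\otimes S(t)/(p,b)$ this is $\mathrm{id}\otimes\epsilon$, where $\epsilon\colon S(t)/(p,b)\to\bZ/p$ is the projection onto the one–dimensional degree-zero part. Hence
\[ Ker(pr)\ \cong\ CH^*(R(\bG))/p\ \otimes\ S(t)^{+}/(p,b). \]
Now recall from §2 that $(b_1,\dots,b_\ell)$ is a regular sequence in $S(t)/p=\bZ/p[t_1,\dots,t_\ell]$ of full length, so $S(t)/p$ is a free $A(b)$-module on a set of monomials $\{t^{\gamma}\}_{\gamma}$ lifting a $\bZ/p$-basis of $S(t)/(p,b)$, with $t^{\gamma_0}=1$ and $|t^{\gamma}|>0$ for $\gamma\neq\gamma_0$.

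Given $0\neq x\in Ker(pr)$, I would write $x=\sum_{\gamma\neq\gamma_0}r_{\gamma}\otimes t^{\gamma}$ with $r_{\gamma}\in CH^*(R(\bG))/p$, and then use that the composite $i_A\colon A(b)\to CH^*(\bF)/p\to CH^*(R(\bG))/p$ is surjective (§3, cf. Lemma 3.6) to pick $b'_{\gamma}\in A(b)$ with $i_A(b'_{\gamma})=r_{\gamma}$. Setting $b'=b'_{\gamma}$ and $t'=t^{\gamma}$ (so $0\neq t'\in S(t)^{+}/(p,b)$, $|t'|>0$) then gives the asserted expression $x=\sum b't'$, provided one knows that $i_A(b'_{\gamma})\otimes t^{\gamma}$ really is the product $b'_{\gamma}\cdot t^{\gamma}$ computed in $CH^*(\bF)/p$.

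The hard part is exactly this last compatibility: the decomposition isomorphism of Theorem 3.4 is a priori only additive, $CH^*(R(\bG))/p$ carries no canonical ring structure, and $S(t)/(p,b)$ need not embed in $CH^*(\bF)/p$ as a subring, so $i_A(b')\otimes t'$ and $b'\cdot t'$ may differ by an element of $Ker(pr)$. To control this I would filter $S(t)/p$, and hence $CH^*(\bF)/p$, by powers of the ideal $(b_1,\dots,b_\ell)$: by construction $gr\,S(t)/p\cong A(b)\otimes S(t)/(b)$, the decomposition of Theorem 3.4 respects this filtration, and on associated graded $pr$ becomes projection to $(b)$-filtration degree $0$ tensored with $\epsilon$. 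Thus $x$ already has the form $\sum b't'$ modulo strictly higher $(b)$-filtration, and the error term again lies in $Ker(pr)$ but in strictly higher $(b)$-filtration and the same internal degree; since $CH^*(\bF)/p$ is finite–dimensional in each degree, both the $(b)$-filtration and the internal degree are finite, and a short induction removes the error term, completing the proof.
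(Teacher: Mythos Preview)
The paper states this lemma without proof; it is treated as an immediate consequence of the motivic decomposition in Theorem~3.3, namely that $Ker(pr)$ is the sum of the Tate-shifted copies $CH^*(R(\bG))/p\otimes S(t)^+/(p,b)$. Your argument starts from the same decomposition, so in spirit it matches the paper, but you are considerably more careful than the paper about whether the abstract tensor factors can be realized as honest products $b'\cdot t'$ in $CH^*(\bF)/p$. That is a real subtlety the paper passes over in silence, and your $(b)$-filtration induction is a reasonable way to close it.

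One warning on logical order: you justify the surjectivity of $i_A\colon A(b)\to CH^*(R(\bG))/p$ by citing Lemma~3.6, but in the paper the proof of Lemma~3.6 \emph{uses} Lemma~3.5 (it opens with ``In the preceding lemma, $A_N\otimes t'$ for $|t'|>0$ maps zero in $CH^*(R(\bG))/p$''). So appealing to Lemma~3.6 here is circular. The paper does assert the surjectivity of $i_A$ just before Lemma~3.4 as an independent fact, so you may cite that assertion instead; but if you want a self-contained argument, note that your filtration induction already does all the work without ever needing $i_A$ to be surjective. Indeed, by Corollary~3.2 any $x\in CH^*(\bF)/p$ lifts to some $P\in S(t)/p$, and writing $P=\sum_\gamma f_\gamma(b)\,t^\gamma$ via the $(b)$-filtration gives $x=\sum f_\gamma(b)\,t^\gamma$ in $CH^*(\bF)/p$ directly; the term $f_{\gamma_0}(b)\cdot 1$ lies in $Ker(pr)$ and has strictly higher $(b)$-filtration than the original presentation once you subtract the $|t^\gamma|>0$ part, so the same descending induction you sketch applies with no appeal to $i_A$ at all.
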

Let us write 
\[  y_{top}=\Pi_{i=1}^s y_i^{p^{r_i}-1}\quad (resp.\ t_{top})\]
the generator of the highest  degree 
in $P(y)$ (resp. $S(t)/(b)$) so that $f=y_{top}t_{top}$
is the fundamental class in $H^{2d}(G/T)$
for $2d=dim_{\bR}(G/T)$.
For $N>0$, let us  write \ \ 
\[A_N=\bZ/p\{b_{i_1}...b_{i_k}| |b_{i_1}|+...+|b_{i_k}|\le N\}
\subset A(b).  \]
\begin{lemma}  For $N=|y_{top}|$, the map
$ A_N\to CH^*(R(\bG))/p$ is surjective.
\end{lemma}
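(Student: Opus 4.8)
The statement to prove is Lemma 3.7: for $N = |y_{top}|$, the composite $A_N \to CH^*(R(\bG))/p$ is surjective, where $A_N \subset A(b)$ is spanned by monomials in the $b_i$ of total degree $\le N$. My plan is to pass through the realization $\bar R(\bG) = R(\bG) \otimes \bar k$ and use the base-change (restriction) map $\mathrm{res}\colon CH^*(R(\bG))/p \to CH^*(\bar R(\bG))/p \cong P(y)/p$. The first step is to recall from Theorem 3.3 that this restriction map is defined, and to observe that since $CH^*(R(\bG))/p$ is a quotient of $A(b)$ (this is exactly the surjectivity $A(b) = \bZ/p[b_1,\dots,b_\ell] \twoheadrightarrow CH^*(\bF)/p \to CH^*(R(\bG))/p$ noted just before Lemma 3.5), it suffices to understand which degrees of $A(b)$ are needed. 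The key numerical point is that $CH^*(R(\bG))/p$ vanishes in degrees $> |y_{top}|$: indeed over $\bar k$ we have $CH^*(\bar R(\bG))/p \cong P(y)/p$, whose top nonzero degree is $|y_{top}| = \sum (p^{r_i}-1)|y_i|$, and the restriction map from $CH^*(R(\bG))/p$ to $CH^*(\bar R(\bG))/p$ is injective in top degrees by the standard argument that a versal torsor becomes trivial generically so the kernel is controlled — more precisely, one uses that $R(\bG)$ is split by $\bar k$ and its Chow groups over $k$ inject into those over $\bar k$ in the relevant range.

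Second, I would make the degree bound precise. Write any class $z \in CH^*(R(\bG))/p$ as the image of some polynomial $P(b) \in A(b)$. Decompose $P(b) = \sum_j P_j(b)$ into homogeneous pieces. Pieces $P_j$ with $\deg P_j > N = |y_{top}|$ map to zero in $CH^*(R(\bG))/p$ because that ring is concentrated in degrees $\le N$ (its associated graded over $\bar k$ is $P(y)/p$, top degree $|y_{top}|$, and the surjection $CH^*(R(\bG))/p \twoheadrightarrow CH^*(\bar R(\bG))/p$ is an isomorphism here since both have the same finite total dimension — this is where I'd invoke that $\bG$ versal and $R(\bG)$ the generalized Rost motive forces $CH^*(R(\bG))/p$ to have no "extra" classes beyond $P(y)/p$, cf. Lemma 1.2 and the structural results of Petrov–Semenov–Zainoulline). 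Hence $z$ is already the image of $\sum_{j \le N} P_j(b) \in A_N$, giving surjectivity.

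The main obstacle, and the step needing genuine care, is justifying that $CH^*(R(\bG))/p$ truly vanishes above degree $|y_{top}|$ — equivalently that the natural surjection $CH^*(R(\bG))/p \twoheadrightarrow CH^*(\bar R(\bG))/p \cong P(y)/p$ does not have its source spread into higher degrees. A priori $CH^*(R(\bG))$ could be larger than its base change; what rescues us is that for a \emph{versal} torsor the motive $R(\bG)$ is an indecomposable summand whose Chow ring, after reduction mod $p$, is \emph{additively} a quotient of $A(b)$ but is also \emph{bounded above in degree by the dimension-type constraint coming from $\bar R(\bG)$}: every generator $y_i \in P(y)/p$ is the image of some Chern-class monomial $b'$ in $A(b)$ of the \emph{same} degree, and products of these generators of degree exceeding $|y_{top}|$ vanish already in $P(y)/p$, hence (by the surjectivity being an iso in top degree, or by a clean dimension count comparing $CH^*(\bF)/p \cong CH^*(R(\bG))/p \otimes S(t)/(p,b)$ with $H^*(G/T)/p \cong P(y)/p \otimes S(t)/(p,b)$) they vanish in $CH^*(R(\bG))/p$ as well. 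I would phrase the argument via that last tensor-product comparison: the Poincaré series of $CH^*(\bF)/p$ equals that of $H^*(G/T)/p$ (since $CH^*$ and $H^{2*}$ agree for the cellular $G_k/B_k$ and the motivic decomposition is compatible), and dividing by the common factor $S(t)/(p,b)$ forces the Poincaré series of $CH^*(R(\bG))/p$ to equal that of $P(y)/p$, which is supported in degrees $\le |y_{top}|$. With that in hand the reduction above finishes the proof.
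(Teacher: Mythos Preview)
Your overall strategy is sound: reduce to showing that $CH^*(R(\bG))/p$ vanishes in degrees above $N=|y_{top}|$, then use the graded surjection $A(b)\twoheadrightarrow CH^*(R(\bG))/p$. But the justification you give for the degree bound is wrong.

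You assert that the Poincar\'e series of $CH^*(\bF)/p$ equals that of $H^*(G/T)/p$, and hence that $\dim_{\bZ/p} CH^*(R(\bG))/p=\dim_{\bZ/p} P(y)/p$. This is false already for $G=Spin(7)$: there $P(y)'/2\cong\Lambda(y_6)$ has dimension $2$, while $CH^*(R(\bG))/2\cong\bZ/2\{1,c_2,c_3\}$ has dimension $3$ (see \S 7 of the paper). The restriction $CH^*(R(\bG))/p\to CH^*(\bar R(\bG))/p$ is in general neither injective nor surjective; for $Spin(7)$ it is zero in positive degrees, since $c_2\mapsto v_1y_6=0$ and $c_3\mapsto 2y_6=0$ in $CH^*/2$. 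Your parenthetical ``$CH^*$ and $H^{2*}$ agree for the cellular $G_k/B_k$'' is about the \emph{split} form $\bar\bF$, not the twisted variety $\bF=\bG/B_k$, so it does not give what you need. Thus neither the restriction argument nor the Poincar\'e-series comparison can work.

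The correct bound, which is what the paper's terse proof is driving at via Lemma~3.5, comes instead from the dimension of $\bF$ together with the tensor decomposition of Theorem~3.3. Since $\bF$ is smooth projective of real dimension $2d$ with $2d=|y_{top}|+|t_{top}|$, one has $CH^m(\bF)/p=0$ for $m>2d$. If $0\ne r\in CH^m(R(\bG))/p$ with $m>N$, then $r\otimes t_{top}$ is nonzero in
\[
CH^{m+|t_{top}|}(\bF)/p\;\cong\;\bigl(CH^*(R(\bG))/p\otimes S(t)/(p,b)\bigr)^{\,m+|t_{top}|}
\]
(nonzero because $t_{top}\ne 0$ and the tensor product is over a field), yet $m+|t_{top}|>N+|t_{top}|=2d$, a contradiction. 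Hence $CH^{>N}(R(\bG))/p=0$ and $A_N=A(b)^{\le N}$ surjects onto $CH^*(R(\bG))/p$. Equivalently, in the paper's formulation: every element of $CH^*(\bF)/p$ can be written as $\sum b_I t_I$ with $b_I\in A_N$ and $t_I\in S(t)/(p,b)$, and the terms with $|t_I|>0$ die under $pr$ by Lemma~3.5.
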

\begin{proof}  In the preceding lemma,  $A_{N}\otimes t'$ for $|t'|>0$ maps zero in $CH^*(R(\bG))/p$.  Since each element
in $S(t)$ is written by an element in $A_N\otimes S(t)/(b)$,
we have the lemma.
\end{proof}
\begin{cor}
If  $b_i\not=0$ in $CH^*(X)/p$,  then  so in $CH^*(R(\bG_k))/p$.
\end{cor}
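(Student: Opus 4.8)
The plan is to argue by contraposition: assume $b_i=0$ in $CH^*(R(\bG))/p$ and show that $b_i$ vanishes already in $CH^*(\bF)/p$. (I read $X$ as the versal flag variety $\bF=\bG/B_k$; for any other $X$ of interest the statement then follows by pulling back along its map to $\bF$.) First I would recall from the paragraph preceding Lemma~3.4 that the composite
\[ A(b)\stackrel{i_A}{\longrightarrow}CH^*(\bF)/p\stackrel{pr}{\longrightarrow}CH^*(R(\bG))/p \]
is surjective, where $i_A$ is induced by the inclusion $A(b)=\bZ/p[b_1,\dots,b_\ell]\hookrightarrow CH^*(BB_k)/p$. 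Writing $pr'$ for this composite and $J=\Ker(pr')$, the hypothesis says $b_i\in J$, so $i_A(b_i)\in\Ker(pr)$, and it remains to deduce $i_A(b_i)=0$.

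Here I would combine two facts. First, by Lemma~3.5 every nonzero element of $\Ker(pr)$ has the form $\sum b't'$ with $b'\in A(b)$ and $0\neq t'\in S(t)^+/(p,b)$; reformulated via Theorem~3.3, this says $\Ker(pr)$ lies in the span of the Tate summands of positive twist, i.e.\ in $CH^*(R(\bG))/p\otimes S(t)^+/(p,b)$. Second, filtering $CH^*(BB_k)/p=S(t)/p$ by powers of the regular sequence $(b_1,\dots,b_\ell)$ --- so that $gr\, S(t)/p\cong A(b)\otimes S(t)/(p,b)$ and $b_i$ becomes the element $b_i\otimes 1$ of $b$-degree one --- and observing that this filtration descends compatibly through the decomposition of Theorem~3.3 (the Tate summands accounting exactly for $S(t)/(p,b)$), one gets that $i_A$ carries the whole subring $A(b)$ into the complementary summand $CH^*(R(\bG))/p\otimes\langle 1\rangle$. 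These two summands meet only in $0$, so from $i_A(b_i)\in\Ker(pr)$ and $i_A(b_i)\in i_A(A(b))$ we conclude $i_A(b_i)=0$, as required. Equivalently, once one knows $\Ker\bigl(CH^*(BB_k)/p\twoheadrightarrow CH^*(\bF)/p\bigr)=J\cdot S(t)/p$, the quotient map on $A(b)\otimes S(t)/(p,b)$ is just $\pi\otimes\mathrm{id}$, so $b_i\in J$ forces $i_A(b_i)=\pi(b_i)\otimes 1=0$ outright --- and then the corollary in fact upgrades to an ``if and only if''.

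The step I expect to be the main obstacle is precisely this compatibility: that the surjection $CH^*(BB_k)/p\to CH^*(\bF)/p$ sends the subring $A(b)$ of polynomials in the $b_j$ into the $R(\bG)$-summand without picking up any correction of higher $b$-degree, equivalently that its kernel is generated by polynomials in the $b_j$. For a versal $\bF$ I would deduce this from the structure of $CH^*(\bF)$ recalled in $\S 3$ --- that it is multiplicatively generated by the $t_i$ and that its defining relations are Chern-class relations (\cite{Ka1}, \cite{Me-Ne-Za}, \cite{YaC}) --- combined with Theorem~3.3; it is also the input that verifies the hypothesis of Lemma~3.4 for a presentation of $J$. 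Granting it, the rest is the bookkeeping indicated above. Note finally that $b_i$ is automatically nonzero in $CH^*(BB_k)/p$, being part of a regular sequence, so the real content of the corollary is that $b_i$ survives all the way to $CH^*(R(\bG))/p$ as soon as it survives the passage to the versal form $CH^*(\bF)/p$.
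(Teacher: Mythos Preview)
Your argument has a genuine gap at precisely the step you flag as the ``main obstacle'': the claim that $i_A$ carries all of $A(b)$ into the summand $CH^*(R(\bG))/p\otimes\langle 1\rangle$. This is not justified, and in fact establishing it even for a single generator $b_i$ amounts to the corollary itself, so the reasoning is circular. The decomposition of Theorem~3.3 is only additive (the paper's Remark after Theorem~3.3 stresses that $CH^*(R(\bG))$ has no canonical ring structure), so there is no reason the image of the \emph{subring} $A(b)$ under the ring map $i_A$ should land in one tensor factor. Your alternative formulation, that $\Ker\bigl(S(t)/p\twoheadrightarrow CH^*(\bF)/p\bigr)=J\cdot S(t)/p$, is likewise not available at this point in the paper: Lemma~3.4 is a conditional statement, not the assertion that such $f_i$ always exist and vanish in $CH^*(\bF)/p$; and even the inclusion $J\cdot S(t)/p\subset K$ would require knowing that $f\in J$ implies $i_A(f)=0$ (not merely $pr(i_A(f))=0$), which is strictly stronger than what you are trying to prove.

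The paper's proof takes a completely different route and never tries to establish your compatibility claim. It combines Lemma~3.5 with a \emph{degree argument and the regularity} of $(b_1,\dots,b_\ell)$: from $b_i=\sum b't'$ with $|t'|>0$ one has $|b'|<|b_i|$ for every term, so (ordering the $b_j$ by degree) each $b'$ is forced into $\mathrm{Ideal}(b_1,\dots,b_{i-1})$, and this relation is then contradicted by the fact that $(b_1,\dots,b_\ell)$ is a regular sequence. You mention the regular sequence only to set up the filtration on $S(t)/p$, but never actually use regularity to derive a contradiction; that is the missing ingredient.
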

\begin{proof}
Let $pr(b_i)=0$.  From Lemma 3.6, 
 $b_i=\sum b't'$ for $|t'|>0$, and hence $b'\in Ideal(b_1,...,b_{i-1})$.
This contradict to that $(b_1,...,b_{\ell})$ is regular.
\end{proof}

Now we consider the torsion index $t(G)$.
Let $dim_{\bR}(G/T)=2d$.  Then the torsion index is defined as
\[ t(G)=|H^{2d}(G/T;\bZ)/i^*H^{2d}(BT;\bZ)|
\quad where\ i:G/T\to BT.\]
Let $n(\bG)$ be the greatest common divisor of the degrees of all finite field extension $k'$ of $k$ such that $\bG$ 
becomes trivial over $k'$.  Then by Grothendieck 
\cite{Gr}, it is known that $n(\bG)$ divides $t(G)$.  Moreover, $\bG$ is versal, then $n(\bG)=t(G)$   (\cite{To2}, \cite{Me-Ne-Za}, \cite{Ka1}), which implies
that each element in $t(G)P(y)$ is represented by element in $ CH^*(BB_k)$.

It is well known that  if $H^*(G)$ has a $p$-torsion, then
$p$ divides the torsion index $t(G)$. Torsion index for
simply connected compact Lie groups are completely determined by Totaro \cite{To1}, \cite{To2}.

\begin{lemma} (\cite{YaC})
Let $\tilde b=b_{i_1}... b_{i_k}$ in
$S(t)$  such that
in $H^*(G/T)$
\[ \tilde b=p^s(y_{top}+\sum yt),\quad
|t|>0\]
for some $y\in P(y)$  and $t\in S(t)^+$.
Then $t(G)_{(p)}\le p^s$. 
\end{lemma}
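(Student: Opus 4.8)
The plan is to trace through the definition of the torsion index $p$-adically and compare the subgroup generated by $\tilde b$ with the full image $i^*H^{2d}(BT;\bZ)$. First I would work in the $p$-localized cohomology, where $grH^*(G/T)\cong P(y)\otimes S(t)/(b)$. The fundamental class is $f=y_{top}t_{top}$, so the top Chow/cohomology group $H^{2d}(G/T)_{(p)}$ is the free rank-one module on $f$ (up to filtration, which is harmless in top degree since there is nothing above it). The hypothesis says $\tilde b = p^s(y_{top}+\sum yt)$ with $|t|>0$; multiplying by $t_{top}$, which has positive degree, kills every term $yt\cdot t_{top}$ that is not top-degree, and since $\tilde b\, t_{top}$ does have top degree the only surviving term is $p^s y_{top}t_{top}\cdot(\text{unit})$, i.e. up to a unit $\tilde b\, t_{top} = p^s f$ in $H^{2d}(G/T)_{(p)}$.

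Next I would observe that $\tilde b$ and $t_{top}$ both lie in $S(t)=i^*H^*(BT)$ (the $b_{i_j}$ are the mod-$p$ reductions of lifts $b_{i_j}\in H^*(BT)$, and here we work with integral lifts), hence their product $\tilde b\, t_{top}$ lies in the image $i^*H^{2d}(BT;\bZ)_{(p)}$. Therefore $p^s f$ lies in that image. By definition, $t(G)_{(p)}$ is the exponent of the cyclic $p$-group $H^{2d}(G/T;\bZ)_{(p)}/i^*H^{2d}(BT;\bZ)_{(p)}$, equivalently the smallest power of $p$ that brings $f$ into the image; since $p^s f$ is already in the image, that smallest power divides $p^s$, giving $t(G)_{(p)}\le p^s$.

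The main obstacle I anticipate is the passage from the associated graded ring $grH^*(G/T)$ back to $H^*(G/T)$ itself, and the handling of the "lower" terms $\sum yt$: one must be careful that multiplication by $t_{top}$ in the filtered ring really does annihilate all of these, i.e. that there is no filtration jump producing a spurious contribution to the top degree. In top degree this is clean — any product landing in $H^{2d}$ with a factor of positive-degree $t$ paired against $y_{top}$ (rather than $y_{top}$ itself) is forced to have strictly smaller $P(y)$-degree and hence cannot be of the form $(\text{unit})f$ — but writing this precisely requires invoking that $H^*(G/T)$ is torsion-free with the stated associated graded, so that the top graded piece is exactly $\bZ_{(p)}\{f\}$ and no extension problems intervene. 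Once that is in place the rest is the short chain of containments above. A secondary point to check is that the unit ambiguity (from $t_{top}^2$ possibly not being exactly $y_{top}$-free-basis-related to $f$) is indeed a unit in $\bZ_{(p)}$ and not divisible by $p$, which follows because $f$ generates $H^{2d}(G/T)_{(p)}$ and $\tilde b\, t_{top}$ is asserted to equal $p^s$ times a generator.
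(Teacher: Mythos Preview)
The paper does not include its own proof of this lemma; it is simply cited from \cite{YaC}. Your plan is correct and is essentially the natural argument: multiply by $t_{top}$ to land in top degree, observe the product lies in $i^*H^{2d}(BT)$, and read off the bound on the torsion index.

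The one step that needs a sharper justification than you give is the vanishing of the cross terms $y\,t\cdot t_{top}$ in $H^{2d}(G/T)$. Your phrasing ``strictly smaller $P(y)$-degree and hence cannot be of the form $(\text{unit})f$'' does not by itself exclude the possibility that such a term equals a \emph{nonzero non-unit} multiple of $f$ (say $p\cdot f$), which would ruin the conclusion. The clean fix is multiplicativity of the Serre filtration: since $t\in S(t)^+$ and $t_{top}$ lie in filtration $\ge |t|$ and $\ge |t_{top}|$ respectively, the product $y\,t\cdot t_{top}$ lies in $F^{|t|+|t_{top}|}H^{2d}(G/T)\subset F^{|t_{top}|+1}H^{2d}(G/T)$. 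But in total degree $2d=|y_{top}|+|t_{top}|$ the only nonzero $E_\infty$-term is $E_\infty^{|t_{top}|,|y_{top}|}$, so $F^{|t_{top}|+1}H^{2d}=0$ and these terms vanish identically. Hence $\tilde b\cdot t_{top}=p^s y_{top}t_{top}=p^s f$ exactly, and the rest follows. Your ``secondary point'' about a unit ambiguity is then moot: there is no $t_{top}^2$ in the computation, and $f$ is by definition $y_{top}t_{top}$.
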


\section{Filtrations of $K$-theories}

In this section, we recall the properties of
the graded rings $gr_{top}^*(X)$, $gr_{geo}^*(X),$
$gr_{\gamma}^*(X)$ related to AHss and Chern classes.
For their definition, see [Ga-Za], [Za], [Ju], [To3], [Ya3,5]),  while definition itself is not used in this paper.

Let $X$ be a topological space.  Let $K^*_{top}(X)$ be the complex $K$-theory.
Consider the  AHss
 \[ E_2^{*,*'}(X)\cong H^*(X)\otimes K^{*'}_{top}\Longrightarrow K^*_{top}(X).\]
Then $gr_{top}^*(X)$ is characterized as 
 \begin{lemma}(Atiyah \cite{At})  $gr^*_{top}(X)
\cong E_{\infty}^{*,0}(X)$.
 \end{lemma}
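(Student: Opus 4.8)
The plan is to unwind the definitions and invoke the standard convergence of the Atiyah--Hirzebruch spectral sequence. Recall that the AHss displayed above is, by construction, the spectral sequence associated to the filtration of $X$ by its skeleta $X^{(0)}\subset X^{(1)}\subset\cdots$, and that the \emph{topological} (equivalently, skeletal) filtration on $K^0_{top}(X)$ is by definition
\[ F^n_{top}K^0_{top}(X)=\ker\bigl(K^0_{top}(X)\to K^0_{top}(X^{(n-1)})\bigr),\qquad gr^n_{top}(X)=F^n_{top}/F^{n+1}_{top}. \]
First I would recall the general fact (Atiyah--Hirzebruch) that this spectral sequence converges, that the filtration it induces on $K^*_{top}(X)$ is exactly the one above, and that $E_\infty^{p,q}(X)\cong F^p K^{p+q}_{top}(X)/F^{p+1}K^{p+q}_{top}(X)$. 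There is essentially nothing to prove here beyond the construction of the spectral sequence of a filtered space; convergence is immediate in the cases of interest, since the relevant $X$ (such as $G/T$ and the realizations of the motives appearing in this paper) are finite complexes, whence the filtration is finite in each degree.

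The only point requiring care is matching the indexing. The associated graded of the filtration on $K^0_{top}(X)$ lives on the antidiagonal $p+q=0$, i.e. $gr^p_{top}(X)\cong E_\infty^{p,-p}(X)$. Multiplication by the Bott class $\beta\in K^{-2}_{top}(pt)$, which is an invertible permanent cycle, induces isomorphisms $E_r^{p,q}(X)\cong E_r^{p,q-2}(X)$ for all $r$; composing the appropriate power gives $E_\infty^{p,-p}(X)\cong E_\infty^{p,0}(X)$, and this identification is compatible with products since $\beta$ is central and invertible. Hence $gr^*_{top}(X)\cong E_\infty^{*,0}(X)$ as graded rings, the right-hand side being understood via the edge homomorphism $K^0_{top}(X)\twoheadrightarrow E_\infty^{*,0}(X)\hookrightarrow H^*(X)$, which one checks is multiplicative. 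When $H^{odd}(X)\neq 0$ one must restrict the $q=0$ row to even $p$ (the odd-$p$ part of that row contributes to $K^1_{top}$, not $K^0_{top}$), but in the applications of this paper $H^*(X)$ is concentrated in even degrees, so no adjustment is needed.

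I do not expect a genuine obstacle here: the substantive input is the general convergence of the AHss of a skeletal filtration, which is classical, and the remaining work is the bookkeeping with Bott periodicity together with the verification that the edge homomorphism is a ring map, so that the additive isomorphism $gr^*_{top}(X)\cong E_\infty^{*,0}(X)$ upgrades to a graded-ring isomorphism.
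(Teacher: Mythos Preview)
The paper does not supply a proof of this lemma; it simply records the statement with a citation to Atiyah. Your sketch is correct and is exactly the standard argument: the AHss is by construction the spectral sequence of the skeletal filtration, so convergence identifies $gr_{top}^p(X)$ with $E_\infty^{p,-p}(X)$, and multiplication by powers of the Bott class (a central invertible permanent cycle) moves this to $E_\infty^{p,0}(X)$, with the caveat about odd $p$ handled as you noted.
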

 
 Let $X$ be a smooth algebraic variety over a subfield  $k$ of $\bC$.
 Let $K_{alg}^*(X)$ be the algebraic $K$-theory.
This $K$-theory can be written by the motivic $K$-theory $AK^{*,*'}(X)$ (\cite{Vo1}, \cite{Vo2},
\cite{YaA}), i.e.,
$K_{alg}^i(X)\cong \oplus _{j} AK^{2j-i,j}(X).$
In particular $K_{alg}^0(X)\cong 
AK^{2*,*}(X)$.  
We recall the motivic AHss (\cite{YaA}, \cite{YaB})
 \[ AE_2^{*,*',*''}(X)\cong H^{*,*'}(X;K^{*''})\Longrightarrow AK^{*,*'}(X)\]
where $K^{2*}=AK^{2*,*}(pt.).$
 Here we have 
\[ AE_2^{2*,*,*''}(X)\cong H^{2*,*}(X;K^{*''})\cong CH^*(X)\otimes K^{*''}.\]
Hence 
$AE_{\infty}^{2*,*,0}(X)$ is a quotient of $ CH^*(X)$,
since  $d_rAE_{r}^{2*,*,*''}(X)=0$ for smooth $X$.
Then $gr_{geo}^*(X)$ is characterized as  
 \begin{lemma}  (\cite{To3}, Lemma 6.2 in \cite{YaF})  We have
\[gr^{2*}_{geo}(X)\cong AE _{\infty}^{2*,*,0}(X)\cong CH^*(X)/I(1)\]
where $I(1)=\cup_rIm(d_r)$.
 \end{lemma}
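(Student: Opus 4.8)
The plan is to deduce the two asserted isomorphisms from the motivic Atiyah--Hirzebruch spectral sequence recalled above, the second one being essentially formal and the first being the substantive point. First I would record the bookkeeping on the line $*''=0$: there $AE_2^{2*,*,0}(X)\cong H^{2*,*}(X;K^0)\cong H^{2*,*}(X;\bZ)\cong CH^*(X)$, and, since $K^{*''}$ is $2$-periodic (Bott periodicity) and vanishes in odd degrees, the only differentials $d_r$ with source and target both nonzero on an even $*''$-line are the odd ones $d_{2r+1}$, exactly as in the topological case treated in Lemma 4.1. For smooth $X$ one has $d_rAE_r^{2*,*,*''}(X)=0$, i.e.\ no differential \emph{leaves} the Chow line (the groups $H^{2q,q}(X;-)\cong CH^q(X)\otimes(-)$ are permanent cycles in the $K$-theory spectral sequence of a smooth variety). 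Hence $AE_\infty^{2*,*,0}(X)$ is obtained from $AE_2^{2*,*,0}(X)\cong CH^*(X)$ solely by killing the images of the incoming differentials landing there, and that sum of images is by definition $I(1)=\cup_r\Img(d_r)$; this already gives $AE_\infty^{2*,*,0}(X)\cong CH^*(X)/I(1)$.

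It remains to identify $gr^{2*}_{geo}(X)$ with $AE_\infty^{2*,*,0}(X)$. Recall that $K^0_{alg}(X)\cong AK^{2*,*}(X)$ carries the geometric filtration $F^i_{geo}$, spanned by the classes $[\mathcal{O}_Z]$ of coherent sheaves whose support has codimension $\ge i$, while the motivic spectral sequence equips $AK^{2*,*}(X)$ with its own decreasing filtration whose graded pieces in weight $i$ and $K$-degree $0$ are the $AE_\infty^{2i,i,0}(X)$. I would show that the two filtrations agree on this antidiagonal, following the topological argument of Atiyah (Lemma 4.1) and its algebraic analogues in [To3, Ch.~15] and in Lemma~6.2 of \cite{YaF}: the cycle (localization) map $CH^i(X)\to AE_\infty^{2i,i,0}(X)$ sends $[Z]$ to the class of $[\mathcal{O}_Z]$ and raises the spectral-sequence filtration to $\ge i$, so $F^i_{geo}$ lands in the $i$-th step of the spectral-sequence filtration, while conversely a class detected by $AE_\infty^{2i,i,0}(X)$ is represented by a structure sheaf supported in codimension $\ge i$. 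Passing to associated graded objects gives $gr^{2i}_{geo}(X)\cong AE_\infty^{2i,i,0}(X)$, and composing with the previous paragraph finishes the proof.

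The step I expect to be the main obstacle is precisely this last matching of filtrations, since the ``geometric'' filtration has several a priori distinct descriptions (codimension of support, the coniveau filtration on $K$-theory, the filtration coming from the motivic spectral sequence). What one really needs is that the motivic Atiyah--Hirzebruch spectral sequence for $K$-theory refines, in a filtration-compatible manner, the Brown--Gersten--Quillen coniveau spectral sequence---the content of the comparison theorems of Friedlander--Suslin, Levine and Totaro---after which the vanishing of the outgoing differentials for smooth $X$ together with $AE_2^{2*,*,0}(X)\cong CH^*(X)$ reduces everything, as above, to reading off the $(2*,*,0)$-graded piece as $CH^*(X)/I(1)$.
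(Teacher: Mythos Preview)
The paper does not actually give a proof of this lemma: it is stated as a citation, with the proof deferred to \cite{To3} and to Lemma~6.2 of \cite{YaF}. There is therefore nothing in the paper itself to compare your argument against.

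That said, your sketch is the standard route and matches what one finds in those references. The second isomorphism is exactly the bookkeeping you describe: the paper records just before the lemma that $AE_2^{2*,*,0}(X)\cong CH^*(X)$ and that $d_rAE_r^{2*,*,*''}(X)=0$ for smooth $X$, so the $E_\infty$-term on the Chow line is the quotient of $CH^*(X)$ by the images of the incoming differentials, which is the definition of $I(1)$. The first isomorphism is, as you say, the nontrivial input: one must identify the filtration on $K_{alg}^0(X)\cong AK^{2*,*}(X)$ coming from the motivic AHss with the geometric (coniveau) filtration, and this is precisely the content of the comparison results (Friedlander--Suslin, Levine, and the treatment in \cite{To3}, Chapter~15) that you invoke. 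Your identification of this step as the main obstacle is accurate; the paper simply cites it rather than reproving it.
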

  \begin{lemma}  (\cite{To3}, \cite{YaF})  The realization  induces $gr_{geo}^*(X)\to gr_{top}^*(X(\bC))$.
\end{lemma}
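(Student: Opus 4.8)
The plan is to obtain the map as the $E_\infty$-level of a morphism of Atiyah--Hirzebruch spectral sequences induced by complex realization. Complex realization carries the motivic $K$-theory spectrum representing $AK^{*,*'}(-)$ to the topological spectrum $K_{top}^*$, compatibly with the towers defining the two spectral sequences, so it induces a morphism from the motivic AHss $\{AE_r^{*,*',*''}(X)\}$ of Lemma 4.3 to the topological AHss $\{E_r^{*,*''}(X(\bC))\}$ of Lemma 4.2; on the $E_2$-page this morphism is, in the tridegrees $(2*,*,*'')$, the cycle class map $CH^*(X)\otimes K^{*''}\to H^{2*}(X(\bC))\otimes K_{top}^{*''}$. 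First I would record this morphism, citing \cite{To3}, \cite{YaF} for its construction (together with \cite{YaA}, \cite{YaB} for the motivic AHss itself).

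Next I would run the routine diagram chase. A morphism of spectral sequences commutes with all differentials, so for each $r$ the image of the differential into $AE_r^{2*,*,0}(X)$ is carried by the cycle class map into the image of the corresponding differential into $E_r^{2*,0}(X(\bC))$. Summing over $r$ and using $I(1)=\cup_r\Img(d_r)$ (Lemma 4.3), the cycle class map $CH^*(X)\to H^{2*}(X(\bC))$ sends $I(1)$ into $\Ker\bigl(H^{2*}(X(\bC))\twoheadrightarrow E_\infty^{2*,0}(X(\bC))\bigr)$. Hence the composite $CH^*(X)\to H^{2*}(X(\bC))\twoheadrightarrow E_\infty^{2*,0}(X(\bC))$ factors through $CH^*(X)/I(1)$, and it is a ring homomorphism because each factor is.

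Finally I would invoke Lemma 4.3 to identify the source $CH^*(X)/I(1)$ with $gr_{geo}^{2*}(X)$ and Lemma 4.2 to identify the target $E_\infty^{2*,0}(X(\bC))$ with $gr_{top}^{2*}(X(\bC))$; this yields the asserted homomorphism $gr_{geo}^*(X)\to gr_{top}^*(X(\bC))$.

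The one genuinely substantive point — and the place where the work sits — is the claim that complex realization really does produce a morphism of the two Atiyah--Hirzebruch spectral sequences, i.e. that it carries the slice/coniveau filtration on $K_{alg}^0$ to the skeletal filtration on $K_{top}^0$. Granting this, the rest is formal. A more elementary alternative would be to argue directly that the realization $K_{alg}^0(X)\to K_{top}^0(X(\bC))$ is a filtered map, since a class supported in codimension $\ge i$ maps to one of topological filtration $\ge 2i$, and then pass to associated graded rings; but this rests on the same input.
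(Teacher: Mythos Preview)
Your approach is correct and is essentially the standard argument found in the cited references \cite{To3}, \cite{YaF}: realization induces a morphism from the motivic AHss to the topological AHss, and passing to $E_\infty$ in the relevant tridegrees gives the map. The paper itself offers no proof beyond the citations, so there is nothing further to compare.

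Two small corrections. First, your internal references are off by one: the identification $gr_{geo}^{2*}(X)\cong CH^*(X)/I(1)$ with $I(1)=\cup_r\Img(d_r)$ is Lemma~4.2, not 4.3, and the identification $gr_{top}^*(X)\cong E_\infty^{*,0}(X)$ is Lemma~4.1, not 4.2. Second, in your diagram chase you should also note that elements of $AE_r^{2*,*,0}(X)$ which support a nontrivial differential (i.e.\ which are not permanent cycles) must likewise be accounted for; since $d_r$ vanishes on $AE_r^{2*,*,0}(X)$ for smooth $X$ (as recalled just before Lemma~4.2), this is automatic here, but it is worth saying explicitly so the passage from $E_2$ to $E_\infty$ is clean.
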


At last, we consider the gamma filtration.
Let $\lambda^i(x)$ be the exterior power of the vector bundle
$x\in K_{alg}^0(X)$ and $\lambda_t(x)=\sum \lambda^i(x)t^i\in K_{alg}^*(X)[t]$.  The total 
$\gamma$-class is defined by
$\lambda_{t/(1-t)}(x)=\gamma_t(x)$ so that $\gamma^i(x)\in K^0_{alg}(X)$ if so is $x$.
The gamma filtration is defined by using this 
$\gamma^i(x)$, 
 which relates Chern classes as so does $\lambda^i(x)$. 
There is a natural map $gr_{\gamma}^*(X)\to gr_{geo}^*(X)$. 

\begin{lemma} ( \cite{At}, \cite{To3}, \cite{YaF})
The condition $g_{\gamma}^{2*}(X)\cong gr_{top}^{2*}(X)$ 
(resp. $gr_{\gamma}^{2*}(X) \cong gr_{geo}^{2*}(X)$)
 is equivalent to that $E_{\infty}^{2*,0}(X)$ (resp. $AE_{\infty}^{2*,*,0}(X)$)
is (multiplicatively) generated by 
 Chern classes in $H^{2*}(X)$ (resp. $CH^*(X)$).
 \end{lemma}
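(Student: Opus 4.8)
The plan is to reduce both statements to the analysis of a single natural comparison map and its image. Recall that the $n$-th step $F_{\gamma}^{n}$ of the gamma filtration on $K^0(X)$ is the subgroup generated by the products $\gamma^{i_1}(\xi_1)\cdots\gamma^{i_m}(\xi_m)$ with the $\xi_j$ reduced and $\sum_j i_j\ge n$; in particular $gr_{\gamma}^{2*}(X)$ is generated as a ring by the Chern classes $c_i(\xi)$, $\xi\in K^0(X)$, since each $\gamma^i$ and each $c_i$ is, up to decomposables, the same universal polynomial in the $\lambda$-operations. On the other hand a splitting-principle computation — starting from $\gamma_t(L-1)=1+t(L-1)$ for a line bundle $L$ and the fact that $L-1$ has codimension $\ge 1$ — shows that $\gamma^i$ of a reduced element, and hence every element of $F_{\gamma}^n$, lies in topological filtration $\ge 2n$, i.e. $F_{\gamma}^n\subseteq F^{2n}_{top}K^0(X)$, and likewise $F_{\gamma}^n\subseteq F^{n}_{geo}K^0(X)$ in the algebraic setting (the classical fact, \cite{At}). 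Since also $F_{\gamma}^{n+1}\subseteq F^{2n+1}_{top}$ (resp. $F^{n+1}_{geo}$), these inclusions induce a graded ring homomorphism
\[ \phi\colon gr_{\gamma}^{2*}(X)\longrightarrow E_{\infty}^{2*,0}(X)=gr_{top}^{2*}(X)\quad\bigl(\text{resp. }\phi\colon gr_{\gamma}^{2*}(X)\to AE_{\infty}^{2*,*,0}(X)=gr_{geo}^{2*}(X)\bigr),\]
and I read ``$gr_{\gamma}^{2*}(X)\cong gr_{top}^{2*}(X)$'' as the assertion that $\phi$ is an isomorphism.

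The first step is to compute $\Img(\phi)$. Because the Chern classes of the gamma filtration and the topological (resp. geometric) Chern classes all come from the same $\lambda$-ring structure on $K^0(X)$, $\phi$ carries the generator $c_i(\xi)\in gr_{\gamma}^{2i}(X)$ to the class of the genuine Chern class $c_i(\xi)$ in $E_{\infty}^{2i,0}(X)$ (resp. $AE_{\infty}^{2i,i,0}(X)$) — equivalently, by the splitting principle the leading term of $\gamma^i(\xi)$ in the Atiyah--Hirzebruch (resp. motivic) spectral sequence is exactly $c_i(\xi)$. As $gr_{\gamma}^{2*}(X)$ is generated by its Chern classes, it follows that $\Img(\phi)$ is precisely the subring of $gr_{top}^{2*}(X)$ (resp. $gr_{geo}^{2*}(X)$) generated by Chern classes. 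Hence $\phi$ is surjective if and only if $E_{\infty}^{2*,0}(X)$ (resp. $AE_{\infty}^{2*,*,0}(X)$) is generated by Chern classes; in particular the ``only if'' direction of the lemma is immediate, an isomorphism being surjective. For the ``if'' direction I would bootstrap surjectivity into bijectivity: surjectivity of $\phi$ in degree $m$ reads $F^{2m}_{top}=F_{\gamma}^m+F^{2m+2}_{top}$ (resp. $F^{m}_{geo}=F_{\gamma}^m+F^{m+1}_{geo}$), and iterating this identity together with the separatedness of the topological (resp. coniveau) filtration — automatic for finite-dimensional $X$, and obtained after the usual completion in the classifying-space case — gives $F^{2m}_{top}=F_{\gamma}^m$ (resp. $F^{m}_{geo}=F_{\gamma}^m$) for all $m$, so the two filtrations coincide and $\phi$ is an isomorphism. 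The algebraic case runs word for word in parallel, using the earlier identification $gr_{geo}^{2*}(X)\cong CH^*(X)/I(1)$.

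The step I expect to demand the most care is the identification $\Img(\phi)=(\text{subring generated by Chern classes})$, i.e. the compatibility between the Chern classes of the gamma filtration and the honest Chern classes under the inclusion $F_{\gamma}^n\subseteq F^{2n}_{top}$ (resp. $F^{n}_{geo}$); I would derive it from \cite{At}, \cite{To3}, \cite{YaF}, together with the standard fact that $gr_{\gamma}^{2*}(X)$ is generated by its Chern classes. Once that is in hand the rest is formal; the only other points worth watching are the separatedness of the filtration used in the bootstrap from surjectivity to bijectivity and, in the topological case, the harmless remark that $E_{\infty}^{2*,0}(X)$ being generated by even Chern classes forces the odd part of the filtration to be trivial.
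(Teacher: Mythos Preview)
The paper does not supply its own proof of this lemma; it is quoted from \cite{At}, \cite{To3}, \cite{YaF} and used as a black box. Your argument is exactly the standard one underlying those references: the inclusion of filtrations yields a ring map $\phi\colon gr_{\gamma}\to gr_{top}$ (resp.\ $gr_{geo}$), its image is the Chern subring because $gr_{\gamma}$ is generated by Chern classes and $\phi$ carries them to the genuine Chern classes, so $\phi$ is onto iff the target is Chern-generated; and surjectivity of $\phi$ in every degree upgrades to equality of the two filtrations by the descending-chain argument, using separatedness. This is correct, and there is nothing in the paper to compare it against.

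One cosmetic point worth tidying is the indexing. Atiyah's inclusion is $F_{\gamma}^{m}\subseteq F_{top}^{m}$ with both sides carrying the same index; the ``$2*$'' in the paper is a doubling convention aligning the $n$-th graded piece with cohomological degree $2n$ (cf.\ the Remark in \S3). Your formulation $F_{\gamma}^{n}\subseteq F_{top}^{2n}$, justified via the splitting principle and $[L]-1\in F_{top}^{2}$, is also true but the descent through the flag bundle (that $p^{*}$ detects the skeletal filtration) deserves a word. Likewise your closing remark about the odd part of the topological filtration is not quite automatic from Chern-generation alone; for the spaces the paper actually uses ($G/T$, flag varieties) the cohomology sits in even degrees and the issue evaporates, so this does not affect the argument in context.
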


Recall that $A\tilde K(1)^{*,*'}(X)$ is  the algebraic Morava $K$-theory with $\tilde K(1)^{*}=\bZ_{(p)}[v_1,v_1^{-1}].$  
The motivic $K$-theory $AK^{*,*'}(X)$ is written by
using this Morava $K$-theory $A\tilde K(1)^{*,*'}(X)$
and the Bott periodicity $B$ with $deg(B)=(-2,-1)$,
as the topological case stated in $\S 2$ 
   \cite{YaF}
\[A\tilde K^{*,*'}(X)\cong A\tilde K(1)^{*,*'}(X)\otimes _{\tilde K(1)^*}\bZ[B,B^{-1}]\quad
with \ v_1=B^{p-1}.\]
\begin{lemma} (\cite{YaF})  
Let  $E(AK)_r$ (resp. $E(A\tilde K(1))_r$) be 
the AHss converging to $AK^{*,*'}(X)$ (resp.
$A\tilde K(1)^{*,*'}(X)$).  Then
\[gr_{geo}^{2*}(X)\cong E(AK)^{2*,*,0}_{\infty}
\cong E(A\tilde K(1))^{2*,*,0}_{\infty}.\]
\end{lemma}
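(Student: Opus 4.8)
The first isomorphism is just Lemma~4.2, since the group $AE_\infty^{2*,*,0}(X)$ occurring there is by definition the $\infty$-page of the AHss $E(AK)$ along the diagonal line $(2*,*,0)$; so the content to be proved is the second isomorphism. The plan is to show that, $p$-locally, $AK$ is an ``Adams split'' direct sum of shifted copies of $A\tilde K(1)$, of which the diagonal line $(2*,*,0)$ detects only the untwisted summand.

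I would begin from the relation $A\tilde K^{*,*'}(X)\cong A\tilde K(1)^{*,*'}(X)\otimes_{\tilde K(1)^*}\bZ_{(p)}[B,B^{-1}]$ recalled above. Since $v_1=B^{p-1}$, the ring $\bZ_{(p)}[B,B^{-1}]$ is free over $\tilde K(1)^*=\bZ_{(p)}[v_1,v_1^{-1}]$ on the classes $1,B,\dots,B^{p-2}$, with $B^j$ sitting in coefficient degree $-2j$; in particular it is flat. As the AHss is built functorially from the slice (or Postnikov) tower, the AHss converging to $AK^{*,*'}(X)$ is obtained, page by page, from the one converging to $A\tilde K(1)^{*,*'}(X)$ by this flat base change: on $E_2$ one has $AE_2^{s,t,*}(AK)=H^{s,t}(X;\tilde K(1)^*)\otimes_{\tilde K(1)^*}\bZ_{(p)}[B,B^{-1}]$, and since tensoring with a flat module commutes with taking homology, $E(AK)_r\cong E(A\tilde K(1))_r\otimes_{\tilde K(1)^*}\bZ_{(p)}[B,B^{-1}]$ for every $r\le\infty$, compatibly with the ($\tilde K(1)^*$-linear) differentials. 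In coordinates this reads $E(AK)_r^{s,t,u}=\bigoplus_{j=0}^{p-2}E(A\tilde K(1))_r^{s,t,\,u+2j}\otimes B^j$.

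It then remains to extract the line $u=0$ and check that the identification is compatible with the differentials relevant to the $\infty$-page there. Because $E(A\tilde K(1))_r^{s,t,d}=0$ unless $d\equiv 0\ (\mathrm{mod}\ 2(p-1))$, only the $j=0$ summand survives at $u=0$, so $E(AK)_r^{2n,n,0}=E(A\tilde K(1))_r^{2n,n,0}$ canonically. A differential $d_r$ changes the coefficient degree by $1-r$; using that the odd coefficient groups vanish and the even ones are concentrated in degrees $\equiv 0\ (\mathrm{mod}\ 2(p-1))$, one sees that in \emph{both} spectral sequences $d_r=0$ unless $r\equiv 1\ (\mathrm{mod}\ 2(p-1))$. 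For such $r$ one has $r-1\equiv 0$, so the differential into $E(AK)_r^{2n,n,0}$ (from the $(r-1)$-line) and the differential out of it (to the $(1-r)$-line) again involve only the $j=0$ summand, and -- modulo the periodicity isomorphisms $E(A\tilde K(1))_r^{*,*,2m(p-1)}\cong E(A\tilde K(1))_r^{*,*,0}$ induced by the unit $v_1$, which commute with all differentials -- they coincide with the corresponding differentials of $E(A\tilde K(1))_r$. An induction on $r$ (the pages being unchanged for the inactive $r$) then gives $E(AK)_r^{2*,*,0}\cong E(A\tilde K(1))_r^{2*,*,0}$ for all $r$, and in the limit this is the desired isomorphism. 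The one genuinely delicate point I expect is precisely this last bookkeeping -- checking that the zero line is insensitive to the Adams splitting, so that no differential touching it is created or lost when passing between $E(AK)$ and $E(A\tilde K(1))$; once the base change is known to be flat and applied termwise, everything else is degree counting.
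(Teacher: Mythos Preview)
The paper does not supply a proof of this lemma; it is quoted from \cite{YaF} without argument, so there is no in-paper proof to compare against.

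Your approach is the natural one and is correct. The displayed relation $A\tilde K^{*,*'}(X)\cong A\tilde K(1)^{*,*'}(X)\otimes_{\tilde K(1)^*}\bZ_{(p)}[B,B^{-1}]$ is precisely the (motivic) Adams splitting of $p$-local $K$-theory into $p-1$ suspended copies of the Adams summand $A\tilde K(1)$. Once one knows that the AHss is functorial in the coefficient spectrum and that its differentials are linear over the coefficient ring (both standard for the slice spectral sequence of a module spectrum), your flat base-change argument goes through, and the degree count singling out the $j=0$ summand on the $(2*,*,0)$-line is correct.

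One small point of presentation: the assertion that ``in \emph{both} spectral sequences $d_r=0$ unless $r\equiv 1\pmod{2(p-1)}$'' does not follow for $E(AK)$ from degree reasons alone, since $\bZ_{(p)}[B,B^{-1}]$ is nonzero in every even degree and a priori only forces $r$ odd. It \emph{does} follow from the splitting you have already established, which is what your induction actually uses; you might rephrase so that the logical order is transparent. This is cosmetic and does not affect the validity of the argument.
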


From the above lemmas,  it is sufficient to consider the Morava $K$-theory
$A\tilde K(1)^{2*,*}(X) $ when we want to study $AK^{2*,*}(X)$.
For ease of notations, let us write simply (for smooth $X$)
\[K^{2*}(X)=A\tilde K(1)^{2*,*}(X),\quad so\ that \ \  K^*(pt.)\cong \bZ_{(p)}[v_1,v_1^{-1}],\]
and $k^{2*}(X)=A\tilde k(1)^{2*,*}(X)$, so that
$k^*(pt.)\cong \bZ_{(p)}[v_1].$
Hereafter of this paper, we only use  this 
Morava $K$-theory
$K^{2*}(X)$  instead of
$AK^{2*,*}(X)$ or $K_{alg}^0(X)$.

For each $n>1$ we can also define  the integral Morava $\tilde K(n)^*$-theory and 
let $gr(n)^*(X)$ be the induced graded ring, that is,
\[ gr(n)^*(X)=E(A\tilde K(n))_{\infty}^{2*,*,0}.\]
where $E(A\tilde K(n))^{*,*',*''}_r$ is the AHss
 converging to
the motivic  integral Morava K-theory $A\tilde K(n)^{*,*'}(X)$.
In particular, we see $gr(1)^*(X)\cong gr_{geo}^*(X)$.
The case $n=2$, the ring  $gr(2)^*(X)$
will used in $\S 10$ below.

We have the surjection 
$ E_2^{2*,*,0}\cong CH^*(X)\twoheadrightarrow
E_{\infty}^{2*,*,0}\cong gr(n)^*(X),$
which induces the isomorphism 
\[ CH^*(X)/I(n)\cong gr(n)^*(X), \quad I(n): ideal \ in\ CH^*(X).\]
Note that for a sufficient  large $n$, we see $I(n)=0$, but 
$res: A\tilde K(n)^*(X)\to A\tilde K(n)^*(\bar X)$ is far from
an isomorphism.
Moreover,  it seems that there is no direct relation
to $gr_{\gamma}^*(X)$, when $n>1$. 

Hereafter this paper, for ease of notations, we write
\[\tilde  k(n)^*(X)=A\tilde k(n)^*(X), \quad and \quad \tilde K(n)^*(X)=\tilde k(n)^*(X)[v_n^{-1}]\]
so that $\tilde k(n)^*\cong \bZ_{(p)}[v_n]$. 
(So $K^*(X)=\tilde K(1)^*(X)$.)

Similar but different approaches using the Morava $K(n)$-theory are given by Sechin and Semenov \cite{Sec-Se}, \cite{Sec}.

At last of this section,
we consider the restriction map
$ res_K: K^*(X)\to K^*(\bar X).$
By Panin \cite{Pa}, it is known that $K_{alg}^0(\bF)$ is torsion free
for each  twisted flag varieties $\bF=\bG/B_k$.
For each simply connected Lie group $G$, it is well known that
$res_K$ is surjective from Chevalley.  
Hence we have
\begin{thm} (Chevalley, Panin \cite{Pa})
When $G$ is simply connected, $res_K$ is 
an isomorphism.
This implies $K^*(\bF)\cong K^*(\bar \bF)\cong K^*(G/T)$.
\end{thm}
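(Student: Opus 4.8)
The plan is to show that $res_K$ is simultaneously injective and surjective, using as inputs Panin's torsion-freeness of $K^0_{alg}(\bF)$ (recalled above) and Chevalley's computation of the $K$-theory of the split flag variety. Write $\bar\bF=\bF\times_k\bar k$. Since $\bG$ splits over $\bar k$, we have $\bar\bF\cong(G_k/B_k)\times_k\bar k$, a split cellular variety, so $K^0(\bar\bF)$ is the free $\bZ_{(p)}$-module on the Schubert cell classes, and $K^0(\bar\bF)\cong K^0(G/T)$ via the topological realization (both sides being free on the Schubert basis). Because $G/T$ has only even cohomology, the odd $K$-groups vanish and $K^*(-)\cong K^0(-)\otimes_{\bZ_{(p)}}K^*(pt.)$, so it suffices to treat $K^0$, after which everything upgrades to $K^*$ by $v_1$-periodicity.

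For injectivity, choose a finite extension $k'/k$ over which $\bG$ splits (this exists because a $\bar k$-point of $\bG$ is defined over a finite subextension). The transfer for the finite flat base-change morphism $\bF_{k'}\to\bF$ composed with restriction is multiplication by $[k':k]$ on the torsion-free group $K^0(\bF)$, hence $res\colon K^0(\bF)\to K^0(\bF_{k'})$ is injective; and $K^0(\bF_{k'})\to K^0(\bar\bF)$ is an isomorphism, since $\bF_{k'}\cong(G_k/B_k)\times_k k'$ is cellular with cells defined over $k'$ and restriction is the identity on the resulting cell bases. Thus $res_K$, which is the composite, is injective.

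The essential step is surjectivity, and this is where simple connectedness of $G$ is used, through Chevalley's theorem $K^0(G_k/B_k)\cong R(T_k)\otimes_{R(G_k)}\bZ$: it says $K^0(\bar\bF)$ is generated, as a ring, by the classes of the line bundles $\mathcal L_\lambda$ attached to weights $\lambda$. Since $G$ is simply connected, the weight lattice equals the character lattice $X^*(T_k)=X^*(B_k)$, so every such $\lambda$ is a genuine character of $B_k$, and $\bG\times_{B_k}k_\lambda$ is then a line bundle on all of $\bF=\bG/B_k$ — here one uses that $B_k$, an extension of a split torus by a split unipotent group, is special, so the $B_k$-torsor $\bG\to\bF$ is Zariski-locally trivial. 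Its restriction to $\bar\bF$ is $\mathcal L_\lambda$, so $\Img(res_K)\subset K^0(\bar\bF)$ contains all ring generators, i.e.\ $res_K$ is surjective. Being both injective and surjective, $res_K$ is an isomorphism; combined with the identifications of the first paragraph this gives $K^*(\bF)\cong K^*(\bar\bF)\cong K^*(G/T)$. I expect surjectivity to be the main obstacle: it depends on Chevalley's presentation and on the point that, for a torsor of a simply connected group, all the line bundles generating $K^0(\bar\bF)$ already descend from $\bG$ to $\bF$ — which fails for adjoint groups (e.g.\ conics) and for twisted partial flag varieties such as Severi--Brauer varieties.
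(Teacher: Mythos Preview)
Your argument is correct and follows exactly the route the paper indicates: the paper does not give a detailed proof but merely records (just before the theorem) that $K^0_{alg}(\bF)$ is torsion free by Panin and that $res_K$ is surjective by Chevalley, then states the theorem. Your write-up fills in precisely these two ingredients --- the transfer/torsion-freeness argument for injectivity and the Chevalley presentation $K^0(G_k/B_k)\cong R(T_k)\otimes_{R(G_k)}\bZ$ together with descent of the line bundles $\mathcal L_\lambda$ for surjectivity --- so there is no difference in approach, only in level of detail.
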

Hence we see 
$ gr_{\gamma}(\bF)\cong gr_{\gamma}(G_k/B_k)\cong 
gr_{\gamma}(G/T).$
Moreover if $\bG$ is versal, then we have
$gr_{\gamma}(\bF)\cong gr_{geo}^*(\bF)$
from Theorem 3.1 and Lemma 4.4.  Thus we have
Lemma 1.1 in the introduction.

\section{connective $K$-theory $k^*(\bar R(\bG))$}

Let $h^*(-)$ be one of  coneccted oriented 
integral theories
$\Omega^*(X), k^*(-)=\tilde k(1)^*(X),$
$\tilde k(n)^*(X)$.
.  Since $H^*(G/T)$ is
torsion free and the AHss collapses,  we have
the $h^*$-algebra isomorphism
\[h^*(\bar \bF)\cong h^*(G/T)\cong h^*[y_1,...,y_k]\otimes S(t)/(p_1,...,p_r,\bar b_1,...,\bar b_{\ell})\]
where $p_i=y_i^{p^{r_i}}$ $mod(S(t)^+)$, $\bar b_i=b_i$
$mod(I_{\infty}S(t)^+)$.  

Recall that additively 
 \[h^*(\bar R(\bG)/p\cong h^*(\bar \bF)/(p,S(t)^+)
\cong 
h^*\otimes P(y)/p.\]
 (Recall $h^*(\bT)\cong h^*(\bar \bT)$ for the Tate motive $\bT$ in Theorem 3.3, and 
it is contained in $h^*\otimes S(t)^+$.) 
Hence we can identify
\[   h^*(\bar \bF)\supset h^*\otimes P(y)
\stackrel {(*)}{\to}  h^*(\bar R(\bG)) \cong h^*\otimes P(y)\]
 where $(*)=pr|_{k^*\otimes P(y)}$ is additive isomorphism.

For the following Lemma 5.1-Corollary 5.5, 
we assume $h^*(X)=\tilde k(n)^*(X)$.
(However, note $\tilde K(n)^*(\bar \bF)\not \cong
K(n)^*(\bF)$ for $n\ge 2$, in general.)

From Lemma 4.3, we see
\[gr(n)^*(X)/p   
\cong CH^*(X)/(p,I(n))\]
\[\cong (h^*(X)\otimes _{h^*}\bZ/p)/I(n)\cong h^*(X)/(I(n),I_{\infty}).\]
Here  note $I(n)$ is  the ideal of (higher) $v_n$-torsion elements in $h^*(X)\otimes_{h^*}\bZ/p$.  Hence for $a\in h^m(X)$, 
the following conditions are equivalent
\[a =0 \in gr(n)^m(X)/p \ \ 
\Longleftrightarrow  \ \ 
a\in (I_{\infty},I(n))h^*(X).\]

\begin{lemma}
Let $a\not =0\in \tilde K(n)^0(X)$.  Then there is
$s\ge 0$  such that $v_1^{-s}a=a'\in k^*(X)$,
$a'\not =0 \in gr(n)^{2s}(X),$ and 
$a'\not \in Im(I_{\infty}h^*(X))$.
\end{lemma}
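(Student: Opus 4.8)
The plan is to clear the $v_n$-denominator of $a$ in a minimal way and then extract the two assertions from that minimality (I read the "$v_1$" in the statement as "$v_n$", consistently with $\tilde K(n)$-theory, and "degree $2s$" in the paper's normalization). By construction $\tilde K(n)^*(X)\cong k^*(X)[v_n^{-1}]$, so the canonical map $\phi\colon k^*(X)\to\tilde K(n)^*(X)$ becomes an isomorphism after inverting $v_n$, its kernel is exactly the $v_n$-power torsion submodule, and every class in $\tilde K(n)^0(X)$ is of the form $v_n^{-m}\phi(b)$ for some $b\in k^*(X)$ and $m\ge 0$. Since $a\neq 0$, the set $\{m\ge 0: v_n^{m}a\in\Img\phi\}$ is nonempty; I would let $s$ be its minimum and fix a preimage $a'\in k^*(X)$ with $\phi(a')=v_n^{s}a$, i.e.\ $v_n^{-s}a=a'$ under the identification. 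The ambiguity in $a'$ (it is defined only modulo $\Ker\phi$) is harmless, because $\Ker\phi$ is precisely the torsion ideal killed in passing to $gr(n)^*(X)$.

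Next I would prove $a'\neq 0$ in $gr(n)^{2s}(X)$, using the identifications $gr(n)^*(X)\cong CH^*(X)/I(n)\cong k^*(X)/(v_n,I(n))$ from Lemma 4.3 and the discussion preceding the statement. First, $a'\notin I(n)$: otherwise $a'$ is $v_n$-power torsion, $\phi(a')=0$, hence $v_n^{s}a=0$ and $a=0$. Second, $a'$ is not $v_n$-divisible modulo torsion: if $a'\equiv v_n c\pmod{\Ker\phi}$ with $c\in k^*(X)$, then $v_n^{s}a=\phi(a')=v_n\phi(c)$, and since $v_n$ is invertible in $\tilde K(n)^*(X)$ this gives $v_n^{s-1}a=\phi(c)\in\Img\phi$, contradicting minimality of $s$ when $s\ge 1$. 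Combining, $a'$ is nonzero in $k^*(X)/(v_n,I(n))\cong gr(n)^{2s}(X)$.

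Finally, for $a'\notin\Img\bigl(I_{\infty}k^*(X)\bigr)$ I would use that here $I_{\infty}k^*(X)=(p,v_n)k^*(X)$ together with the equivalence recorded just before the lemma: $a'=0$ in $gr(n)^{2s}(X)/p$ iff $a'\in(I_{\infty},I(n))k^*(X)$. Since the previous paragraph already gives $a'\notin(v_n,I(n))k^*(X)$, it remains to see $a'$ is not $p$-divisible modulo $(v_n,I(n))$, and this is where one invokes that in the intended applications $\tilde K(n)^0(X)$ is a torsion-free $\bZ_{(p)}$-lattice (for the flag varieties $\bF$ by Panin's theorem), so that a $p$-primitive $a$ forces $a'$ to be $p$-primitive as well. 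I expect this last point — and the bookkeeping in the $s=0$ case, namely guaranteeing that the minimal preimage $a'$ is genuinely $v_n$- and $p$-primitive rather than accidentally divisible in $k^*(X)$ — to be the real obstacle; everything else is formal manipulation of the localization $k^*(X)\to k^*(X)[v_n^{-1}]$ and of the presentation $gr(n)^*(X)\cong CH^*(X)/I(n)$.
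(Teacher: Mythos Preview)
Your construction has the direction reversed. You take the \emph{smallest} $s\ge 0$ with $v_n^{s}a$ in the image of $k^*(X)$ --- that is, you clear the $v_n$-denominator of $a$. The paper does the opposite: it takes the \emph{largest} $s$ such that $a=v_n^{s}a'$ with $a'\in k^*(X)$, i.e.\ it factors out from $a$ as many copies of $v_n$ as possible. With the paper's choice $a'$ is automatically not $v_n$-divisible (otherwise $s$ could be increased by one), hence $a'$ is a $k^*$-module generator of $h^*(X)/p$; that single observation yields all three conclusions of the lemma at once, including $s\ge 0$.

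The ``$s=0$ obstacle'' you flag is not a boundary case but the whole content. For the spaces actually in play (e.g.\ $X=\bar R(\bG)$, where $k^*(X)\cong k^*\otimes P(y)$ is $v_n$-torsion free and $P(y)$ sits in non-negative degrees), every $a\in\tilde K(n)^0(X)$ already lies in $k^0(X)$; your minimum is then always $s=0$ and minimality gives no control whatsoever on the $v_n$-divisibility of $a'$. The paper's maximum, by contrast, picks out the top filtration degree of $a$ in $gr(n)^*(X)$, which is precisely what is wanted.

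Note also that your formula ``$\phi(a')=v_n^{s}a$, i.e.\ $v_n^{-s}a=a'$'' is internally inconsistent: from $\phi(a')=v_n^{s}a$ one gets $a=v_n^{-s}a'$, the opposite of both what you write and what the lemma's equation $v_n^{-s}a=a'$ (i.e.\ $a=v_n^{s}a'$) asks for. This sign slip is not cosmetic --- it is exactly the reversed construction above. Your worry about $p$-divisibility is reasonable and the paper's one-line proof is indeed terse on that point, but the $v_n$-direction is where the argument lives, and your setup does not capture it.
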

\begin{proof}
Suppose that  $a\not =0\in \tilde K(n)^0(X)$.
Since $K(n)^*(X)\cong h^*(X)[v_1^{-1}]$, there is
$s$ such that 
\[ a=v_n^{s}a'\quad for \ some \ a'\in h^{2s}(X).\]
Let us take the largest such $s$.
Then $a'$ is a $k^*$ module generator of $h^*(X)/p$.
So $s$ must be nonnegative.
\end{proof}

\begin{lemma}
For $a\in h^m(\bar R(\bG)) \subset  h^*\otimes P(y)$, if
$a=0\in gr(n)^*(R(\bG))/p$, then
\[a\in Im(I_{\infty}A(b)) \subset  h^*(\bar R(\bG))\cong
h^*\otimes P(y).\]
\end{lemma}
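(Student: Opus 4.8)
The plan is to trace the hypothesis $a = 0 \in gr(n)^*(R(\bG))/p$ back through the surjections $CH^*(\bF)/p \twoheadrightarrow CH^*(R(\bG))/p$ and the identification $h^*(\bar R(\bG))/p \cong h^* \otimes P(y)$, using the characterizations established just above: namely that $a = 0 \in gr(n)^m(R(\bG))/p$ is equivalent to $a \in (I_\infty, I(n))h^*(R(\bG))$, where $I(n)$ is the ideal of higher $v_n$-torsion elements. First I would lift $a$ to an element $\tilde a \in h^*(\bar\bF)$, and then to $CH^*(R(\bG))/p$ and further to $CH^*(\bF)/p$; by Corollary 3.2 (the versal case), every such lift is represented by a polynomial in the $t_i$, hence by Theorem 3.3 lies in the image of $A(b) \otimes S(t)/(p,b)$, and in fact — since $a$ lives in the $P(y)$-part $h^*(\bar R(\bG))$ — I would arrange that $\tilde a$ comes from $A(b)$ itself, via the surjection $i_A: A(b) \to CH^*(R(\bG))/p$ and its connective-theory analogue.

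The key step is then to analyze the condition "$a \in I(n) h^*(R(\bG))$" at the level of $A(b)$. The point is that $P(y)/p = h^*(\bar R(\bG))/(I_\infty)$ is concentrated in the subalgebra generated by the transgressive images, and a class in $A(b)$ that becomes a $v_n$-torsion element in $h^* \otimes P(y)$ must already lie in $I_\infty A(b)$: the $b_i$ map to genuine (nonzero, non-$v_n$-torsion) classes by Corollary 3.7 applied over $\bar k$, so no nontrivial polynomial in the $b_i$ with unit leading coefficient can be killed by inverting or by $v_n$-divisibility without picking up a factor from $I_\infty$. Concretely, I would argue: writing $a = \sum v_I a_I(b)$ with $a_I(b) \in A(b)$ monomials and $v_I$ monomials in $v_0=p, v_1, \dots$, the vanishing in $gr(n)^*/p = h^*(R(\bG))/(I_\infty, I(n))$ forces every term with $v_I = 1$ to vanish in $P(y)/p$, hence (by freeness of $h^* \otimes P(y)$ over $h^*$ and the regularity inputs from Lemma 3.2 of Karpenko) to come from $I_\infty$. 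That places $a$ in $\mathrm{Im}(I_\infty A(b))$ as claimed.

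The main obstacle I anticipate is bookkeeping the distinction between "$v_n$-torsion" ($I(n)$) and "$I_\infty$-divisible", and in particular verifying that the $I(n)$-contribution does not produce elements of $A(b)$ outside $I_\infty A(b)$. This requires knowing that $I(n) \subset I_\infty \cdot h^*(\bar R(\bG))$ after restricting to the split form — which should follow because $h^*(\bar R(\bG)) \cong h^* \otimes P(y)$ is a free $h^*$-module, so there is no $v_n$-torsion at all over $\bar k$, and every relation defining $I(n)$ for the twisted $R(\bG)$ maps into $I_\infty A(b)$ upon restriction. I would also need to be slightly careful that the identification $(*) = pr|_{h^* \otimes P(y)}$ is compatible with the map $i_A$ from $A(b)$; this is where Lemma 3.5 and Lemma 3.6 do the work, guaranteeing that the kernel of $pr$ on $CH^*(\bF)/p$ consists of elements $\sum b' t'$ with $|t'| > 0$, so that the $P(y)$-component is controlled purely by $A(b)$ modulo $I_\infty$.
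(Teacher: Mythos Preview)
Your approach is essentially the paper's, but with the emphasis inverted: what you flag in your third paragraph as an ``anticipated obstacle'' --- that $h^*(\bar R(\bG))\cong h^*\otimes P(y)$ is free over $h^*$ and hence has no $v_n$-torsion --- is in fact the entire proof. The paper simply observes that this freeness forces the condition $a\in (I_{\infty},I(n))\,h^*(R(\bG))$ to collapse, after restriction, to $a\in I_{\infty}\cdot\mathrm{Im}\bigl(h^*(R(\bG))\to h^*(\bar R(\bG))\bigr)$; versality (Theorem~3.1) then gives that this image comes from $h^*\otimes S(t)$, hence from $A(b)$ after projecting to $\bar R(\bG)$. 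Your detours through $CH^*(\bF)/p$, the decomposition $a=\sum v_I a_I(b)$, and the appeal to Corollary~3.7 ``over $\bar k$'' are unnecessary --- and the last is misapplied, since Corollary~3.7 concerns the twisted form while over $\bar k$ the $b_i$ vanish in $CH^*/p$ (they are nonzero in $h^*\otimes P(y)$ only by Corollary~2.1, which is the relevant input).
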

\begin{proof}
The fact $a=0\in gr(n)^*(R(\bG))/p$ implies 
that $a\in I_{\infty}h^*(R(\bG))$ in $h^*(\bar R(\bG))
\cong k^*\otimes P(y)$ (which is no $v_n-torsion$).
When $\bG$ is versal, we see 
\[  a=0 \quad  mod(I_{\infty}(h^*(R(\bG))
 )=mod(I_{\infty}(h^*\otimes S(t))) =
mod(I_{\infty}\otimes A(b)) \]
 in $h^m(\bar R(\bG))$.
\end{proof}

The converse of the above lemma seems difficult to
show.  We only see 
\begin{lemma}  
  If $a\not=0\in gr(n)^m(R(\bG))/p$ and $a\in
Im(I_{\infty}A(b))\subset h^*\otimes P(y)/p$, then there is $a'\in Im(I_{\infty}A(b)
\subset h^*(R(\bG))/p$
such that $a-a'=0\in gr(n)^m(X)$.
\end{lemma}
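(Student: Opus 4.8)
The plan is to run the whole argument through base change to $\bar k$, using essentially only functoriality together with two inputs already in hand: that for versal $\bG$ every element of $A(b)$ is induced from $h^*(BB_k)$ and therefore maps compatibly to $h^*(\bF)$, $h^*(R(\bG))$, $h^*(\bar \bF)$ and $h^*(\bar R(\bG))$ (Theorem 3.1 and Theorem 3.3), and that $h^*(\bar R(\bG))\cong h^*\otimes P(y)$ carries no $v_n$-torsion, so that $gr(n)^*(\bar R(\bG))/p\cong P(y)/p$ and the map on $gr(n)$ induced from $\bar R(\bG)$ is just reduction modulo $I_{\infty}$. I read $a$ as an element of $h^*(R(\bG))/p$ whose class in $gr(n)^m(R(\bG))/p$ is the given nonzero one, and $res\colon h^*(R(\bG))/p\to h^*(\bar R(\bG))/p$ for restriction.

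First I would invoke the hypothesis to fix a presentation $res(a)=\sum_j c_j\,\overline{\beta_j}$ in $h^*(\bar R(\bG))/p\cong(h^*\otimes P(y))/p$ with $c_j\in I_{\infty}$ and $\beta_j\in A(b)$, where $\overline{\beta_j}$ is the image of $\beta_j$. Next, since each $\beta_j$ lifts canonically through $h^*(BB_k)$, I would transport the expression upstairs and set
\[ a':=\sum_j c_j\,\beta_j\ \in\ h^*(R(\bG))/p, \]
with $\beta_j$ now read as the image of $\beta_j\in A(b)$ along $A(b)\to h^*(BB_k)/p\to h^*(\bF)/p\to h^*(R(\bG))/p$; then $a'\in Im(I_{\infty}A(b))\subset h^*(R(\bG))/p$ by construction. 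Because the composite $A(b)\to h^*(R(\bG))/p\stackrel{res}{\to}h^*(\bar R(\bG))/p$ agrees with the direct map $A(b)\to h^*(\bar R(\bG))/p$ — both being induced from $h^*(BB_k)$, by naturality of Karpenko's surjection and of the Petrov--Semenov--Zainoulline decomposition under the base change $Spec\,\bar k\to Spec\,k$ — one gets $res(\beta_j)=\overline{\beta_j}$, and hence
\[ res(a-a')=res(a)-\sum_j c_j\,res(\beta_j)=0. \]
Thus $a-a'$ lies in the kernel of $res$, so its class in $gr(n)^m(\bar R(\bG))/p\cong P(y)/p$ vanishes; this is the assertion $a-a'=0\in gr(n)^m(X)$, with $X$ understood through the base-changed picture.

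The hard part, and the reason this is only the partial converse of the preceding lemma rather than the statement ``$a\in Im(I_{\infty}A(b))$ forces $a=0$ in $gr(n)$'', is that the vanishing of $a-a'$ after base change cannot be promoted to vanishing in $gr(n)^m(R(\bG))/p$ itself: the element $a'$ already dies there (it sits in $I_{\infty}h^*(R(\bG))$), so such a promotion would force $a=0$ in $gr(n)^m(R(\bG))/p$, against the hypothesis $a\ne 0$. The obstruction is precisely the (large) kernel of $gr(n)^*(R(\bG))\to gr(n)^*(\bar R(\bG))$, stressed already for $\tilde K(n)$, $n\ge 2$, in $\S 4$, and I see no way to control it in general; the argument therefore has to stop at the base-changed conclusion. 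The one routine point I would be careful about is the compatibility $res\circ\big(A(b)\to h^*(R(\bG))/p\big)=\big(A(b)\to h^*(\bar R(\bG))/p\big)$, which I would reduce to the naturality of Theorem 3.3 (and of Theorem 3.1) with respect to base change.
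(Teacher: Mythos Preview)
The paper states this lemma without proof; it appears only as a remark that the full converse of Lemma~5.2 ``seems difficult to show'' and that one can only obtain the weaker statement recorded here. So there is nothing to compare against on the paper's side.

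Your argument is correct and is the natural one: lift the presentation $res(a)=\sum_j c_j\overline{\beta_j}$ with $c_j\in I_\infty$, $\beta_j\in A(b)$ along the Karpenko surjection $h^*(BB_k)\to h^*(R(\bG))$ to obtain $a'=\sum_j c_j\beta_j\in h^*(R(\bG))/p$, and observe that $res(a-a')=0$ by the compatibility of Theorems~3.1 and~3.3 with base change. Your reading of the conclusion ``$a-a'=0\in gr(n)^m(X)$'' as the statement over $\bar R(\bG)$ is the only sensible one, for exactly the reason you give: since $a'\in I_\infty h^*(R(\bG))$ one has $a'=0$ in $gr(n)^*(R(\bG))/p$, so $a-a'=a\neq 0$ there by hypothesis. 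That observation is precisely why the author singles this out as only a partial converse. Your closing paragraph identifying the obstruction with the kernel of $gr(n)^*(R(\bG))\to gr(n)^*(\bar R(\bG))$ is a fair gloss on the paper's remark in \S4 that $res$ on $\tilde K(n)$ is far from an isomorphism for $n\ge 2$.
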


From Lemma 3.5, we show additively
\[ Ker(pr)=k^*\otimes P(y)\otimes  S(t)^+/(b)\subset h^*(\bF)\cong k^*\otimes H^*(G/T).\]

 \begin{lemma}
Let $y,y'\in h^*\otimes P(y)\subset h^*(\bF)$.
Moreover $a,a'\in Ker(pr)$ with 
$a\in I_{\infty}^s h^*(\bar \bF) $,  $a'\in I_{\infty}^r
h^*(\bar \bF)$.  Then  we have in $h^*(\bar \bF)$
\[ (y+a)(y'+a')=yy'\quad mod(Ker(pr), I_{\infty}^{s+r+1}).\]
That is, $pr((y+a)(y'+a'))=pr(yy')$ in $h^*(\bar R(\bG))/
(I_{\infty}^{s+r+1}).$
\end{lemma}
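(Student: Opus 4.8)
The plan is to expand the product and observe that every term other than $yy'$ lies in $Ker(pr)$, so that in fact $pr\bigl((y+a)(y'+a')\bigr)=pr(yy')$ on the nose; the weaker congruence modulo $(Ker(pr),I_\infty^{s+r+1})$ then follows a fortiori, and the $I_\infty$-powers are written only because that is the shape of the statement used later. Concretely, regarding $y,y',a,a'$ as elements of $h^*(\bar\bF)$, one has
\[(y+a)(y'+a')-yy'=ya'+y'a+aa',\]
and since $a,a'\in Ker(pr)$ it suffices to know that $Ker(pr)$ is an ideal of $h^*(\bar\bF)$.

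To see this I would return to the presentation recalled at the beginning of this section, $h^*(\bar\bF)\cong h^*(G/T)$: there the image of $h^*(BT)$ is the subring $S(t)/(b)$, and $h^*(\bar\bF)$ is free as an $S(t)/(b)$-module on the monomials in the $y_i$ (the overflow terms $\tilde y_i^{p^{r_i}}$ lie in $S(t)^+\!\cdot h^*(\bar\bF)$ and so only express higher powers). Consequently $Ker(pr)=h^*\otimes P(y)\otimes S(t)^+/(b)$ is precisely $\bigl(S(t)^+/(b)\bigr)\cdot h^*(\bar\bF)$, i.e. the ideal of $h^*(\bar\bF)$ generated by the positive-degree elements of $S(t)/(b)=\Img\bigl(h^*(BT)\to h^*(\bar\bF)\bigr)$. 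In particular $Ker(pr)$ is an ideal, hence $ya',\,y'a,\,aa'\in Ker(pr)$, and therefore $pr\bigl((y+a)(y'+a')\bigr)=pr(yy')$ in $h^*(\bar R(\bG))$.

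To record this in the stated $I_\infty$-filtered form one uses that $h^*(\bar\bF)$ is $h^*$-free and $Ker(pr)$ is a direct $h^*$-summand of it, so $Ker(pr)\cap I_\infty^m h^*(\bar\bF)=I_\infty^m\,Ker(pr)$ for every $m$; hence $a=\sum_{|J|=s}v_J m_J$ and $a'=\sum_{|K|=r}v_K m'_K$ with $m_J,m'_K\in Ker(pr)$, whence $aa'=\sum v_J v_K\,(m_Jm'_K)\in I_\infty^{s+r}Ker(pr)$, $ya'\in I_\infty^{r}Ker(pr)$ and $y'a\in I_\infty^{s}Ker(pr)$ — all lying in $Ker(pr)$ and hence mapping to $0$ in $h^*(\bar R(\bG))/I_\infty^{s+r+1}$. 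I do not expect a genuine obstacle: the single point that has to be nailed down is the ideal property of $Ker(pr)$ (equivalently, the compatibility of the additive splitting $h^*(\bar\bF)=(h^*\otimes P(y))\oplus Ker(pr)$ with multiplication), which is immediate from the $\Img(h^*(BT))$-module structure of $h^*(G/T)$ recalled above; the $I_\infty$-bookkeeping is then just slack, kept because it is what the later sections need when propagating the $I_\infty$-filtration through products of relations.
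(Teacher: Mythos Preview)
Your approach is correct and in fact cleaner than the paper's: once one knows that $Ker(pr)$ is an ideal of $h^*(\bar\bF)$, the three cross-terms $ya'$, $y'a$, $aa'$ all lie in $Ker(pr)$ and the congruence holds on the nose, so the $I_\infty^{s+r+1}$ clause is indeed slack. The paper does not argue this way for the term $aa'$: it expands $t_I t_{I'}$ in the filtration $gr\,S(t)\cong A(b)\otimes S(t)/(b)$ and splits $aa'=x_{21}+x_{22}$, where $x_{21}$ (the $S(t)^+/(b)$-part) lands in $Ker(pr)$ and $x_{22}$ (the $A(b)^+$-part) is only shown to lie in $I_\infty^{s+r+1}$ via $b_J\in I_\infty$. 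Interestingly, for the term $x_1=ya'+y'a$ the paper's appeal to Lemma~3.5 is already implicitly using the ideal property you isolate, so your argument makes explicit what the paper leaves tacit and then applies it uniformly. One small imprecision in your write-up: the image of $h^*(BT)$ in $h^*(\bar\bF)$ is \emph{not} literally $S(t)/(b)$, since each $b_i$ has nonzero image lying in $I_\infty\cdot h^*(\bar\bF)$ (cf.\ Corollary~2.1); the clean way to see that $Ker(pr)$ is an ideal is to note that it coincides with the kernel of the ring map $\pi^*:h^*(G/T)\to h^*(G)$ (equivalently, with the ideal $(t_1,\dots,t_\ell)$ generated by the images of the $t_i$), since both are additively $h^*\otimes P(y)\otimes S(t)^+/(b)$ in the chosen basis.
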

\begin{proof}
Let $x=y+a,x'=y'+a'\in h^*(\bar \bF)$ and 
 $a\in I_{\infty}^r h^*(\bar \bF)$.            
From Lemma 3.5,  we can write
\[ a=\sum a_Iy_It_I \quad a_I\in I_{\infty}^s,\ \ t_I
\in S(t)^+/(b)\quad in\ h^*(\bar \bF).\]

Similarly we take $a'=\sum a_{I'}y_{I'}t_{I'}$.  Then 
\[ x\cdot x'-y\cdot y'=x_1+x_2\quad in\ h^*(\bar \bF)\]
\[ with \ \ x_1=y\sum a_{I'}y_{I'}t_{I'}+y'\sum a_Iy_It_I,
\ \ 
and
\ \  x_2=(\sum a_Iy_It_I)(\sum a'_{I'}y_{I'}t_{I'}).\]
Here we see $pr(x_1)=0$
also from Lemma 3.5. Let us write
\[ x_2=x_{21}+x_{22} \quad with\  x_{21}=
\sum a_{I''}y_{I''}t_{I''}, \ a_{I''}\in I_{\infty}^{r+s}\]
\[and \quad x_{22}=\sum a_{J}y_Jt_Jb_J  \  with \
a_J\in I_{\infty}^{r+s},\ b_J\in A_{(p)}^+(b)= \bZ_{(p)}[b_1,...,b_{\ell}]^+. \]
Here we have 
$pr(x_{21})=0$ and 
$x_{22}=0\ mod(I_{\infty}^{r+s+1}), $
since $b_J\in (I_{\infty}).$
\end{proof}
\begin{cor}  Let $pr(b_{i_j})=pr(b_{i_j}')$ for $b_i'\in k^*(\bar \bF)$.  Then
\[ pr(b_{i_1}...b_{i_s})=pr(b_{i_1}'...b_{i_s}')\quad in\ 
k^*(\bar R(\bG))/(I_{\infty}^{s+1}).\]
\end{cor}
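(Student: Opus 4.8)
The plan is to prove Corollary~5.5 by induction on the number of factors, with the preceding lemma (the one computing $pr((y+a)(y'+a'))$) as the engine. First I would fix notation and record two structural facts. Recall from Corollary~2.1 that, since $v_{0}=p$, the transgressive element $b_{i_{j}}$ lies in $I_{\infty}k^{*}(\bar\bF)$; I shall likewise use $b_{i_{j}}'\in I_{\infty}k^{*}(\bar\bF)$, so that $b_{i_{j}}-b_{i_{j}}'\in I_{\infty}k^{*}(\bar\bF)$ (this is needed: without it the error would acquire only a single $I_{\infty}$-factor per multiplication and the statement would fail). Using the splitting of free $k^{*}$-modules $k^{*}(\bar\bF)\cong(k^{*}\otimes P(y))\oplus Ker(pr)$, write $b_{i_{j}}=y_{j}+a_{j}$ with $y_{j}\in k^{*}\otimes P(y)$ and $a_{j}\in Ker(pr)$; the hypothesis $pr(b_{i_{j}})=pr(b_{i_{j}}')$ forces $b_{i_{j}}'=y_{j}+a_{j}'$ with the \emph{same} $y_{j}$ and some $a_{j}'\in Ker(pr)$, and since $b_{i_{j}},b_{i_{j}}'\in I_{\infty}k^{*}(\bar\bF)$ all of $y_{j},a_{j},a_{j}'$ lie in $I_{\infty}k^{*}(\bar\bF)$. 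It then suffices to prove
\[ pr(b_{i_{1}}\cdots b_{i_{m}})\equiv pr(y_{1}\cdots y_{m})\pmod{I_{\infty}^{m+1}}\qquad(1\le m\le s),\]
since the right-hand side is built only from the common parts $y_{j}$; applying this with $m=s$ to both the given product and the primed product yields the corollary.

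I would prove the displayed congruence by induction on $m$. For $m=1$ it is the identity $pr(y_{1}+a_{1})=pr(y_{1})$, valid because $a_{1}\in Ker(pr)$. For the step, observe that $b_{i_{1}}\cdots b_{i_{m}}\in I_{\infty}^{m}k^{*}(\bar\bF)$, so in the splitting its $(k^{*}\otimes P(y))$-component $P_{m}$ and its $Ker(pr)$-component $A_{m}$ both lie in $I_{\infty}^{m}$. The induction hypothesis gives $pr(P_{m})=pr(b_{i_{1}}\cdots b_{i_{m}})\equiv pr(y_{1}\cdots y_{m})\pmod{I_{\infty}^{m+1}}$; since $pr$ restricts to a $k^{*}$-linear isomorphism $k^{*}\otimes P(y)\cong k^{*}(\bar R(\bG))$, this forces $P_{m}\equiv L_{m}\pmod{I_{\infty}^{m+1}}$, where $L_{m}$ is the $(k^{*}\otimes P(y))$-component of $y_{1}\cdots y_{m}$. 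Now factor $b_{i_{1}}\cdots b_{i_{m+1}}=(P_{m}+A_{m})(y_{m+1}+a_{m+1})$ and apply the preceding lemma with error terms $A_{m}\in I_{\infty}^{m}\cap Ker(pr)$ and $a_{m+1}\in I_{\infty}^{1}\cap Ker(pr)$: it gives $pr(b_{i_{1}}\cdots b_{i_{m+1}})\equiv pr(P_{m}y_{m+1})\pmod{I_{\infty}^{m+2}}$. Finally $P_{m}y_{m+1}\equiv L_{m}y_{m+1}\pmod{I_{\infty}^{m+2}}$ because $P_{m}-L_{m}\in I_{\infty}^{m+1}$ and $y_{m+1}\in I_{\infty}$, while $L_{m}y_{m+1}=y_{1}\cdots y_{m+1}-w\,y_{m+1}$ where $y_{1}\cdots y_{m}=L_{m}+w$ with $w\in Ker(pr)$, and $w\,y_{m+1}\in Ker(pr)$ by the fact (from Lemma~3.5, used exactly as for $x_{1}$ in the proof of the preceding lemma) that $(k^{*}\otimes P(y))\cdot Ker(pr)\subseteq Ker(pr)$. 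Hence $pr(P_{m}y_{m+1})=pr(L_{m}y_{m+1})=pr(y_{1}\cdots y_{m+1})$, closing the induction.

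The only genuinely delicate point I anticipate is the bookkeeping of the $I_{\infty}$-adic degrees: one must check at each stage that an $m$-fold product of the $b$'s truly sits in $I_{\infty}^{m}$, and that the ``$+1$'' in the conclusion of the preceding lemma is not consumed too early --- the comparison $P_{m}\equiv L_{m}$ holds only modulo $I_{\infty}^{m+1}$, so it is essential that it is multiplied by $y_{m+1}\in I_{\infty}^{1}$, which restores the exponent $m+2$ required to iterate all the way to $m=s$; this is also precisely where the assumption $b_{i_{j}}'\in I_{\infty}k^{*}(\bar\bF)$ enters. Everything else is a repeated application of the preceding lemma together with the injectivity of $pr$ on $k^{*}\otimes P(y)$ and the module identity $(k^{*}\otimes P(y))\cdot Ker(pr)\subseteq Ker(pr)$.
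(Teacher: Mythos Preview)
Your proof is correct and follows the same inductive route as the paper's terse argument (split the partial product into its $k^*\otimes P(y)$- and $Ker(pr)$-components, both lying in $I_\infty^{s-1}$, then apply Lemma~5.4), with the $I_\infty$-adic bookkeeping made explicit where the paper leaves it to the reader. You are also right that the hypothesis $b_{i_j}'\in I_\infty k^*(\bar\bF)$ is needed for the stated conclusion; the paper's own argument uses it just as implicitly, and in every application of the corollary the $b_{i_j}'$ are chosen that way.
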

\begin{proof} 
Let $x=b_{i_1}...b_{i_{s-1}}$.  Then $x\in I_{\infty}^{s-1}$.
So $y=pr(x)$ and $a=x-y\in Ker(pr)$  are also in 
$(I_{\infty}^{s-1})$. Hence we have the result from the preceding lemma.
\end{proof}

The results Lemma 5.1-Corollary 5.5 hold also
for $\Omega^*(X)=k(\infty)^*(X)$.
Here we identify
$h^*(-)$ (resp. $\tilde K(n)^*(\infty),$  $gr(\infty)^*(X),$  $v_n$-torsion)
by  $\Omega^*(X)$ (resp. , 
 $\Omega ^*(X)[v_i^{-1}|i\ge 1]$,
$gr(\infty)^*(n)$,
$(v_i|i\ge 1)$).

\section{The spin group $Spin(2\ell+1)$ and $p=2$}

At first we consider the
orthogonal groups $G=SO(m)$ and $p=2$
, while it is not simply connected.
The $mod(2)$-cohomology is written as ( see for example \cite{Mi-Tod}, \cite{Ni})
\[ grH^*(SO(m);\bZ/2)\cong \Lambda(x_1,x_2,...,x_{m-1}) \]
where $|x_i|=i$, and the multiplications are given by $x_s^2=x_{2s}$.

For ease of argument,  we only consider the case
$m=2\ell+1$ so that
\[ H^*(G;\bZ/2)\cong P(y)\otimes \Lambda(x_1,x_3,...,x_{2\ell-1}) \]
\[ grP(y)/2\cong \Lambda(y_2,...,y_{2\ell}), \quad 
letting\ y_{2i}=x_{2i}\ \ (hence \ y_{4i}=y_{2i}^2).\]

The Steenrod operation is given as 
$Sq^k(x_i)= {i\choose k}(x_{i+k}).$
The $Q_i$-operations are given by Nishimoto \cite{Ni}
\[Q_nx_{2i-1}=y_{2i+2^{n+1}-2},\qquad Q_ny_{2i}=0.\]
It is well known that   the transgression 
$b_i=d_{2i}(x_{2i-1})=c_i$ is the  $i$-th elementary symmetric function
on $S(t)$. 
 Moreover we see that 
$Q_0(x_{2i-1})=y_{2i}$ in $H^*(G;\bZ/2)$.
From Corollary 2.1, we have
\begin{cor} In $BP^*(G/T)/\II$, we have  
\[c_i= 2y_{2i}+\sum_{n\ge 1} v_ny(2i+2^{n+1}-2) \]
for some $y(j)$ with $\pi^*(y(j))=y_{j}$.
\end{cor}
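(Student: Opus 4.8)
The plan is to deduce Corollary 6.1 directly from Corollary 2.1 by specializing the general transgression formula to the orthogonal case, using the explicit $Q_i$-computations of Nishimoto recalled just above. Recall that Corollary 2.1 says: if $d_{|x|+1}(x)=b\neq 0$ in $H^*(G/T)/p$ for $x\in H^*(G;\bZ/p)$, then there is a lift $b\in BP^*(BT)\cong BP^*\otimes S(t)$ such that $b=\sum_{i\ge 0} v_i\,y(i)$ in $BP^*(G/T)/\II$, where $\pi^*y(i)=Q_ix$ (and $v_0=p$). So the whole content is to apply this with $x=x_{2i-1}$.

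\begin{proof}
Apply Corollary 2.1 with $x=x_{2i-1}\in H^*(G;\bZ/2)$. By the discussion above, the transgression is $d_{2i}(x_{2i-1})=b_i=c_i$, the $i$-th elementary symmetric function in $S(t)$, and $c_i\neq 0\in H^*(G/T)/2$ since $(c_1,\dots,c_\ell)$ is the regular sequence defining $grH^*(G/T)/2$. Hence the hypothesis of Corollary 2.1 is satisfied with $x=x_{2i-1}$, $b=c_i$, and we obtain a lift $c_i\in BP^*(BT)$ with
\[ c_i=\sum_{n\ge 0} v_n\,y(n)\in BP^*(G/T)/\II,\qquad \pi^*y(n)=Q_n x_{2i-1}. \]
Now insert Nishimoto's formulas: $Q_0x_{2i-1}=y_{2i}$ and, for $n\ge 1$, $Q_nx_{2i-1}=y_{2i+2^{n+1}-2}$. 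Writing $v_0=p$ and relabelling $y(n)$ as $y(2i)$ for $n=0$ and $y(2i+2^{n+1}-2)$ for $n\ge1$ (these are the chosen lifts in $H^*(G/T)$ of the corresponding elements of $P(y)$ under $\pi^*$), the displayed sum becomes exactly
\[ c_i=2y_{2i}+\sum_{n\ge 1} v_n\,y(2i+2^{n+1}-2)\in BP^*(G/T)/\II, \]
which is the assertion of Corollary 6.1. (The sum is finite since $y(j)=0$ once $j$ exceeds $2\ell$, i.e. once $j$ lies above the top degree of $P(y)$.)
\end{proof}

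There is essentially no obstacle here: the statement is a bookkeeping specialization of Corollary 2.1, and the only thing one must be careful about is matching conventions --- namely that $v_0=p$ accounts for the $2y_{2i}$ term via $Q_0=\beta$ (and in the orthogonal case $Q_0x_{2i-1}=y_{2i}$, so the Bockstein contribution is the integral class $y_{2i}$ with coefficient $p=2$), and that the Milnor operation $Q_n$ for $n\ge 1$ raises degree by $2\cdot 2^n-1$, so $Q_nx_{2i-1}$ has degree $(2i-1)+(2^{n+1}-1)=2i+2^{n+1}-2$, consistent with Nishimoto's formula. If anything needs a remark it is only the claim that each $Q_nx_{2i-1}$ is the $\pi^*$-image of a genuine class $y(j)\in H^*(G/T)$ rather than merely an element of $H^*(G)$; but this is exactly the content of Corollary 2.1 (the $y(i)$ there live in $H^*(G/T;\bZ/p)$), so nothing further is needed.
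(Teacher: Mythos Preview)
Your proof is correct and follows exactly the approach the paper intends: the corollary is stated immediately after ``From Corollary 2.1, we have'', and your argument is precisely this specialization, plugging in $x=x_{2i-1}$, $b=c_i$, and Nishimoto's formulas $Q_nx_{2i-1}=y_{2i+2^{n+1}-2}$.
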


We have $c_i^2=0$ in $CH^*(\bF)/2$ from
the natural inclusion $SO(2\ell+1)\to Sp(2\ell+1)$
(see \cite{Pe}, \cite{YaC}) for the symplectic group
$Sp(2\ell+1)$.
Thus we have 
\begin{thm} (\cite{Pe}, \cite{YaC}) 
Let $(G,p)=(SO(2\ell+1),2)$ and $\bF=\bG/B_k$
be versal.  
Then $CH^*(\bF)$ is torsion free, and 
\[ CH^*(\bF)/2\cong S(t)/(2,c_1^2,...,c_{\ell}^2),\quad
CH^*(R(\bG))/2\cong \Lambda(c_1,...,c_{\ell}).\]
\end{thm}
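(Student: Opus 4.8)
The plan is to establish the two displayed formulas by combining the versal-flag-variety structure results from $\S 3$ with the explicit $\mathrm{mod}\,2$ cohomology of $SO(2\ell+1)$. First I would record the input data: by the discussion at the start of $\S 6$ we have $H^*(G;\bZ/2)\cong P(y)\otimes\Lambda(x_1,x_3,\dots,x_{2\ell-1})$ with $P(y)/2\cong\Lambda(y_2,\dots,y_{2\ell})$ (via $y_{2i}=x_{2i}$, so $y_{4i}=y_{2i}^2$), the transgressions are $b_i=d_{2i}(x_{2i-1})=c_i$, the $i$-th elementary symmetric polynomial in $S(t)$, and by Theorem 3.3 we have $CH^*(\bar R(\bG))/2\cong P(y)/2\cong\Lambda(y_2,\dots,y_{2\ell})$ and $CH^*(\bF)/2\cong CH^*(R(\bG))/2\otimes S(t)/(2,b_1,\dots,b_\ell)$.

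The key algebraic point is the relation $c_i^2=0$ in $CH^*(\bF)/2$. I would prove this from the inclusion $SO(2\ell+1)\hookrightarrow Sp(2\ell+1)$ (equivalently, from the map of versal torsors / classifying spaces, cf. \cite{Pe}, \cite{YaC}): pulling back the generators of $CH^*$ of the symplectic flag variety, where the analogous Chern classes already satisfy $c_i^2=0$ on the nose (the symplectic flag variety having $H^*$ with the Pontryagin-class relations), forces $c_i^2=0$ here as well. Since the $c_i=b_i$ form a regular sequence in $S(t)$, the subring they generate is exactly $\Lambda(c_1,\dots,c_\ell)$ modulo nothing extra except these square-zero relations, so $c_i^2=0$ is the \emph{only} relation among the $c_i$ beyond those coming from the regular sequence. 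Now apply Lemma 3.4 with $f_i(b)=b_i^2=c_i^2$: these vanish in $CH^*(\bF)/2$ by the above, and $CH^*(R(\bG))/2\cong A(b)/(c_1^2,\dots,c_\ell^2)\cong\Lambda(c_1,\dots,c_\ell)$ — here I must check that $\Lambda(c_1,\dots,c_\ell)$ has the right size to match $P(y)/2\cong\Lambda(y_2,\dots,y_{2\ell})$, which it does since both are exterior algebras on $\ell$ generators (the $c_i$ correspond under $pr$ to the $y_{2i}$, as $Q_0 x_{2i-1}=y_{2i}$ lifts $b_i=c_i$, cf. Corollary 6.1). Lemma 3.4 then yields $CH^*(\bF)/2\cong S(t)/(2,c_1^2,\dots,c_\ell^2)$, and freeness of $CH^*(\bF)$ follows from Panin's theorem (Theorem 4.9) together with the fact that the integral version $S(t)/(c_1^2,\dots,c_\ell^2)$ has no $2$-torsion, since the associated graded (with respect to the $b$-filtration) is torsion-free: $\Lambda(c_1,\dots,c_\ell)\otimes S(t)/(c)$ over $\bZ_{(2)}$ matches the torsion-free $CH^*(\bar\bF)\cong H^{2*}(G/T)$ in ranks, so no torsion can be hidden.

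The main obstacle I anticipate is making the comparison with the symplectic group fully rigorous — specifically, verifying that the pullback map $CH^*(\mathrm{Sp}\text{-flag})/2\to CH^*(\bF)/2$ is a ring map hitting the $c_i$ and that the target relations $c_i^2=0$ genuinely descend (one must be careful that the versal $SO$-torsor maps to an $Sp$-torsor in a way compatible with the Borel subgroups, and that no lower-degree correction terms appear). A secondary subtlety is the bookkeeping that $\Lambda(c_1,\dots,c_\ell)$ and $P(y)/2$ have matching Poincaré series so that the surjection $CH^*(R(\bG))/2\twoheadrightarrow\Lambda(c_1,\dots,c_\ell)$ is forced to be an isomorphism — this should be immediate once one notes $|c_i|=2i$ matches $|y_{2i}|=2i$ under the degree convention $|x|=2*$ of $\S 3$. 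Everything else is a direct application of Lemma 3.4 and Theorem 3.3.
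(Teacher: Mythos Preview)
Your approach is essentially the paper's: the paper gives no proof beyond the one-sentence remark that $c_i^2=0$ in $CH^*(\bF)/2$ via the inclusion $SO(2\ell+1)\hookrightarrow Sp(2\ell+1)$ and cites \cite{Pe}, \cite{YaC} for the rest, while you fill in exactly the expected details (Lemma 3.4, the dimension match with $P(y)/2$).

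One genuine slip to correct: your torsion-freeness argument is not quite right. Panin's theorem (Theorem 4.6) is about $K^0$, not about $CH^*$, and it does not by itself bound Chow-torsion. More importantly, the \emph{integral} ring $S(t)/(c_1^2,\dots,c_\ell^2)$ is \emph{not} a model for $CH^*(\bF)$: from Corollary 6.1 one has $c_i=2y_{2i}$ in $CH^*(\bar R(\bG))$, so $c_i^2=4y_{4i}=2c_{2i}$ there when $2i\le \ell$, i.e.\ the integral relations are $c_i^2-2c_{2i}=0$ rather than $c_i^2=0$. The clean way to get torsion-freeness is the rank count you essentially already have: once $CH^*(R(\bG))/2\cong \Lambda(c_1,\dots,c_\ell)$ is established (surjection from $c_i^2=0$; injection because $\dim_{\bZ/2}CH^*(R(\bG))/2\ge \operatorname{rank}_{\bZ_{(2)}}CH^*(R(\bG))=\operatorname{rank}_{\bZ_{(2)}}CH^*(\bar R(\bG))=2^\ell$), a finitely generated $\bZ_{(2)}$-module $M$ with $\dim_{\bZ/2}M/2=\operatorname{rank}\,M$ is free, so $CH^*(R(\bG))$ and hence $CH^*(\bF)\cong CH^*(R(\bG))\otimes S(t)/(b)$ is torsion-free. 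Also watch the logical order with Lemma 3.4: its \emph{hypothesis} is the presentation of $CH^*(R(\bG))/2$, so you must establish $CH^*(R(\bG))/2\cong\Lambda(c_1,\dots,c_\ell)$ first (by the dimension argument), and only then invoke Lemma 3.4 to obtain $CH^*(\bF)/2\cong S(t)/(2,c_1^2,\dots,c_\ell^2)$.
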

\begin{cor} We have 
$ gr_{\gamma}(\bF)\cong gr_{geo}(\bF)\cong CH^*(\bF).$
\end{cor}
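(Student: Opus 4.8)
The plan is to deduce both isomorphisms from Theorem 6.3 together with the general facts of \S 3 and \S 4; essentially all the real work has already been done in proving that $CH^*(\bF)$ is torsion free.

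For the isomorphism $gr_{\gamma}^*(\bF)\cong gr_{geo}^*(\bF)$ I would argue exactly as in the remark preceding Lemma 1.1: since $\bG$ is versal, Theorem 3.1 (equivalently Corollary 3.2) shows that $CH^*(\bF)=AE_2^{2*,*,0}(\bF)$ is multiplicatively generated by the first Chern classes $t_i\in S(t)$, hence so is its quotient $AE_{\infty}^{2*,*,0}(\bF)$, and Lemma 4.4 then gives $gr_{\gamma}^{2*}(\bF)\cong gr_{geo}^{2*}(\bF)$. This is just Lemma 1.1 specialized to $G=SO(2\ell+1)$.

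For the isomorphism $gr_{geo}^*(\bF)\cong CH^*(\bF)$, i.e. for $I(1)=0$, I would run a rank count on the natural surjection $CH^*(\bF)\twoheadrightarrow CH^*(\bF)/I(1)\cong gr_{geo}^*(\bF)$ of Lemma 4.3. On the source side, Theorem 6.3 identifies $CH^*(\bF)/2\cong S(t)/(2,c_1^2,\dots,c_{\ell}^2)$, a complete intersection whose total $\bZ/2$-dimension is $\prod_{i=1}^{\ell}2i=2^{\ell}\ell! = |W(B_{\ell})|$; since $CH^*(\bF)$ is finitely generated and torsion free by Theorem 6.3, it is therefore free of rank $|W|$ over $\bZ_{(2)}$. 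On the target side, $gr_{geo}^*(\bF)\cong gr_{\gamma}^*(\bF)$ is the graded ring of the gamma filtration on $K^0_{alg}(\bF)$, whose rank equals $\mathrm{rank}\,K^0_{alg}(\bF)=|W|$ since the versal flag variety is geometrically cellular with $|W|$ cells (Panin \cite{Pa}). Thus $CH^*(\bF)\twoheadrightarrow gr_{geo}^*(\bF)$ is a surjection of finitely generated groups of the same rank, so its kernel $I(1)$ is torsion; but $I(1)\subset CH^*(\bF)$ is torsion free, whence $I(1)=0$, and $gr_{\gamma}^*(\bF)\cong gr_{geo}^*(\bF)\cong CH^*(\bF)$.

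The only delicate points I expect are the rank bookkeeping and supplying the input on $K^0_{alg}(\bF)$ in the \emph{non}--simply connected case $G=SO(2\ell+1)$: Theorem 4.8 as stated requires $G$ simply connected, so one should instead cite directly that $K^0_{alg}(\bF)$ is free of the geometric rank $|W|$ (Panin \cite{Pa}), without claiming $res_K$ is an isomorphism. It is worth stressing that this argument is special to $SO(2\ell+1)$ exactly because of the torsion-freeness of $CH^*(\bF)$ coming from Theorem 6.3; for $Spin$ the Chow ring of the versal flag variety carries $2$-torsion and the rank count no longer forces $I(1)=0$, consistently with \S 10. Alternatively, one may perform the identical rank count after the motivic decomposition $M(\bF)\cong R(\bG)\otimes T$ of Theorem 3.3, comparing $CH^*(R(\bG))\cong\Lambda_{\bZ_{(2)}}(c_1,\dots,c_{\ell})$ (free of rank $2^{\ell}$, again by torsion-freeness) with $gr_{geo}^*(R(\bG))$; this is essentially the same proof.
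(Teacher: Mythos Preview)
Your proof is correct and follows the line the paper intends: the corollary is an immediate consequence of the torsion-freeness of $CH^*(\bF)$ in Theorem~6.2, together with the versality argument (Theorem~3.1 + Lemma~4.4) for $gr_{\gamma}\cong gr_{geo}$. Two small remarks. First, your reference to ``Lemma~1.1 specialized to $G=SO(2\ell+1)$'' is slightly off, since Lemma~1.1 is stated only for simply connected $G$; but your actual argument via Theorem~3.1/Corollary~3.2 and Lemma~4.4 is the right one and does not need simple connectedness. Second, the rank count via Panin is fine, but you can shortcut it: for any smooth projective $X$ the kernel $I(1)$ of $CH^*(X)\twoheadrightarrow gr_{geo}^*(X)$ is torsion (the differentials in the $K$-theory AHss are torsion, equivalently $CH^*\otimes\bQ\cong gr_{\gamma}K_0\otimes\bQ$ by Riemann--Roch), so torsion-freeness of $CH^*(\bF)$ immediately gives $I(1)=0$ without computing $|W|$ on both sides.
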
 

From Corollary 2.2, we see
$ c_i=2y_i+v_1 y_{2i+2}$ $mod (v_1^2).$
Here we consider
the $mod(2)$ theory version
$ res_{K/2}: K^*(\bF)/2 \to  K^*(\bar \bF)/2.$
\begin{lemma}
We have  $ Im(res_{K/2})\cong K^*\otimes \Lambda(y_{2i}|2i\ge 4).$  Hence $res_K$ is not surjective.
\end{lemma}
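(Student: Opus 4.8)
The plan is to compute $Im(res_{K/2})$ directly from the restrictions of the Chern classes, using the $BP$-formula of Corollary 6.1 together with Theorem 6.2.

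\emph{Reduction to the Rost motive.} Since $\bG$ is versal, $K^*(\bF)/2$ is multiplicatively generated by the $t_i$ (Corollary 3.2); applying the projection $pr$ and using $CH^*(R(\bG))/2\cong\Lambda(c_1,\dots,c_\ell)$ (Theorem 6.2), the ring $K^*(R(\bG))/2$ is generated by $c_1,\dots,c_\ell$. By the decomposition $M(\bF)\cong R(\bG)\otimes(\oplus_u\bT^{\otimes u})$ (Theorem 3.3) and its $K$-theory version, $res_{K/2}$ is the tensor product of the restriction on the $R(\bG)$-summand with an isomorphism on the Tate part $S(t)/(2,b)$; so it suffices to determine the image of $res_{K/2}\colon K^*(R(\bG))/2\to K^*(\bar R(\bG))/2\cong K^*/2\otimes P(y)/2$ and then tensor with $S(t)/(2,b)$. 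This image is the $K^*/2$-subalgebra generated by $\bar c_i:=res_{K/2}(c_i)$.

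\emph{Using the formula.} By Corollary 6.1, $c_i=2y_{2i}+v_1y_{2i+2}$ in $K^*(\bar R(\bG))$ modulo $(2,v_1)^2$, so $\bar c_i\equiv v_1y_{2i+2}\pmod{v_1^2}$ after reducing mod $2$. As $v_1$ is a unit in $K^*=\tilde K(1)^*$, the elements $\eta_i:=v_1^{-1}\bar c_i=y_{2i+2}+v_1(\cdots)$ lie in the image for $i=1,\dots,\ell$ and generate it, with leading terms (in the filtration by $P(y)$-degree, equivalently the AHss filtration) $y_4,y_6,\dots,y_{2\ell+2}$. Writing $\ell+1=2^am$ with $m$ odd, $y_{2\ell+2}=y_{2m}^{2^a}$ and $2m\cdot2^a=2(\ell+1)>2\ell$, so $y_{2\ell+2}=0$ in $P(y)/2$; similarly every $\eta_i$ with $2i+2>2\ell$ lies, up to higher filtration, in the subalgebra generated by $\eta_1,\dots,\eta_{\ell-1}$. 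Hence $Im(res_{K/2})$ is generated by $\eta_1,\dots,\eta_{\ell-1}$, with leading terms $y_4,y_6,\dots,y_{2\ell}$.

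\emph{Identifying the image.} From the Borel description of $H^*(SO(2\ell+1);\bZ/2)$, the algebra $P(y)/2$ is a tensor product $\bigotimes_{m\ \mathrm{odd},\,m\le\ell}\bZ/2[y_{2m}]/(y_{2m}^{2^{a_m}})$ of truncated polynomial algebras with $2$-power heights, $\sum_ma_m=\ell$, and $y_2$ has height $2^{t+1}$ where $2^t\le\ell<2^{t+1}$. The subalgebra generated by $y_4=y_2^2$ together with $y_6,y_{10},\dots$ is obtained from $P(y)/2$ by replacing the generator $y_2$ with $y_2^2$, hence has Poincaré series $\prod_{j=2}^{\ell}(1+q^{2j})$, which is exactly the Poincaré series of $K^*/2\otimes\Lambda(y_4,\dots,y_{2\ell})$. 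To upgrade this leading-term computation to an honest identification, one uses $c_i^2=0$ in $CH^*(\bF)/2$ (Theorem 6.2): this forces $\bar c_i^2$ into strictly higher AHss-filtration than a generic square, so the multiplicative height of each $\eta_i$ equals that of $y_{2i+2}$ in $P(y)/2$ and the $v_1$-corrections produce no new generator; a rank count then gives $Im(res_{K/2})\cong K^*\otimes\Lambda(y_{2i}\mid2i\ge4)$ additively (on the full flag variety, tensored with $S(t)/(2,b)$).

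\emph{Non-surjectivity, and the main obstacle.} Finally, $y_2=x_1^2$ is a nonzero permanent cycle of $K^*(\bar R(\bG))/2$ sitting in AHss-filtration $2$, whereas by the above $Im(res_{K/2})$ meets the $P(y)$-direction only in filtration $\ge4$; so $y_2\notin Im(res_{K/2})$, and therefore $res_{K/2}$, a fortiori $res_K$, is not surjective. I expect the third step to be the main obstacle: controlling the $v_1$-corrections to the $\bar c_i$ and proving that no product of the $\eta_i$ leaves the claimed exterior-type algebra — this is precisely where $c_i^2=0$ (Theorem 6.2) and the exact truncation heights of $P(y)$ are needed.
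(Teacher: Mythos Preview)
The paper does not supply a proof of this lemma: the sentence immediately preceding it records the formula $c_i\equiv 2y_{2i}+v_1y_{2i+2}\pmod{v_1^2}$ (from Corollary~6.1), and the lemma is then asserted without further argument. Your approach is therefore the same as the one the paper hints at --- reduce to $R(\bG)$ via the motivic decomposition, read off the mod-$2$ leading term $\bar c_i\equiv v_1y_{2i+2}$, and identify the image with the $K^*/2$-subalgebra generated by $y_4,\dots,y_{2\ell}$ --- only fleshed out considerably.

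Two parts of your argument are solid. First, the lower bound: the products $\prod_{i\in S}\bar c_i$ for $S\subseteq\{1,\dots,\ell-1\}$ have leading terms $v_1^{|S|}\prod_{i\in S}y_{2i+2}$, and your binary-expansion/Poincar\'e-series check shows these are $2^{\ell-1}$ linearly independent elements of $P(y)/2$. Second, non-surjectivity: since every $\bar c_i$ lands in $P(y)$-filtration $\ge4$, no nonconstant element of the image has leading term in filtration $2$, so $y_2\notin Im(res_{K/2})$ and $res_K$ is not surjective. This is the conclusion the paper actually uses.

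The genuine gap --- which you yourself flag as ``the main obstacle'' --- is the upper bound. Your claim that ``$\eta_\ell$ lies, up to higher filtration, in the subalgebra generated by $\eta_1,\dots,\eta_{\ell-1}$'' does not follow from $y_{2\ell+2}=0$: the leading term of $\eta_\ell$ vanishes, but its next nonvanishing term is some element of $P(y)^{\ge 2\ell+4}$, and nothing prevents it from involving $y_2$ (for instance $y_2y_4\cdots y_{2\ell}$ lies outside the subalgebra generated by $y_4,\dots,y_{2\ell}$). Your appeal to $c_i^2=0$ in $CH^*(\bF)/2$ does not close this: that relation only forces $\bar c_i^2$ into higher filtration on the \emph{source} side, whereas in the target one has, e.g., $\bar c_1^2\equiv v_1^2y_8\ne0$ for $\ell\ge4$, so the $\eta_i$ are not literally exterior generators and the ``rank count'' is not yet justified. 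The paper does not address this point either; to make the first isomorphism honest one would need to control the $v_1^{\ge2}$-terms of each $c_i$ in $k^*(\bar R(\bG))$ directly, or else read the lemma only at the level of associated gradeds.
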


Now we consider  
$Spin(2\ell+1)$.
Throughout this section, let $p=2$,  $G=SO(2\ell+1)$
and $G'=Spin(2\ell+1)$.
It is well known that
$ G/T\cong G'/T'$ for the maximal torus $T'$ of the spin group.

  By definition, we have the 
$2$ covering $\pi:G'\to G$.
It is well known that 
$\pi^*:   H^*(G/T)\cong H^*(G'/T')$.
Let $2^t\le \ell < 2^{t+1}$, i.e. $t=[log_2\ell]$.
The mod $2$ cohomology is
\[ H^*(G';\bZ/2)\cong H^*(G;\bZ/2)/(x_1,y_2)\otimes
\Lambda(z) \]
\[ \cong P(y)'\otimes \Lambda(x_3,x_5,...,x_{2\ell-1})\otimes \Lambda(z),\quad |z|=2^{t+2}-1\]
where
$P(y)\cong \bZ/2[y_2]/(y_2^{2^{t+1}})\otimes P(y)'$.  
(Here $z$ is defined by $d_{2^{t+2}}(z)=y^{2^{t+1}}$ for $0\not =y\in H^2(B\bZ/2;\bZ/2)$ in the spectral sequence induced from
the fibering $G'\to G\to B\bZ/2$.)
Hence
\[ grP(y)'\cong \otimes _{2i\not =2^j}\Lambda(y_{2i})\cong
\Lambda(y_6,y_{10},y_{12},...,y_{2\bar \ell})\]
where $\bar \ell=\ell-1$ if $\ell=2^j$ for some $j$, and
$\bar \ell=\ell$ otherwise.

The $Q_i$ operation for $z$ is given by Nishimoto 
\cite{Ni}
\[ Q_0(z)=\sum _{i+j=2^{t+1},i<j}y_{2i}y_{2j}, \quad 
 Q_n(z)=\sum _{i+j=2^{t+1}+2^{n+1}-2,i<j}y_{2i}y_{2j}\ \ for\ n\ge 1.\]

We know that 
\[ grH^*(G'/T')/2\cong P(y)'\otimes S(t')/(2,c_2',.....,c_{\ell}',c_1^{2^{t+1}}).\]
Here $c_i'=\pi^*(c_i)$ and $d_{2^{t+2}}(z)=c_1^{2^{t+1}}$ in the spectral sequence
converging $H^*(G'/T')$.

Take $k$ such that $\bG$ is a versal $G_k$-torsor so that
$\bG'_k$ is also a versal $G_k'$-torsor.  Let us write
$\bF=\bG/B_k$ and $\bF'=\bG'/B_k'$.  Then
\[ CH^*(\bar R(\bG'))/2\cong P(y)'/2,\quad and \quad
    CH^*(\bar R(\bG))/2\cong P(y)/2.\]

The Chow ring $CH^*(R(\bG'))/2$ is not computed yet
(for general $\ell$),
while we have the following lemmas.
\begin{lemma}
Let $2^t\le \ell<2^{t+1}$and $\bG$ be versal. 
Then there  is a surjection
\[ \Lambda(c_2',...c_{\bar \ell}')\otimes \bZ/2[c_1^{2^{t+1}}]
\twoheadrightarrow CH^*(R(\bG')) /2.\]
\end{lemma}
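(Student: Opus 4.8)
The plan is to transport the known surjection for $CH^*(R(\bG))/2$ (Theorem 6.2 and Lemma 3.7) across the $2$-covering $\pi: G'\to G$, and to keep track of what happens to the generators $c_i$ and to the extra class $c_1^{2^{t+1}}$ coming from $z$. First I would recall from the cohomology computation above that $P(y)\cong \bZ/2[y_2]/(y_2^{2^{t+1}})\otimes P(y)'$, so that $CH^*(\bar R(\bG))/2\cong P(y)/2$ splits off the polynomial truncation in $y_2$ from $CH^*(\bar R(\bG'))/2\cong P(y)'/2$. On the "geometric" (twisted, versal) side, Lemma 3.7 (for $N=|y_{top}|$ applied to $\bG'$) already gives that $CH^*(R(\bG'))/2$ is generated additively by monomials in the $b_i'=c_i'$; so the only issue is to produce the correct finite generating set and to identify which Chern classes survive.

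The key steps, in order: (1) Use $\pi^*: H^*(G/T)\cong H^*(G'/T')$, together with $c_i'=\pi^*(c_i)$ and the relation $d_{2^{t+2}}(z)=c_1^{2^{t+1}}$, to see that in $CH^*(\bF')/2\cong S(t')/(2,c_2',\dots,c_\ell',c_1^{2^{t+1}})$ the class $c_1$ itself is \emph{not} a generator of $R(\bG')$ (it lies in $S(t)^+$ but $c_1=0$ is not imposed — rather $c_1^{2^{t+1}}$ replaces the role that $y_2$-powers played downstairs), whereas $c_1^{2^{t+1}}$ \emph{does} map to a nonzero class in $CH^*(R(\bG'))/2$ via Corollary 3.8 applied to the regular sequence, or directly because it is the transgression of $z$. (2) Apply Corollary 3.8 to conclude that each $c_i'$ ($i\ge 2$) maps to a nonzero class in $CH^*(R(\bG'))/2$, and that the $c_i'$ with $2i$ a power of $2$ are redundant: since $c_{2i}'\equiv (c_i')^2$ modulo decomposables (from $x_s^2=x_{2s}$ and the squaring structure inherited through $\pi^*$), and since $(c_i')^2=0$ in $CH^*(\bF')/2$ for $i\ge 2$ by the $Sp$-embedding argument of Theorem 6.2, the surviving independent generators are exactly $c_2',\dots,c_{\bar\ell}'$ together with the one polynomial generator $c_1^{2^{t+1}}$. (3) Assemble these into the claimed surjection $\Lambda(c_2',\dots,c_{\bar\ell}')\otimes \bZ/2[c_1^{2^{t+1}}]\twoheadrightarrow CH^*(R(\bG'))/2$: surjectivity follows from Lemma 3.7 once one checks that every $b_i'$-monomial is, modulo $S(t)^+/(b)$-decomposables (Lemma 3.6), a polynomial in $c_1^{2^{t+1}}$ times a squarefree monomial in the $c_i'$ with $i\ge 2$ and $2i$ not a power of $2$.

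The main obstacle will be Step (2): one must justify rigorously that $(c_i')^2$ vanishes in $CH^*(\bF')/2$ (equivalently in the image in $CH^*(R(\bG'))/2$) for all relevant $i\ge 2$, and that the relation $c_{2i}'=(c_i')^2 + (\text{decomposables})$ really holds with the decomposables lying in the ideal generated by lower $c_j'$. The vanishing of $(c_i')^2$ is inherited from $SO(2\ell+1)\hookrightarrow Sp(2\ell+1)$ exactly as in Theorem 6.2, since $\pi^*$ is an isomorphism on $H^*(G/T)$ and the Chern classes pull back compatibly; so the real care is in the combinatorics of which squares of symmetric functions reduce to which, i.e.\ in pinning down $\bar\ell$ (which is why the statement carefully distinguishes $\ell=2^j$ from the generic case) and in making sure no generator is accidentally killed. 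I expect the bookkeeping over the range $2i\ne 2^j$ to be the only genuinely delicate point; everything else is a direct application of Lemmas 3.5--3.7, Corollary 3.8, and Theorem 6.2.
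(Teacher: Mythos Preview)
Your overall strategy---invoke Lemma 3.6 to get surjectivity from monomials in the transgressions $b_i'\in\{c_2',\dots,c_\ell',c_1^{2^{t+1}}\}$, then pass to the exterior algebra using $(c_i')^2=0$---matches the paper's implicit argument (the lemma is stated there without proof). The vanishing $(c_i')^2=0$ in $CH^*(\bF')/2$ is justified essentially as you indicate: $\bF'=\bG'/B_k'$ is the $B_k$-quotient of the associated $SO$-torsor, and the relation $c_i^2=0$ coming from the $Sp$-embedding (used before Theorem 6.2) specializes from the versal $SO$-torsor to every $SO$-torsor.

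Your step (2), however, contains a genuine error in identifying which generators to remove. You assert that ``the $c_i'$ with $2i$ a power of $2$ are redundant'' because ``$c_{2i}'\equiv (c_i')^2$ modulo decomposables.'' Both claims are false. There is no relation $c_{2i}\equiv c_i^2$ among elementary symmetric functions; you have transported the relation $x_s^2=x_{2s}$, which holds among the \emph{odd-degree} primitives $x_s$ of $H^*(SO(m);\bZ/2)$, to the Chern classes $c_i$, where it simply does not hold (already $c_1^2\ne c_2$). Your proposal would therefore kill $c_2',c_4',\dots$, contradicting both the statement of the lemma (which keeps all of $c_2',\dots,c_{\bar\ell}'$) and the explicit computations: for $Spin(11)$, Theorem 8.2 shows that $c_2,c_4,c_2c_4$ are all nonzero in $gr_\gamma^*(R(\bG'))/2$, so they are certainly not redundant. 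The lemma omits \emph{only} $c_\ell'$, and only in the single case $\ell=2^t$. The underlying reason is that for $i=2^t$ every term in the expansion of Corollary 6.1, namely $c_{2^t}'=2y_{2^{t+1}}+\sum_{n\ge 1}v_n\,y(2^{t+1}+2^{n+1}-2)$, vanishes in $P(y)'$: the first because $2^{t+1}$ is a power of $2$, and the rest because their degree exceeds $2\ell=2^{t+1}$. So $c_{2^t}'$ contributes nothing not already produced by the remaining $b_i'$, which is what you need in place of the incorrect squaring relation.
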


Let $G''=Spin(2\ell+2)$.  Then the inclusion
$i:G'\subset G''$ induces 
\[ CH^*(\bar R(\bG''))\cong P(y)'\stackrel{i^*}{\cong}
     CH^*(\bar R(\bG')).\]
Hence by a (main) theorem by Vishik-Zainoulline
\cite{Vi-Za}, we have

\begin{lemma}  Let $G'=Spin(\ell+1)$ and $G''=Spin(2\ell+2)$.  Then
$R(\bG')\cong R(\bG'')$.
\end{lemma}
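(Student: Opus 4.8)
The plan is to feed the Chow-ring identification displayed immediately above the statement into the main theorem of \cite{Vi-Za}. That theorem is an isomorphism criterion for geometrically split motives: if $M$ and $N$ are motives over $k$ whose base changes $\bar M,\bar N$ to $\bar k$ are finite direct sums of Tate motives, and if $M,N$ satisfy the Rost nilpotence principle, then a morphism $f\colon M\to N$ is an isomorphism as soon as $\bar f\colon\bar M\to\bar N$ is. The motives $R(\bG')$ and $R(\bG'')$ appear as direct summands of the motives of twisted flag varieties by Theorem 3.3, so they are geometrically split and satisfy Rost nilpotence, and the criterion applies to them. So it is enough to produce a correspondence $R(\bG')\to R(\bG'')$ that becomes an isomorphism over $\bar k$.

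First I would fix a single field $k$ together with versal torsors $\bG'$ for $G'=Spin(2\ell+1)$ and $\bG''$ for $G''=Spin(2\ell+2)$ that are compatible along $i\colon G'_k\hookrightarrow G''_k$; this is harmless since, exactly as for $CH^*(R(\bG))$, the motive $R(\bG)$ does not depend on the choice of versal torsor, so it suffices to treat one convenient pair. The embedding $i$ induces a morphism of the associated (orthogonal) flag varieties, and projecting through the Petrov--Semenov--Zainoulline decomposition of Theorem 3.3 yields a correspondence $\phi\colon R(\bG')\to R(\bG'')$ (or its transpose) whose $\bar k$-realization induces, on Chow groups, exactly the isomorphism $CH^*(\bar R(\bG''))\cong P(y)'\cong CH^*(\bar R(\bG'))$ recorded before the statement. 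Since over $\bar k$ both $R(\bG')$ and $R(\bG'')$ are sums of Tate motives with free Chow groups, a morphism between them is an isomorphism precisely when it is one on Chow groups; hence $\bar\phi$ is an isomorphism, and \cite{Vi-Za} upgrades $\phi$ to an isomorphism $R(\bG')\cong R(\bG'')$ over $k$.

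The main obstacle is the middle step: exhibiting an honest motivic morphism (correspondence) between $R(\bG')$ and $R(\bG'')$ realizing the Chow-level map $i^*$, i.e.\ checking that the embedding $B_\ell\hookrightarrow D_{\ell+1}$ is compatible with the Petrov--Semenov--Zainoulline projectors and that the resulting correspondence over $\bar k$ is precisely the one computing the isomorphism $P(y)'\stackrel{\sim}{\longrightarrow}P(y)'$. Once that compatibility is in place, everything else — versality bookkeeping, geometric splitness, Rost nilpotence, and the passage from ``iso on Chow groups'' to ``iso of Tate-sum motives'' — is routine.
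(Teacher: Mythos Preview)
Your proposal is correct and is essentially the same approach as the paper's: the paper records the isomorphism $CH^*(\bar R(\bG''))\cong P(y)'\stackrel{i^*}{\cong}CH^*(\bar R(\bG'))$ induced by the inclusion $G'\subset G''$ and then simply invokes ``a (main) theorem by Vishik--Zainoulline \cite{Vi-Za}'' without further elaboration, whereas you have unpacked precisely how that citation applies (geometrically split summands, Rost nilpotence, and the reduction to checking the map on Chow groups over $\bar k$). The ``main obstacle'' you flag---producing an actual correspondence compatible with the Petrov--Semenov--Zainoulline projectors---is exactly the content absorbed by the citation, and the paper does not spell it out either.
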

 
Therefore, in this paper, we consider only $G=Spin(2\ell+1)$,  but the arguments for
$G=Spin(2\ell+2)$ are similar.

\begin{lemma}  The restriction $res_K:
K^*(\bF')\to K^*(\bar \bF') $ 
 is isomorphic.
In fact, we have $K^*(R(\bG'))/2\cong 
K^*/2\otimes  \Lambda(c_i'|i\not =2^j-1).$
\end{lemma}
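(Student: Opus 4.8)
The plan is to exploit Theorem 4.6 (Chevalley–Panin) together with the explicit knowledge of the mod $2$ cohomology of $G'=Spin(2\ell+1)$. Since $G'$ is simply connected, Theorem 4.6 already tells us that $res_K\colon K^*(\bF')\to K^*(\bar\bF')\cong K^*(G'/T')$ is an isomorphism; the only real content of the lemma is the second sentence, the description of $K^*(R(\bG'))/2$. First I would pass to the mod $2$ version $res_{K/2}\colon K^*(\bF')/2\to K^*(\bar\bF')/2$ and recall that, since $H^*(G'/T')$ is torsion free and the AHss collapses (as noted in \S5), we have $K^*(\bar\bF')/2\cong K^*/2\otimes H^*(G'/T')/2$ with $H^*(G'/T')/2\cong P(y)'\otimes S(t')/(c_2',\dots,c_\ell',c_1^{2^{t+1}})$. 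Under the projection $pr\colon K^*(\bF')/2\to K^*(R(\bG'))/2$, the target is additively $K^*/2\otimes P(y)'$ by the identification $h^*(\bar R(\bG'))/p\cong h^*\otimes P(y)'$ from \S5 (with $h^*=\tilde k(1)^*$, then inverting $v_1$).

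Next I would identify, inside $K^*(\bF')/2$, lifts of the generators $y_{2i}$ of $P(y)'$ coming from Chern classes. By Corollary 6.1 applied to $SO(2\ell+1)$ and pulled back along $\pi$, in $BP^*(G'/T')/I_\infty^2$ we have $c_i'=2y_{2i}+\sum_{n\ge1}v_ny(2i+2^{n+1}-2)$, so $mod(v_1^2)$ one gets $c_i'\equiv v_1\,y(2i+2)$ in $K$-theory (the $2y_{2i}$ term vanishes mod $2$). Thus $v_1^{-1}c_i'$ realizes $y_{2i+2}$ up to higher filtration. The point is that $y_{2j}$ for $2j$ not a power of $2$ are precisely the polynomial generators of $P(y)'$, and each such $y_{2j}=y_{2i+2}$ is hit, via $v_1^{-1}c_i'$, by some Chern class $c_i'$; because $2j=2i+2$ is not a power of $2$, the index $i$ satisfies $i\ne 2^s-1$, which explains the range $c_i'$ with $i\ne 2^j-1$ in the statement. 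The generators $x_{2i-1}$ and $z$ of the exterior part of $H^*(G';\bZ/2)$ die in $P(y)'$ and do not contribute; the relations $c_i^2=0$ in $CH^*$ (from $SO\subset Sp$, Theorem 6.2) and the regularity of the transgression sequence should let me argue that $\Lambda(c_i'\mid i\ne 2^j-1)$ maps isomorphically onto $K^*(R(\bG'))/2$, matching dimensions degree by degree against $P(y)'\cong\Lambda(y_6,y_{10},y_{12},\dots,y_{2\bar\ell})$.

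The main obstacle I anticipate is the bookkeeping in the last step: showing the surjection $\Lambda(c_2',\dots,c_{\bar\ell}')\otimes\bZ/2[c_1^{2^{t+1}}]\twoheadrightarrow CH^*(R(\bG'))/2$ of Lemma 6.6 becomes an \emph{isomorphism} after applying $res_{K/2}$ and inverting $v_1$, i.e. that the kernel ideal $I(1)$-type relations are generated exactly by $c_i'^2=0$ and $c_1^{2^{t+1}}=0$ in $K$-theory, with no surviving $v_1$-torsion. Here the torsion-freeness of $K_{alg}^0(\bF')$ (Panin) plus the collapse of the AHss for $G'/T'$ is the key leverage: it forces $gr_\gamma^{2*}(\bF')\cong gr_{top}^{2*}(G'/T')$ is \emph{not} needed, rather $K^*(\bF')\cong K^*(G'/T')$ \emph{is} an isomorphism, so I can compute entirely on the split side where everything is a free $K^*$-module. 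Concretely, I would verify that in $K^*(G'/T')/2$ the classes $v_1^{-i}c_i'$ (for $i\ne 2^j-1$) together with $1$ form a $K^*/2$-basis of $K^*(R(\bG'))/2$ by a Poincaré-series count, using that $|c_i'|=2i$ corresponds to $|y_{2i+2}|$ after one multiplication by $v_1^{-1}$, and that $\Lambda(y_6,y_{10},\dots)$ and $\Lambda(c_i'\mid i\ne 2^j-1)$ have the same Poincaré series; this identification, once the grading shift by $v_1$ is tracked carefully, completes the proof.
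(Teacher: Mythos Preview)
The paper states this lemma without proof; the first sentence is an instance of Theorem~4.6 (Chevalley--Panin), and the second is left implicit. Your plan is essentially the intended one: invoke Theorem~4.6, then use the congruence $c_i'\equiv v_1\,y_{2i+2}\pmod{2,v_1^2}$ from Corollary~6.1 (via $Q_1x_{2i-1}=y_{2i+2}$) to set up a bijection between the $c_i'$ with $i\ne 2^j-1$ and the exterior generators $y_{2m}$ of $P(y)'$ (namely $m=i+1$, which is not a power of~$2$ precisely when $i\ne 2^j-1$).

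Two places to tighten. First, drop the appeal to ``$c_i^2=0$ from $SO\subset Sp$'': that is a $CH^*$ relation for the \emph{orthogonal} flag and does not hold in $K^*(R(\bG'))/2$ for Spin; for example $(c_2')^2\equiv v_1^2 y_{12}=v_1c_5'\ne 0$ once $\ell\ge 6$. The lemma's isomorphism is only one of $K^*/2$-modules (cf.\ the Remark after Theorem~3.3: $CH^*(R(\bG))$ has no canonical ring structure), so what you must actually show is that the square-free monomials in the relevant $c_i'$ form a $K^*/2$-basis. The triangularity of $c_i'\mapsto v_1 y_{2i+2}+(\text{terms of higher $P(y)'$-degree})$ gives exactly this, with no need for $(c_i')^2=0$. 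Second, replace the Poincar\'e-series argument by a plain rank count over $K^*/2$: once $v_1$ is inverted the grading is periodic, so both sides are free $K^*/2$-modules and the triangular map is an isomorphism iff the ranks agree. Note also that some $c_i'$ in the stated range may vanish $\bmod\ 2$ (e.g.\ $c_\ell'\equiv 2y_{2\ell}$ when $y_{2\ell+2}=0$, as for $\ell=5$ where the paper explicitly uses only $c_2,c_4$ in \S 8); this is harmless since an exterior algebra on a zero generator collapses. With these adjustments your plan goes through.
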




\section{$Spin(7),Spin(9)$}

From  this section to the last section,
we only consider $G=Spin(2\ell+1)$.
Hereafter in this paper, we change notations as follows.
 Let us write the element $c_1^j$, in the preceding section, by $e_j$,
and take off the dash of $c_i'$, i.e.
\[ e_{2^{t+1}}=c_1^{2^{t+1}},\quad c_2=c_2',\ c_3=c_3',\ ....,\ c_{\ell}=c_{\ell}'.\]

Now we consider examples. 
For groups $Spin(7),$ $Spin(9)$, the rank $\ell=3,4$
respectively and the cohomology is given
\[ grH^*(G;\bZ/2)\cong 
\begin{cases}
\bZ/2[y_6]/(y_6^2)\otimes \Lambda(x_3,x_5,z_7)\quad G=Spin(7)\\
\bZ/2[y_6]/(y_6^2)\otimes \Lambda(x_3,x_5,x_7,z_{15})
\quad G=Spin(9).\end{cases}\]

We can take  $v_1y_6=c_2, 2y_6=c_3$ in $k^*(\bar \bF)$.  The Chow groups of the Rost motives $R(\bG))$ are isomorphic
\[ CH^*(R(\bG))\cong gr_{\gamma}^*(R(\bG))\cong
\bZ_{(2)}\{1,c_3\}\oplus \bZ/2\{c_2\}.\]
 The Chow ring 
of the flag manifolds are \cite{YaC}
\[gr_{\gamma}(G/T)\cong  CH^*(\bF)\cong \begin{cases}
      S(t)/(2c_2,c_2^2,c_3^2,c_2c_3,e_4)\quad G=Spin(7),\ell=3\\
  S(t)/(2c_2,c_2^2,c_3^2,c_2c_3,e_8,c_4)\quad G=Spin(9),
\ell=4 
\end{cases} \]
where $S(t)=\bZ_{(2)}[t_1,...,t_{\ell}].$

\section{$Spin(11)$}

In this section we consider $G=Spin(11)$.
The cohomology is written as 
\[H^*(G;\bZ/2)\cong \bZ/2[y_6,y_{10}]/(y_6^2,y_{10}^2)\otimes \Lambda(x_3,x_5,x_7,x_9,z_{15}).\]
By Nishimoto, we know $Q_0(z_{15})=y_6y_{10}$. It implies
$2y_6y_{10}=d_{16}(z_{15})=e_8$.
Since $y_{top}=y_6y_{10}$, we have $t(G)=2$.
From Corollary 2.1, we can take in $k^*(\bar \bF)$
such that $c_2=v_1y_6 ,c_4=v_1y_{10}$.  Moreover
in $k^*(\bar \bF)/(I_{\infty}^2)$ we have
$c_3=2y_6,c_5=2y_{10}$.

\begin{lemma}
For $x\in k^*\otimes S(t)$, let $x\in I_{\infty}^3
 k^*(\bar \bF).$
Then  $x\in I_{\infty}\cdot S(t)$, i.e., $x=0$ in
 $gr_{\gamma}(\bF)/2$.
\end{lemma}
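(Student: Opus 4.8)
The plan is to analyze when a monomial $x \in k^*\otimes S(t)$ can lie in $I_\infty^3 k^*(\bar\bF)$, using the explicit relations computed above for $Spin(11)$: in $k^*(\bar\bF)$ we have $c_2 = v_1 y_6$, $c_4 = v_1 y_{10}$, and modulo $I_\infty^2$ we have $c_3 = 2y_6$, $c_5 = 2y_{10}$, together with $e_8 = 2 y_6 y_{10}$ (so $e_8 \in I_\infty^2$ but represents the top class $2y_{top}$), while the remaining Chern classes $c_i$ vanish in $CH^*(R(\bG))/2$. First I would recall from Theorem 3.3 and \S5 that $k^*(\bar\bF) \cong k^*\otimes P(y)' $ on the Rost-motive part with $P(y)' = \Lambda(y_6,y_{10})$ here, and that $Ker(pr) = k^*\otimes P(y)\otimes S(t)^+/(b)$ by Lemma 3.5. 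So for $x \in k^*\otimes S(t)$, writing $x = pr(x) + (x - pr(x))$ with $x-pr(x)\in Ker(pr)\subset I_\infty \cdot S(t)$ automatically, the claim $x \in I_\infty\cdot S(t)$ reduces to showing $pr(x) \in I_\infty \cdot k^*(R(\bG))$, i.e. $pr(x) = 0 \in gr_\gamma^*(\bF)/2$ via the identification $gr_\gamma^*(\bF)/2 = CH^*(\bF)/2 = h^*(\bF)/(I_\infty, \ldots)$.

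The key step is then: if $pr(x) \neq 0 \in gr_\gamma^*(\bF)/2$, show $pr(x) \notin I_\infty^3$. Since $gr_\gamma^*(\bF)/2 \cong CH^*(R(\bG))/2 \otimes S(t)/(2,b)$ and $CH^*(R(\bG))/2 \cong \bZ/2\{1,c_2,c_3, c_2 c_3, \ldots\}$ is additively spanned (by Lemma 1.2 / the $Spin(11)$ version of Lemma 6.6) by $1$, $c_2$, $c_3$, $c_2 c_3$ and products with $e_8$-type classes, I would enumerate the possible nonzero values of $pr(x)$. A nonzero monomial in $gr_\gamma$ pairs a product of $c$'s (of low $I_\infty$-filtration) with an $S(t)/(b)$ part of positive degree; the maximal $I_\infty$-power one can extract from such an element, before hitting the image of $I_\infty^3$ in the torsion-free module $k^*\otimes P(y)'$, is at most $2$ — because $c_2, c_4$ contribute one factor of $v_1$ each and $c_3, c_5$ one factor of $2$ each, but the relation $c_2 c_3 c_4 c_5 \in I_\infty^4$-type products already vanish in $CH^*(R(\bG))/2$, and $c_2^2 = c_3^2 = c_2 c_3 \cdot (\text{something})$ collapse. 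Concretely: the surviving classes $c_2, c_3$ are in $I_\infty^1$ exactly, $c_2 c_3$ is in $I_\infty^2$ exactly, and any product landing in $I_\infty^3$ would have to be $c_2 c_3 \cdot c_i$ with $i\in\{2,3\}$ or involve $e_8$ nontrivially, all of which are $0$ in $CH^*(R(\bG))/2$. Hence $I_\infty^3 \cap (\text{nonzero part of } gr_\gamma^*(\bF)/2) = 0$, which is exactly the contrapositive of the lemma.

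The main obstacle I anticipate is the bookkeeping for products involving $e_8 = c_1^8$: since $e_8$ itself represents $2 y_6 y_{10}$, it already carries $I_\infty$-filtration $2$, so any product $e_8 \cdot (\text{nonzero } c\text{-monomial})$ would naively be in $I_\infty^3$ — one must check carefully that all such products are forced to vanish in $CH^*(R(\bG))/2$ (equivalently that $e_8$ times $c_2$ or $c_3$ is already $0$, which follows because $e_8$ lifts the top-degree generator and multiplication by any positive-degree class kills it), so they never provide a genuine nonzero element of $I_\infty^3$. I would also need Lemma 5.6 / Corollary 5.5 to control how the $I_\infty$-filtration behaves under $pr$ on products, ensuring that replacing $c_i$ by its lift $c_i'$ only changes things modulo higher $I_\infty$-powers. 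Once these two points are nailed down, the statement follows by the short reduction in the first paragraph.
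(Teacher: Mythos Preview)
The paper's proof is a direct iterative subtraction: given $x\in I_\infty^3 k^*(\bar\bF)$, one subtracts explicit elements of $I_\infty\cdot S(t)$ that themselves lie in $I_\infty^3$ (for instance $2c_3c_5$ or $4e_8$, each equal to $8y_6y_{10}$ modulo higher terms), leaving a remainder still in $I_\infty^3$ but of strictly higher degree; since degrees are bounded by $2\dim(\bF)$, the process terminates and exhibits $x$ as a sum of elements of $I_\infty\cdot S(t)$.

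Your reduction in the first paragraph has a genuine gap. The assertion ``$x - pr(x)\in Ker(pr)\subset I_\infty\cdot S(t)$ automatically'' is false: $Ker(pr)=k^*\otimes P(y)\otimes S(t)^+/(b)$ is not even a subset of $k^*\otimes S(t)$. Concretely, an element such as $c_2\, t$ with $0\neq t\in S(t)^+/(b)$ has $pr(c_2 t)=0$ yet is nonzero in $gr_\gamma(\bF)/2$, so knowing $pr(x)=0$ in $gr_\gamma(R(\bG))/2$ is \emph{not} enough to conclude $x=0$ in $gr_\gamma(\bF)/2$; you must control every Tate-motive component, not only the Rost one. There are also factual slips in your enumeration: for $Spin(11)$ the surviving quadratic class in $CH^*(R(\bG))/2$ is $c_2c_4$ (mapping to $v_1^2 y_6y_{10}$), not $c_2c_3$, and $e_8=2y_6y_{10}$ has $I_\infty$-filtration exactly $1$, not $2$, since $|e_8|=|y_{top}|$ leaves no room for a $v_1$-correction. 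Even with these repaired, the contrapositive you propose presupposes the full structure of $gr_\gamma(R(\bG))/2$ (the paper's Theorem 8.2), which is established only after this lemma and by essentially the same subtraction technique --- so your route does not bypass the paper's argument.
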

\begin{proof}  
Take off product of  $c_i$ and $t\not =0\in S(t)/(b)$
from $x$, we get the result.
For example, let  $x=8y_6y_{10}\ mod (4v_1).$
 We consider
$x'=x-2c_3c_5$ or $x'=x-4e_8$.  Then $x'\in I_{\infty}^3$ and 
$|x'|>|x|$.  Since $x=0\in CH^*(\bF)/2$ for $|x|>2dim(\bF)$,
we get the result, by continuing this argument.
\end{proof}

\begin{thm} (\cite{Ka2}, \cite{YaC}, \cite{YaG})
 For $(G,p)=(Spin(11),2)$, we have the isomorphisms
\[gr_{\gamma}(R(\bG))/2\cong \bZ/2\{1, c_2,c_3,c_4,c_5,c_2c_4, e_8\},\]
where $c_2c_4=pr(c_2\cdot c_4)$ for the cup product $\cdot$ in $gr_{\gamma}^*(\bF)$, and 
\[ gr_{\gamma}(\bF)/2\cong S(t)/(2,c_ic_j, c_ie_8,e_8^2|2\le i\le j\le 5,\ (i,j)\not
=(2,4)).\]
\end{thm}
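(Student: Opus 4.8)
The plan is to compute $gr_{\gamma}^*(\bF)/2$ and $gr_\gamma^*(R(\bG))/2$ together, using the decomposition $CH^*(\bF)/2\cong CH^*(R(\bG))\otimes S(t)/(2,b)$ from Theorem 3.3 and the already-known generators $c_2,c_3,c_4,c_5,e_8$ coming from $BB_k$ via Theorem 3.1 and Corollary 3.2. By Lemma 6.6 (the surjection $\Lambda(c_2,c_3,c_4,c_5)\otimes\bZ/2[e_8]\twoheadrightarrow CH^*(R(\bG))/2$, noting $\bar\ell=5$ here) together with Lemma 1.3, every generator of $CH^*(R(\bG))/2$ is a product of $1,c_2,\dots,c_5,e_8$. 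So the first step is to locate the relations among these products inside $gr_\gamma$. The key input is that $c_i^2=0$ in $CH^*(\bF)/2$ (this is the $Spin$-analogue of Theorem 6.2, using $Spin(2\ell+1)\to SO(2\ell+1)\to Sp$), which kills all squares; and $e_8^2=0$ likewise. What remains is to see which \emph{mixed} products $c_ic_j$ survive in $gr_\gamma$.

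The heart of the argument is the interplay with the connective $K$-theory computations already set up at the start of this section: in $k^*(\bar\bF)$ one has $c_2=v_1y_6$, $c_4=v_1y_{10}$, and modulo $I_\infty^2$, $c_3=2y_6$, $c_5=2y_{10}$, while $e_8=2y_6y_{10}$ up to higher filtration (from $d_{16}(z_{15})=e_8$, $Q_0z_{15}=y_6y_{10}$). Using Corollary 5.5 (products of lifts agree modulo $I_\infty^{s+1}$) one computes $pr(c_2c_4)=pr(v_1^2 y_6y_{10})\ne 0$ in $CH^*(R(\bG))/2$ — this is the one mixed product that survives — whereas $pr(c_2c_3),pr(c_2c_5),pr(c_3c_4),pr(c_4c_5),pr(c_3c_5)$ all lie in $I_\infty^2 k^*(\bar\bF)$ in a way that forces them into $\mathrm{Ker}(pr)$ or into higher filtration, via Lemma 5.4/5.5 and the degree/dimension vanishing trick of Lemma 9.1. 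Concretely, for each such product I would write it as (element of $P(y)$-part) $+$ (element of $\mathrm{Ker}(pr)$) and check, using $y_6^2=y_{10}^2=0$ in $P(y)'$, that the $P(y)$-part vanishes; then Lemma 5.4 finishes. Triple products like $c_2c_4\cdot(\text{anything})$ and $c_ie_8$ then vanish for degree reasons ($y_6y_{10}$ is the top class of $P(y)'$, so $y_6y_{10}\cdot y_j=0$), giving $c_2c_4\cdot c_i=0$ and $c_ie_8=0$ in $gr_\gamma$. This yields the additive basis $\{1,c_2,c_3,c_4,c_5,c_2c_4,e_8\}$ for $gr_\gamma(R(\bG))/2$.

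For $gr_\gamma(\bF)/2$ itself: by Lemma 1.1, $gr_\gamma(\bF)\cong gr_{geo}(\bF)\cong CH^*(\bF)/I(1)$ since $\bG$ is versal; and by Lemma 3.4 the computation of $CH^*(R(\bG))/2$ as $\Lambda(c_2,\dots,c_5)\otimes\bZ/2[e_8]$ modulo the relations just found, \emph{provided those relations already hold in $CH^*(\bF)/2$ and not just after applying $pr$}, lifts to $CH^*(\bF)/2\cong S(t)/(2,f_i(b))$. So I would verify that $c_ic_j=0$ in $CH^*(\bF)/2$ for all $(i,j)\ne(2,4)$ with $2\le i\le j\le 5$ (these are genuine relations in the Chow ring of the flag variety, provable from the $SO\to Sp$ comparison plus the relation $e_8=2y_6y_{10}$ forcing $c_3c_5$-type products down), that $c_ie_8=0$ and $e_8^2=0$ there, and that these generate all relations by a Poincaré-series count against $grH^*(G/T)/2$. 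Then Lemma 3.4 gives $gr_\gamma(\bF)/2\cong S(t)/(2,c_ic_j,c_ie_8,e_8^2)$ with the stated index set.

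\textbf{Main obstacle.} The delicate point is distinguishing the surviving product $c_2c_4$ from the vanishing ones $c_3c_5$ (and $c_2c_5$, $c_3c_4$): all of these map to multiples of $y_6y_{10}$ in the associated graded of $k^*(\bar\bF)$, and one must track the $v_1$-filtration and the $I_\infty$-filtration carefully to see that $c_2c_4=v_1^2y_6y_{10}$ is a $k^*$-module generator of $P(y)'$ in the right degree while $c_3c_5=4y_6y_{10}$ (and the cross terms) are $\in I_\infty^2$ and hence killed in $gr_\gamma$, exactly as in the proof of Lemma 9.1. Keeping the bookkeeping of Corollary 5.5 honest — i.e.\ that the ``higher filtration'' error terms genuinely die by the dimension argument — is where the real work lies; everything else is the formalism of Theorem 3.3 and Lemmas 3.4–3.6.
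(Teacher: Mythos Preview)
Your overall strategy is close to the paper's, but there are two concrete gaps.

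\textbf{First gap: the ``$P(y)$-part vanishes'' argument does not cover $c_2c_5$, $c_3c_4$, $c_3c_5$.} Your concrete plan --- write $c_ic_j$ as a $P(y)'$-part plus an element of $\mathrm{Ker}(pr)$, then use $y_6^2=y_{10}^2=0$ --- works for $c_2c_3$ and $c_4c_5$ only. For $c_2c_5=2v_1y_6y_{10}$, $c_3c_4=2v_1y_6y_{10}$, $c_3c_5=4y_6y_{10}$, the $P(y)'$-part is $y_6y_{10}\ne 0$, so nothing vanishes that way. Your fallback ``$\in I_\infty^2$ hence killed in $gr_\gamma$'' is not a valid implication by itself: Lemma~8.1 needs $I_\infty^3$, not $I_\infty^2$. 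What the paper actually does is observe the explicit identities
\[
c_2c_5=c_3c_4=v_1e_8,\qquad c_3c_5=2e_8
\]
in $K^*(\bar\bF)\cong K^*(\bF)$. These immediately show that each product lies in strictly higher geometric filtration (it equals $v_1\cdot(\text{degree }16)$ or $2\cdot(\text{something})$), hence vanishes in $gr_\gamma/2$. You have the pieces ($c_3c_5=4y_6y_{10}$, $e_8=2y_6y_{10}$) but never put them together; the identity with $e_8$ is the missing link.

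\textbf{Second gap: for $gr_\gamma(\bF)/2$ you are proving too much.} You invoke Lemma~3.4, which requires the relations $c_ic_j=0$, $c_ie_8=0$, $e_8^2=0$ to hold in $CH^*(\bF)/2$. But the theorem only asserts the presentation of $gr_\gamma(\bF)/2$, and the paper proves the relations there, not in $CH^*$. For instance, writing $y_6^2=y_6t+y_{10}t'$ in $H^*(G/T)$, the paper computes in $K^*(\bar\bF)\cong K^*(\bF)$ that $c_2c_3=2v_1y_6^2=v_1(c_3t+c_5t')$; this shows $c_2c_3$ has higher geometric filtration, so $c_2c_3=0$ in $gr_\gamma(\bF)/2$. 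The same iterative ``subtract $v_1\cdot(\text{Chern combination})$ and raise degree'' trick handles $c_ie_8$ and the rest --- this is the ``argument similar to Lemma~3.4'' the paper alludes to, but carried out in $gr_{geo}$ rather than $CH^*$. Verifying the relations in $CH^*(\bF)/2$ itself is exactly Karpenko's separate Theorem~8.3, proved by a completely different norm-map argument; your sketch (``$SO\to Sp$ plus $e_8=2y_6y_{10}$'') does not give that.

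In short: replace the ``$P(y)$-part vanishes'' step for the cross terms by the explicit identities with $e_8$, and for the second isomorphism work directly in $gr_\gamma(\bF)/2$ via the $K^*(\bF)\cong K^*(\bar\bF)$ isomorphism rather than detouring through $CH^*(\bF)/2$.
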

\begin{proof}
We will compute $gr_{geo}^*(R(\bG))/2\cong
gr_{\gamma}^*(R(\bG))/2$.
Consider the restriction map $res_K$
\[ K^*(R(\bG))\cong  
 K^*\{1,c_2,c_4,c_2c_4\}
 \stackrel{\cong }{\to} K^*\{1,y_6,y_{10},y_6y_{10}\}
\cong K^*(\bar R(\bG))\]
by $res_K(c_2)= v_1y_6$, $res_K(c_4)=v_1y_{10}$
in $K^*(\bar \bF)$.
Note $c_2c_4\not =0 \in CH^*(R(\bG))/2$,
from the fact that $v_1y_6y_{10}\not \in Im(res_K)$.

Next using $res_K(c_3)=2y_6$, $res_K(c_5)=2y_{10}$,
and $res_K(e_8)=2y_6y_{10}$,  we have
\[grK^*(R(\bG))\cong
K^*/2\{c_2,c_4,c_2c_4\}\oplus
   K^*\{1,c_3,c_5,e_8\}.\]
From this, we show the first isomorphism.
Note generators $c_2,...,e_8$ are all nonzero
in $CH^*(R(\bG))/2$ by Corollary 3.7.
(Of course $e_8\not =0$ and $e_8^2=0$ in 
$CH^*(R(\bG))/2$, since
$y_6y_{10}\not \in res_{CH}$ and $|e_8^2|>|y_6y_{10}|$)

Let us write $y_{6}^2=y_6t+y_{10}t'\in CH^*(\bar \bF)/2$ for $t\in S(t)$.  Then
\[ c_2c_3=2v_1y_6^2=2v_1(y_6t+y_{10}t)'=v_1(c_3t+c_5t')\quad in \ K^*(\bar \bF)\cong K^(\bF).\]
Hence we see $c_2c_3=0$ in $CH^*(\bF)/I(1)\cong 
gr_{\gamma}^*(\bar \bF)$.  
We also note
\[c_2c_5=c_3c_4=v_1e_8,\quad  c_3c_5=4y_6y_{10}=2e_8 \quad in\ K^*(\bar \bF)\ \ (so\ in\ k^*(\bar \bF)).\]

By  arguments for $gr_{geo}^*(\bF)$, similar to Lemma 3.4, we can show the second isomorphism.  For example,
\[ x=c_2e_8=v_1y_6\cdot 2y_6y_{10}=v_1(2y_6^2y_{10})
=v_1(2y_6t+2y_{10}t').\]
Then we consider $x'=x-v_1(c_3t+c_5t')$.  Then $|x'|>|x|$, and continue this argument, we get the result.

The other cases are proved similarly.
\end{proof}

  Karpenko computes $CH^*(\bF)$ and proves (\cite{Ka2}) the following theorem.
\begin{thm} (\cite{Ka2})
Let $G=Spin(11)$ and $\bG$ be versal.
Then $gr_{\gamma}(R(\bG))\cong CH^*(R(\bG))$.
\end{thm}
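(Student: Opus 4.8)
The statement asserts that $I(1)=0$ for $G=Spin(11)$; equivalently, the canonical surjection $CH^*(R(\bG))\twoheadrightarrow gr_\gamma^*(R(\bG))$ (which exists by Lemma 4.4 together with the identification $gr_\gamma^*(R(\bG))\cong gr_{geo}^*(R(\bG))\cong CH^*(R(\bG))/I(1)$) is an isomorphism. Since we already have this surjection, the entire content is to show that $CH^*(R(\bG))$ is, graded piece by graded piece, no larger than the group $gr_\gamma^*(R(\bG))$ computed in Theorem 8.2, namely $\bZ_{(2)}\{1,c_3,c_5,e_8\}\oplus \bZ/2\{c_2,c_4,c_2c_4\}$ (the integral lift of the mod $2$ answer there). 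The plan is to pin down $CH^*(\bF)$ for the versal $Spin(11)$-flag variety $\bF$ and then extract $CH^*(R(\bG))$ from the motivic decomposition of Theorem 3.3, which gives an additive isomorphism $CH^*(\bF)/2\cong CH^*(R(\bG))/2\otimes S(t)/(2,b)$.

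First I would record the relations which certainly hold in $CH^*(\bF)$: the transgression relations realizing $CH^*(\bF)$ as a quotient of $S(t)=\bZ_{(2)}[t_1,\dots,t_5]$ via $b_i=c_i$; the vanishing $c_i^2=0$ in $CH^*(\bF)/2$ inherited from the symplectic group as in Theorem 6.2; the relation $e_8=2y_6y_{10}$ together with $t(G)=2$ and $y_6^2=y_{10}^2=0$ in $\bar R(\bG)$; and the $v_1$-Bockstein data of Corollary 2.1, i.e.\ $c_2=v_1y_6$, $c_4=v_1y_{10}$, $c_3\equiv 2y_6$, $c_5\equiv 2y_{10}\bmod I_\infty^2$ in $k^*(\bar\bF)$. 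Lemma 6.6 supplies the a priori surjection $\Lambda(c_2,c_3,c_4,c_5)\otimes\bZ/2[e_8]\twoheadrightarrow CH^*(R(\bG))/2$, so what remains is to kill everything outside the desired $7$-dimensional subspace, i.e.\ to prove that already in $CH^*(R(\bG))$ one has $c_ic_j=0$ for $2\le i\le j\le 5$ with $(i,j)\neq(2,4)$, that $c_ie_8=0$ for $2\le i\le 5$, and that $e_8^2=0$, while $c_2c_4=pr(c_2\cdot c_4)\neq 0$.

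The non-vanishing of $c_2c_4$ is the easy half: it follows from $res_K(c_2\cdot c_4)=v_1^2y_6y_{10}$ together with $v_1y_6y_{10}\notin\Img(res_K)$ (a computation in the spirit of Lemma 6.2), so $c_2\cdot c_4$ cannot lie in $S(t)^+\cdot CH^*(\bF)$. For the vanishing statements I would argue exactly as in the proof of Theorem 8.2, but one level down — in $k^*$ and $CH^*$ rather than in $gr_{geo}$. For instance, writing $y_6^2=y_6t+y_{10}t'$ in $CH^*(\bar\bF)/2$ with $t,t'\in S(t)^+$ gives
\[ c_2c_3=2v_1y_6^2=v_1(c_3t+c_5t')\in S(t)^+\cdot CH^*(\bF), \]
so $pr(c_2c_3)=0$; similarly $c_2c_5=c_3c_4=v_1e_8$, $c_3c_5=2e_8$, and the products $c_ie_8$, $e_8^2$ are disposed of by the degree-raising trick of Lemma 8.1 (repeatedly subtracting a term $v_1(\dots)$ and invoking $CH^m(\bF)/2=0$ above the top dimension $2\dim\bF$). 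Carrying this through shows $CH^*(R(\bG))/2$ is spanned by $1,c_2,c_3,c_4,c_5,c_2c_4,e_8$, hence has dimension $\le 7$; combined with the surjection onto the $7$-dimensional $gr_\gamma^*(R(\bG))/2$ this forces equality mod $2$, and the integral statement follows from the $v_1$-divisibility pattern ($c_3,c_5,e_8$ non-torsion, $c_2,c_4,c_2c_4$ of order $2$) already visible in $grK^*(R(\bG))$ in the proof of Theorem 8.2.

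The main obstacle is precisely the passage from the $gr$-level vanishing to the integral one: a priori the products $c_2c_3$, $c_3c_5$, $c_ie_8$, $e_8^2$ could be nonzero $2$-torsion classes living in $I(1)\subset CH^*(R(\bG))$, and excluding this possibility is exactly what requires the full and delicate determination of the relation ideal of $CH^*(\bF)$ carried out by Karpenko in \cite{Ka2}. The computation of $gr_\gamma^*(R(\bG))$ alone shows only that these products die modulo $I(1)$; the additional input needed is that the relations $c_ic_j=v_1(\dots)$, $\dots$ already lift to the connective theory $k^*(R(\bG))$ (equivalently that $CH^*(\bF)$ has no extra torsion beyond what the versal and torsion-index data force), and that is the step where the real work lies.
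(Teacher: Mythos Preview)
Your proposal correctly identifies both the goal and the obstacle, but --- as you yourself concede in the final paragraph --- it does not close the gap. The $k^*$-theory manipulations you perform (e.g.\ $c_2c_3 = v_1(c_3t+c_5t')$) establish the desired relations only in $gr_{geo}^*(R(\bG))$, not in $CH^*(R(\bG))$; that is precisely the content of Theorem~8.2 and is circular for the present purpose. You then defer to a direct determination of the relation ideal of $CH^*(\bF)$, but that is neither the argument the paper gives nor is it necessary.

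The paper's proof (following Karpenko) bypasses any relation-by-relation check by exploiting the transfer. One takes a degree-$2$ splitting extension and the associated norm $N\colon CH^*(\bar R(\bG))\to CH^*(R(\bG))$, together with its analogue on $gr_{geo}^*$. Since $CH^*(\bar R(\bG))\cong P(y)$ is torsion free, both norms are injective, and the left vertical map in the resulting commutative ladder is an isomorphism; by a five-lemma argument it suffices to compare the cokernels. One computes $N(1)=2$, $N(y_6)=c_3$, $N(y_{10})=c_5$, $N(y_6y_{10})=e_8$, and since $\Img(N)$ is an ideal (projection formula), Lemma~6.5 forces $CH^*(R(\bG))/\Img(N)$ to be a quotient of $\Lambda(c_2,c_4)$. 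But Theorem~8.2 already gives $gr_{geo}^*(R(\bG))/\Img(N)\cong\Lambda(c_2,c_4)$, so the surjection on cokernels is an isomorphism and we are done. The conceptual point your outline is missing is that any hypothetical extra torsion in $I(1)$ would have to survive in the cokernel of $N$, and that cokernel is already pinned to its minimal size --- no lift of the individual relations $c_ic_j=0$ to $CH^*$ is ever required.
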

\begin{proof}
(Karpenko \cite{Ka2})
We consider the following commutative diagram
\[\begin{CD}
CH^*(\bar R(\bG)) @>{N}>>
CH^*(R(\bG)) @>>> CH^*(R(\bG))/N\\
@V{\cong}VV @V{f_1}VV  @V{f_2}VV \\
gr_{geo}^*(\bar R(\bG))
@>{N}>>
gr_{geo}^*( R(\bG))
@>>>
gr_{geo}^*(R(\bG))/N
\end{CD}
\]
Since  $CH^*(\bar R(\bG))$ is torsion free,
the both norm maps $N$ are injective.
Hence if $f_2$ is isomorphic, then so is $f_1$.

The norm map $N$ is given by  (using $res\cdot N \cdot res=2res$ and $CH^*(\bar R(\bG))$ is torsion free)
\[ N(y_6)=c_3,\quad N(y_{10})=c_5,\quad N(y_6y_{10})=e_8.\]
Hence $CH^*(R(\bG))/N$ (and $gr_{geo}^*(R(\bG))/N$)
is isomorphic to a quotient of
\[ \Lambda_{\bZ}(c_2,...,c_5,e_8)/Ideal(2,c_3,c_5,e_8)
\cong \Lambda(c_2,c_4).\]
We see from the preceding theorem that
$ gr_{ geo}(R(\bG))/N\cong
  \Lambda(c_2,c_4).$
So $f_1$ is isomorphic.
\end{proof}
Hence the ring in Theorem 7.1  
is isomorphic to  $CH^*(\bF)/2$.
Moreover,  Karpenko conjectured 
\begin{conj} (\cite{Ka1}, \cite{Ka2})  Let $G$ be simply connected and  $\bG$ be  a versal $G_k$-torsor.  Then 
$CH^*(\bF)\cong gr_{\gamma}^*(\bF)$.
\end{conj}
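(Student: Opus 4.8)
The conjecture is equivalent, by Lemma 1.1, to the vanishing of the ideal $I(1)\subset CH^*(\bF)$, and by the motivic decomposition $M(\bF)\cong R(\bG)\otimes T$ of Theorem 3.3 (together with Theorem 3.1, which makes every ring in sight generated by Chern classes) it reduces further to showing $gr_{\gamma}^*(R(\bG))\cong CH^*(R(\bG))$, so the whole problem is concentrated in the generalized Rost motive. The plan is to attack this by the norm argument that Karpenko uses for $Spin(11)$ in Theorem 7.3, carried out for each simply connected $G$ (in practice, group by group).

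Concretely I would form the commutative square
\[
\begin{CD}
CH^*(\bar R(\bG)) @>{N}>> CH^*(R(\bG)) @>>> CH^*(R(\bG))/N\\
@V{\cong}VV @V{f_1}VV @V{f_2}VV\\
gr_{geo}^*(\bar R(\bG)) @>{N}>> gr_{geo}^*(R(\bG)) @>>> gr_{geo}^*(R(\bG))/N
\end{CD}
\]
Since $CH^*(\bar R(\bG))\cong P(y)$ is torsion free (Theorem 3.3), both norm maps $N$ are injective and the left vertical arrow is an isomorphism (the AHss over $\bar k$ collapses), so it is enough to prove that $f_2$ is an isomorphism. For that I would first pin down $N\colon P(y)/2\to CH^*(R(\bG))/2$ from the identity $res\cdot N\cdot res=2res$ and the torsion-freeness downstairs; as for $Spin(11)$, each monomial in the $y_i$ should land on an explicit Chern-class monomial built from the odd-indexed $c_i$ and $e_{2^{t+1}}$ (via Corollary 6.1 and its Spin analogue), so that $CH^*(R(\bG))/N$ becomes a quotient of $\Lambda(c_2,\dots,c_{\ell})\otimes\bZ/2[e_{2^{t+1}}]$ by the ideal generated by those monomials, i.e. essentially a quotient of the exterior subalgebra on the even $c_i$. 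In parallel I would compute $gr_{geo}^*(R(\bG))/N$ from the restriction to $K^*(\bar R(\bG))$ and the connective $K$-theory picture of $\S 5$: relations such as $c_i=v_1y_{2i}+\cdots$ in $k^*(\bar\bF)$ force certain products of generators to be $v_1$-divisible and hence zero in $gr_{geo}^*$, which pins down the target. If the two quotients agree, $f_2$, and therefore $f_1$, is an isomorphism and the conjecture holds for that $G$; this is exactly how Theorem 7.3 settles $Spin(11)$, and one can hope to run the computation by hand for the next few values of $\ell$.

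The hard step---and the one at which the plan must fail in general---is controlling $CH^*(R(\bG))$ itself, equivalently deciding which products of Chern classes are nonzero in the Chow ring of the Rost motive. Lemma 1.2 gives only an additive surjection from $\Lambda(c_2,\dots,c_{\ell})\otimes\bZ/2[e_{2^{t+1}}]$ with unknown kernel, and the $K$-theory determines the quotient $gr_{geo}^*=gr(1)^*$ but not $CH^*$; so "$CH^*(R(\bG))/N\cong gr_{geo}^*(R(\bG))/N$" will break once there are Chow classes of the Rost motive that vanish in $gr_{\gamma}^*=gr(1)^*$ yet are detected by a higher integral Morava theory $gr(2)^*$. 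This is precisely what happens for $Spin(17)$: using that the torsion index is $2^{4}$ and Nishimoto's formulas \cite{Ni} for the Milnor operations on $z$, one checks that $c_2c_3c_6c_7$ is zero in $gr(1)^*(R(\bG))/2$ but nonzero in $gr(2)^*(R(\bG))/2$, hence $0\neq c_2c_3c_6c_7\in I(1)$. So the attempted proof breaks exactly at the comparison of the two $N$-quotients, and for $Spin(17)$ the outcome is not a proof of the conjecture but a counterexample to it.
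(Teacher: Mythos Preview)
Your analysis is on target, but note that the statement is a \emph{conjecture}, and the paper does not prove it---indeed the paper \emph{disproves} it. So there is no ``paper's own proof'' to compare against; what the paper supplies is the counterexample $G=Spin(17)$ (Theorem~10.4), obtained exactly as you describe: Lemma~10.2 shows $c_2c_3c_6c_7=0$ in $gr_{\gamma}^*(R(\bG))/2=gr(1)^*(R(\bG))/2$, while Lemma~10.3 shows it is nonzero in $gr(2)^*(R(\bG))/2$ (using $t(Spin(17))=2^4$ and Nishimoto's $Q_i$-formulas), hence nonzero in $CH^*(R(\bG))/2$, so $I(1)\neq 0$.

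Your framing as a ``proof proposal'' is therefore slightly misleading: what you have written is not a failed proof but a correct diagnosis that the conjecture is false, together with the same obstruction the paper uses. The norm-square strategy you sketch is indeed the paper's method for the positive cases ($Spin(11)$, Theorem~8.3), and your identification of the step at which it breaks---the inability to control $CH^*(R(\bG))$ from $gr(1)^*$ alone once higher Morava $K$-theories see extra classes---is precisely the paper's point. One minor correction: you refer to Theorem~7.3 for the $Spin(11)$ norm argument, but in the paper's numbering that is Theorem~8.3.
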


We recall here the algebraic cobordism theory
$\Omega^*(X)$ defined in $\S 4$.
For a nonzero pure symbol $a$ in the Milnor $K$-theory $K_{n+1}^M(k)/2$, let $V_a$ be the norm variety.  
The usual Rost motive $R_n$ is defined by as an irreducible motive $M(V_a)$.

Then we know (\cite{Vi-Ya}, \cite{YaB}) that the restriction
\[ res_{\Omega}:\Omega^*(R_n)\to \Omega^*(\bar R_n)
\cong BP^*[y]/(y^p)\quad |y|=2(p^n-1)/(p-1) \]
is an injection.  Moreover, we have  
\[Im(res_{\Omega})=BP^*\{1\}\oplus I_n\otimes \bZ_{(p)}[y]^+/(y^p)\]
where $I_n=(p,...,v_{n-1})$ and  $\bZ_{(p)}[y]^+/(y^p)=
\bZ_{(p)}\{y,...,y^{p-1}\}.$

When $G=Spin(7),Spin(9)$, the motive $R(\bG))\cong R_2$
and $Im(res_{\Omega})\cong BP^*\{1\}
\oplus I_2\{y_6\}$.
The similar fact holds for $R(\bG)$ with $G=Spin(11)$.
\begin{cor}  Let $G=Spin(11)$ and $\bG$ be versal. 
Then the restriction map
\[ res_{\Omega}:\Omega^*(R(\bG))\to 
\Omega^*(\bar R(\bG))
\cong BP^*\{1,y_6,y_{10},y_6y_{10}\}\]
is an injection, and 
\[Im(res_{\Omega})=BP^*\{1\}\oplus (2,v_1)\{y_6,y_{10}\}
\oplus (2,v_1^2)\{y_6y_{10}\}.
\]
\end{cor}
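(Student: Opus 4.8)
The plan is to compute $\mathrm{Im}(res_{\Omega})$ directly and then upgrade the module-level knowledge already obtained for $K^*$ and $CH^*$ to the full $BP^*$-module $\Omega^*(R(\bG))$. First I would recall that by Theorem 3.3 (and the discussion in \S5) one has the additive identification $\Omega^*(\bar R(\bG))\cong BP^*\otimes P(y) = BP^*\{1,y_6,y_{10},y_6y_{10}\}$, and that since $H^*(G/T)$ is torsion free the AHss for $\Omega^*$ collapses, so $res_{\Omega}$ is the restriction of the (injective, by \cite{Vi-Ya}, \cite{YaB} and torsion-freeness of $\Omega^*(\bar R_n)$) restriction map on the ambient flag variety; injectivity of $res_{\Omega}$ on $R(\bG)$ then follows because $\Omega^*(\bar R(\bG))$ is $BP^*$-torsion free and $\Omega^*(R(\bG))$ is a direct summand of $\Omega^*(\bF)$, which injects into $\Omega^*(\bar\bF)$ by the versal case of Panin's theorem together with Theorem 4.7.

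Next I would pin down the image. The generators come from $S(t)=\mathbb Z_{(2)}[t_1,\dots,t_5]$: by Corollary 6.1 specialized to $\ell=5$, in $\Omega^*(\bar\bF)/I_\infty^2$ we have $c_2 = v_1 y_6$, $c_4 = v_1 y_{10}$, $c_3 = 2y_6$, $c_5 = 2y_{10}$ (plus higher $I_\infty$ corrections which, by the filtration argument of Lemma 5.4 / Corollary 5.5, can be absorbed), and $e_8 = 2 y_6 y_{10}$ since $d_{16}(z_{15}) = e_8$ and $Q_0(z_{15}) = y_6 y_{10}$. Thus $res_{\Omega}$ hits $1$ (from the unit), all of $(2,v_1)$ times $y_6$ and times $y_{10}$ (from $c_3, c_2$ and $c_5, c_4$), and $e_8 \mapsto 2y_6y_{10}$, $c_2 c_4 \mapsto v_1^2 y_6 y_{10}$, $c_3 c_5 \mapsto 4 y_6 y_{10}$, giving $(2,v_1^2)\{y_6 y_{10}\}$ in the top slot. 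So the image contains $BP^*\{1\}\oplus(2,v_1)\{y_6,y_{10}\}\oplus(2,v_1^2)\{y_6y_{10}\}$, and I must show there is nothing more.

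For the reverse inclusion I would argue slot by slot using the $BP^*$-module structure of $\Omega^*(R(\bG))$ as a summand of $\Omega^*(\bF)\cong \Omega^*(G/T)$. By Corollary 3.2 every element of $\Omega^*(R(\bG))$ is a $BP^*$-combination of products of the $t_i$, i.e.\ of products of the $c_i$ and $e_8$ (and their $I_\infty$-corrections); projecting to $\Omega^*(\bar R(\bG))$ and reading off the $y_6$-, $y_{10}$-, and $y_6y_{10}$-components, any such product either lies in the span of $1$, or contributes to $y_6$ or $y_{10}$ only through a factor carrying a coefficient in $I_\infty=(2,v_1,\dots)$ — but in fact modulo $I_\infty^2$ only $(2,v_1)$ survives because $y_6^2, y_{10}^2\in (S(t)^+)$ are killed in $\bar R(\bG)$, and a standard $I_\infty$-adic induction (exactly as in Lemma 5.4 and Corollary 5.5, together with the vanishing of $CH^*$ above the top degree as in Lemma 8.1) shows the higher $v_i$ corrections do not enlarge the first-order part; in the $y_6 y_{10}$ slot any monomial hitting it is either $e_8$-type (coefficient in $(2)$), a product $c_ic_j$ (coefficient in $(4,v_1^2)$ after using $c_3 c_5 = 4y_6y_{10}$, $c_2c_4 = v_1^2 y_6y_{10}$, $c_2c_5 = c_3c_4 = v_1 e_8 \in v_1(2)$), or higher — all inside $(2,v_1^2)$. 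Assembling these gives the claimed equality. The main obstacle I anticipate is the bookkeeping of the higher $I_\infty$-corrections to the $c_i$ in $\Omega^*$ (as opposed to the clean $mod\ I_\infty^2$ formulas): one must be sure that, e.g., the $v_2$-term in $c_2$ does not secretly contribute a generator outside $(2,v_1)$ in a lower-degree slot — this is handled by the degree/filtration induction of \S5, but it is the step that requires genuine care rather than a one-line invocation.
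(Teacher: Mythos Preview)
Your injectivity argument has a genuine gap. You claim that $\Omega^*(\bF)\hookrightarrow\Omega^*(\bar\bF)$ follows from Panin's theorem (Theorem 4.7) together with torsion-freeness, but Panin's result is for $K^*$, not for $\Omega^*$; passing from $K^*$ to $\Omega^*$ goes the wrong way (Conner--Floyd is $\Omega^*\otimes_{BP^*}K^*\cong K^*$, so injectivity on $K^*$ says nothing about kernels in $\Omega^*$). The references \cite{Vi-Ya}, \cite{YaB} you invoke concern the ordinary Rost motive $R_n$, whereas $R(\bG)$ for $Spin(11)$ is the generalized Rost motive of \cite{Pe-Se-Za} and is \emph{not} an $R_n$. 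In fact, the paper isolates exactly this issue in Lemma 8.7: injectivity of $res_\Omega$ follows from the two hypotheses $K^*(X)\cong K^*(\bar X)$ \emph{and} $gr_{geo}^*(X)\cong CH^*(X)$. The second hypothesis is Karpenko's Theorem 8.3 for $Spin(11)$, which you never use.

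The paper's proof is accordingly much shorter and structurally different. Having Theorem 8.2 together with Karpenko's Theorem 8.3, one knows $CH^*(R(\bG))$ exactly, and hence $gr\Omega^*(R(\bG))\cong BP^*\otimes CH^*(R(\bG))\cong BP^*/2\{c_2,c_4,c_2c_4\}\oplus BP^*\{1,c_3,c_5,e_8\}$. Mapping these seven generators into $gr\Omega^*(\bar R(\bG))=\Omega^*(\bar R(\bG))/2\oplus 2\Omega^*(\bar R(\bG))$ via the known values $c_2\mapsto v_1y_6$, $c_4\mapsto v_1y_{10}$, $c_2c_4\mapsto v_1^2y_6y_{10}$, $c_3\mapsto 2y_6$, $c_5\mapsto 2y_{10}$, $e_8\mapsto 2y_6y_{10}$ gives simultaneously the image and the injectivity (the map on associated gradeds is visibly injective). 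Your monomial-by-monomial upper bound could in principle recover the image description, but it cannot by itself yield injectivity; moreover, your claim that ``modulo $I_\infty^2$ only $(2,v_1)$ survives'' is not correct as stated (e.g.\ $c_2=v_1y_6+v_2y_{10}$ in $\Omega^*(\bar R(\bG))/I_\infty^2$ by Corollary 6.1, and $v_2\notin(2,v_1)$), so the off-diagonal $v_n$ terms need the associated-graded framework of the paper rather than the slotwise reading you attempt.
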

\begin{proof}
Since $CH^*(X)\otimes BP^*\cong gr\Omega^*(X)$, we have
\[ gr\Omega ^*(R(\bG))\cong BP^*/2\{c_2,c_4,c_2c_4\}\oplus 
BP^*\{1,c_3,c_5,e_8\}.\]
This induces an injection to
\[ gr(Im_{\Omega})\cong 
BP^*/2\{v_1y_6,v_1y_{10}, v_1^2y_{6}y_{10}\}\oplus
  BP^*\{1,2y_6,2y_{10},2y_{6}y_{10} \}  \]
\[ \subset gr\Omega^*(\bar R(\bG))= \Omega^*(\bar R(\bG))/2\oplus 2\Omega^*(\bar R(\bG)).\]
\end{proof}

We give here a conjecture
\begin{conj} Let $G$ be a connected compact Lie group 
and $\bG$ be a $G_k$-torsor. Then the restriction map \[res_{\Omega}: \Omega^*(R(\bG))\to \Omega^*(\bar
R(\bG))\subset  BP^*\otimes P(y)\]
is injective, i.e., $\Omega^*(R(\bG))$ is torsion free.
\end{conj}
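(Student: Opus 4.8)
\smallskip
\noindent\emph{Towards a proof.} The plan is to bootstrap from the classical Rost motive, where the statement is already known. First I would pass from $R(\bG)$ to the full flag variety. By the motivic decomposition $M(\bF)_{(p)}\cong R(\bG)\otimes T$ (Theorem 3.3) with $T$ a sum of Tate motives, $R(\bG)$ is a direct summand of $M(\bF)$ and the complementary factor $T$ is split; hence $\Omega^*(R(\bG))\hookrightarrow\Omega^*(\bF)$ is a split monomorphism of $BP^*$-modules, and $res_{\Omega}$ is injective on $R(\bG)$ if and only if it is injective on $\bF$. Since $\Omega^*(\bar\bF)\cong\Omega^*(G/T)$ is $BP^*$-free as a module, injectivity of $res_{\Omega}\colon\Omega^*(\bF)\to\Omega^*(\bar\bF)$ also yields the torsion-freeness of $\Omega^*(R(\bG))$. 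So the whole question reduces to showing that $res_{\Omega}$ on the flag variety $\bF=\bG/B_k$ is injective.

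For that I would run the algebraic-cobordism version of the Atiyah--Hirzebruch argument that collapses the spectral sequence over $\bar k$. Over $\bar k$ the collapse is forced by torsion-freeness of $H^*(G/T)$, so the entire obstruction to $res_{\Omega}$ being injective is carried by the coniveau/AHss differentials over $k$, and by Corollary 2.1 (and its $Spin$ incarnation Corollary 6.1) these differentials are governed by the Milnor operations $Q_i$ through the universal congruences $b=\sum_{i\ge 0}v_i\,y(i)$ in $\Omega^*(\bar\bF)/\II$ with $\pi^*y(i)=Q_ix$. Using that $\Omega^*(BB_k)\cong BP^*(BT)$ surjects onto $\Omega^*(\bF)$ (Theorem 3.1), one presents $\Omega^*(\bF)$ as a quotient $BP^*(BT)/J$ of a $BP^*$-free module, and the problem becomes to show that $J$ is generated precisely by the $v$-divisible lifts dictated by those congruences together with the group-specific relations (e.g. $c_i^2=0$ from $SO\subset Sp$ in the $Spin$ case), so that the quotient remains torsion free.

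The genuinely hard part is to reduce this bookkeeping to the computation for the classical Rost motive. By the result recalled just above (\cite{Vi-Ya}, \cite{YaB}), $res_{\Omega}\colon\Omega^*(R_n)\to\Omega^*(\bar R_n)\cong BP^*[y]/(y^p)$ is injective with image $BP^*\{1\}\oplus I_n\cdot\bZ_{(p)}[y]^+/(y^p)$, while over $\bar k$ one has $CH^*(\bar R(\bG))/p\cong P(y)/p\cong\bigotimes_i\bZ/p[y_i]/(y_i^{p^{r_i}})$. The structure theory (Petrov--Semenov--Zainoulline \cite{Pe-Se-Za}, Vishik--Zainoulline \cite{Vi-Za}, and \S 5--\S 6) equips $R(\bG)$ with a filtration whose subquotients are Tate twists of classical Rost motives $R_{n_i}$; splicing the displays for the $R_{n_i}$ along this filtration, and matching the $v$-divisibilities of Corollary 6.1 with the invariant ideals $I_{n_i}$, should assemble $Im(res_{\Omega})$ explicitly and yield the injectivity. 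I expect the main obstacle to be exactly here: for $\ell\ge 5$ the motive $R(\bG)$ is not a tensor product of classical Rost motives (this is what already made $CH^*(R(\bG))$ subtle for $Spin(11)$, and more so for $Spin(13)$ and $Spin(17)$), so the filtration carries nontrivial extensions that must be controlled at the level of $\Omega^*$ rather than $CH^*$ — there is no formal reason the cobordism spectral sequence degenerates as it does for a single $R_n$, and one must rule out hidden $p$-linked or $v_i$-linked extensions. A secondary difficulty is uniformity in $\bG$: the versal case does not formally imply the general one, since torsion-freeness need not be inherited by specializations of torsors, so the filtration and the control of its differentials have to be produced for all $G_k$-torsors simultaneously.
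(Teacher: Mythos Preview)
The statement you are trying to prove is labeled \emph{Conjecture} in the paper, not Theorem or Lemma: the paper does not prove it and does not claim to. What the paper actually offers is (i) a verification for $G=Spin(11)$ (Corollary 8.5), obtained by first computing $CH^*(R(\bG))$ via Karpenko's norm argument and then reading off $Im(res_{\Omega})$ explicitly, and (ii) the conditional Lemma 8.7, which says that if $K^*(X)\cong K^*(\bar X)$ and $gr_{geo}^*(X)\cong CH^*(X)$ (i.e.\ Karpenko's conjecture holds for $X$) then $res_{\Omega}$ is injective. Since the paper itself \emph{disproves} Karpenko's conjecture for $Spin(17)$, even this conditional route is blocked in general; the injectivity of $res_{\Omega}$ is left genuinely open beyond the small cases.

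Your outline is therefore not to be compared against a paper proof, but assessed on its own, and there is a real gap at the step you yourself flag. The assertion that $R(\bG)$ carries a filtration with subquotients Tate twists of classical Rost motives $R_{n_i}$ is not in \cite{Pe-Se-Za} or \cite{Vi-Za} and is not known; the generalized Rost motive of Petrov--Semenov--Zainoulline is defined as the unique indecomposable summand of $M(\bF)$, and for $Spin(2\ell+1)$ with $\ell\ge 5$ it is \emph{not} built from classical $R_n$'s in any established way (this is precisely why $CH^*(R(\bG))$ is hard from $Spin(11)$ onward). Without that filtration, the splicing argument has nothing to splice. The earlier reductions are fine --- passing from $R(\bG)$ to $\bF$ via the Tate complement, and presenting $\Omega^*(\bF)$ as a quotient of $BP^*(BT)$ in the versal case --- but they only repackage the problem; they do not produce the missing structural input. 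Your closing remark about non-versal torsors is also correct: nothing here, nor in the paper, handles arbitrary $\bG$.
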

When $G$ is simply connected and $\bG$ is versal, this conjecture is weaker than the Karpenko conjecture.
\begin{lemma} 
If $K^*(X)\cong K^*(\bar X)$
and  $gr_{geo}^*(X)\cong  CH^*(X)$, then 
the restriction map $res_{\Omega}$ is injective.
\end{lemma}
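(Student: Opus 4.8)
The plan is to trace through the chain of isomorphisms connecting $\Omega^*$, $K^*$, $CH^*$ and the geometric graded ring, and show injectivity of $res_\Omega$ reduces to the two hypotheses. First I would recall that there is a natural map $\Omega^*(X)\otimes_{BP^*}\bZ_{(2)}\cong CH^*(X)$ and, more importantly, the base-change $\Omega^*(X)\otimes_{BP^*}\tilde K(1)^*\cong K^*(X)$ (Conner--Floyd, as stated in $\S 2$ and $\S 4$). So an element $a\in\Omega^*(X)$ lying in $\ker(res_\Omega)$ has image zero in both $CH^*(\bar X)$ (via the realization $\Omega^*(\bar X)\to BP^*(\bar X)$ and then to $CH^*$) and in $K^*(\bar X)$.

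The key step is a descending induction on the ideal $I_\infty$-adic (or rather $v_i$-adic) filtration on $\Omega^*(\bar X)$, i.e. writing $\Omega^*(\bar X)\cong BP^*\otimes P(y)$ as in $\S 5$. Suppose $0\ne a\in\ker(res_\Omega)$; pass to $gr\,\Omega^*(X)\cong CH^*(X)\otimes BP^*$, which is the associated graded for the filtration by the coefficient ideal. The leading term $\bar a\in CH^*(X)\otimes BP^*$ must restrict to zero in $CH^*(\bar X)\otimes BP^*$, because the hypothesis $gr_{geo}^*(X)\cong CH^*(X)$ means $I(1)=0$, so $res_{CH}:CH^*(X)\to CH^*(\bar X)$ is injective (the point being that $gr_{geo}^*$ sits inside $gr_{top}^*(\bar X)$ by Lemma 4.3, so $CH^*(X)=gr_{geo}^*(X)\hookrightarrow CH^*(\bar X)$). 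Hence $\bar a=0$, contradicting that $\bar a$ was the leading term — unless the filtration argument needs the $K$-theory hypothesis to get started. Concretely: $res_\Omega$ is injective after inverting $v_1$ precisely because $K^*(X)\cong K^*(\bar X)$ and $\Omega^*[v_1^{-1},v_2^{-1},\dots]$-localization factors through $K$-theory; combining this with the $CH$-injectivity at the "$v_i=0$" end pins down the whole filtration.

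So the steps in order are: (1) invoke $K^*(X)\cong K^*(\bar X)$ to conclude $res_\Omega$ becomes injective after inverting $v_1$, hence $\ker(res_\Omega)$ consists of $v_1$-torsion-free... no, rather: $\ker(res_\Omega)$ maps to zero in $K^*(X)=\Omega^*(X)\otimes_{BP^*}\tilde K(1)^*$, so every element of the kernel is annihilated by a power of... actually it lies in the kernel of $\Omega^*(X)\to\Omega^*(X)\otimes_{BP^*}\tilde K(1)^*$; (2) identify that kernel with elements supported in higher $v_i$-filtration, using that $\Omega^*(\bar X)$ is free over $BP^*$ and $res$ is filtration-preserving; (3) on the associated graded, use $CH^*(X)\hookrightarrow CH^*(\bar X)$ (the hypothesis $gr_{geo}^*(X)\cong CH^*(X)$ together with Lemma 4.3) to kill the leading term; (4) conclude $a=0$.

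The main obstacle I anticipate is step (2): controlling how $\ker(res_\Omega)$ interacts with the $BP^*$-module filtration on $\Omega^*(\bar X)\cong BP^*\otimes P(y)$ and ensuring the "leading term" lands in the copy of $CH^*(\bar X)$ rather than smearing across several filtration degrees — this is exactly the kind of bookkeeping done in Lemma 5.1 and Corollary 3.7, and one should be able to cite those. A secondary subtlety is that $CH^*(R(\bG))$ (or $CH^*(X)$ in general) carries only an additive, not ring, structure in the Rost-motive setting, so all maps must be treated additively; but since the statement only asserts injectivity of an additive map this is harmless. I expect the proof to be short once one organizes it as: "$a\in\ker(res_\Omega)$ $\Rightarrow$ $a\in\ker(\Omega^*(X)\to K^*(X))$ by the $K$-hypothesis and Conner--Floyd $\Rightarrow$ leading term of $a$ lies in $\ker(CH^*(X)\to CH^*(\bar X))=0$ by the $gr_{geo}$-hypothesis $\Rightarrow$ $a=0$."
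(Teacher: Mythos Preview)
Your central claim---that the hypothesis $gr_{geo}^*(X)\cong CH^*(X)$ forces $res_{CH}:CH^*(X)\to CH^*(\bar X)$ to be injective---is false, and this is what your argument ultimately rests on. Take $X=R(\bG)$ for $G=Spin(7)$ with $\bG$ versal: both hypotheses of the lemma hold (this is exactly $\S 7$ together with Theorem~4.6), yet $CH^*(R(\bG))\cong\bZ_{(2)}\{1,c_3\}\oplus\bZ/2\{c_2\}$ while $CH^*(\bar R(\bG))\cong\bZ_{(2)}\{1,y_6\}$ is torsion free, and $res_{CH}(c_2)=0$ because $c_2=v_1y_6$ in $k^*(\bar\bF)$ becomes zero once $v_1=0$. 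Lemma~4.3 only produces a \emph{map} $gr_{geo}^*(X)\to gr_{top}^*(X(\bC))$, with no injectivity statement, and $gr_{top}^*$ is a quotient of $H^*(X(\bC))$, not of $CH^*(\bar X)$; the chain ``$CH^*(X)=gr_{geo}^*(X)\hookrightarrow CH^*(\bar X)$'' simply does not exist.

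The gap appears exactly when the leading term of $a$ in $gr\,\Omega^*(X)\cong CH^*(X)\otimes BP^*$ has coefficient in $BP^{<0}$ involving some $v_n$ with $n\ge 2$: such a term dies in $K^*(X)$ automatically (since $v_n\mapsto 0$ there), so the $K$-theory hypothesis tells you nothing, and your filtration induction stalls. The paper's proof handles this by a device you do not mention: one applies a Quillen operation $r_\alpha$ (built from the relations $r_{p\Delta_{n-1}}(v_n)=v_1$) to convert the coefficient of the leading term to $\lambda v_1^s$. After this, the underlying Chow class $x'\in CH^*(X)$ is nonzero but $v_1^s$-torsion in $gr\,\Omega^*(X)$, hence vanishes in $K^*(X)$; that is precisely the statement $0\not=x'\in I(1)$, contradicting $I(1)=0$. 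Your step ``$a\in\ker(res_\Omega)\Rightarrow a\in\ker(\Omega^*(X)\to K^*(X))$'' is correct and matches the paper's Case~1, but without the Quillen-operation reduction you cannot close Case~2.
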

\begin{proof}
Let $0\not =x\in Ker(res_{\Omega})$.
  In the  AHss $E_r^{2*,*,*'}$
 converging to
$\Omega^*(X)$, let 
\[ x=ax'\quad a\in BP^*,\ 0\not = x'\in CH^*(X).\]

Suppose $a\in BP^0\cong \bZ_{(p)}$, that is,
$0\not=x\in CH^*(X)\cong gr_{geo}^*(X)$.  Hence  
$x$ is non-zero in $K^*(X)\cong K^*(\bar X)$.
Since $K^*(\bar X)\cong \Omega^*(\bar X)
\otimes _{BP^*}K^*$, we see $0\not =x\in \Omega^*(\bar X)$, and this is a contradiction.

Suppose $a\in BP^{<0}$.
Considering the Quillen operation
 $r_{p^1\Delta_{n-1}}(v_n)=v_1$ (\cite{Ra}, \cite{Ha},
\cite{YaC}).  
we can take the operation $r_{\alpha}$ such that
\[ r_{\alpha}(ax')=r_{\alpha}(a)\cdot x'=
\lambda v_1^sx'\quad for\ \ s,\ \lambda\in \bZ.\]
Hence there is a nonzero element  $\lambda'x'\not =0 \in CH^*(X)$, for $\lambda'\in \bZ$, 
but it is $v_1^s$-torsion in $E_{\infty}^{2*,*,*'}\cong
gr \Omega^*(X)$.  
Since 
$\Omega^*(X)\otimes _{BP^*}\bZ_{(p)}[v_1,v_1^{-1}]
\cong K^*(X),$
we see $0\not =\lambda'x' \in I(1). $ 
\end{proof}

{\bf Remark.}
The converse of the above lemma is not correct. In fact, for the original 
Rost motive $R_n$, $gr_{\gamma}(R_n)\not \cong
CH^*(R_n)$ for $n\ge 3$, while $res_{\Omega}$ is injective.


\section{$Spin(13)$}

In this section,  we consider the case $Spin(13)$, i.e.,
$\ell=6$.  Then we have
\[ grP(y)\cong \Lambda(y_6,y_{10},y_{12}).\]
Hence $y_{top}=y_6y_{10}y_{12}$.
Since  
$2^t\le \ell<2^{t+1}$, so $t=2$.
Hence 
$e_8$ exists in $CH^*(R(\bG))$.  Note $e_8^2=pr(e_8\cdot e_8)=0$ in $CH^*(R(\bG))$
since $|e_8^2|>|y_{top}|$.
We also note $t(G)=2^2=4$ by Totaro, 
in fact $e_8c_6=4y_{top}$.
 
{\bf Note.}   From Corollary 5.5, the projection $pr(c_{i_1}...c_{i_s})$ is 
uniquely determined in $ k^*(\bar R(\bG))/(I_{\infty}^{s+1})\cong k^*\otimes P(y)/(I_{\infty}^{s+1})$.

We take $y_6,y_{10},y_{12}$ such that
\[ c_2=v_1y_6,\ \ c_4=v_1y_{10}, \ \ y_{12}=y_6^2\quad in\
k^*(\bar \bF).\]
(For example, let $ y_6=v_1^{-1}c_2$ and let us write 
$y_{12}=y_6^2$ in $k^*(\bF)$.)
Recall that  the invariant ideal $I_{\infty}$
in $k^*$-theory is 
\[I_{\infty}=Ideal(2,v_1)\subset k^*(\bar R(\bG))
\cong \bZ_{(2)}[v_1]\otimes P(y). \]
Note that $pr(I_{\infty})\subset I_{\infty}$.
Then we can take $c_i\in k^*(R(\bG))$ such that
in $k^*(\bar R(\bG))/(I_{\infty}^3)$,
\[c_3=2y_6+\mu v_1^2y_{10}, \ \ 
 c_5=2y_{10}+v_1y_{12},\ \ 
 c_{6}=2y_{12}+\mu 'v_1^2y_{6}y_{10},\quad \mu,\mu'\in
\bZ_{(2)},\]
\[ e_8=2y_6y_{10}+v_1y_6y_{12}.\]
Here we used $P(y)^*=0$ for $*=8,14,20$,
and $ Q_1x_9=y_{12},\ Q_1z_{15}=y_6y_{12}$.
Moreover, we write (new) $c_3$ (resp. $c_6$) as the (old) element $c_3-\mu v_1c_4$ (resp. $c_6-2\mu'c_2c_4$)
such that  ($mod(I_{\infty}^3)$)
\[ c_3=2y_{6},\quad c_6=2y_{12}.\]

Let us write $\bZ_{(p)}$-free module
\[ \Lambda_{\bZ}(a_1,...,a_n)=\bZ_{(p)}\{a_{i_1}...a_{i_s}|
1\le i_1<...<i_s\le n\}.\]
\begin{lemma}
Let $A(c)=\Lambda_{\bZ}(c_2,...,c_6,e_8).$
For $x\in A(c)$, let $x\in I_{\infty}^4
 k^*(\bar R(\bG)).$
Then  $x\in I_{\infty}\cdot A(c)$, i.e., $x=0$ in
 $gr_{\gamma}(R(\bG))/2$.
\end{lemma}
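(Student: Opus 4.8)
The plan is to mimic the proof of Lemma 7.1, pushing any monomial $x \in A(c)$ that lies in $I_\infty^4 k^*(\bar R(\bG))$ up in degree until it leaves the range where $CH^*(R(\bG))/2$ is nonzero, thereby forcing $x \in I_\infty \cdot A(c)$. Concretely, I would argue by downward induction on the degree $|x|$. The top nonzero degree of $CH^*(R(\bG))/2 \cong P(y)/2$ is $|y_{top}| = |y_6 y_{10} y_{12}| = 28$ (note $CH^*(R(\bG))$ does carry $e_8$, but $e_8 = 0$ once we multiply into the socle, so the relevant cut-off is still $|y_{top}|$). So for $|x| > 28$ there is nothing to prove. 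For $|x| \le 28$, I want to subtract off a correction term lying in $I_\infty \cdot A(c)$ that kills the leading $I_\infty^4$-behaviour and strictly raises the degree.

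The key computational input is the explicit list of formulas in $k^*(\bar R(\bG))/(I_\infty^3)$ established just before the lemma: $c_2 = v_1 y_6$, $c_4 = v_1 y_{10}$, $c_3 = 2y_6$, $c_5 = 2y_{10} + v_1 y_{12}$, $c_6 = 2y_{12}$, $e_8 = 2y_6 y_{10} + v_1 y_6 y_{12}$, together with $y_{12} = y_6^2$ in $k^*(\bar\bF)$. From these and Corollary 5.5 (which says $pr(c_{i_1}\cdots c_{i_s})$ is well-defined in $k^*(\bar R(\bG))/(I_\infty^{s+1})$), every monomial in $A(c)$ of weight $s$ (number of factors) has an image lying in $I_\infty^s k^*\otimes P(y)$, and its leading term mod $I_\infty^{s+1}$ is a monomial $v_1^a 2^b \cdot (\text{product of } y\text{'s})$ with $a+b = s$. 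If $x \in I_\infty^4$ and the product of $y$'s in its leading term is $y_I$, I would replace that leading $y$-monomial by a representative built from the $c_i$'s and $e_8$: since $res_K: K^*(R(\bG)) \to K^*(\bar R(\bG))$ is surjective (indeed iso, Lemma 6.6), and since $P(y)/2$ is spanned by monomials in $y_6, y_{10}, y_{12}$, each such $y_I$ is hit by a $\bZ_{(2)}[v_1]$-combination of $c$-monomials; multiplying by the appropriate power $v_1^a$ or $2^b$ with $a+b \ge 4$ yields the desired $x' \in I_\infty \cdot A(c)$ with $x - x' \in I_\infty^5$ (hence still in $I_\infty^4$) and $|x - x'| > |x|$. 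Iterating terminates by the degree bound.

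I would organise the write-up around a single inductive step: given $x \in A(c) \cap I_\infty^4 k^*(\bar R(\bG))$ with $|x| = m \le 28$, produce $x' \in I_\infty \cdot A(c)$ with $|x'| = m$ and $x - x' \in I_\infty^4 k^*(\bar R(\bG))$, $|x - x'| > m$. The construction of $x'$ is where I would spell out one or two representative cases explicitly — e.g. $x = 8 y_6 y_{10} y_{12}$, corrected by $2 c_3 c_5 c_6$ or $4 e_8 c_6$ exactly as in the proof of Lemma 7.1 — and then assert the remaining finitely many cases are handled the same way using the formula list above. The main obstacle, and the one point I would be careful about, is book-keeping the $I_\infty$-adic valuation: I need that subtracting $x'$ does not accidentally drop the valuation below $4$, which is guaranteed precisely because $x'$ is chosen with the same leading $I_\infty^4$-term as $x$, so the difference sits in $I_\infty^{\ge 5}$. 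Everything else is the routine "push up the degree until you fall off the top of $P(y)$" argument, legitimised by $gr_\gamma^*(R(\bG))/2 \cong gr_{geo}^*(R(\bG))/2$ (Theorem 3.1, Lemma 4.4) and the characterisation $a = 0 \in gr(1)^*(R(\bG))/2 \iff a \in (I_\infty, I(1)) h^*(X)$ from $\S 5$.
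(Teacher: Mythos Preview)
Your overall strategy (subtract off correction terms and iterate) matches the paper's, but the inductive framing has a genuine error: you write ``$|x'| = m$ and $|x - x'| > m$'', which is impossible --- two homogeneous degree-$m$ elements differ in degree $m$. What actually increases is the $I_\infty$-filtration level: you correctly note $x - x' \in I_\infty^5$, and \emph{that} is the quantity to induct on. Termination then comes from the fact that in a fixed Chow degree $m$ the exponent $a$ in any term $v_1^a 2^b y$ is bounded by $(|y_{top}| - m)/2$, so after finitely many steps only $2$-divisible terms remain and one finishes $2$-adically. The paper's proof avoids this confusion by not inducting on degree at all: it simply lists the five monomial types $v_1^4 y,\ 2v_1^3 y,\ 4v_1^2 y,\ 8v_1 y,\ 16y$ that can occur in $I_\infty^4$, and kills each directly.

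More substantively, you do not name the two inputs that drive the paper's case analysis. For the terms with $b \ge 2$ the paper invokes the torsion index $t(G) = 4$, which guarantees $4y \in A(c)$ (up to higher $I_\infty$) for every $y \in P(y)$; this is what lets you absorb $4v_1^2 y$, $8v_1 y$, $16y$ into $I_\infty A(c)$ in one stroke. For the remaining cases $v_1^4 y$ and $2v_1^3 y$ the paper steps \emph{outside} $A(c)$ and works in $k^*(\bar\bF)$: using $y_{12} = y_6^2$ one has $c_2^2 = v_1^2 y_{12}$ exactly, so e.g.\ $v_1^4 y_{top} = c_2 c_4 \cdot c_2^2$, and then $c_2^2 = 0$ in $CH^*(\bF)/2$ (the Petrov relation) kills it directly, no iteration needed. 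Your appeal to ``$res_K$ surjective'' and the formula list does not by itself produce these two facts; in particular $res_K$ is a statement after inverting $v_1$ and says nothing about which $v_1^a 2^b y$ lie in $Im(res_k)$. Your variant using $c_2 c_4 c_5 = v_1^3 y_{top}$ (via $c_5 \equiv v_1 y_{12}$ mod $2$) \emph{can} be made to work as an alternative to the $c_2^2$ trick, but you would need to verify it case by case and then handle the resulting $I_\infty^5$ error, which brings you back to needing the torsion-index bound.
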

\begin{proof}  
Let $x\in I_{\infty}^4k^*(\bar R(\bG))$.  Then $x$ is a sum of
\[ v_1^4y,\ 2v_1^3y,\ 4v_1^2y,\ 8v_1y,\ 16y\quad for \ y\in P(y).\]

Since the torsion index $t(G)=4$,
we see $4y_6y_{10}y_{12}\in k_*(R(\bG))$.
When $y=y_iy_j$, we consider $4y-c_{i'}c_{j'}$ instead of $4y$ where $2y_i=c_{i'}, 2y_{j}=c_{j'}\ mod(v_1)$.
Then  $4v_1^2y,8v_1y,16y$ are in $I_{\infty}A(c)$.
Hence we only need to consider  the cases 
\[x=v_1^4y\quad  or \quad x=2v_1^3y.\]

Let  $x=v_1^4y$.  When  $y=y_{top}$, we consider 
\[  x'= x-c_2c_4(c_2)^2=v_1^4y_{top}-v_1y_6v_1y_{10}(v_1^2y_{12})=0\quad in\ \ k^*(\bar \bF).\]
Here $(c_2^2)=0$ in $gr_{\gamma}(\bar \bF)/2$, and hence
$x=x'=0\in gr_{\gamma}^*(R(\bG))/2$.
When $y=y_{10}y_{12}$, taking $x'=x-v_1c_4c_2^2$,
similar arguments work.
When  $y=y_6y_{10}$, we consider
\[ x'=x-v_1^2c_2c_4=0\quad in\  k^*(\bar R(\bG)).\]
This means $x=0\in gr_{\gamma}(R(\bG))/2$.
Thus we can see the lemma for $x=v_1^4y$.

Let $x=2v_1^3y$ with $y=y_{top}$.
Then we see
\[ x'=x-v_1c_2c_4c_{6}=2v_1^3y_{top}-v_1(v_1y_6)(v_1y_{10})(2y_{12})=0
\quad mod(I_{\infty}^5)\]
which is zero in $gr_{\gamma}^*(R(\bG))/2$ since
$x'=v_1^5y'$ or $x'=2v_1^4y'$.
The other cases are proved similarly.
\end{proof}  

\begin{lemma} $c_2c_3=0\in gr_{\gamma}(R(\bG))/2$.
\end{lemma}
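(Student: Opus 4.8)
The plan is to compute the product $c_2 c_3$ inside the Morava $K$-theory $k^*(\bar R(\bG))$ using the explicit formulas for the restrictions of $c_2$ and $c_3$ established just above, and then to descend to $gr_\gamma$ via the identification $gr_\gamma^*(R(\bG))/2 \cong gr_{geo}^*(R(\bG))/2 \cong CH^*(R(\bG))/(I(1))$ together with Lemma 8.1. Recall that in $k^*(\bar R(\bG))/(I_\infty^3)$ we have $c_2 = v_1 y_6$ and $c_3 = 2 y_6$, and $y_{12} = y_6^2$ in $k^*(\bar \bF)$. Hence
\[ c_2 c_3 = v_1 y_6 \cdot 2 y_6 = 2 v_1 y_6^2 = 2 v_1 y_{12} \quad \text{in } k^*(\bar R(\bG)). \]

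First I would observe that $2 v_1 y_{12} \in I_\infty^3 \, k^*(\bar R(\bG))$ — indeed $2 v_1 \in I_\infty^2$ and $y_{12} \in P(y)$, so the product lies in $I_\infty^2 \cdot k^*(\bar R(\bG))$, and in fact $2 v_1 y_{12} = v_1 \cdot (2 y_{12}) = v_1 c_6 \bmod I_\infty^3$, which already exhibits $c_2 c_3$ as $v_1$ times an element of $k^*(R(\bG))$, i.e. as an element of $I_\infty \cdot A(c)$ modulo higher filtration. More directly: $c_2 c_3 - v_1 c_6 = 2 v_1 y_{12} - v_1(2 y_{12} + \mu' v_1^2 y_6 y_{10}) = -\mu' v_1^3 y_6 y_{10} \in I_\infty^4 A(c)$, and by Lemma 8.1 this difference vanishes in $gr_\gamma^*(R(\bG))/2$. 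Therefore $c_2 c_3 = v_1 c_6 = 0$ in $gr_\gamma^*(R(\bG))/2$, since multiplication by $v_1$ is zero on the associated graded ring (which is a quotient of $CH^*$, a $\bZ_{(2)}$-module with $v_1$ acting trivially — more precisely $v_1 \in I_\infty$ and $gr_\gamma/2$ is a quotient of $k^*(R(\bG))/(I_\infty)$).

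Alternatively, and perhaps more cleanly, I would argue directly in the style of the proof of Theorem 7.1: write $y_6^2 = y_6 t + y_{10} t' + y_{12} t''$ for suitable $t, t', t'' \in S(t)$ in $CH^*(\bar \bF)/2$ (using that $CH^*(\bar\bF) \cong CH^*(\bar R(\bG)) \otimes S(t)/(b)$ and that $y_6^2$ has degree $12$), so that in $k^*(\bar\bF) \cong k^*(\bF)$,
\[ c_2 c_3 = 2 v_1 y_6^2 = 2 v_1(y_6 t + y_{10} t' + y_{12} t'') = v_1(c_3 t + c_5 t' + c_6 t''), \]
which manifestly lies in $I_\infty \cdot S(t)^+ $ (after absorbing the degree-zero part, which is $0$ here since $|c_2c_3|>0$); hence $c_2 c_3 = 0$ in $gr_\gamma^*(\bF)/2 \cong CH^*(\bF)/(2, I(1))$, and a fortiori its image $c_2 c_3 = pr(c_2 \cdot c_3) = 0$ in $gr_\gamma^*(R(\bG))/2$. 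One must be a little careful that the decomposition of $y_6^2$ can be chosen compatibly — but this is exactly the kind of bookkeeping already carried out repeatedly (e.g. in Lemma 8.1 and in Theorem 7.1), so it is routine.

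The main obstacle, such as it is, is purely one of bookkeeping: making sure that the relation $c_2 c_3 = v_1 c_6$ holds not merely modulo $I_\infty^3$ but with an error term controlled well enough to invoke Lemma 8.1 (i.e. an error in $I_\infty^4 A(c)$ with strictly larger degree, so the inductive degree-raising argument of Lemma 8.1 applies). Given the explicit formulas for $c_3, c_6$ modulo $I_\infty^3$ and the vanishing $P(y)^* = 0$ in the relevant degrees $8, 14, 20$ already noted, this control is available, and no genuinely new input beyond Lemma 8.1 and Corollary 5.5 (uniqueness of $pr(c_{i_1}\cdots c_{i_s})$ modulo $I_\infty^{s+1}$) is needed.
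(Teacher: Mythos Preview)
Your first approach is correct and is exactly the paper's: set $x = c_2 c_3 - v_1 c_6$, verify $x \equiv 0$ in $k^*(\bar R(\bG))/I_\infty^4$, and invoke the preceding lemma (Lemma 9.1, not 8.1). One small bookkeeping correction: with the \emph{unadjusted} formula $c_6 = 2y_{12} + \mu' v_1^2 y_6 y_{10}$ your error term $-\mu' v_1^3 y_6 y_{10}$ lies only in $I_\infty^3$, not $I_\infty^4$; the paper instead uses the redefined $c_6 \equiv 2y_{12}\pmod{I_\infty^3}$ together with Lemma 5.4 (which gives $pr(c_2 c_3) \equiv 2v_1 y_{12}\pmod{I_\infty^4}$, since $a=0$ for $c_2$), so that $x\in I_\infty^4$ exactly as Lemma 9.1 requires.
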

\begin{proof}
We have
$c_2c_3=v_1y_6(2y_6)$  $mod(I_{\infty}^4)$ in $k^*(\bar R(\bG))$.
(Here we used Lemma 5.4 and that $c_3=2y_6$ in $k^*(\bar \bF)/I_{\infty}^2$ from Corollary 2.1.)
We consider 
$ x=c_2c_3-v_1c_6$ with $mod(I_{\infty}^4)$.  That is,   
\[ x=
v_1y_6(2y_6)-v_1(2y_{12})=0\quad mod(I_{\infty}^4).\]
Hence from the preceding lemma, we see $c_2c_3=0\in gr_{\gamma}(R(\bG))/2$.
\end{proof}


We will compute $gr_{geo}^*(R(\bG))/2\cong gr_{\gamma}^*(R(\bG))/2$ (by using Lemma 5.2).  We will seek
generators of 
 \[gr_2(K^*(R(\bG))=K^*(R(\bG))/2\oplus
2K^*(R(\bG))\] which are nonzero in $CH^*(R(\bG))/2$.  
If $x$ is a generator of $K^*(R(\bG))/2$ but 
is zero in $CH^*(R(\bG))/2$,  then we seek an element
$x'\not =0\in CH^*(R(\bG))/2$ with $x=v_1^sx'$ in 
$K^*(\bar R(\bG))$ for some $s\ge 1$.

We note  that
\[K^*(R(\bG))/2\cong K^*\otimes \Lambda(c_2,c_4,c_5)\]
\[\qquad \cong K^*\otimes \Lambda(y_6,y_{10},y_{12})\cong
K^*(\bar R(G))/2\]
by $c_2\mapsto v_1y_6$, $c_4\mapsto v_1y_{10}$
and $c_5\mapsto 2v_1^{-1}c_4+v_1y_{12}$.

At first we study elements in $\Lambda(c_2.c_4,c_5)$.
Of course $c_2,c_4,c_5$ are nonzero in $gr_{\gamma}^*(R(\bG))/2$.
We also know $c_2c_4\not =0\in gr_{\gamma}^*(R(\bG))/2$
since so for $G=Spin(11)$.

We have (with $mod(I_{\infty}^4)$)
\[ c_2c_5=v_1y_6(2y_{10}+v_1y_{12})=v_1e_8.\]
Hence we get
$ c_2c_5c_4=v_1e_8c_4,$
where $e_8c_4=v_1^2y_{top}\ mod(I_{\infty}^3)$ which is nonzero
in $gr_{\gamma}^*(R(\bG))/2$.
(Here we note $c_2c_4\mapsto v_1^2y_6y_{10},\ c_2c_5-c_3c_4 \mapsto v_1^2y_6y_{12}$, and they are nonzero
in $gr_{\gamma}^*(R(\bG))/2$.)

For the element $c_4c_5$,  we compute
\[c_4c_5=v_1y_{10}(2y_{10}+v_1y_{12})=v_1^2y_{10}y_{12}\ \ 
mod(I_{\infty}^3)\]
which is also non zero in  $ gr_{\gamma}^*(R(\bG))/2$.
Thus we have 

\begin{lemma}  We have some filtration and the graded ring
\[ grP(c)/(2)\cong \bZ/2\{1,c_2,c_4,c_5,c_2c_4,c_4c_5\}\oplus
\bZ/2\{ e_8, e_8c_4\} \]
\[ \cong P(c)/(2,c_2c_5,c_2c_5c_4)\oplus
\bZ/2\{ e_8, e_8c_4\}\subset gr_{\gamma}(R(\bG))/2,\]
where $P(c)=\Lambda_{\bZ}(c_2,c_4,c_5).$
\end{lemma}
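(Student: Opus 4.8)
The plan is to establish the claimed direct-sum decomposition
\[
grP(c)/(2)\cong \bZ/2\{1,c_2,c_4,c_5,c_2c_4,c_4c_5\}\oplus\bZ/2\{e_8,e_8c_4\}
\]
as a subobject of $gr_{\gamma}^*(R(\bG))/2$, by exhibiting eight explicit classes, showing they are linearly independent in $gr_{\gamma}^*(R(\bG))/2$, and showing all other products of the generators $c_2,c_4,c_5,e_8$ either coincide with one of these eight or vanish. The computations in the running text of $\S 9$ already do most of the work; the proof just collects them. First I would record the identification $K^*(R(\bG))/2\cong K^*\otimes\Lambda(c_2,c_4,c_5)$ from the displayed isomorphism just before Lemma 9.4, under $c_2\mapsto v_1y_6$, $c_4\mapsto v_1y_{10}$, $c_5\mapsto 2v_1^{-1}c_4+v_1y_{12}$, so that $1,c_2,c_4,c_5,c_2c_4,c_2c_5,c_4c_5,c_2c_4c_5$ form a $K^*$-basis. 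Then I would push each basis element into $gr_{\gamma}^*(R(\bG))/2\cong gr_{geo}^*(R(\bG))/2$ via Lemma 5.2 and Lemma 5.1: a class $x\in K^*(R(\bG))$ contributes the element $v_1^{-s}x$ where $s$ is maximal with $x=v_1^s x'$, $x'\in k^*(R(\bG))$.

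Next I would run through the six ``top-cell-free'' products. The classes $1,c_2,c_4,c_5$ are nonzero in $gr_{\gamma}^*(R(\bG))/2$ by Corollary 3.7 (each $c_i\neq 0$ in $CH^*(R(\bG))/2$ and has no higher $v_1$-divisibility). For $c_2c_4$, nonvanishing in $gr_{\gamma}^*(R(\bG))/2$ is inherited from the $Spin(11)$ computation (Theorem 7.1), since $c_2c_4\mapsto v_1^2 y_6y_{10}\notin\Img(res_K)$ at filtration $v_1$. For $c_2c_5$, the computation $c_2c_5=v_1 e_8\pmod{I_\infty^4}$ shows $c_2c_5$ is $v_1$-divisible, hence $c_2c_5=0$ in $gr_{\gamma}^*(R(\bG))/2$ but its ``desuspension'' is $e_8$ — this is why $e_8$ and $e_8c_4$ appear as extra summands rather than products of the $c$'s. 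For $c_4c_5$, the computation $c_4c_5=v_1^2 y_{10}y_{12}\pmod{I_\infty^3}$ together with the observation that $c_2c_5-c_3c_4\mapsto v_1^2 y_6y_{12}$ is nonzero in $gr_{\gamma}^*(R(\bG))/2$ gives $c_4c_5\neq 0$. Finally $c_2c_4c_5=v_1(e_8c_4)$ with $e_8c_4=v_1^2 y_{top}\pmod{I_\infty^3}$, so $c_2c_4c_5=0$ in $gr_{\gamma}^*(R(\bG))/2$ while its desuspension $e_8c_4$ is nonzero, $y_{top}\notin res$ at appropriate filtration. Linear independence of the resulting eight classes then follows because their images in $K^*(\bar R(\bG))/2\cong K^*\otimes\Lambda(y_6,y_{10},y_{12})$ are, up to units, the eight distinct monomials $1,v_1y_6,v_1y_{10},2y_{10}+v_1y_{12},v_1^2y_6y_{10},v_1^2y_{10}y_{12},2y_6y_{10}+v_1y_6y_{12},v_1^2y_6y_{10}y_{12}$, which are $K^*$-linearly independent.

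For the ring-structure statement, I would observe that modulo $(2,c_2c_5,c_2c_5c_4)$ the polynomial ring $P(c)=\Lambda_{\bZ}(c_2,c_4,c_5)$ has exactly the six surviving monomials $1,c_2,c_4,c_5,c_2c_4,c_4c_5$, which matches the first $\bZ/2$-span; one must check that in $gr_{\gamma}^*(R(\bG))/2$ the relations $c_2c_5=0$ and $c_2c_4c_5=0$ hold — both established above — and that no further relations among these six hold, which is the linear independence just shown. The two extra generators $e_8,e_8c_4$ are by construction not in the image of $P(c)$, and $e_8 c_5=c_2c_5 c_5/?$ type products are absorbed: I would note $e_8^2=0$ ($|e_8^2|>|y_{top}|$), $c_2 e_8=0$ and $c_3 e_8=0$ in $gr_{\gamma}$ (by the same $I_\infty$-filtration argument as in Lemma 9.1, reducing $v_1 y_6\cdot 2y_6y_{10}$ modulo products $c_iS(t)$), so the $e_8$-part is indeed just the $\bZ/2$-span of $e_8$ and $e_8c_4$.

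The main obstacle I anticipate is not any single computation but the careful bookkeeping of $I_\infty$-filtration degrees: every assertion of the form ``$x$ is divisible by $v_1^s$ and $v_1^{-s}x\neq 0$ in $gr_{\gamma}^*(R(\bG))/2$'' requires knowing $x$ modulo a high enough power $I_\infty^N$, and Corollary 5.5 only pins down $pr(c_{i_1}\cdots c_{i_s})$ modulo $I_\infty^{s+1}$. So for the length-three products $c_2c_4c_5$ one only controls things modulo $I_\infty^4$, which is exactly enough here because $\dim R(\bG)$ forces vanishing of everything in $I_\infty^4 A(c)$ by Lemma 9.1; I would invoke Lemma 9.1 explicitly at each such step rather than re-deriving the estimate. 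The secondary subtlety is confirming that the desuspended classes $e_8$ and $e_8c_4$ genuinely land outside the $P(c)$-submodule and are themselves nonzero — this uses that $y_6y_{12}$, respectively $y_6y_{10}y_{12}$, are not in $\Img(res_K)$ at the relevant filtration, which follows from Lemma 6.8 ($res_K$ for $Spin(13)$ is an isomorphism with $K^*(R(\bG))/2\cong K^*/2\otimes\Lambda(c_i\mid i\neq 2^j-1)$) combined with the explicit formulas for $c_i$ in terms of the $y$'s.
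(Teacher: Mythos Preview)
Your proposal is correct and recapitulates exactly the paper's approach: the lemma has no separate proof environment in the paper, and the running-text computations of $\S 9$ immediately preceding the statement (the $K^*(R(\bG))/2\cong K^*\otimes\Lambda(c_2,c_4,c_5)$ identification, $c_2c_4\neq 0$ from $Spin(11)$, $c_2c_5=v_1e_8$, $c_2c_4c_5=v_1e_8c_4$, $c_4c_5=v_1^2y_{10}y_{12}$) are precisely what you reproduce. One small expository slip: the nonvanishing of $c_2c_5-c_3c_4$ does not \emph{imply} that of $c_4c_5$ as your phrasing ``together with \dots\ gives $c_4c_5\neq 0$'' suggests --- both hold independently for the same reason (namely that $v_1y_{10}y_{12}$, respectively $v_1y_6y_{12}$, is not in $\Img(res_k)$), and the extra checks on $e_8^2$, $c_2e_8$, $c_3e_8$ are unnecessary for this lemma, which asserts only the embedding of the eight listed classes.
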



We  consider   elements in $2K^*(\bar R(\bG))\cap Im(res_{k})$.
First note   
\[2y_6=c_3,\quad 2y_{12}=c_6\quad with \ mod(I_{\infty}^3).\]
and $c_3,c_6\not =0$ in $gr_{\gamma}^*(R(\bG))/2$.
Next we see (with $mod(I_{\infty}^3)$)
\[ 2v_1y_6y_{10}=c_3c_4,\quad 2v_1y_6y_{12}=c_2c_6,\quad 2v_1y_{10}y_{12}=c_4c_6.\]
Here $c_3c_4,c_2c_6,c_4c_6\not =0\in CH^*(R(\bG))/2$
since $2y_6y_{10},2y_6y_{12},2y_{10}y_{12}\not \in Im(res_{CH})$.
These $c_3c_4,c_2y_6,c_4c_6$ are $2$-torsion in $gr_{\gamma}(R(\bG))$, in fact, we  see
\[ 2c_3c_4=v_1c_2c_6,\quad  2c_2c_6=v_1c_3c_6,\quad 2c_4c_6=v_1c_5c_6.\]

Next we consider elements $4y$ in $y\in P(y)$.
In particular, 
\[ 4y_{10}=2c_5+v_1c_6,\quad
4y_6y_{10}=2e_8+c_2c_6,\quad\]
\[ 4y_6y_{12}=c_3c_6,\quad 4y_{10}y_{12}=c_5c_6,\]
here $c_3c_6,c_5c_6\not =0$ in $CH^*(R(\bG))/2$. 
Moreover
\  $4y_6y_{10}y_{12}=e_8c_6$  \ 
here $e_8c_6\not =0\in CH^*(R(\bG))/2$ since $t(G)=4$.
Thus we have
\begin{lemma}  The graded ring $gr_{\gamma}(R(\bG))$ contains
\[ \bZ/2\{c_4c_3,c_2c_6,c_4c_6\}\oplus  \bZ_{(2)}
\{c_3,c_5,c_6,e_8, c_3c_6, c_5c_6.e_8c_6\}.\]
\end{lemma}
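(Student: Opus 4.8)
The plan is to compute $gr_{geo}^*(R(\bG))$, which equals $gr_{\gamma}^*(R(\bG))$ since $\bG$ is versal (as for $\bF$ in Lemma 1.1), by pushing the restriction computations made just before this lemma through the identification $K^*(R(\bG))\cong K^*(\bar R(\bG))\cong K^*\otimes\Lambda(y_6,y_{10},y_{12})$ established above, and reading off the module structure via Lemma 5.1, which says that a nonzero element of $K^*(R(\bG))$, divided by its maximal power of $v_1$, represents a nonzero class of $gr_{geo}^*(R(\bG))$ not lying in $I_{\infty}k^*$. First I would record, using Corollary 5.5 to determine $pr$ of each monomial in $c_2,...,c_6,e_8$ modulo one power of $I_{\infty}$ beyond what is needed, the congruences
\[ res_k(c_3)\equiv 2y_6,\quad res_k(c_5)\equiv 2y_{10}+v_1y_{12},\quad res_k(c_6)\equiv 2y_{12},\quad res_k(e_8)\equiv 2y_6y_{10}+v_1y_6y_{12}, \]
\[ res_k(c_3c_6)\equiv 4y_6y_{12},\quad res_k(c_5c_6)\equiv 4y_{10}y_{12},\quad res_k(e_8c_6)\equiv 4y_{top}, \]
\[ res_k(c_3c_4)\equiv 2v_1y_6y_{10},\quad res_k(c_2c_6)\equiv 2v_1y_6y_{12},\quad res_k(c_4c_6)\equiv 2v_1y_{10}y_{12} \]
(all modulo higher powers of $I_{\infty}$).

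For the seven potentially torsion-free classes $c_3,c_5,c_6,e_8,c_3c_6,c_5c_6,e_8c_6$, reduction modulo $v_1$ turns $res_k$ into $res_{CH}$ and sends them into $CH^*(\bar R(\bG))\cong P(y)\cong gr_{geo}^*(\bar R(\bG))$ with nonzero leading coefficients $2y_6,\,2y_{10},\,2y_{12},\,2y_6y_{10},\,4y_6y_{12},\,4y_{10}y_{12},\,4y_{top}$. Since that ring is torsion free and restriction commutes with the surjection $CH^*\twoheadrightarrow gr_{geo}^*$, each of the seven classes satisfies $2^jx\ne 0$ in $gr_{\gamma}^*(R(\bG))$ for every $j$; their leading $P(y)$-monomials $y_6,y_{10},y_{12},y_6y_{10},y_6y_{12},y_{10}y_{12},y_{top}$ being $\bZ_{(2)}$-independent, the seven span a copy of $\bZ_{(2)}\{c_3,c_5,c_6,e_8,c_3c_6,c_5c_6,e_8c_6\}$. (This in particular recovers the non-vanishing modulo $2$: for $c_3,c_5,c_6$ by Corollary 3.7, and for $e_8c_6$ from $res_{CH}(e_8c_6)=4y_{top}$ together with $t(G)=2^2$, the remaining two products being the same since $4y_6y_{12}$ and $4y_{10}y_{12}$ are not twice an element of $Im(res_{CH})$.)

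For $c_3c_4,c_2c_6,c_4c_6$ the images lie in $v_1k^*(\bar R(\bG))$, hence vanish under $res_{CH}$; as $CH^*(\bar R(\bG))$ is torsion free, $Ker(res_{CH})$ is exactly the torsion subgroup of $CH^*(R(\bG))$, so these classes are $2$-primary torsion in $CH^*(R(\bG))$ and a fortiori in $gr_{\gamma}^*(R(\bG))$. That each is killed by $2$ follows from the relations $2c_3c_4=v_1c_2c_6$, $2c_2c_6=v_1c_3c_6$, $2c_4c_6=v_1c_5c_6$: the $v_1$-multiples on the right have zero image in the filtration-zero quotient $gr_{geo}^*=AE_{\infty}^{2*,*,0}$, while the $I_{\infty}$-corrections are killed modulo $2$ by Lemma 9.1, exactly as in the proof of Lemma 9.2; hence $2c_3c_4=2c_2c_6=2c_4c_6=0$ in $gr_{\gamma}^*(R(\bG))$. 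Non-vanishing modulo $2$ is seen as in Lemma 9.2 and the $Spin(11)$ case of \S 8: were $c_3c_4=0$ in $gr_{\gamma}^*(R(\bG))/2$, Lemma 5.1 would force $v_1^{-1}res_k(c_3c_4)=2y_6y_{10}$ into $v_1\,Im(res_k)$, contradicting the known shape of $Im(res_k)$ for $Spin(11)$, and similarly for $c_2c_6,c_4c_6$. Finally, the ten displayed images are $\bZ_{(2)}$-independent in $\bZ_{(2)}[v_1]\otimes P(y)$, seven of the associated classes are free and three have order $2$, and the only coincidence of internal degrees, $|e_8|=|c_2c_6|=16$, pairs a free class with a torsion one; hence the ten classes together span the asserted submodule $\bZ/2\{c_3c_4,c_2c_6,c_4c_6\}\oplus\bZ_{(2)}\{c_3,c_5,c_6,e_8,c_3c_6,c_5c_6,e_8c_6\}$ of $gr_{\gamma}^*(R(\bG))$.

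I expect the main obstacle to be the passage from the congruences modulo powers of $I_{\infty}$ to honest identities in $gr_{\gamma}^*(R(\bG))$ --- in particular, verifying that the $I_{\infty}$-correction terms affect neither the torsion orders nor the mod-$2$ non-vanishing; this is precisely what Lemma 9.1 and Corollary 5.5 (together with the fact that all relevant degrees are bounded by $|y_{top}|$) are there to control, and it is where the work of Lemmas 9.1 and 9.2 gets reused. A secondary point is making $Im(res_{CH})$ explicit enough for the non-vanishing modulo $2$ of $c_3c_6,c_5c_6,e_8c_6$, which again reduces to the value $t(G)=2^2$ of the torsion index; the rest is bookkeeping with the formulas for $res_k(c_i)$ and $res_k(e_8)$ recorded above.
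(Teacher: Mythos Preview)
Your approach is essentially the same as the paper's: the lemma is not given a separate proof but is distilled from the running computations just above it, where the paper records exactly the $res_k$-values you list, derives the relations $2c_3c_4=v_1c_2c_6$, $2c_2c_6=v_1c_3c_6$, $2c_4c_6=v_1c_5c_6$ to show $2$-torsion, and identifies $c_3c_6,c_5c_6,e_8c_6$ with $4y_6y_{12},4y_{10}y_{12},4y_{top}$ for the free part. Your extra bookkeeping (independence check, use of $Ker(res_{CH})=\text{torsion}$) is sound and only makes explicit what the paper leaves to the reader.

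One small caution: your appeal to Lemma~5.1 for the non-vanishing of $c_3c_4,c_2c_6,c_4c_6$ in $gr_{\gamma}^*(R(\bG))/2$ is not quite what that lemma says --- it produces \emph{some} $s\ge 0$ with $v_1^{-s}a\ne 0$ in $gr(1)^*$, not $s=0$. The paper's corresponding sentence (``since $2y_6y_{10},2y_6y_{12},2y_{10}y_{12}\not\in Im(res_{CH})$'') is equally terse and, read literally, is even problematic since $res_{CH}(e_8)=2y_6y_{10}$. In both presentations the honest justification is the global one underlying Theorem~9.5: the eight monomials of $P(y)$ are accounted for by the free generators you list together with those of Lemma~9.3, and any further collapse would contradict the rank of $K^*(R(\bG))\cong K^*\otimes P(y)$. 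So your sketch is at the same level of rigor as the paper; just be aware that the pointwise ``Lemma~5.1'' citation does not by itself pin down $s=0$.
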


From preceding two lemmas, we have

\begin{thm} Let $G=Spin(13)$
and $\bG$ be versal.  Then we have 
\[ gr_{\gamma}(R(\bG))
\cong P(b)/(c_2c_5,c_2c_4c_5)\oplus
\bZ/2\{c_4c_3,e_8c_4,c_2c_6,c_4c_6\}\]
\[\oplus \bZ_{(2)}\{1,c_3,c_6,e_8. c_3c_6,c_5c_6, e_8c_6\}\]
where $P(b)=\Lambda_{\bZ}(c_2,c_4,c_5)/(2c_2,2c_4)$
\end{thm}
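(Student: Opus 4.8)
The plan is to assemble the additive structure of $gr_{\gamma}^*(R(\bG))$ by combining the two partial descriptions already obtained, Lemma 9.4 and Lemma 9.5, and then checking that together they exhaust everything and that no further relations intervene. First I would recall that by Lemma 5.2 it suffices to compute $gr_{geo}^*(R(\bG))/2 \cong gr_{\gamma}^*(R(\bG))/2$ and then lift to the integral statement. The integral structure comes from the two sources of generators: those in $\Lambda(c_2,c_4,c_5)$ (the image of $K^*(R(\bG))/2$), which produce the summand $P(b)/(c_2c_5,c_2c_4c_5)$ together with the $\bZ/2$-classes $e_8, e_8c_4$ in Lemma 9.4; and those detected in $2K^*(\bar R(\bG))\cap Im(res_k)$, which produce the summand in Lemma 9.5 involving $c_3,c_5,c_6,e_8$ and products with $c_6$. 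The torsion labels are forced: $2c_2=2c_4=0$ in $gr_{\gamma}$ because $c_2=v_1y_6$, $c_4=v_1y_{10}$ and $2v_1y_6 = v_1 c_3$ lies in $Im(res_k)$ but with a higher $I_\infty$-power, so $2c_2\in I(1)$; similarly for $c_4$, $c_3c_4$, $c_2c_6$, $c_4c_6$ as already computed via the relations $2c_3c_4 = v_1 c_2 c_6$, etc.

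The next step is to verify that the list is complete, i.e.\ that every monomial in $c_2,\dots,c_6,e_8$ either appears in the stated module or vanishes in $gr_{\gamma}^*(R(\bG))/2$. By Lemma 1.3 the Chow ring $CH^*(R(\bG))/2$ is additively generated by products of $1,c_2,\dots,c_6$, and $e_8 = e_{2^{t+1}}$, so it is enough to run through these monomials by degree. The monomials lying in $\Lambda(c_2,c_4,c_5)$ are handled by Lemma 9.4, where $c_2c_5 = v_1 e_8$ and $c_2c_4c_5 = v_1 e_8 c_4$ show those two are expressible via $e_8$, hence absorbed. The monomials involving $c_3$ or $c_6$ are controlled by Corollary 5.5 (which makes $pr(c_{i_1}\cdots c_{i_s})$ well-defined mod $I_\infty^{s+1}$) together with the explicit $k^*(\bar R(\bG))$-expressions $c_3 = 2y_6$, $c_5 = 2y_{10}+v_1 y_{12}$, $c_6 = 2y_{12}$, $e_8 = 2y_6y_{10}+v_1 y_6 y_{12}$ modulo $I_\infty^3$. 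The vanishing statements $c_2c_3 = 0$ (Lemma 9.2) and, by the same mechanism, $c_3^2, c_2^2, c_3c_5$ being expressible in terms of lower classes, knock out the remaining monomials; the key technical tool is Lemma 9.1, which guarantees that anything landing in $I_\infty^4 k^*(\bar R(\bG))$ after subtracting an $A(c)$-combination is zero in $gr_{\gamma}(R(\bG))/2$.

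Then I would pin down that the top-degree behaviour is consistent: $e_8 c_6 = 4 y_{top}$ is nonzero in $CH^*(R(\bG))/2$ because $t(G) = 4$ (Totaro), so $e_8c_6$ genuinely survives with a $\bZ_{(2)}$ label, while $e_8^2 = 0$ and products of degree exceeding $|y_{top}| = |y_6y_{10}y_{12}|$ automatically vanish. Finally, lifting from mod $2$ to the integral statement: the classes carrying $\bZ_{(2)}$ in the final formula are exactly those whose image under $res_k$ lies in $2K^*(\bar R(\bG))$ and is $2$-divisible no further (so they generate a free $\bZ_{(2)}$-summand), while $c_2, c_4$ and the four products $c_3c_4, e_8c_4, c_2c_6, c_4c_6$ are $2$-torsion, giving $P(b) = \Lambda_{\bZ}(c_2,c_4,c_5)/(2c_2,2c_4)$. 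The main obstacle I expect is the completeness/exhaustion step: showing that no monomial in the $c_i$ and $e_8$ produces a class outside the listed module — this requires a careful degree-by-degree bookkeeping using Lemma 9.1 to absorb high-$I_\infty$-filtration terms and Corollary 5.5 to make the subtractions well-defined, and it is easy to miss a surviving class or, conversely, to claim survival of something that is actually a boundary. A secondary subtlety is keeping track of the change-of-generator normalizations (the "new $c_3$", "new $c_6$") so that the stated relations hold on the nose rather than only modulo higher filtration.
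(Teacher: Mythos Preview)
Your approach is essentially the same as the paper's: the theorem is obtained by combining the two preceding lemmas (what you call Lemma 9.4 and Lemma 9.5 are the paper's Lemma 9.3 and Lemma 9.4; your Lemma 1.3 is the paper's Lemma 1.2), and the paper's own proof consists of nothing more than the sentence ``From preceding two lemmas, we have'' immediately before the theorem statement. Your elaboration of the completeness/exhaustion step and the integral lift makes explicit what the paper leaves implicit, but the strategy is identical.
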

\begin{cor} We have the additive isomorphism
\[ gr_{\gamma}(R(\bG))\cong 
    A\otimes (\bZ_{(2)}\{1,c_6\}\oplus \bZ/2\{c_4\})\oplus
\bZ/2\{c_6c_4\}\]
where $A= \bZ_{(2)}\{1,c_3,c_5,e_8\}\oplus \bZ/2\{c_2\}.$
\end{cor}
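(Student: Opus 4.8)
The plan is to obtain the Corollary as a purely additive rearrangement of the decomposition in the preceding Theorem; no new geometric input is needed. First I would spell out the $2$-torsion structure of the summand $P(b)/(c_2c_5,c_2c_4c_5)$. Since $2c_2=2c_4=0$ in $P(b)=\Lambda_{\bZ}(c_2,c_4,c_5)/(2c_2,2c_4)$, every monomial in $\Lambda_{\bZ}(c_2,c_4,c_5)$ divisible by $c_2$ or by $c_4$ is annihilated by $2$, while $1$ and $c_5$ span free $\bZ_{(2)}$-summands; after killing $c_2c_5$ and $c_2c_4c_5$ one is left with
\[ P(b)/(c_2c_5,c_2c_4c_5)\cong \bZ_{(2)}\{1,c_5\}\oplus \bZ/2\{c_2,c_4,c_2c_4,c_4c_5\}.\]
Substituting this into the Theorem and identifying the two copies of the unit $1$ (in degree $0$), one sees that $gr_{\gamma}(R(\bG))$ is the graded $\bZ_{(2)}$-module whose free part has basis $1,c_3,c_5,c_6,e_8,c_3c_6,c_5c_6,e_8c_6$ and whose $2$-torsion part has basis $c_2,c_4,c_2c_4,c_4c_5,c_3c_4,e_8c_4,c_2c_6,c_4c_6$.

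Next I would expand the right-hand side of the Corollary and compare the two lists. With $A=\bZ_{(2)}\{1,c_3,c_5,e_8\}\oplus\bZ/2\{c_2\}$ one has
\[ A\otimes\bZ_{(2)}\{1,c_6\}\cong \bZ_{(2)}\{1,c_3,c_5,e_8,c_6,c_3c_6,c_5c_6,e_8c_6\}\oplus\bZ/2\{c_2,c_2c_6\},\]
and, because $\bZ/2\{c_4\}$ is annihilated by $2$, the term $A\otimes\bZ/2\{c_4\}$ equals $(A/2A)\{c_4\}\cong\bZ/2\{c_4,c_3c_4,c_5c_4,e_8c_4,c_2c_4\}$; adjoining the last summand $\bZ/2\{c_6c_4\}$ one recovers precisely the free generators $1,c_3,c_5,c_6,e_8,c_3c_6,c_5c_6,e_8c_6$ and the $2$-torsion generators $c_2,c_2c_6,c_4,c_3c_4,c_4c_5,e_8c_4,c_2c_4,c_4c_6$ found above (using $c_5c_4=c_4c_5$ and $c_6c_4=c_4c_6$). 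Since these are the same monomials, in the same degrees and with the same $\bZ_{(2)}$- or $\bZ/2$-coefficients, the asserted graded additive isomorphism follows.

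There is essentially no obstacle here: the argument is bookkeeping, and the only point requiring care is keeping straight which monomials are free and which are $2$-torsion. This is dictated entirely by the relations $2c_2=2c_4=0$ already recorded in the Theorem, together with the observation that each of the extra generators $c_3c_4$, $e_8c_4$, $c_2c_6$, $c_4c_6$, $c_6c_4$ is a multiple of $c_2$ or $c_4$ and hence $2$-torsion.
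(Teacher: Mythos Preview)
Your proof is correct and is precisely the approach the paper has in mind: the corollary is stated immediately after Theorem 9.5 with no separate argument, so it is meant as a purely additive rearrangement of that theorem, which is exactly what you carry out. Your enumeration of the eight free generators $1,c_3,c_5,c_6,e_8,c_3c_6,c_5c_6,e_8c_6$ and the eight $2$-torsion generators $c_2,c_4,c_2c_4,c_4c_5,c_3c_4,e_8c_4,c_2c_6,c_4c_6$ is right, and they match the expansion of $A\otimes(\bZ_{(2)}\{1,c_6\}\oplus\bZ/2\{c_4\})\oplus\bZ/2\{c_6c_4\}$ on the nose. The double appearance of $1$ (once in $P(b)$ and once in the last $\bZ_{(2)}$-summand of the Theorem) that you noticed is a slip in the paper's statement, and identifying the two copies is the correct reading.
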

\begin{cor} Let $G=Spin(13)$, $\bG$ be versal,
and $\bF=\bG/B_k$.  To state the formula simply,
let us write $e_8=c_8$.  Then
we have additive (graded) isomorphism
\[ gr_{\gamma}(\bF)/2\cong
S(t)/(2,c_ic_j,c_mc_nc_k|(i,j)\not=(s,4), (s',6)\
for \ s\not =4,\ s'\not =6)\]
where $i,j,k,m,n,s,s'$ are one of $2,...,6,8$.
\end{cor}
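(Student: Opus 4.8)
The plan is to deduce Corollary 9.8 from Theorem 9.6 exactly as the second isomorphism of Theorem 7.1 was obtained in the $Spin(11)$ case: pass to the geometric filtration, peel off the Tate part of the motive, substitute the computation of $R(\bG)$, and repackage the result as a quotient of $S(t)$. Since $\bG$ is versal, Lemma 1.1 gives $gr_\gamma^*(\bF)/2 \cong gr_{geo}^*(\bF)/2 = CH^*(\bF)/(2,I(1))$, and the same for $R(\bG)$. By Theorem 3.3 the motive of $\bF$ is $R(\bG)\otimes T$ with $T=\bigoplus_u\bT^{\otimes u}$ and $CH^*(T)/2\cong S(t)/(2,b)$; as $T$ is a sum of Tate motives its motivic AHss collapses, so $I(1)$ for $\bF$ is $I(1)$ for $R(\bG)$ tensored with $S(t)/(2,b)$, and hence
\[ gr_\gamma^*(\bF)/2 \;\cong\; gr_{geo}^*(R(\bG))/2 \;\otimes\; S(t)/(2,b). \]
Into the right side I substitute the mod $2$ reduction of Theorem 9.6 (equivalently Corollary 9.7): $gr_{geo}^*(R(\bG))/2$ is the $16$-dimensional $\bZ/2$-space spanned by $1$, by $c_2,\dots,c_6,e_8$ (with $e_8=c_8=c_1^8$), and by the products $c_sc_4$ for $s\in\{2,3,5,6,8\}$ together with $c_{s'}c_6$ for $s'\in\{2,3,5,8\}$.

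The next step is to recognize this $16$-dimensional module as $\bZ/2[c_2,\dots,c_6,e_8]/J$, where $J$ is the monomial ideal generated by all quadratic monomials $c_ic_j$ with $(i,j)$ neither of the form $(s,4)$, $s\ne4$, nor $(s',6)$, $s'\ne6$ (so in particular all $c_i^2$, and all $c_ic_j$ avoiding the indices $4$ and $6$), together with all cubic monomials $c_mc_nc_k$. By an argument parallel to Lemma 3.4, matching $\bZ/2$-dimensions degree by degree against the displayed tensor decomposition, it then suffices to verify inside the honest ring $CH^*(\bF)/2$ that (i) every generator of $J$ maps to $0$ in $gr_\gamma^*(\bF)/2$, and (ii) the classes $c_2,\dots,c_6,e_8$ and the surviving quadratic products stay nonzero. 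Claim (ii) holds because these elements project nontrivially to $gr_\gamma^*(R(\bG))$. For claim (i) one argues exactly as in Lemma 9.2 and the $Spin(11)$ computation: starting from $c_2=v_1y_6$, $c_4=v_1y_{10}$, $y_{12}=y_6^2$, $c_3\equiv2y_6$, $c_5\equiv2y_{10}$, $c_6\equiv2y_{12}$, $e_8\equiv2y_6y_{10}$ modulo $I_\infty^{r+1}$ in $k^*(\bar\bF)$, and using Lemma 5.4 and Corollary 5.5, each forbidden $c_ic_j$ and each triple $c_mc_nc_k$ is rewritten in $K^*(\bar\bF)\cong K^*(\bF)$ as $v_1^r$ times a class of strictly larger degree, hence lands in $I(1)$; one pushes this rewriting up through the finitely many degrees of $\bF$ as in the proof of Lemma 9.1.

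The step I expect to cost the most work is (i), both because of the sheer number of forbidden quadratic and all cubic monomials to dispatch, and because of a structural nuisance: $gr_{geo}^*(R(\bG))$ has no canonical ring structure while the target $S(t)/(2,c_ic_j,c_mc_nc_k)$ genuinely is a ring, so each monomial relation must be checked in $CH^*(\bF)/2$ via the $k^*(\bar\bF)$-identities rather than inside $R(\bG)$. One also has to carry along the adjustments of $e_8$, $c_3$, $c_6$ modulo $I_\infty^3$ made at the start of §9 so that the Chern classes $c_2,\dots,c_6,e_8\in S(t)$ used in the presentation are precisely those whose projections to $R(\bG)$ realize the generators of Theorem 9.6. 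Finally, one should remember that although $e_8=c_1^8$ restricts to $0$ on $\bar\bF$, it is nonzero in $CH^*(\bF)/2$ and is not divisible by any generator of $J$, so it survives as a genuine degree-$8$ generator of $gr_\gamma^*(\bF)/2$; tracking this is what makes the final Poincaré-series check ($\dim = 16\cdot\dim S(t)/(2,b)$) come out right.
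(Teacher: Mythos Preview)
Your overall scheme---the tensor decomposition $gr_\gamma^*(\bF)/2 \cong gr_\gamma^*(R(\bG))/2 \otimes S(t)/(2,b)$ and the identification of $gr_\gamma^*(R(\bG))/2$ as a specific $16$-dimensional $\bZ/2$-space---is correct and is what the paper intends. The gap is in your step (i): it is simply false that every generator of $J$ vanishes in $gr_\gamma^*(\bF)/2$. The monomial $c_2e_8=c_2c_8$ lies in $J$ (the pair $(2,8)$ is neither of the form $(s,4)$ nor $(s',6)$), yet the paper records immediately after the corollary that $c_2e_8-c_4c_6=0$ in $gr_\gamma^*(\bF)/2$; since $c_4c_6$ is one of the surviving quadratic elements, this gives $c_2e_8\ne 0$ there. (Concretely, $c_2e_8\equiv v_1y_6(2y_6y_{10}+v_1y_6y_{12})=2v_1y_{10}y_{12}+v_1^2y_{12}^2$ and $c_4c_6\equiv 2v_1y_{10}y_{12}$ modulo $I_\infty^4$, and $y_{12}^2$ projects to zero in $P(y)$.) Hence the natural surjection $S(t)/2\twoheadrightarrow gr_\gamma^*(\bF)/2$ does \emph{not} factor through $S(t)/(2,J)$, and the isomorphism of the corollary is explicitly not one of rings. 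Your plan would manufacture a ring isomorphism via that quotient map, so it cannot succeed; you would get stuck exactly when trying to show $c_2e_8=0$.

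The repair is to take the word ``additive'' in the statement literally and drop (i) and (ii) altogether. Once the tensor decomposition is in hand, the corollary is a pure Poincar\'e-series comparison: the sixteen monomials surviving in $\bZ/2[c_2,\ldots,c_6,e_8]/J$ have the same list of degrees as the sixteen basis elements of $gr_\gamma^*(R(\bG))/2$ supplied by Theorem~9.5, and since $S(t)/2$ is free over $A(b)=\bZ/2[c_2,\ldots,c_6,e_8]$ with basis $S(t)/(2,b)$, one has $S(t)/(2,J)\cong (A(b)/J)\otimes S(t)/(2,b)$ additively. Both sides therefore have equal graded $\bZ/2$-dimension, which is all an additive graded isomorphism asserts. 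None of the casewise vanishing computations you sketch are needed, and the one for $c_2e_8$ is unavailable.
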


Note that the above isomorphism is not that of rings.
In fact, we see 
\[c_2e_8-c_4c_6=0 \quad in\   gr_{\gamma}^*(\bF)/2.\]
We also can not get $CH^*(R(\bG))$ here, but we do some arguments using norm map.
The norm map $N$ is given  explicitly
\[ N(y_6)=2c_3,\ N(y_{10})=2c_5,\ N(y_6y_{10})=2e_8,
 N(y_{12})=2c_6,\]
\[ N(y_6y_{12})=c_3c_6,\ N(y_{10}y_{12})=c_5c_6,\ N(y_{top})=e_8c_6.\]
\begin{lemma}  We have  \ \ 
$gr_{geo}^*(R(\bG))/N\cong gr_{geo}^*(R(\bG))/
(2,c_3c_6,c_5c_6,e_8c_6).$\end{lemma}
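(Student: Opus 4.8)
The plan is to identify both sides with the same explicit $\bZ/2$-vector space, using the additive description of $gr_{geo}^*(R(\bG))\cong gr_\gamma^*(R(\bG))$ already obtained in Theorem 9.5 and Corollary 9.6. Here $gr_{geo}^*(R(\bG))/N$ means the cokernel of the norm map $N\colon gr_{geo}^*(\bar R(\bG))\to gr_{geo}^*(R(\bG))$, and since $I(1)=0$ over $\bar k$ (the AHss for $\bar R(\bG)$ collapses, $H^*(G/T)$ being torsion free) we have $gr_{geo}^*(\bar R(\bG))\cong CH^*(\bar R(\bG))$, which additively is $\bZ_{(2)}\{1,y_6,y_{10},y_{12},y_6y_{10},y_6y_{12},y_{10}y_{12},y_{top}\}$; the Chow-level norm descends to $gr_{geo}^*$ simply because composing it with the projection $CH^*(R(\bG))\to gr_{geo}^*(R(\bG))$ automatically factors through the (trivial) quotient $CH^*(\bar R(\bG))\to gr_{geo}^*(\bar R(\bG))$, as in the commutative square used to prove Theorem 8.3.

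First I would compute $\Img(N)$. The seven values $N(y_6)=2c_3$, $N(y_{10})=2c_5$, $N(y_{12})=2c_6$, $N(y_6y_{10})=2e_8$, $N(y_6y_{12})=c_3c_6$, $N(y_{10}y_{12})=c_5c_6$, $N(y_{top})=e_8c_6$ are recorded just before the lemma, and $N(1)=2$ follows from $res\cdot N\cdot res=2\,res$ together with the fact that $res$ is an isomorphism in degree $0$. Hence $\Img(N)\subset gr_{geo}^*(R(\bG))$ is the $\bZ_{(2)}$-submodule generated by $2,2c_3,2c_5,2c_6,2e_8,c_3c_6,c_5c_6,e_8c_6$. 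Now I would feed in the structure of $gr_{geo}^*(R(\bG))$ from Corollary 9.6: it has $\bZ_{(2)}$-basis $1,c_3,c_5,c_6,e_8,c_3c_6,c_5c_6,e_8c_6$ together with the eight $\bZ/2$-generators $c_2,c_4,c_2c_4,c_4c_5,c_3c_4,e_8c_4,c_2c_6,c_4c_6$. The elements $2\cdot 1,2c_3,2c_5,2c_6,2e_8$ span the index-two sublattices of the first five free summands, while $c_3c_6,c_5c_6,e_8c_6$ span the last three free summands entirely; the eight torsion generators lie outside $\Img(N)$. Therefore
\[ gr_{geo}^*(R(\bG))/N\cong \bZ/2\{1,c_3,c_5,c_6,e_8\}\oplus\bZ/2\{c_2,c_4,c_2c_4,c_4c_5,c_3c_4,e_8c_4,c_2c_6,c_4c_6\}. \]
Finally I would read $gr_{geo}^*(R(\bG))/(2,c_3c_6,c_5c_6,e_8c_6)$ off the same basis: reduction mod $2$ turns all eight free summands into $\bZ/2$-summands, and then killing the three classes $c_3c_6,c_5c_6,e_8c_6$ removes three of them, leaving exactly the same thirteen $\bZ/2$-generators. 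Comparing the two descriptions gives the claimed additive isomorphism.

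The only real content is the bookkeeping in the middle step, and the point to watch is that in $gr_{geo}^*(R(\bG))$ the classes $c_3c_6,c_5c_6,e_8c_6$ really are free $\bZ_{(2)}$-summands — so that $N(y_6y_{12}),N(y_{10}y_{12}),N(y_{top})$ generate them completely rather than merely up to index two — while $c_3,c_5,c_6,e_8$ contribute only a $\bZ_{(2)}$ and no extra $\bZ/2$ (so that $2c_3,2c_5,2c_6,2e_8$ genuinely have index two), and that none of the eight $2$-torsion generators meets $\Img(N)$. All of this is already contained in Theorem 9.5, so beyond the explicit values of $N$ and that structure theorem there is no further geometric input; the argument is a direct comparison of two filtered $\bZ_{(2)}$-modules.
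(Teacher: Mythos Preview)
Your proposal is correct and follows exactly the line the paper leaves implicit: the lemma is stated immediately after the explicit list of norm values with no proof, so it is meant to be read off from those seven values (plus $N(1)$) together with the additive structure of $gr_{geo}^*(R(\bG))$ already established in Theorem 9.5/Corollary 9.6, which is precisely the bookkeeping you carry out. Your observation that the key point is that $c_3c_6,c_5c_6,e_8c_6$ sit in genuine $\bZ_{(2)}$-summands (so $N$ hits them integrally rather than with index two) is exactly the content the paper is suppressing.
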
 

 
\begin{lemma}  If we can see that 
$c_2c_3=0$ and  $c_2e_8-c_4c_6=0$
in $CH^*(R(\bG))/2,$
then $gr_{\gamma}^*(R(\bG))
\cong CH^*(R(\bG))$.
\end{lemma}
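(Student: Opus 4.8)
The plan is to exploit the Karpenko-style commutative square relating $CH^*$ and $gr_{geo}^*$ that already worked for $Spin(11)$ (Theorem 7.3), together with the explicit additive description of $gr_{\gamma}^*(R(\bG))$ in Theorem 9.5 and the norm computation in Lemma 9.6. Write $N$ for the norm $CH^*(\bar R(\bG))\to CH^*(R(\bG))$; since $CH^*(\bar R(\bG))\cong P(y)'/2$ is torsion free (in fact the relevant piece is $BP$-free after tensoring), the norm map is injective, so in the square
\[
\begin{CD}
CH^*(\bar R(\bG)) @>{N}>> CH^*(R(\bG)) @>>> CH^*(R(\bG))/N\\
@V{\cong}VV @V{f_1}VV @V{f_2}VV\\
gr_{geo}^*(\bar R(\bG)) @>{N}>> gr_{geo}^*(R(\bG)) @>>> gr_{geo}^*(R(\bG))/N
\end{CD}
\]
both horizontal norm maps are injective and the left vertical arrow is an isomorphism. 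Hence a diagram chase shows that $f_1$ is an isomorphism as soon as $f_2$ is. So the whole problem reduces to identifying $CH^*(R(\bG))/N$ with $gr_{geo}^*(R(\bG))/N$.

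Next I would compute the target $gr_{geo}^*(R(\bG))/N$ from Theorem 9.5 and Lemma 9.6: killing the image of $N$, i.e. the ideal generated by $2, c_3c_6, c_5c_6, e_8c_6$ (equivalently $2c_3,2c_5,2e_8,2c_6$ and the higher products), collapses the $\bZ_{(2)}$-summands, and one is left with an $\bF_2$-vector space spanned by the classes $1, c_2, c_4, c_5, c_2c_4, c_3, c_6, e_8$ and the torsion products $c_3c_4, c_2c_6, c_4c_6, e_8c_4$ — more precisely $gr_{geo}^*(R(\bG))/N\cong A\otimes(\bF_2\{1,c_6\}\oplus \bF_2\{c_4\})\oplus\bF_2\{c_4c_6\}$ with $A=\bF_2\{1,c_3,c_5,e_8,c_2\}$, subject to the two relations $c_2c_5=0$ and $c_2c_4c_5=0$ already recorded. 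On the $CH^*$ side, $CH^*(R(\bG))/N$ is a quotient of the algebra generated by $c_2,\dots,c_6,e_8$ modulo $2$ and modulo $N$-image, and by Lemma 1.3 (the surjection $\Lambda(c_2,\dots,c_6)\otimes\bZ/2[e_8]\twoheadrightarrow CH^*(R(\bG))/2$) it is a quotient of exactly this finite-dimensional thing. The only gap between the two is the pair of a priori possibly-nonzero classes $c_2c_3$ and $c_2e_8-c_4c_6$ in $CH^*(R(\bG))/2$, which is precisely the hypothesis of the lemma.

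So the argument is: assume $c_2c_3=0$ and $c_2e_8-c_4c_6=0$ in $CH^*(R(\bG))/2$. Then in $CH^*(R(\bG))/N$ one deduces all the further relations that hold in $gr_{geo}^*(R(\bG))/N$ — for instance $c_2c_5 = c_3c_4$ lies in $N$-image hence vanishes, $c_2c_4c_5$ vanishes similarly, and using $c_2c_3=0$ together with $c_2e_8=c_4c_6$ one checks that no class survives in $CH^*(R(\bG))/N$ beyond those present in $gr_{geo}^*(R(\bG))/N$. Counting dimensions, the surjection $CH^*(R(\bG))/N\twoheadrightarrow gr_{geo}^*(R(\bG))/N$ (induced by $f_2$, which is always surjective since $f_1$ is a surjection of the kind in Lemma 4.2) is then forced to be an isomorphism because source and target have the same $\bF_2$-dimension in each degree. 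Hence $f_2$ is an isomorphism, therefore $f_1$ is, and $gr_{\gamma}^*(R(\bG))\cong gr_{geo}^*(R(\bG))\cong CH^*(R(\bG))$.

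The main obstacle is the dimension count / surjectivity bookkeeping in the previous paragraph: one must verify that imposing only the two stated relations $c_2c_3=0$ and $c_2e_8=c_4c_6$ on the model algebra $\Lambda(c_2,\dots,c_6)\otimes\bZ/2[e_8]$, then quotienting by the explicit $N$-image, yields an $\bF_2$-module of exactly the size of $gr_{geo}^*(R(\bG))/N$ computed from Theorem 9.5 and Lemma 9.6 — i.e. that there are no further "accidental" relations forced in $CH^*$ and no missing ones. This is a finite but slightly delicate check, degree by degree; everything else (injectivity of the norms, the diagram chase, surjectivity of $f_2$) is formal and already in place from the $Spin(11)$ treatment.
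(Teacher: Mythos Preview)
Your norm-square framework is sound and matches the $Spin(11)$ template, but the final ``dimension count'' does not go through with only the two hypotheses. The relations $c_2c_3=0$ and $c_2e_8=c_4c_6$ in the model $\Lambda(c_2,\dots,c_6,e_8)$, together with the $N$-image, are \emph{not} enough to cut $CH^*(R(\bG))/N$ down to the size of $gr_{geo}^*(R(\bG))/N$. Concretely, in degree $16$ the model carries three classes $e_8$, $c_2c_6$, $c_3c_5$, while $gr_{geo}^*(R(\bG))/N$ has only the first two; neither hypothesis (in degrees $10$ and $20$) nor any norm relation (degrees $18$, $22$, $28$) touches $c_3c_5$. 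Similarly $c_5e_8$ in degree $26$ is not killed. Your aside that ``$c_2c_5=c_3c_4$ lies in the $N$-image'' is also unjustified: the equality $c_2c_5=c_3c_4$ is a $k^*$-theory statement (it becomes $v_1e_8$ there), not a relation in $CH^*(R(\bG))/2$, and $c_3c_4$ is not in the norm image computed before Lemma~9.8.

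What is missing is the Steenrod-operation step that the paper supplies. Using the Wu formula on $CH^*(R(\bG))/2$, from $c_2c_3=0$ one deduces $P^2(c_2c_3)=c_2c_5=0$ and then $P^1(c_2c_5)=c_3c_5=0$; from $c_2e_8-c_4c_6=0$ one deduces $c_3e_8-c_5c_6=0$ and then $c_5e_8=0$. With these four extra relations in hand the quotient $A(c)'/(c_2c_3,c_2c_5,c_3c_5,c_2e_8,c_3e_8,c_5e_8)$ matches $gr_\gamma^*(R(\bG))/2$ from Theorem~9.5 on the nose, and the paper concludes directly without invoking the norm diagram at all. So the essential new ingredient beyond the $Spin(11)$ argument is this propagation of relations via Steenrod operations; once you add it, either your norm approach or the paper's direct comparison will finish the proof.
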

\begin{proof}
By the Wu formula,  we have
\[c_2c_3=0\stackrel{P^2}{\Longrightarrow}
c_2c_5=0\stackrel{P^1}{\Longrightarrow} c_3c_5=0,\]
\[  c_2e_8-c_4c_6=0\stackrel{P^1}{\Longrightarrow} c_3e_8-c_5c_6=0\stackrel{P^2}{\Longrightarrow} c_5e_8=0.\]
Let us write $c_8=e_8$ and $c_7=0$, and 
$  A(c)'=\bZ/2\{1,c_k, c_ic_j|1\le i,j,k\le 8, i<j\}.$
Then from Theorem 9.5, we can see
\[ gr_{\gamma }^*(R(\bG))/2\cong
A(c)'/(c_2c_3,c_2c_5, c_3c_5,\ c_2e_8,c_3e_8,c_5e_8)
.\]
Hence we have the lemma.
 \end{proof}

\begin{lemma}  Let $G=Spin(13)$ and $\bG$ be versal. 
Then for the restriction map
$ res_{k}:k^*(R(\bG))\to k^*(\bar R(\bG))$, the image 
$Im(res_{k})\subset grk^*(\bar R(\bG))$ is given by
\[res(k^*(R(\bG'))
\oplus (2)\{y_{12}\}\oplus (4,2v_1, v_1^2)\{y_6y_{12},y_{10}y_{12}\}
\oplus (4,v_1^2)
\{y_{top}\}
\]
where $G'=Spin(11)$ and $(a,...,b)\subset k^*$
is the ideal generated by $a,...,b$.
\end{lemma}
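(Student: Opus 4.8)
The plan is to compute $\mathrm{Im}(res_k)$ in $grk^*(\bar R(\bG))\cong k^*\otimes\Lambda(y_6,y_{10},y_{12})$ by determining, for each monomial generator $y_I$ of $P(y)$, exactly which multiples $v_1^a 2^b y_I$ lie in the image. For the subalgebra $k^*\otimes\Lambda(y_6,y_{10})\cong k^*(\bar R(\bG'))$ with $G'=Spin(11)$, I would simply invoke the already-established description of $Im(res_k)$ for $Spin(11)$ (Corollary 8.5 and Theorem 8.2): the classes $c_2,c_4$ restrict to $v_1y_6,v_1y_{10}$, the classes $c_3,c_5$ to $2y_6,2y_{10}$ mod $I_\infty^2$, and $e_8$ to $2y_6y_{10}$ mod $I_\infty^2$, with the Conner--Floyd/restriction bookkeeping giving the stated ideals $(2,v_1)\{y_6,y_{10}\}\oplus(2,v_1^2)\{y_6y_{10}\}$; this part of $Im(res_k)$ is detected already inside $Spin(13)$ because the inclusion $Spin(11)\subset Spin(13)$ is compatible with the motivic decompositions.

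Next I would handle the ``new'' generators involving $y_{12}$. From $\S 9$ we already have chosen lifts with $c_6=2y_{12}$ mod $I_\infty^3$, $c_5=2y_{10}+v_1y_{12}$ mod $I_\infty^3$, $e_8=2y_6y_{10}+v_1y_6y_{12}$ mod $I_\infty^3$, and the products computed there: $c_3c_6=4y_6y_{12}$, $c_5c_6=4y_{10}y_{12}$, $c_2c_6$ restricting to $2v_1y_6y_{12}$, $c_4c_6$ to $2v_1y_{10}y_{12}$, $c_3c_4$ to $2v_1y_6y_{10}$ (these live in the $y_6y_{10}$ slot already covered), and $e_8c_6=4y_{top}$, together with $c_2c_5c_4=v_1e_8c_4$ restricting to $v_1^3y_{top}$ and $c_4c_5$ to $v_1^2y_{10}y_{12}$, and $c_2c_4\cdot(\text{something})$ or $c_2(c_2c_5-c_3c_4)$ giving $v_1^2 y_6y_{12}$. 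Collating: the $y_{12}$-slot receives exactly $(2)\{y_{12}\}$ (from $c_6$, with no $v_1 y_{12}$ or odd multiples available since $P(y)^*=0$ in the relevant low degrees and $c_5$ only contributes $2y_{10}+v_1y_{12}$ which is entangled with the $Spin(11)$ part); the $y_6y_{12}$ and $y_{10}y_{12}$ slots receive $(4,2v_1,v_1^2)\{-\}$ (from $c_3c_6$ resp. $c_5c_6$ giving the $4$'s, $c_2c_6$ resp. $c_4c_6$ giving the $2v_1$'s, and $c_2\cdot(c_2c_5-c_3c_4)$ resp. $c_4c_5$ giving the $v_1^2$'s); and the top slot $y_{top}$ receives $(4,v_1^2)\{y_{top}\}$ (from $e_8c_6$ giving $4y_{top}$ and from $c_2c_4c_2 = c_2^2c_4$ or $c_2c_4c_5$-type products giving $v_1^2 y_{top}$, noting $c_2^2=0$ in $CH$ forces care—one uses instead $c_2 c_4 c_5$ mod higher $I_\infty$).

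The main obstacle I anticipate is the bookkeeping of \emph{which} $v_1$-power is actually the minimal one in each slot, i.e.\ ruling out that e.g.\ $v_1 y_{12}$ or $v_1^3 y_{top}$ is \emph{not} in the image when the stated ideal says it is (or is not). This requires the kind of argument made in Lemma 9.1: if a putative class $v_1^a 2^b y_I$ were in $Im(res_k)$ but not in the claimed ideal, subtract off the evident $c$-monomial approximating it, land in higher $I_\infty$-filtration, and iterate until one reaches a degree above $2\dim\bF$ where $CH^*(\bF)/2$ vanishes, forcing a contradiction; combined with Corollary 5.5, which pins down $pr(c_{i_1}\cdots c_{i_s})$ uniquely mod $I_\infty^{s+1}$, this closes the gaps. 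The one genuinely delicate point is the top class: since $e_8 c_6 = 4y_{top}$ exactly matches the torsion index $t(G)=4$, one must check that no product of Chern classes produces $2 v_1 y_{top}$ or $y_{top}$ itself (which would contradict $t(G)=4$ via Lemma 3.8), so the ideal is genuinely $(4,v_1^2)$ and not smaller; this follows from Totaro's torsion-index computation plus the filtration estimate of Lemma 3.8. Once all slots are verified, the direct-sum decomposition in the statement follows by assembling the per-slot answers, using that $grk^*(\bar R(\bG))$ splits as a $k^*$-module along the monomial basis $\{1,y_6,y_{10},y_{12},y_6y_{10},y_6y_{12},y_{10}y_{12},y_{top}\}$.
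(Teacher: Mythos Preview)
Your approach is essentially the paper's: list explicit Chern-class monomials and read off their images in $k^*\otimes P(y)$ slot by slot. The paper's proof is simply the table
\[c_6\mapsto 2y_{12},\quad c_3c_6\mapsto 4y_6y_{12},\quad c_5c_6\mapsto 4y_{10}y_{12},\quad e_8c_6\mapsto 4y_{top},\]
\[c_2c_6\mapsto 2v_1y_6y_{12},\quad c_4c_6\mapsto 2v_1y_{10}y_{12},\]
\[(c_3c_4-v_1e_8)\mapsto v_1^2y_6y_{12},\quad c_4c_5\mapsto v_1^2y_{10}y_{12},\quad e_8c_4\mapsto v_1^2y_{top},\]
followed by ``These facts imply the lemma from Theorem 9.5.'' That last sentence is the whole upper-bound argument: since Theorem 9.5 already gives $gr_\gamma(R(\bG))$ completely, one knows a full $k^*$-module basis of $Im(res_k)$ and the listed elements are it.

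Two small corrections to your explicit witnesses. For $v_1^2y_6y_{12}$ the element is $(c_3c_4-v_1e_8)$ (equivalently $c_2c_5-c_3c_4$), not $c_2\cdot(c_2c_5-c_3c_4)$; the extra $c_2$ would multiply by $v_1y_6$ and push you to $v_1^3y_{12}^2$. For $v_1^2y_{top}$ the correct witness is $e_8c_4$ (which you in fact have, since you wrote $c_2c_5c_4=v_1e_8c_4\mapsto v_1^3y_{top}$), not a $c_2c_4c_5$-type product, which only gives $v_1^3y_{top}$.

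On the upper bound: your ad-hoc plan (subtract-and-iterate in the style of Lemma 9.1, plus the torsion index for $y_{top}$) would work, but it is more labor than needed and your torsion-index remark only excludes $2y_{top}$, not $2v_1y_{top}$, from the image; ruling out $2v_1y_{top}$ genuinely requires knowing all Chern monomials of the right degree, which is exactly the content of Theorem 9.5. Since that theorem is already available at this point of the paper, just invoke it.
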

\begin{proof}
%
The image of the restriction map is written (with $mod(I_{\infty}^3)) $  as follows
\[c_6\mapsto 2y_{12},\ \ c_6c_3\mapsto 4y_6y_{12},\ \ 
c_6c_5\mapsto 4y_{10}y_{12},\ \ c_6e_8\mapsto 4y_{top} \]
\[  c_6c_2\mapsto 2v_1y_6y_{12},\ \ 
c_6c_4\mapsto 2v_1y_{10}y_{12},\ \  (e_8c_5\mapsto 4v_1y_{top})\]
\[(c_3c_4-v_1e_8)\mapsto v_1^2y_6y_{12},\ \ c_4c_5\mapsto v_1^2y_{10}y_{12},\ \ 
e_8c_4\mapsto v_1^2y_{top}.\]
These facts imply the lemma from Theorem 9.5.
\end{proof}
If  the Karpenko conjecture holds for this case,
then from the above lemma, we see that the maps $res_{k}$ and $res_{\Omega}$ 
are injective (as  stated in Lemma 8.7).

\section{$Spin(15), Spin(17)$ and the counterexample}

At first, we consider the case $Spin(15)$, i.e.,
$\ell=7$.  Then we have
\[ grP(y)\cong \Lambda(y_6,y_{10},y_{12},y_{14}), \quad y_6^2=y_{12}.\]
So $y_{top}=y_6y_{10}y_{12}y_{14}$ and
there is 
$e_8$ in $CH^*(R(\bG))$.
By Totaro, it is known that the torsion index is $2^3=8$.

\begin{lemma} 
We have  $c_2c_3\not =0$ in $gr_{\gamma}(R(\bG))/2$.
\end{lemma}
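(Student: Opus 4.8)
The plan is to distinguish $c_2c_3$ from zero in $gr_\gamma^*(R(\bG))/2$ by exhibiting it as a nonzero element that survives to the connective $K$-theory $k^*(\bar R(\bG))$, in the spirit of the computation for $Spin(13)$ (Lemma 9.2 and its surrounding arguments). The key contrast with $Spin(13)$: there we had $c_6 = 2y_{12} \bmod I_\infty^3$, so the element $c_2c_3 - v_1 c_6$ lay in $I_\infty^4$ and therefore vanished in $gr_\gamma^*$. For $Spin(15)$ the relevant generator of degree $12$ is still present, but the point will be that the correction term needed to kill $c_2c_3$ is no longer available among the Chern-class generators at the right filtration level, so $c_2c_3$ persists.

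First I would fix representatives as in $\S 9$: choose $y_6,y_{10},y_{12},y_{14}$ in $k^*(\bar\bF)$ with $c_2 = v_1 y_6$, $c_4 = v_1 y_{10}$, $y_{12} = y_6^2$, and work out from Corollary 2.1 (applied to $Spin(15)$, using Nishimoto's $Q_i$-formulas for the $x_{2i-1}$ and $z$) the expansions of $c_3, c_5, c_6, c_7, e_8$ modulo $I_\infty^3$; in particular $c_3 = 2y_6 \bmod I_\infty^2$ and $c_3 = 2y_6 + (\text{something})\,v_1^2 \bmod I_\infty^3$. Then $c_2 c_3 = v_1 y_6 \cdot 2 y_6 = 2 v_1 y_6^2 = 2v_1 y_{12} \bmod I_\infty^4$ in $k^*(\bar R(\bG))$. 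Now I examine which products of Chern classes of total degree $14$ hit $2v_1 y_{12}$ (or $v_1 \cdot (\text{degree }12)$) modulo $I_\infty^4$: the candidates are $v_1$ times a degree-$12$ class, i.e.\ $v_1 c_6$, $v_1 c_2 c_4$, etc., and one must check whether any $\bZ/2$-combination of these equals $2 v_1 y_{12}$ in $k^*(\bar R(\bG))/I_\infty^4$. The claim is that it does not, because in $\bar R(\bG)$ for $Spin(15)$ the class $c_6$ no longer satisfies $c_6 = 2 y_{12} \bmod I_\infty^3$ — the $e_8 = c_1^{16}$ relation now interacts differently ($t(G) = 8$ rather than $4$), and $2 y_{12}$ is genuinely not in the image of the degree-$12$ part of $k^*(R(\bG)) \to k^*(\bar R(\bG))$ modulo $I_\infty^4$. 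Equivalently, I would argue that $c_2 c_3$ maps to a nonzero element of $gr(1)^*(R(\bG))/2 \cong CH^*(R(\bG))/(2,I(1))$ by checking that $2v_1 y_{12}$ is not $v_1$-torsion-correctable, using Lemma 5.2 and Lemma 5.4.

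The cleanest route is probably via the restriction map and the norm: compute $Im(res_k)$ in degrees through $14$ (as in Lemma 9.9 for $Spin(13)$), locate where $y_{12}$, $2y_{12}$, $v_1 y_{12}$, $2v_1 y_{12}$ sit, and verify that $2v_1 y_{12}$ is a $k^*$-module generator of $Im(res_k)$ in its degree that is not hit by any lower-filtration product — hence the corresponding Chow class $c_2 c_3$ cannot be pushed off to higher filtration by Chern-class relations and is therefore nonzero in $gr_\gamma^*(R(\bG))/2 \cong gr_{geo}^*(R(\bG))/2$. Concretely, I would show $c_2 c_3 = 2v_1 y_{12} \bmod I_\infty^4$, observe that any potential relation $c_2c_3 = v_1 x'$ with $x' \in CH^{12}(R(\bG))/2$ would force $x' = 2y_{12}$ in $k^{12}(\bar R(\bG))/I_\infty^2$, and then rule this out because no Chern-class generator of degree $12$ in $CH^*(R(\bG))$ restricts to $2y_{12}$ for $Spin(15)$ (this is exactly the place the argument differs from $Spin(13)$, where $c_6$ did that job).

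The main obstacle is the bookkeeping of $I_\infty$-filtration degrees: one must be careful that the degree-$12$ generator of $CH^*(\bar R(\bG))/2 \cong P(y)'/2$ for $Spin(15)$ is $y_{12} = y_6^2$, that $e_8$ lives in degree $16$ (wait — in our normalization $e_8$ has degree $8$; rather $e_{16} = c_1^{16}$), and to track the precise constants $\mu,\mu'$ in the expansions of $c_3, c_6, e_8$ modulo $I_\infty^3$, since a $v_1^2$-term in $c_3$ could conceivably absorb the discrepancy. I expect that after absorbing such terms by redefining $c_3 \mapsto c_3 - \mu v_1 c_4$ (as done in $\S 9$), the element $c_2 c_3 \equiv 2v_1 y_{12} \bmod I_\infty^4$ survives, and the verification that $2v_1 y_{12}$ is not in the image of $I_\infty \cdot (\text{degree-}12\text{ Chern products})$ in $gr_{geo}^*$ is the crux — this is where one genuinely uses that $\ell = 7$ (so $2^t \le \ell < 2^{t+1}$ with $t = 2$, $\bar\ell = 7$) rather than $\ell = 6$.
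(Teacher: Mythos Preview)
Your overall strategy matches the paper's: compute $c_2c_3$ in $k^*(\bar R(\bG))$, try to subtract off $v_1$ times a degree-$12$ Chern element, and show the residue cannot be killed. You also correctly identify the pivotal claim, namely that for $Spin(15)$ one no longer has $c_6 \equiv 2y_{12}$ modulo $I_\infty^3$, in contrast to $Spin(13)$. That is exactly where the proof lives.

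The gap is in your explanation of \emph{why} this fails. You attribute it to the torsion index jumping from $4$ to $8$ and to the behavior of $e_8$ (or $e_{16}$). Neither is the mechanism. The torsion index controls $y_{top}$, not $y_{12}$; and for $\ell=7$ one still has $t=2$, so the relevant element is $e_8=c_1^{8}$, just as for $\ell=6$. The actual reason is purely local: for $\ell=7$ the class $y_{14}$ is a new exterior generator of $P(y)'$, and by Corollary~2.1 (using $Q_1x_{11}=y_{14}$) one has
\[
c_6 \equiv 2y_{12} + v_1 y_{14} \pmod{I_\infty^3}
\]
in $k^*(\bar R(\bG))$. For $\ell=6$ the term $y_{14}$ simply does not exist in $P(y)'$, which is why the $Spin(13)$ argument went through.

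With this in hand, the paper's computation is
\[
c_2c_3 - v_1 c_6 \;\equiv\; v_1y_6\cdot 2y_6 - v_1(2y_{12}+v_1y_{14}) \;\equiv\; -v_1^2 y_{14} \pmod{I_\infty^3}.
\]
To kill $c_2c_3$ in $gr_\gamma^*(R(\bG))/2$ one would then need a degree-$12$ element $x'$ of the Chern algebra with $x'\equiv v_1y_{14}$ modulo $I_\infty^2$. The only candidate that is not already in $I_\infty^2$ is $c_6$ itself, and $c_6\equiv 2y_{12}+v_1y_{14}$: the nonzero $2y_{12}$ term obstructs this. Equivalently (as the paper phrases it), there is no odd generator $x_i$ with $Q_1x_i=y_{14}$ and $Q_0x_i=0$, since $Q_1x_{11}=y_{14}$ forces $Q_0x_{11}=y_{12}\neq 0$. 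Your framing in terms of ``no Chern class restricts to $2y_{12}$'' is the contrapositive of this and is fine once you supply the $y_{14}$ computation; without it the argument is just an assertion.
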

\begin{proof}
The element $x=c_2c_3-v_1c_6$ is written
for $y',y''\in k^*\otimes P(y)$
\[v_1y_6(2y_6-v_1^2y')-v_1(2y_{12}+v_1y_{14}
+v_1^2y'')=-v_1^2y_{14}\quad mod(I_{\infty}^3).\]
Let us write $A(c)= \Lambda_{\bZ}(c_2,...,c_{7})\otimes \bZ[e_8]$.  
If $x=v_1x'$ with  $x'\in k^*\otimes\Lambda(c)$,
 then $x'=v_1y_{14}$.
But such $x'$ does not exist, because there does not 
exist $x_i$ such that $Q_1x_i=y_{14}$ but $Q_0x_i=0$ (note
$Q_0x_{11}=y_{12}$).
\end{proof}

Next we consider the case 
 $G=Spin(17)$ and $\bG$ be versal.  
Hence $P(y)$ is the same as the case $G=Spin(15)$, that is, $ grP(y)\cong \Lambda(y_6,y_{10},y_{12},y_{14})$.
So $y_{top}=y_6y_{10}y_{12}y_{14}$.  But by Totaro,
it is known that $t(G)=2^4=16$. Note that we have 
the element
$e_{16}\in CH^*(R(\bG))$.


\begin{lemma}  Let $G=Spin(17)$ and $\bG$ be versal.
We have $c_2c_3c_6c_7=0$ in $gr_{\gamma}(R(\bG))/2$.
\end{lemma}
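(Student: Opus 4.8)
The plan is to follow the method of Lemmas 9.1--9.3: detect the vanishing inside $gr_{\gamma}(R(\bG))/2\cong gr_{geo}(R(\bG))/2$ by computing in the connective Morava $K$-theory $k^*(\bar R(\bG))\cong k^*\otimes P(y)'$. For $G=Spin(17)$ one has $t=3$, $\bar\ell=7$, and $P(y)'$ is the $\bZ/2$-algebra generated by $y_6,y_{10},y_{14}$ with $y_{12}=y_6^2$ and $y_6^4=y_{10}^2=y_{14}^2=0$; in particular $y_{12}^2=0$. By Theorem 3.1 and Lemmas 4.4 and 6.7 it suffices to locate $pr(c_2c_3c_6c_7)$ inside the $I_\infty$-adic filtration of $k^*(\bar R(\bG))$ and to recognise it, modulo that filtration, as coming from Chern monomials that are forced to vanish in $gr_{\gamma}(R(\bG))/2$.

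First I would write down, from Corollary 2.1 (equivalently Corollary 6.1) and the normalisation of Chern-class representatives used in $\S 9$, the leading $I_\infty$-terms: modulo higher filtration,
\[ c_2\equiv v_1y_6,\qquad c_3\equiv 2y_6,\qquad c_6\equiv 2y_{12}+v_1y_{14},\qquad c_7\equiv 2y_{14}. \]
Each $c_i$ lies in $I_\infty$, so $c_2c_3c_6c_7\in I_\infty^4$, and by Corollary 5.5 its image in $k^*(\bar R(\bG))/I_\infty^5$ is the product of the four classes in $I_\infty/I_\infty^2$. Expanding,
\[ (v_1y_6)(2y_6)(2y_{12}+v_1y_{14})(2y_{14})=8v_1\,y_6^2y_{12}y_{14}+4v_1^2\,y_6^2y_{14}^2=8v_1\,y_{12}^2y_{14}+4v_1^2\,y_{12}y_{14}^2=0, \]
since $y_{12}^2=y_6^4=0$ and $y_{14}^2=0$ in $P(y)'$. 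The conceptual content is that $c_2c_3$ already produces the square $y_6^2=y_{12}$, and the further $y$'s from $c_6,c_7$ then force a second $y_{12}$ or a $y_{14}^2$; this is precisely the feature absent from, say, $c_2c_4c_6c_7$, whose leading term $4v_1^2y_{top}$ is nonzero. Hence $res_k(c_2c_3c_6c_7)\in I_\infty^5\,k^*(\bar R(\bG))$.

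To finish I would argue as in Lemma 9.1, exploiting that $c_2c_3c_6c_7$ sits in the high degree $|c_2c_3c_6c_7|=36$, just below the top degree $|y_{top}|=42$ of $R(\bG)$. In degree $36$ the module $k^*(\bar R(\bG))$ has $\bZ_{(2)}$-basis $y_{10}y_{12}y_{14}$ and $v_1^3y_{top}$, so $I_\infty^5$ there lies in $2\,k^*(\bar R(\bG))$; a short bookkeeping using the leading-term identities $res_k(c_5c_6c_7)\equiv 8\,y_{10}y_{12}y_{14}$ and $v_1\cdot res_k(c_2c_4c_6c_7)\equiv 4\,v_1^3y_{top}$ (the coefficients $8$ and $4$ being the evident powers of $2$ times units) lets one subtract suitable multiples of $c_5c_6c_7$ and of $v_1c_2c_4c_6c_7$ from $c_2c_3c_6c_7$, the relevant small linear system over $\bZ_{(2)}$ being invertible and forcing the multiple of $c_5c_6c_7$ to be even. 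Since $res_k$ is injective on $k^*(R(\bG))$ (Lemma 6.7), this yields an identity $c_2c_3c_6c_7=\alpha\,c_5c_6c_7+\beta\,v_1\,c_2c_4c_6c_7$ in $k^*(R(\bG))$ with $\alpha$ even; passing to $gr_{\gamma}^{36}(R(\bG))/2$, the first term dies because $\alpha$ is even, and the second because $v_1c_2c_4c_6c_7$ represents, up to the Bott class $v_1=B$, a class of strictly higher geometric filtration. Therefore $c_2c_3c_6c_7=0$ in $gr_{\gamma}(R(\bG))/2$.

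The main obstacle is this finishing step: setting up the analogue of Lemma 9.1 for $Spin(17)$ and controlling the $I_\infty$-adic bookkeeping near the top degree --- in particular checking that the competing Chern monomials restrict with the evident powers of $2$ times units, so that the small elimination system is invertible, and that the process terminates (which it does because $P(y)'$ is concentrated in degrees $\le 42$). The leading-term computation itself is short; its only subtlety is matching the precision it provides ($I_\infty^5$) with what Corollary 5.5 guarantees for a product of four Chern classes. Totaro's value $t(G)=2^4$ does not enter here; it is what makes $c_2c_3c_6c_7$ nonzero in $CH^*(R(\bG))/2$ (via $\tilde K(2)$-theory), so that this vanishing in $gr_{\gamma}$ becomes a genuine counterexample to Karpenko's conjecture.
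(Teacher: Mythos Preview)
Your approach is correct and matches the paper's: compute the leading term of $x=c_2c_3c_6c_7$ modulo $I_\infty^5$, see it vanishes because $y_6^2=y_{12}$ forces $y_{12}^2$ or $y_{14}^2$, and then absorb the remainder into $v_1$ times a Chern monomial of higher Chow degree. The paper's execution is a little cleaner: since $y_6$ is \emph{defined} by $c_2=v_1y_6$ (so this holds exactly, not just modulo $I_\infty^2$), one has $x=v_1y_6\cdot c_3c_6c_7$, and multiplication by $y_6$ annihilates both the $y_6y_{12}y_{14}$ and $y_{top}$ pieces of $c_3c_6c_7$; hence $x$ has \emph{no} $y_{10}y_{12}y_{14}$ component at all and equals $4\lambda\,v_1^3y_{top}$ on the nose, so only the single auxiliary monomial $x''=c_2c_4c_6c_7$ is needed. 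Your two-monomial elimination with $c_5c_6c_7$ and $v_1c_2c_4c_6c_7$ works as well (the $2$-adic valuations line up: $a\in 32\bZ_{(2)}$, $b\in 4\bZ_{(2)}$ force $\alpha\in 4\bZ_{(2)}$ and $\beta\in\bZ_{(2)}$), though note your $2\times 2$ system is not literally invertible over $\bZ_{(2)}$---it is merely solvable with $\alpha$ even, which is all you need. One small imprecision: Lemma 6.7 gives that $res_K$ (not $res_k$) is an isomorphism; your identity is therefore obtained in $K^*(R(\bG))$ rather than $k^*(R(\bG))$, but that is exactly what is required to compare filtration levels in $gr_{geo}$, so the conclusion is unaffected.
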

\begin{proof}
The element $ x=c_2c_3c_6c_7$ is computed in $k^*(\bar \bF)$ (See Lemma 5.3.)
\[x=v_1y_6(2y_6+v_1^2y')(2y_{12}+v_1y_{14}+v_1^2y'')(2y_{14}+v_1^2y''')=0\quad mod(I_{\infty}^5)\]
for $y',y'',y''' \in k^*\otimes P(y)$, because $y_{12}^2,y_{14}^2\in I_{\infty}$.

The degrees are given  $|x|=|y_{top}|-6$ and $|v_1^3|=-6$. So
\[ (*)\quad  x=2^2\lambda v_1^3y_{top},\quad with \ \lambda\in\bZ.\]
Here we note that  $x$ is decided with $mod(v_1^4)$.  However
if $x=2^2\lambda v_1^3y_{top}+v_1^4y'$, then $|y'|>|y_{top}|$
and $(*)$ holds exactly in $k^*(\bar R(\bG))$.

Let us write $x''=c_2c_4c_6c_7$, i.e.,
\[x''=v_1y_6v_1y_{10}(2y_{12}+v_1y_{14})\cdot 2y_{14}=
2^2v_1^2y_{top}\quad mod(v_1^3).\]
Then  we have $x-\lambda v_1x''=0$
$mod(v_1^4)$.
Hence $x=0$ in $gr_{\gamma}^*(R(\bG))/2$.
\end{proof}
In the above proof, torsion index and elements 
 $e_8, e_{16}$ do not appear.  Hence 
the above proof is seen also the proof for $G=Spin(15)$.

We will disprove the Karpenko conjecture by using the algebraic integral
Morava $A\tilde K(n)$-theory, and
its associated graded ring 
$ gr(n)^*(X)=CH^*(X)/I(n).$  Recall the Nishimoto result
$ Q_i(x_3)=y_{2^{i+1}+2},$ and $ Q_i(x_5)=y_{2^{i+1}+4}.$
Hence,  in  $\Omega^*(\bar R(\bG))/I_{\infty}^2$, we have
\[ \begin{cases}
c_2=v_1y_6+v_2y_{10}+v_3y_{18}+... \\
c_3=2y_6+v_2y_{12}+v_3y_{20}+...
\end{cases} \]

Now  we consider $\tilde k(2)$-theory
with $\tilde k(2)^*=\bZ_{(2)}[v_2]$. (Note
the res $\tilde K(2)^*(R(\bG))\to \tilde K(2)^*(R(\bG))$ is not isomorphism, in general.) 
\begin{lemma}
Let $G=Spin(17)$ and $\bG$ be versal.
We have
\[ c_2c_3c_6c_7\not=0\quad in\ gr(2)^*(R(\bG))/2
\quad  (hence\ in\ CH^*(R(\bG))/2).\]
\end{lemma}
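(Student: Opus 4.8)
The plan is to work in the integral algebraic Morava $\tilde k(2)$-theory, for which $I_\infty=(2,v_2)$ and $|v_2|=-6$, combining an explicit computation of $res_{\tilde k(2)}(c_2c_3c_6c_7)$ in $\tilde k(2)^*(\bar R(\bG))\cong \tilde k(2)^*\otimes P(y)'$ with Lemma 5.2. Here $P(y)'$ has $\bZ_{(2)}$-basis the monomials $y_6^ay_{10}^by_{14}^c$, $0\le a\le 3$, $0\le b,c\le 1$, where $y_{12}:=y_6^2$ and $y_{top}:=y_6y_{10}y_{12}y_{14}=y_6^3y_{10}y_{14}$, so in particular $y_{12}^2=y_6^4=0$ and $y_{10}^2=y_{14}^2=0$; the top degree is $|y_{top}|=42$.

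First I would record, via Corollary 6.1 and Nishimoto's $Q_i$-formulas for $x_7,x_9,x_{11},x_{13}$, the congruences modulo $I_\infty^2$
\[ c_2\equiv v_2y_{10},\quad c_3\equiv 2y_6+v_2y_{12},\quad c_4\equiv v_2y_{14},\quad c_5\equiv 2y_{10},\quad c_6\equiv 2y_{12},\quad c_7\equiv 2y_{14}. \]
Since $c_2,c_3,c_6,c_7\in I_\infty$ the product lies in $I_\infty^4$, and multiplying out $(v_2y_{10})(2y_6+v_2y_{12})(2y_{12})(2y_{14})$ — every term carrying a second $v_2$ dying by $y_{12}^2=y_6^4=0$ — gives
\[ res_{\tilde k(2)}(c_2c_3c_6c_7)\equiv 2^3\,v_2\,y_{top}\pmod{I_\infty^5}, \]
so the coefficient of $v_2y_{top}$ lies in $2^3\bZ_{(2)}$ but not in $2^4\bZ_{(2)}$. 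This is exactly where $Spin(17)$ escapes the $gr_\gamma$-cancellation of Lemma 10.4: in $\tilde k(1)$-theory one has $c_4\equiv v_1y_{10}$, so $c_2c_4c_6c_7\equiv 2^2v_1^2y_{top}$ and $v_1\cdot(c_2c_4c_6c_7)$ corrects $c_2c_3c_6c_7$ to $0$; in $\tilde k(2)$-theory $c_4\equiv v_2y_{14}$, so $c_2c_4c_6c_7\equiv 2^2v_2^2\,y_{10}y_{12}y_{14}^2=0$ because $y_{14}^2=0$, no correction is available, and the leading term $2^3v_2y_{top}$ persists.

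Now suppose for contradiction that $c_2c_3c_6c_7=0$ in $gr(2)^*(R(\bG))/2$. By Lemma 5.2 its image lies in $Im(I_\infty A(b))\subset\tilde k(2)^*\otimes P(y)'$, hence is a $\tilde k(2)^*$-combination of elements $\alpha\,pr(m)$ with $\alpha\in I_\infty$ and $m$ a monomial in $c_2,\dots,c_7$ and $e_{16}=c_1^{16}$. The monomials reaching degree $36$ in such an expression are those with $|m|=36$ (multiplied by $2\in I_\infty$) and, through the factor $v_2\in I_\infty$, those with $|m|=42$; there are finitely many, and I would estimate the $v_2y_{top}$-coefficient of each $pr(m)$ $2$-adically. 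A monomial with $|m|=36$ not involving $e_{16}$ is a product of at least three of $c_2,\dots,c_7$: the unique three-factor one is $c_5c_6c_7$, for which $pr(c_5c_6c_7)\equiv 2^3y_{10}y_{12}y_{14}\pmod{I_\infty^4}$, while the four-factor ones ($c_3c_4c_5c_6$, $c_2c_4c_5c_7$, $c_2c_3c_6c_7$) have $pr$-image already in $I_\infty^4$, so in every case the coefficient of $v_2y_{top}$ lies in $2^3\bZ_{(2)}$; the only monomial with $|m|=36$ involving $e_{16}$ is $c_2e_{16}$, and since $pr(c_1)=0$ Corollary 5.5 forces $pr(c_2e_{16})\in I_\infty^{18}$; a monomial with $|m|=42$ is a product of at least four Chern classes (or involves $e_{16}$), so $pr(m)\in I_\infty^4$ and its $y_{top}$-coefficient lies in $2^4\bZ_{(2)}$. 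Multiplying any of these by an element of $I_\infty$, the $v_2y_{top}$-coefficient of every element of $Im(I_\infty A(b))$ in degree $36$ lands in $2^4\bZ_{(2)}$, which contradicts the previous paragraph. Therefore $c_2c_3c_6c_7\ne 0$ in $gr(2)^*(R(\bG))/2$, and since $gr(2)^*(R(\bG))=CH^*(R(\bG))/I(2)$ is a quotient of $CH^*(R(\bG))$, also $c_2c_3c_6c_7\ne 0$ in $CH^*(R(\bG))/2$.

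The main obstacle is the finite enumeration in the last paragraph: one must verify that the list of monomials of degrees $36$ and $42$ in $c_2,\dots,c_7,e_{16}$ is complete and that the $v_2y_{top}$-coefficient of each $pr$-image really is divisible by $2^3$ (resp.\ the $y_{top}$-coefficient by $2^4$), which uses the $Q_i$-action on all of $x_3,\dots,x_{13}$ and on $z$, the vanishing of $P(y)'$ above degree $42$, and the $I_\infty$-filtration control of Corollary 5.5. Everything else is formal once $res_{\tilde k(2)}(c_2c_3c_6c_7)\equiv 2^3v_2y_{top}\pmod{I_\infty^5}$ is in hand.
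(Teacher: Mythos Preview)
Your computation $res_{\tilde k(2)}(c_2c_3c_6c_7)\equiv 2^3v_2y_{top}\pmod{I_\infty^5}$ is correct and matches the paper exactly. The divergence is in the contradiction argument, and your enumeration has two genuine gaps.

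First, the step ``since $pr(c_1)=0$ Corollary 5.5 forces $pr(c_2e_{16})\in I_\infty^{18}$'' is not valid. Corollary 5.5 is stated for products of the $b_i$'s, and its proof (via Lemma 5.4) uses that each $b_i\in I_\infty$ in $h^*(\bar\bF)$. For $Spin(17)$ the transgressions are $c_2,\dots,c_8,e_{16}$; the element $c_1$ is \emph{not} one of them and is not in $I_\infty$, so you cannot treat $e_{16}=c_1^{16}$ as a $16$-fold product in Corollary 5.5. You must handle $e_{16}$ as a single generator and compute $pr(e_{16})$ directly from Corollary 2.1 and Nishimoto's formula for $Q_n(z)$.

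Second, Lemma 5.2 places $x$ in $Im(I_\infty A(b))$ where $A(b)$ is a \emph{polynomial} ring, so your restriction to square-free monomials in $c_2,\dots,c_7$ is not justified. For instance $c_7^3$ has $|m|=42$ with only three factors, so the claim ``at least four Chern classes'' fails. (It happens that $pr(c_7^3)$ has zero $y_{top}$-coefficient because $y_{14}^2=0$, but that is a different reason.) A complete enumeration would have to treat all monomials with repetition, or else argue that $c_i^2\in 2\cdot CH^*(\bF)$ lets one absorb repeated factors into the $I_\infty$-coefficient.

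The paper avoids both issues by invoking Totaro's torsion index $t(Spin(17))=2^4$ directly. If $x=v_2x'$ with $x'\in \tilde k(2)^*\otimes\Lambda(c)$, then $x'\equiv 2^3y_{top}\pmod{v_2}$, so $2^3y_{top}\in Im(res_{CH})$, contradicting $t(G)=2^4$. The case $x=2x'$ is reduced to the same contradiction via the Quillen operation $r_{\Delta_2}$ (which sends $v_2\mapsto 2$). Your enumeration is in effect an attempt to re-derive the torsion-index bound by hand; it can be made to work, but the paper's route is both shorter and what actually closes the argument.
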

\begin{proof} 
We can write in $\tilde k(2)^*(\bar R(\bG))/(I_{\infty}^5).$
\[ x=pr(c_2c_3c_6c_7)=(v_2y_{10})(2y_6+v_2y_{12})(2y_{12}+v_2y')(2y_{14}+v_2y'') \]
for $y',y''\in \tilde k(2)^*\otimes P(y).$  Hence we have 
\[x=2^3v_2y_{top}\quad mod(v_2^2).
\]

Suppose that $x=0\in gr(2)^*(R(\bG))/2$.
 Then     there is $x'\in \tilde k(2)^*\otimes \Lambda(c)=\tilde k(2)^*\otimes
\Lambda(c_2,...,c_7)\otimes \bZ_{(2)}[e_{16}]$
such that we have
\[ x=v_2x'\quad or \quad x=2x'.\]

Suppose $x=v_2x'$. Then
$x'=2^3y_{top}$ $mod(v_2)$.
Therefore we see $t(G)\le 2^3$, and this contradicts
to $t(G)=2^4$.  

The case $x=2x'$ is proved similarly
using the Quillen operation $r_{\Delta_2}(v_2)=2$.
In fact 
$r_{\Delta_2}x'=2^3y_{top}$ also.  
(For $j\not =2$,
$r_{\Delta_2}(v_j)=0\ mod(I_{\infty}^2)$ and we can define this Quillen 
operation in  $\tilde k(2)^*(-)$.)
\end{proof}

{\bf Remark.}
For $G=Spin(15)$, we have $c_2c_3c_6c_7=0$ also
in $gr(2)^*(R(\bG))/2$. form $t(G)=2^3$.  Indeed
$e_{8}c_6c_7=2^3y_{top}$. For $G=Spin(13)$,
we have $c_2c_3=0$ also in $gr(2)^*(R(\bG))/2$.

\begin{thm} When $G=Spin(17)$ and $\bG$ be versal.
Then $I(1)\not =0$.
\end{thm}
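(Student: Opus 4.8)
The plan is to assemble the pieces already in place. Recall from Lemma 1.1 that $gr_\gamma^*(G/T) \cong gr_{geo}^*(\bF) \cong CH^*(\bF)/I(1)$, and more locally (via Theorem 3.3) that $CH^*(\bF)/2 \cong CH^*(R(\bG))/2 \otimes S(t)/(2,b)$, so it suffices to exhibit a nonzero class in $I(1) \cap CH^*(R(\bG))$. The natural candidate is $x = c_2c_3c_6c_7$, and the argument is: (i) $x = 0$ in $gr_\gamma^*(R(\bG))/2 \cong gr_{geo}^*(R(\bG))/2 \cong CH^*(R(\bG))/I(1)$ by Lemma 10.3; (ii) $x \neq 0$ in $CH^*(R(\bG))/2$ by Lemma 10.5 (which passes through $gr(2)^*(R(\bG))/2$). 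Together these say precisely that $0 \neq x \in I(1)$, i.e. $I(1) \neq 0$.

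So the body of the proof is short: apply Lemma 10.3 to get $x \equiv 0$ modulo $I(1)$ in $CH^*(R(\bG))/2$; apply Lemma 10.5 to get $x \not\equiv 0$ in $CH^*(R(\bG))/2$ (a fortiori $x \neq 0$ in $CH^*(R(\bG))$); conclude $x$ is a nonzero element of $I(1)$, hence $I(1) \neq 0$. One should also remark, for clarity, how this sits inside the full flag variety: since $CH^*(\bF)/2 \cong CH^*(R(\bG))/2 \otimes S(t)/(2,b)$ as a module over $CH^*(R(\bG))/2$, and the projection $pr : CH^*(\bF) \to CH^*(R(\bG))$ carries $I(1)(\bF)$ onto $I(1)(R(\bG))$ (both $I(1)$'s being the images of AHss differentials, compatible with the motivic decomposition), the nontriviality of $I(1)$ for $R(\bG)$ yields nontriviality for $\bF$ as well, which is the statement as phrased in the introduction.

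The only real content is already packaged in Lemmas 10.3 and 10.5, so the expected obstacle is not in proving Theorem 10.6 itself but in making sure the two lemmas are genuinely talking about the same class $x$ in the same group: Lemma 10.3 works in $gr_\gamma^*(R(\bG))/2$ and Lemma 10.5 works in $gr(2)^*(R(\bG))/2$, and both are quotients of $CH^*(R(\bG))/2$ by ideals $I(1)$ and $I(2)$ respectively. The logical force comes from the fact that $x = 0$ in $CH^*(R(\bG))/I(1)$ while $x \neq 0$ in $CH^*(R(\bG))/I(2)$ — which is only possible if $x \neq 0$ in $CH^*(R(\bG))/2$ and $x \in I(1)$. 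I would state this explicitly: the vanishing in $gr_{geo} = CH^*/I(1)$ together with non-vanishing in $CH^*/I(2)$ forces $x$ to lie in $I(1) \setminus \{0\}$ (indeed $x \notin I(2)$, so in particular $x \neq 0$, while $x \in I(1)$).

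Thus the proof reads, in effect: \emph{By Lemma 10.3, $c_2c_3c_6c_7 = 0$ in $gr_\gamma^*(R(\bG))/2 \cong gr_{geo}^*(R(\bG))/2 \cong CH^*(R(\bG))/(2,I(1))$. By Lemma 10.5, $c_2c_3c_6c_7 \neq 0$ in $gr(2)^*(R(\bG))/2 \cong CH^*(R(\bG))/(2,I(2))$, hence a fortiori $c_2c_3c_6c_7 \neq 0$ in $CH^*(R(\bG))/2$. Therefore $0 \neq c_2c_3c_6c_7 \in I(1)$, and so $I(1) \neq 0$ — equivalently, via Theorem 3.3 and Lemma 1.1, $I(1) \neq 0$ for $\bF = \bG/B_k$ as well.* I would keep it to essentially this length, since everything substantive has been done in the preceding lemmas; no further computation is needed.
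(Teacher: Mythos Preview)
Your argument is correct and is exactly the paper's approach: the theorem is immediate from the two lemmas showing $x=c_2c_3c_6c_7$ vanishes in $gr_{\gamma}^*(R(\bG))/2\cong CH^*(R(\bG))/(2,I(1))$ but survives in $gr(2)^*(R(\bG))/2\cong CH^*(R(\bG))/(2,I(2))$, whence $0\neq x\in I(1)$. The only correction is bookkeeping: in the paper's numbering the vanishing lemma is Lemma~10.2 and the non-vanishing lemma is Lemma~10.3 (your ``Lemma~10.5'' is the analogous $Spin(19)$ statement), so adjust the references accordingly.
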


Similarly, we have
\begin{lemma}
Let $G=Spin(19)$ and $\bG$ be versal.  For $x=c_2c_3c_6e_{16}$, we have $x \not =0 \in CH^*(R(\bG))/2$ but
$x=0\in gr_{\gamma}(R(\bG))/2$.
\end{lemma}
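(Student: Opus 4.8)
The statement for $G=Spin(19)$ parallels exactly the $Spin(17)$ counterexample, so the plan is to mimic the proof of Lemma 10.3 and Lemma 10.6 with the element $x=c_2c_3c_6e_{16}$ in place of $c_2c_3c_6c_7$. First I would record the relevant cohomological data: for $G=Spin(19)$ we have $\ell=9$, $2^3\le \ell<2^4$ so $t=3$ and the relevant degree $2$ generator gives $e_{16}=c_1^{16}\in CH^*(R(\bG))$; by Totaro the torsion index is $t(G)=2^4$; and $grP(y)\cong \Lambda(y_6,y_{10},y_{12},y_{14},y_{18})$ with $y_{top}=y_6y_{10}y_{12}y_{14}y_{18}$ (using $y_{4i}=y_{2i}^2$). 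I would also fix, as in §9--10, lifts in $k^*(\bar R(\bG))/(I_\infty^{s+1})$ so that $c_2=v_1y_6+\dots$, $c_3=2y_6+\dots$, $c_6=2y_{12}+v_1y_{14}+\dots$, and $e_{16}=2^3 y_{18}+\dots$ modulo higher $I_\infty$-powers (the leading term of $e_{16}$ being governed by $t(G)$ and the regular sequence structure).

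The vanishing $x=0\in gr_{\gamma}(R(\bG))/2$ would be proved by the same reduction as Lemma 10.3: compute $x=c_2c_3c_6e_{16}$ in $k^*(\bar\bF)$ using the leading forms above, observe that all $v_1^0$ and low $v_1$-power contributions are killed because $y_{12},y_{14}$ square into $I_\infty$, and conclude that $x$ is a multiple $2^a\lambda v_1^b y_{top}$ modulo $v_1^{b+1}$ for appropriate $a,b$ with $a+b$ chosen so that a comparison element $x''\in\Lambda(c)$ of strictly larger filtration matches the leading term. Concretely I expect $x$ to agree modulo higher terms with a product like $\lambda v_1\cdot c_2c_4c_6e_{16}$ (or a similar product built from $c_4$ and $c_5$ in place of $c_3$), so that $x-\lambda v_1 x''=0$ in $gr_{\gamma}^*(R(\bG))/2$, exactly as in the cited lemma. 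The degree bookkeeping $|x|=|y_{top}|-\epsilon$ for the appropriate shift $\epsilon$, together with $CH^*(R(\bG))=0$ above $|y_{top}|$, is what makes ``modulo higher $v_1$'' into an exact identity.

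The nonvanishing $x\neq 0\in CH^*(R(\bG))/2$ would be proved exactly as Lemma 10.6, by passing to the integral Morava $\tilde k(2)$-theory and using the Nishimoto formulas $Q_i(x_3)=y_{2^{i+1}+2}$, $Q_i(x_5)=y_{2^{i+1}+4}$, so that modulo $I_\infty^2$ one has $c_2=v_1y_6+v_2y_{10}+\dots$ and $c_3=2y_6+v_2y_{12}+\dots$. Computing $pr(c_2c_3c_6e_{16})$ in $\tilde k(2)^*(\bar R(\bG))$ modulo a suitable $I_\infty$-power, the leading term should be $2^3 v_2 y_{top}$ (up to a $2$-adic unit). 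If this were zero in $gr(2)^*(R(\bG))/2$, then $x=v_2x'$ or $x=2x'$ with $x'\in \tilde k(2)^*\otimes\Lambda(c_2,\dots,c_9)\otimes\bZ_{(2)}[e_{16}]$; in the first case $x'\equiv 2^3 y_{top}\bmod v_2$ forces $t(G)\le 2^3$, contradicting $t(G)=2^4$, and in the second case the Quillen operation $r_{\Delta_2}$ (with $r_{\Delta_2}(v_2)=2$, $r_{\Delta_2}(v_j)\equiv0\bmod I_\infty^2$ for $j\neq2$) sends $x'$ to $2^3 y_{top}$, giving the same contradiction. Since $gr(2)^*(R(\bG))=CH^*(R(\bG))/I(2)$ is a quotient of $CH^*(R(\bG))$, this shows $x\neq0$ in $CH^*(R(\bG))/2$, hence $0\neq x\in I(1)$.

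\textbf{Main obstacle.} The delicate point, as in §10, is getting the leading terms of the lifts of $c_6$ and especially $e_{16}$ right modulo the relevant power of $I_\infty$ --- i.e., verifying that the coefficient of $v_1^b y_{top}$ (resp.\ $v_2 y_{top}$) in the product is exactly a unit times $2^3$, not a higher power of $2$, which is precisely where the torsion index $t(G)=2^4$ (versus $2^3$) enters and makes $Spin(19)$ genuinely analogous to $Spin(17)$ rather than to $Spin(15)$. One must also check that no lower $I_\infty$-filtration term of $x$ survives in $gr_{\gamma}$, which requires the same kind of ``absorb $c_i t$ for $t\in S(t)^+/(b)$'' clean-up as in Lemma 9.1 and Lemma 5.4, together with the uniqueness of $pr(c_{i_1}\cdots c_{i_s})$ modulo $I_\infty^{s+1}$ from Corollary 5.5.
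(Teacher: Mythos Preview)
Your overall strategy is correct and matches the paper's approach: the paper's own proof simply says to exchange $c_7$ by $e_{16}$ in the proofs of Lemmas 10.2 and 10.3, after recording that $t(G)=2^4$ (witnessed by $c_3c_5c_6e_{16}=2^4y_{top}$).

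There is, however, one concrete error in your proposal, and it is precisely in the spot you flagged as the main obstacle. You write $e_{16}=2^3y_{18}+\dots$, with the leading term ``governed by $t(G)$''. This cannot hold even on degree grounds, since $|e_{16}|=32$ while $|y_{18}|=18$. The leading term of $e_{16}$ is not determined by the torsion index but by Nishimoto's formula $Q_0(z)=\sum_{i+j=2^{t+1},\,i<j}y_{2i}y_{2j}$: for $t=3$ and $\ell=9$ the only surviving pair is $(i,j)=(7,9)$, so $Q_0(z)=y_{14}y_{18}$ and hence
\[
e_{16}=2(y_{14}y_{18})\quad\text{mod }(v_1)\qquad\text{in }k^*(\bar R(\bG)).
\]
With this correction $e_{16}$ plays exactly the role that $c_7=2y_{14}$ played for $Spin(17)$, the extra factor $y_{18}$ being carried along. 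In $\tilde k(2)^*(\bar R(\bG))$ one then gets
\[
x=(v_2y_{10})(2y_6+v_2y_{12})(2y_{12}+v_2y')(2y_{14}y_{18}+v_2y'')
=2^3v_2\,y_{top}\quad\text{mod }(v_2^2),
\]
and your contradiction with $t(G)=2^4$ goes through verbatim. In $k^*(\bar R(\bG))$ one has $x=0$ mod $I_\infty^5$ (using $y_{12}^2,y_{14}^2\in I_\infty$) exactly as in Lemma 10.2; since $|x|=54=|y_{top}|-6$, one gets $x=2^2\lambda v_1^3y_{top}$, and the comparison element is $x''=c_2c_4c_6e_{16}=2^2v_1^2y_{top}$ mod $(v_1^3)$, so $x-\lambda v_1x''=0$ in $gr_{\gamma}^*(R(\bG))/2$, as you intended.
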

\begin{proof}
We note that from $Q_0(z)=y_{14}y_{18}$, we see 
$ e_{16}=2(y_{14}y_{18})$ in $CH^*(\bar R(\bG)),$
The torsion index $t(G)=2^4$ by Totaro
(in fact, $c_3c_5c_6e_{16}=2^4y_{top}$).
The results are gets by arguments exchanging 
$c_7$ by $e_{16}$ in the proofs of Lemma 10.2, 10.3 for $Spin(17)$.
The results are gets by arguments similar to the
preceding lemmas.
\end{proof}

\begin{cor}     Let $G=Spin(17)$ and $\bG$ be versal.
Then we have
\[ (2^4,2^3v_1,2^3v_2)\{y_{top}\}\subset Im(res_{\Omega})\subset
grBP^*(\bar R(\bG)).\]
\end{cor}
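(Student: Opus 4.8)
The plan is to realise each of the three module generators $2^4y_{top}$, $2^3v_1y_{top}$, $2^3v_2y_{top}$ as $res_{\Omega}$ of an explicit product of Chern classes, up to a unit of $\bZ_{(2)}$; since $res_{\Omega}$ is $BP^*$--linear, $Im(res_{\Omega})$ is a $BP^*$--submodule of $grBP^*(\bar R(\bG))\cong BP^*\otimes P(y)$, so once these three generators are known to lie in it, so does the submodule $(2^4,2^3v_1,2^3v_2)\{y_{top}\}$ they span. The tool that makes this work is the precise graded shape of $P(y)$ for $Spin(17)$: as a graded $\bZ_{(2)}$--module it is $\bZ_{(2)}[y_6]/(y_6^4)\otimes\Lambda(y_{10},y_{14})$, with $y_{12}=y_6^2$ and $y_{top}=y_6^3y_{10}y_{14}$ of Chow degree $21$, and -- crucially -- it vanishes in Chow degrees $2,4,17,19,20$ and is one--dimensional in degrees $18$ (spanned by $y_6^2y_{10}y_{14}$) and $21$. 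Consequently, for a product $c_I=c_{i_1}\cdots c_{i_k}$ of Chern classes of Chow degree $d$ with $18\le d\le 21$, the element $res_{\Omega}(c_I)$, sitting in Chow degree $d$ of $BP^*\otimes P(y)$, can only be a $\bZ_{(2)}$--combination of $y_6^2y_{10}y_{14}$ (possible only if $d=18$), of $v_1^{21-d}y_{top}$, and of $v_2y_{top}$ (possible only if $d=18$).

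For $2^4y_{top}$ take $c_3c_5c_6c_7$ (Chow degree $21$): the degree count gives $res_{\Omega}(c_3c_5c_6c_7)=c\cdot y_{top}$ with $c\in\bZ_{(2)}$, and reducing to $res_{CH}$ and using the congruence $c_i\equiv 2y_{2i}+\sum_{n\ge1}v_ny(2i+2^{n+1}-2)$ modulo $\II$ of \S 6 (so $res_{CH}(c_i)$ is $2$ times a unit times $y_{2i}$ for $i\in\{3,5,6,7\}$, the Chow degree $i$ part of $P(y)$ being one--dimensional there) we see that $c$ is $2^4$ times a unit of $\bZ_{(2)}$; this is consistent with $t(G)=2^4$. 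For $2^3v_1y_{top}$ take $c_2c_5c_6c_7$ (Chow degree $20$): the degree count gives $res_{\Omega}(c_2c_5c_6c_7)=c'\,v_1y_{top}$, and the \S 6 congruences (with $y_4=y_8=y_{16}=0$ in $P(y)$) yield $res_{\Omega}(c_2)\equiv v_1y_6+v_2y_{10}$, $res_{\Omega}(c_5)\equiv 2y_{10}+v_1y_{12}$, $res_{\Omega}(c_6)\equiv 2y_{12}+v_1y_{14}$, $res_{\Omega}(c_7)\equiv 2y_{14}$ modulo $\II$; multiplying these out and using $y_{10}^2,y_{12}^2,y_{14}^2\in I_\infty$, every resulting monomial lies in $I_\infty^5$ except $v_1y_6\cdot2y_{10}\cdot2y_{12}\cdot2y_{14}=2^3v_1y_{top}$, while the $\II$--corrections to the $c_i$ only add further $I_\infty^5$ terms, here necessarily of the form $2^4\bZ_{(2)}v_1y_{top}$. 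Hence $c'$ is $2^3$ times a unit, so $2^3v_1y_{top}\in Im(res_{\Omega})$.

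The last generator $2^3v_2y_{top}$ is the substance of the argument and is where Lemmas 10.2 and 10.3 enter. For $c_2c_4c_6c_7$ (Chow degree $19$) the degree count forces $res_{\Omega}(c_2c_4c_6c_7)=c''\,v_1^2y_{top}$, and the computation $x''=2^2v_1^2y_{top}$ in the proof of Lemma 10.2 shows $c''$ is $2^2$ times a unit; multiplying by $v_1$ gives $2^2v_1^3y_{top}\in Im(res_{\Omega})$. For $c_2c_3c_6c_7$ (Chow degree $18$) the degree count gives $res_{\Omega}(c_2c_3c_6c_7)=\alpha\,y_6^2y_{10}y_{14}+\beta\,v_1^3y_{top}+\gamma\,v_2y_{top}$ with $\alpha,\beta,\gamma\in\bZ_{(2)}$, where $\alpha=0$ since $res_{CH}(c_2)=0$ (nothing in Chow degree $2$ of $P(y)$), where the proof of Lemma 10.2 -- a computation in $k^*$ -- identifies $\beta$ with $2^2\lambda$, $\lambda\in\bZ$, so in particular $4\mid\beta$, and where the proof of Lemma 10.3 -- a computation in $\tilde k(2)^*$ -- gives that $\gamma$ is $2^3$ times a unit. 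Since $4\mid\beta$ and $2^2v_1^3y_{top}\in Im(res_{\Omega})$, also $\beta v_1^3y_{top}\in Im(res_{\Omega})$; subtracting it off, $\gamma v_2y_{top}\in Im(res_{\Omega})$, whence $2^3v_2y_{top}\in Im(res_{\Omega})$. Combining the three facts with the $BP^*$--linearity of $res_{\Omega}$ proves the corollary.

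I expect the main obstacle to be bookkeeping rather than any new idea: one must (i) reinterpret Lemmas 10.2 and 10.3, which are stated for $gr_\gamma$ and $gr(2)^*$, as statements about $res_{\Omega}$ modulo suitable powers of $I_\infty$; (ii) invoke the vanishing of $P(y)$ in Chow degrees $17,19,20$ to be certain that the chosen Chern monomials restrict to the single monomials claimed, the only possible extra term $\alpha\,y_6^2y_{10}y_{14}$ (in degree $18$) being forced to zero; and (iii) keep track of the fact that each leading coefficient is pinned down only up to a unit of $\bZ_{(2)}$ -- harmless for a containment, provided one verifies that the $v_1^3y_{top}$--contamination of $res_{\Omega}(c_2c_3c_6c_7)$ is genuinely absorbed by $v_1\cdot res_{\Omega}(c_2c_4c_6c_7)$, which is precisely why one needs $4\mid\beta$ rather than merely $2\mid\beta$.
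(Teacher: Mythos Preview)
Your argument is correct and uses exactly the same three Chern monomials $c_3c_5c_6c_7$, $c_2c_5c_6c_7$, $c_2c_3c_6c_7$ that the paper exhibits to hit $2^4y_{top}$, $2^3v_1y_{top}$, $2^3v_2y_{top}$. The only real difference is in how the unwanted $\beta v_1^3y_{top}$ term in $res_\Omega(c_2c_3c_6c_7)$ is disposed of: the paper simply works modulo $I_\infty^5$ (where $\beta v_1^3y_{top}$ vanishes because $4\mid\beta$) and appeals to the invariance of the ideal $(2^4,2^3v_1,2^3v_2)\subset BP^*$ under the Quillen operations $r_{\Delta_1},r_{2\Delta_1},r_{\Delta_2}$, while you cancel it explicitly with a fourth monomial $v_1\cdot c_2c_4c_6c_7$. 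Your route is slightly longer but entirely elementary and self-contained; the paper's is terser but relies on the (standard) fact that $Im(res_\Omega)$ is closed under Landweber--Novikov operations. Your careful degree bookkeeping --- in particular the observation that $P(y)$ vanishes in Chow degrees $17,19,20$ and that $\alpha=0$ because $res_{CH}(c_2)=0$ --- makes the lift from the mod-$I_\infty^5$ computations in Lemmas~10.2 and~10.3 to the exact $BP^*$-statement completely transparent, which the paper leaves implicit.
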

\begin{proof} 
We see
$ c_2c_3c_6c_7=2^3v_2y_{top}$  $mod(I_{\infty}^5).$
Hence the ideal $(2^4,2^3v_1,2^3v_2)\subset BP^*$
is invariant under $r_{\Delta_1},r_{2\Delta_1},$ and
$r_{\Delta_2}$, we have the corollary.  In fact
\[ c_3c_5c_6c_7\mapsto 2^4y_{top},\quad
c_2c_5c_6c_7\mapsto 2^3v_1y_{top},\quad 
c_2c_3c_6c_7\mapsto 2^3v_2y_{top}.\]
\end{proof}

\section{$Spin(N)$, $N$ ; large}

At first we see that fixed $*>0$, the Chow group 
$CH^*(R(\bG))$ is constant when $\ell$ becomes bigger.
\begin{thm}
Let $G(n)=Spin(n)$ and $\bG(n)$ be
the versal $G(n)_k$-torsor.  Then
we have the additive isomorphism
 \[ lim_{\gets N}CH^*(R(\bG(N)))/2\cong \Lambda(c_2,c_3,...,c_n,...). \]
\end{thm}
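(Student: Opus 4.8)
The plan is to show that for each fixed degree $*>0$ the natural maps coming from the inclusions $Spin(n)\subset Spin(n+2)$ induce isomorphisms $CH^*(R(\bG(n+2)))/2\cong CH^*(R(\bG(n)))/2$ once $n$ is large enough (relative to $*$), and that the limit is the polynomial-free exterior algebra on the Chern classes $c_2,c_3,\dots$. First I would recall from Lemma 6.7 (the Vishik--Zainoulline theorem, $R(\bG')\cong R(\bG'')$ for $Spin(2\ell+1)$ and $Spin(2\ell+2)$) that it suffices to run the argument along the odd tower $G=Spin(2\ell+1)$, $\ell\to\infty$. For such $G$ we have by Theorem 3.3 and the discussion in \S6 that $CH^*(\bar R(\bG))/2\cong P(y)'/2$, where $grP(y)'\cong\Lambda(y_6,y_{10},y_{12},\dots,y_{2\bar\ell})$, i.e. the exterior algebra on $y_{2i}$ for those $2i\le 2\bar\ell$ that are \emph{not} powers of $2$. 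As $\ell\to\infty$ this stabilizes in each fixed degree to $\Lambda(y_6,y_{10},y_{12},y_{14},\dots)$ (all even generators except the $2^j$), and the restriction-compatible inclusions $Spin(2\ell+1)\subset Spin(2\ell+3)$ send $y_{2i}\mapsto y_{2i}$, so on the level of $\bar R$ the inverse limit is exactly this stable exterior algebra.

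The second step is to transfer this to $R(\bG)$ itself. By Lemma 1.2 (equivalently Lemma 6.5), for $2^t\le\ell<2^{t+1}$ there is a surjection
\[ \Lambda(c_2,\dots,c_\ell)\otimes\bZ/2[e_{2^{t+1}}]\twoheadrightarrow CH^*(R(\bG))/2,\]
where $c_i$ is represented by the $i$-th Chern class and $\deg(e_{2^{t+1}})=2^{t+1}$. In a fixed total degree $*$, once $\ell$ is large enough the polynomial variable $e_{2^{t+1}}$ has degree exceeding $*$, hence contributes nothing; so in degrees $\le *$ the surjection becomes $\Lambda(c_2,\dots,c_\ell)\twoheadrightarrow CH^*(R(\bG))/2$, compatibly for increasing $\ell$. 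It remains to see that this surjection is, in the limit, an isomorphism. For the upper bound on the size of $CH^*(R(\bG))/2$ I would compare dimensions with $CH^*(\bar R(\bG))/2\cong P(y)'/2$: the restriction $res_{CH}\colon CH^*(R(\bG))/2\to CH^*(\bar R(\bG))/2$ together with the norm map $N$ organizes $CH^*(R(\bG))/2$, but more efficiently one uses that the number of monomials in $c_2,\dots,c_\ell$ that survive is controlled by the relations $c_i^2=0$ (coming, as in \S6, from $SO(2\ell+1)\subset Sp(2\ell+1)$, giving $c_i'^2\in(\text{decomposables})$) — so the exterior algebra $\Lambda(c_2,\dots,c_\ell)$ is already the natural upper bound, and one needs the $c_{i_1}\cdots c_{i_s}$ with $2\le i_1<\dots<i_s$ to be linearly independent in $CH^*(R(\bG))/2$ in the stable range.

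For that linear independence — which I expect to be the main obstacle — the strategy is to use the connective $K$-theory and Morava-theoretic filtrations developed in \S5 and \S10: each squarefree monomial $c_{i_1}\cdots c_{i_s}$ has a well-defined projection in $k^*(\bar R(\bG))/(I_\infty^{s+1})$ by Corollary 5.5, and the leading term of $pr(c_{i_1}\cdots c_{i_s})$ in $I_\infty^s/I_\infty^{s+1}\cong (\bZ/2\oplus v_1\bZ/2\oplus\cdots)\otimes P(y)'$ is a specific product of $y$'s (by Corollaries 2.1 and 5.5 the odd $c_j$'s contribute a factor $2y_{\bullet}$ and the even $c_j$'s a factor $v_1y_{\bullet}$, modulo higher $I_\infty$-filtration), and these leading terms are distinct nonzero elements of $P(y)'$ for distinct squarefree monomials, provided one is in the stable range where no collision $y_a y_b=y_c y_d$ or $y_{2i}=y_j^2$ forces identifications — precisely the regime $\ell\gg *$. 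Hence the monomials are independent already in $gr\, k^*(\bar R(\bG))$, a fortiori in $CH^*(R(\bG))/2$, and the surjection is an isomorphism in the limit. Passing to the inverse limit over $N$ (the maps $CH^*(R(\bG(N+2)))/2\to CH^*(R(\bG(N)))/2$ being surjective with stabilizing kernel in each degree, hence eventually iso) yields $\varprojlim_N CH^*(R(\bG(N)))/2\cong\Lambda(c_2,c_3,\dots)$, as claimed. The delicate point to get right is bookkeeping the $I_\infty$-filtration jumps so that the "leading term" map is genuinely well-defined and injective on squarefree monomials in the stable range — everything else is the stabilization argument plus citing Lemmas 1.2, 6.5, 6.7 and Corollary 5.5.
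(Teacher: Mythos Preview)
Your overall plan (surjection from $\Lambda(c_2,\dots)$ plus stable linear independence of the squarefree monomials) is the right shape, but the independence step has a genuine gap.

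You propose to detect independence via the leading term in $I_\infty^s/I_\infty^{s+1}$ for the connective $k^*=\tilde k(1)^*$-theory, claiming that distinct squarefree monomials $c_{i_1}\cdots c_{i_s}$ have distinct leading $y$-products ``in the stable range.'' This is false, and the failure does not disappear as $\ell\to\infty$. In $\tilde k(1)^*(\bar R(\bG))$ one has $c_2=v_1y_6$, $c_3=2y_6$, $c_4=v_1y_{10}$, $c_5=2y_{10}+v_1y_{12}$, so
\[
c_2c_5\equiv 2v_1\,y_6y_{10},\qquad c_3c_4\equiv 2v_1\,y_6y_{10}\quad \pmod{I_\infty^3},
\]
identically for every $\ell\ge 5$. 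The collision comes from the fact that in $k(1)$-theory the pairs $(c_{2j},c_{2j+1})$ both see the \emph{same} generator $y_{4j+2}$ (via $Q_1$ and $Q_0$ respectively); no amount of increasing $\ell$ separates them. More seriously, any argument that only uses $\tilde k(1)^*$ can at best establish independence in $gr(1)^*(R(\bG))/2=gr_\gamma^*(R(\bG))/2$, and the paper's own \S10 shows this quotient has extra relations (e.g.\ $c_2c_3c_6c_7=0$ for $Spin(17)$) that $CH^*(R(\bG))/2$ does not. So $k(1)$-theory is provably insufficient to prove the theorem as stated.

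The paper's proof supplies exactly the missing idea: for a fixed degree bound, choose $n$ with $2^n$ exceeding that bound and work in the Morava $k(n)$-theory. There $c_i\equiv v_n\,y_{2i+2^{n+1}-2}$, and for $2\le i<2^n$ the indices $2i+2^{n+1}-2$ all lie strictly between $2^{n+1}$ and $2^{n+2}$, so they are pairwise distinct and none is a power of $2$. Hence $c_{i_1}\cdots c_{i_s}\mapsto v_n^{\,s}\,y_{m_{i_1}}\cdots y_{m_{i_s}}$ with genuinely distinct exterior monomials, and one checks $v_n^{s-1}y_{m_I}\notin \mathrm{Im}(res)$ since every element of the image is $\equiv 0\pmod{v_n}$. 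This gives independence in $gr(n)^*(R(\bG))/2$, hence in $CH^*(R(\bG))/2$. Varying $n$ with the degree is the crucial trick; your $n=1$ attempt cannot be repaired without it.
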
 
\begin{proof}
Let $N=2\ell+1$,  and
$ 2^2\le 2^n<2^t\le \ell<2^{t+1}.$  We will  see 
\[  CH^*(R(\bG(N))/2\cong \Lambda (c_2,...,c_{2^n-1})
\quad for \  *< 2^n. \]

Suppose that
\[ x= \sum  c_{i_1}... c_{i_s} =0\in CH^*(R(\bG(N)))/2\quad
for \ \ 2\le  i_1<...<i_s<2^n.\]
Here $c_{i_j}=v_ny_m$ with $m={2^n-2+2i_j}$ in $k(n)^{*}(\bar R(\bG)/2$.  Since $2^n<m<2^{n+1}$,
$m$  is not a form $2^r$, $r>3$. Hence $y_m$
 is a generator of $grP(y)\cong \Lambda_{s\not =2^i}y_{2s}$.

Moreover  recall that  
\[ e_{2^{t+1}}=v_ny_{2^{t+1}-2+2^n-2}y_{2^{t}+2}+...\]
This element  is in the $ideal(v_n^2,E)$ with
$E=(y_{2j}|j> 2^{t}).$
Hence we see $c_{i_j}=v_ny_m$ is  also nonzero $mod(v_n^2,E)$ since $n<t$.

Thus we see that 
for $x'= y_{2^n-2+2{i_1}}....y_{2^n-2+2i_s}$
\[ x=\sum v_n^sx'\not =0 \in k(n)^*(\bar R(\bG))/2\cong k(n)^*\otimes P(y).\]
Moreover $v_n^{s-1}x'\not \in Im(res)$, because 
$Im(res)$ 
 is generated by $res(c_{j_1})...res(c_{j_r})$
and each $res(c_j)=0$ $mod(v_n)$.
\end{proof}

We note the following general properties for $\ell\ge 5$.
\begin{lemma}
Let $2^t<\ell<2^{t+1}$ with $\ell \ge 5$.
 Then
$e_{2^{t+1}}\not =0$
in $gr_{\gamma}^*(R(\bG))/2$.
\end{lemma}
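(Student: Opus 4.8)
The plan is to establish the equivalent statement $e := e_{2^{t+1}} \notin 2\,CH^*(R(\bG)) + I(1)$, which gives $e \ne 0$ in $gr_\gamma^*(R(\bG))/2 \cong gr_{geo}^*(R(\bG))/2 \cong CH^*(R(\bG))/(2,I(1))$. The main tool is the restriction $res\colon CH^*(R(\bG)) \to CH^*(\bar R(\bG)) \cong P(y)'$. Since the Atiyah--Hirzebruch spectral sequence for $\bar R(\bG)$ degenerates ($P(y)'$ is torsion free), $res$ factors through $CH^*(R(\bG))/I(1) = gr_{geo}^*(R(\bG))$; in particular $res(I(1)) = 0$. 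So it suffices to show that $res(e)$ is not of the form $2\,res(g)$ with $g \in CH^*(R(\bG))$.

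First I would compute $res(e)$. As $e = c_1^{2^{t+1}}$ is the transgression $d_{2^{t+2}}(z)$, Corollary 2.1 applied to $b=e$, together with Nishimoto's formula $Q_0(z)=\sum_{i+j=2^{t+1},\ i<j} y_{2i}y_{2j}$, yields
\[ res(e)=2\,\tilde w \quad\text{in } CH^*(\bar R(\bG)),\qquad \tilde w\equiv \sum_{i+j=2^{t+1},\ i<j} y_{2i}y_{2j}\pmod 2 . \]
Write $w$ for this mod-$2$ sum. The crux is that $w\ne 0$ in $P(y)'/2$. Since $\ell\ge 5$ we have $t\ge 2$, and since $2^t<\ell$ (so $\ell$ is not a power of $2$ and $\bar\ell=\ell$) we have $2^t+1\le\bar\ell$. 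Hence the pair $(i,j)=(2^t-1,\,2^t+1)$ satisfies $3\le i<j\le\bar\ell$ and $i+j=2^{t+1}$, and both indices $2i=2^{t+1}-2$ and $2j=2^{t+1}+2$ are $\le 2\bar\ell$ and are not powers of $2$; so $y_{2^{t+1}-2}y_{2^{t+1}+2}$ is an honest square-free monomial in the generators of $P(y)'$. Distinct pairs $(i,j)$ contribute distinct unordered index pairs $\{2i,2j\}$, so no cancellation occurs and the coefficient of $y_{2^{t+1}-2}y_{2^{t+1}+2}$ in $w$ is $1$. Thus $w\ne 0$, so $\tilde w$ has an odd coefficient and is not divisible by $2$ in $P(y)'$.

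It remains to control $Im(res)$ in Chow-degree $2^{t+1}$. By the surjection $\Lambda(c_2,\dots,c_\ell)\otimes\bZ/2[e]\twoheadrightarrow CH^*(R(\bG))/2$, and because $\ell<2^{t+1}$ forces there to be neither a single Chern class nor a positive-degree multiple of $e$ in this degree, Nakayama's lemma shows $CH^{2^{t+1}}(R(\bG))$ is generated over $\bZ_{(2)}$ by $e$ and the products $c_{i_1}\cdots c_{i_s}$ with $s\ge 2$, $i_1+\cdots+i_s=2^{t+1}$. Working in the ring $CH^*(\bF)$, the $Spin$-version of Corollary 2.1 (Corollary 6.1) gives $res(c_i)\in 2\,CH^*(\bar\bF)$, hence $res(c_{i_1}\cdots c_{i_s})\in 2^s\,CH^*(\bar\bF)\subseteq 4\,CH^*(\bar\bF)$; applying the projection $pr$ and the compatibility $pr\circ res=res\circ pr$, each such monomial restricts into $4\,CH^*(\bar R(\bG))$. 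Together with $res(e)=2\tilde w$ this gives $res\bigl(CH^{2^{t+1}}(R(\bG))\bigr)\subseteq 2\,CH^{2^{t+1}}(\bar R(\bG))$. Now if $e=2g+c$ with $c\in I(1)$, then $2\tilde w=res(e)=2\,res(g)$, so $res(g)=\tilde w$ since $P(y)'$ is torsion free; but $\tilde w\notin 2\,CH^{2^{t+1}}(\bar R(\bG))$, a contradiction. Hence $e\ne 0$ in $gr_\gamma^*(R(\bG))/2$.

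I expect the combinatorial claim $w\ne 0$ to be the main obstacle: it requires exhibiting a surviving monomial in $Q_0(z)$, and this is exactly where the strict inequality $2^t<\ell$ enters --- for $\ell=2^t$ every pair with $i+j=2^{t+1}$ has $\max(i,j)\ge 2^t+1>\bar\ell=2^t-1$, so $w=0$ and the statement indeed fails there (consistently with the $Spin(17)$ counterexample). The other delicate point is the $4$-divisibility of the restrictions of length-$\ge 2$ Chern monomials, together with the compatibility of $pr$ and $res$, which is what confines $Im(res)$ to $2\,CH^*(\bar R(\bG))$ in degree $2^{t+1}$ and thereby separates $e$ from $2\,CH^*(R(\bG))+I(1)$.
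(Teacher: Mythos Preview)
Your proof is correct and takes a genuinely different route from the paper's. The paper proceeds by cases: for $t=2$ it quotes the explicit computation for $Spin(11)$ and then uses naturality along $Spin(11)\subset Spin(13)\subset Spin(15)$; for $t\ge 3$ it works first in $G=Spin(d+3)$ with $d=2^{t+1}$, and the key input is Totaro's strict inequality of torsion indices $t(Spin(d-1))<t(Spin(d+3))$. Assuming $e_d$ vanished one would get $y_{d-2}y_{d+2}\in Im(res_{CH})$, and multiplying by a class realizing $t(Spin(d-1))\,y_6\cdots y_{d-4}$ would place too small a multiple of $y_{top}$ in $Im(res_{CH})$, contradicting the larger torsion index; the conclusion is then pushed to all $2^t<\ell<2^{t+1}$ by naturality along $Spin(d+3)\subset Spin(2\ell+1)$. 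Your argument, by contrast, is uniform in $\ell$ and avoids Totaro's computation entirely: via Lemma~6.5 and Nakayama you list the $\bZ_{(2)}$-generators of $CH^{2^{t+1}}(R(\bG))$, observe that every Chern monomial of length $\ge 2$ restricts into $4\,CH^*(\bar R(\bG))$ while $res(e)=2\tilde w$ with $\tilde w$ odd (the monomial $y_{2^{t+1}-2}y_{2^{t+1}+2}$ survives precisely because $2^t<\ell$), so $Im(res)$ in that degree sits inside $2\,CH^*(\bar R(\bG))$ and can never hit $\tilde w$. Both proofs implicitly use that $res$ annihilates $I(1)$ because the AHss for $\bar R(\bG)$ degenerates; you make this explicit, while the paper leaves it tacit. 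The trade-off is that the paper's method ties $e_d$ to the torsion-index story exploited elsewhere in \S\S 10--11, whereas yours is more elementary and self-contained.
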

\begin{proof}
At first, we recall the case $\ell=5$.  Then $e_8\not =0$
in $CH^*(R(\bG))/2$ from Theorem 7.1.
By the naturality $e_8\not =0$ for $\ell=6,7$. Moreover
we note
\[ 2^3=t(Spin(15))<t(Spin(19))=2^4\quad (while\ 
t(Spin(7))=2=t(Spin(11))).\]

Let us write $d=2^{t+1}$ and assume $d\ge 16$ (i.e., $t\ge 3$).
We consider the group $ G=Spin(d+3)$.
Since $Q_0(z)=y_{d-2}y_{d+2}$ and $d(z)=e_d$ 
by Nishimoto, we have
\[ 2y_{d-2}y_{d+2}=e_d  \quad \in CH(\bar R(\bG))/2.\]

Here we recall
\[( *)\quad t(Spin(d-1))<t(Spin(d+3)).\]
In fact, by Totaro, for 
$t(Spin(2n+1))=2^u$, $u$ is close to $n-2log_2(n)+1$. 
Then we can see that $e_d\not =0$ in  $CH^*(R(\bG))/2$. Otherwise $1/2e_d=y_{d-2}y_{d+2}
\in Res$.  This implies
        \[ t(Spin(d-1))y_{6}...y_{d-4}\cdot y_{d-2}y_{d+2}
\in Res(CH^*(R(\bG))),\]
which contradicts to $ (*)$.  Hence From
 Corollary 3.7,
we also see $e_d\not =0\in CH^*(R(\bG_k))/2$
for $\ell=d/2+1$.

For groups $Spin(2\ell+1)$, $d/2<\ell<d$, we see 
\[ d(z)=c_1^d=2(y_{d-2}y_{d+2}+y_{d-4}y_{d+4}+...)\]
We have $e_d\not=0$ for the group
$Spin(2\ell+1)$, by the naturality for
$Spin(d+3)\to Spin(2\ell+1)$.
\end{proof}


\end{document}